\documentclass[11pt]{article}

\usepackage[a4paper,margin=1.1in]{geometry}
\usepackage{amsmath,amssymb,amsthm,mathtools}
\usepackage{mathrsfs}
\usepackage{enumitem}
\usepackage{aliascnt}
\usepackage[hidelinks]{hyperref}
\usepackage{cleveref}

\numberwithin{equation}{section}

\newtheorem{theorem}{Theorem}[section]

\newaliascnt{proposition}{theorem}
\newtheorem{proposition}[proposition]{Proposition}
\aliascntresetthe{proposition}

\newaliascnt{lemma}{theorem}
\newtheorem{lemma}[lemma]{Lemma}
\aliascntresetthe{lemma}

\newaliascnt{corollary}{theorem}
\newtheorem{corollary}[corollary]{Corollary}
\aliascntresetthe{corollary}

\theoremstyle{definition}
\newaliascnt{definition}{theorem}
\newtheorem{definition}[definition]{Definition}
\aliascntresetthe{definition}

\newaliascnt{assumption}{theorem}
\newtheorem{assumption}[assumption]{Assumption}
\aliascntresetthe{assumption}

\newaliascnt{convention}{theorem}
\newtheorem{convention}[convention]{Convention}
\aliascntresetthe{convention}

\newaliascnt{example}{theorem}

\aliascntresetthe{example}

\theoremstyle{remark}
\newaliascnt{remark}{theorem}
\newtheorem{remark}[remark]{Remark}
\aliascntresetthe{remark}

\crefname{assumption}{Assumption}{Assumptions}
\Crefname{assumption}{Assumption}{Assumptions}
\crefname{convention}{Convention}{Conventions}
\Crefname{convention}{Convention}{Conventions}
\crefname{definition}{Definition}{Definitions}
\Crefname{definition}{Definition}{Definitions}
\crefname{lemma}{Lemma}{Lemmas}
\Crefname{lemma}{Lemma}{Lemmas}
\crefname{proposition}{Proposition}{Propositions}
\Crefname{proposition}{Proposition}{Propositions}
\crefname{corollary}{Corollary}{Corollaries}
\Crefname{corollary}{Corollary}{Corollaries}
\crefname{theorem}{Theorem}{Theorems}
\Crefname{theorem}{Theorem}{Theorems}
\crefname{remark}{Remark}{Remarks}
\Crefname{remark}{Remark}{Remarks}

\newcommand{\R}{\mathbb{R}}

\newcommand{\cl}{\mathrm{cl}}
\newcommand{\loc}{\mathrm{loc}}

\newcommand{\Dist}{\operatorname{dist}}
\newcommand{\Image}{\operatorname{Im}}

\newcommand{\Rep}{\operatorname{Rep}}
\newcommand{\Tax}{\operatorname{Tax}}

\newcommand{\Sector}{\operatorname{Sector}}
\newcommand{\Err}{\mathsf{Err}}
\newcommand{\calC}{\mathcal C}
\newcommand{\calD}{\mathcal D}
\newcommand{\calK}{\mathcal K}
\newcommand{\calM}{\mathcal M}
\newcommand{\calP}{\mathcal P}
\newcommand{\calQ}{\mathcal Q}
\newcommand{\calR}{\mathcal R}
\newcommand{\calS}{\mathcal S}
\newcommand{\calU}{\mathcal U}
\newcommand{\calX}{\mathcal X}
\newcommand{\calY}{\mathcal Y}
\newcommand{\calZ}{\mathcal Z}

\title{Finite-Window Computational Anti-Phantom Theorems for Scale-Critical Navier--Stokes Defects}
\author{Runlong Yu\\
The University of Alabama, Tuscaloosa, AL, USA\\
\texttt{ryu5@ua.edu}}
\date{}

\begin{document}

\maketitle

\begin{abstract}
We prove a finite-window anti-phantom principle for scale-critical
Navier--Stokes defect packages and develop a conditional localized transfer
framework around it.  In a fixed clean quotient, the main compactness theorem
shows that if the only defect that is simultaneously invisible, exactly
reproducible, and tax-free is a gauge artifact, then observation,
reproduction failure, and tax control the clean quotient distance by a positive
finite-window gap.  We then isolate the additional localized inputs needed to
use this clean gap for Navier--Stokes packages: pressure-source observability,
enhanced pressure-tail geometry, chart visibility, and residual sub-budgets
for localization, reproduction, and gate/tax mismatch.  The localized results
are conditional finite-window reductions with explicit error constants,
including a comparison theorem between the enhanced-tail geometry and the
original intrinsic geometry under stated projection and harmonic tail
approximation assumptions.  The paper should be read as a rigorous
finite-window obstruction framework, not as a proof of Navier--Stokes
regularity, a construction of a singular solution, or a scale-uniform
regularity criterion.
\end{abstract}

\tableofcontents

\section{Introduction}

\subsection{The local regularity obstruction}

The local regularity theory for the three-dimensional incompressible
Navier--Stokes equations turns smallness of scale-critical quantities into
regularity.  This viewpoint goes back to the Leray--Hopf weak-solution
framework and to the partial-regularity theory of Scheffer and
Caffarelli--Kohn--Nirenberg, with later refinements and expositions such as
Lin's proof and Seregin's lecture notes
\cite{Leray1934,Hopf1951,Scheffer1976,Scheffer1977,CKN1982,Lin1998,SereginLectureNotes}.
A potential singular mechanism must therefore do more than make a single
critical norm large.  It must persist through a sequence of scales while
evading the observations and ledger terms that normally expose pressure,
flux, energy, dissipation, and trace defects.

There are two levels of that obstruction.  The first is clean: after
quotienting out specified gauges, can a non-gauge defect remain invisible to
the finite-window detectors?  The second is localized and PDE-facing: what
pressure, tail, chart, and residual estimates would transfer the clean gap
back to a localized Navier--Stokes package?

The terminology of flux, budget, and defect is consistent with the local
energy-transfer viewpoint for weak Euler and Navier--Stokes flows
\cite{ConstantinETiti1994,Eyink1994,DuchonRobert2000}.  The pressure terms are
kept explicit because local pressure decomposition and pressure regularity are
central in Navier--Stokes partial regularity
\cite{SohrWahl1986,SereginSverak2002,SereginLectureNotes}.  Critical-space and
backward-uniqueness mechanisms provide another perspective on how
scale-critical information interacts with possible singular behavior
\cite{ESS2003}.

The word \emph{clean} means that cutoff leakage, harmonic-pressure artifacts,
periodization errors, and localization residuals have been removed from the
algebraic model or placed into explicit transfer hypotheses.  The word
\emph{finite-window} means that all spaces are finite-dimensional and only a
fixed dyadic range is considered.  The main clean theorem is therefore an
algebraic compactness theorem.  The later localized statements are conditional
reductions that name the remaining Navier--Stokes inputs.

The framework is motivated by two neighboring lines of work.  One line studies
one-component and anisotropic regularity criteria for the three-dimensional
Navier--Stokes equations
\cite{KukavicaZiane2006,CaoTiti2011,CheminZhang2016,CheminZhangZhang2017,HanLeiLiZhao2019,KangNguyen2023}.
Another line asks for quantitative regularity information from spatial
concentration or weak--strong uniqueness mechanisms
\cite{BarkerPrange2021,AlbrittonBarkerPrange2023}.  The present manuscript is
also a finite-window algebraic companion to the preceding harmonic-pressure,
strict-shadow, Schur-visibility, and defect-cascade formulations
\cite{YuOneComponent2026,YuStrict2026,YuSchur2026,YuInvisible2026}.

\subsection{Main contribution}

The main contribution of this paper is twofold.  First, we prove a clean
finite-window anti-phantom theorem.  We define a constrained clean defect space
\(\calK_\Lambda^{\cl}\), a gauge subspace
\(\Gamma_\Lambda^{\cl}\), and a quotient distance
\[
    \Dist_{\cl}(d,\Gamma_\Lambda^{\cl})
    =
    \inf_{\gamma\in\Gamma_\Lambda^{\cl}}\|d-\gamma\|_{\cl}.
\]
We then define a computational detector consisting of three parts:
\[
    \text{observation}
    \quad+\quad
    \text{reproduction residual}
    \quad+\quad
    \text{tax}.
\]
The theorem is stated for a gauge-compatible clean detector.  It says that if
the zero set of the induced quotient detector contains no nonzero quotient
class, then the detector controls the quotient distance by a positive
finite-window constant.

Second, we build a conditional localized transfer layer around the clean gap.
The localized part introduces pressure-source quotient observability,
enhanced pressure-tail geometry, a chart-visibility decomposition, and four
explicit residual sub-budgets: chart mismatch, localization leakage,
reproduction drift, and gate/tax mismatch.  These pieces are assembled into a
finite-window enhanced-tail residual budget.  We then prove a conditional
comparison between the enhanced-tail distance and the original intrinsic
distance, assuming projection and harmonic tail approximation on a common
intrinsic representative.

The innovation is not the claim that these localized hypotheses are automatic.
It is the organization of the obstruction: the manuscript separates the
finite-window algebra that is proved from the PDE-facing compatibility
estimates that remain open.

\subsection{Main theorem narrative and theorem status}

The theorem should not be read as a proof of global regularity for
Navier--Stokes.  It also does not construct a singular solution, does not
claim that singularities are computers, and does not assert any form of
undecidability.  Its role is narrower: it proves a fixed clean finite-window
gap and records conditional mechanisms by which that gap could be transferred
to localized defect packages.

The proof narrative has four layers.  First, the clean finite-window theorem uses a detector that descends to the
quotient by gauge directions and then applies finite-dimensional compactness to
obtain a positive detector gap.  Second, a conditional
local-to-clean transfer theorem records how such a clean gap would imply a
localized lower bound in the presence of explicit comparison and residual
budget hypotheses.  Third, the pressure-source and enhanced-tail sections
decompose those hypotheses into concrete finite-window components.  Fourth,
the final comparison theorem explains when the enhanced-tail geometry is
controlled by the original intrinsic geometry, up to explicit projection and
harmonic tail errors.

Every localized conclusion in this manuscript is conditional on named
structural inputs.  No decay of projection tails as \(N\to\infty\), no decay of
harmonic tails as \(M\to\infty\), no pressure/tax coercivity theorem, and no
scale-uniform moving-window estimate is proved.

\subsection{Organization of the paper}

\Cref{sec:clean-spaces} defines clean finite-window defect spaces, constraints,
gauge directions, and quotient distance.  \Cref{sec:quotient-geometry} records
the compactness of the quotient unit sphere.  \Cref{sec:detectors} defines
observation, reproduction, tax, and the computational anti-phantom constant.
\Cref{sec:main-theorem} proves the kernel-free characterization and the clean
finite-window computational anti-phantom inequality.
\Cref{sec:finite-state-sector} derives a finite-state sector interpretation of
the gap.  \Cref{sec:localized-transfer-corollary} records the conditional
localized transfer corollary obtained by using
\(\mu_\Lambda^{\rm comp}\) as the clean gap constant.
\Cref{sec:failure-modes} classifies the ways in which the clean mechanism can
fail to produce a localized conclusion.
\Cref{sec:pressure-source-observability} develops the pressure-source,
enhanced-tail, residual-budget, and intrinsic-comparison components needed for
the localized transfer branch.  \Cref{sec:roadmap} records the next
PDE-facing derivation targets and the limitations of the present manuscript.

\section{Clean Finite-Window Defect Spaces}
\label{sec:clean-spaces}

\subsection{Finite dyadic windows}

Fix a finite dyadic window
\[
    \Lambda=\{k_0,k_0+1,\ldots,k_0+L\},
    \qquad L<\infty,
\]
and write
\[
    \Lambda_{\rm adj}
    :=
    \{k\in\Lambda:\ k+1\in\Lambda\}.
\]
All constants in this paper may depend on the fixed window \(\Lambda\), the
chosen finite-dimensional spaces, and the chosen norms.  No scale-uniform
statement in \(L\), \(k_0\), or the dyadic radius is asserted.

\subsection{Clean defect coordinates}

\begin{definition}[Clean finite-window defect space]
\label{def:clean-defect-space}
For each \(k\in\Lambda\), let \(\calD_k^{\cl}\) be a finite-dimensional real
normed vector space.  A clean defect at scale \(k\) is written schematically as
\[
    d_k=(U_k,P_k,R_k,\Pi_k,\Phi_k,\tau_k)\in\calD_k^{\cl},
\]
where the coordinates represent velocity, pressure, covariance or Reynolds
stress, interscale flux, energy/trace, and tax or ledger-slack data.  The
clean finite-window defect space is
\[
    \calD_\Lambda^{\cl}
    :=
    \prod_{k\in\Lambda}\calD_k^{\cl}.
\]
It is equipped with a fixed norm \(\|\cdot\|_{\cl}\).
\end{definition}

\begin{definition}[Clean constraint space]
\label{def:clean-constraint-space}
The clean constraint space
\[
    \calK_\Lambda^{\cl}\subset\calD_\Lambda^{\cl}
\]
is a finite-dimensional linear subspace.  Its role is to encode the clean
finite-dimensional constraints that survive after localization artifacts have
been removed, such as projected divergence constraints, projected pressure
compatibility, projected momentum compatibility, and projected finite-window
energy-flux identities.
\end{definition}

\begin{remark}[Linear clean model first]
The present paper proves the main theorem in the linear clean model, where
\(\calK_\Lambda^{\cl}\) is a vector subspace.  Positivity constraints,
semialgebraic admissibility conditions, or Reynolds-stress cones can be added
later, but they are not needed for the compactness-based gap theorem below.
\end{remark}

\subsection{Gauge directions and quotient distance}

\begin{definition}[Clean gauge space]
\label{def:clean-gauge-space}
Let \(\calC_\Lambda^{\cl}\) be a finite-dimensional real normed vector space
and let
\[
    G_\Lambda^{\cl}:\calC_\Lambda^{\cl}\to\calD_\Lambda^{\cl}
\]
be a linear map.  The clean gauge subspace inside the constrained clean space is
\[
    \Gamma_\Lambda^{\cl}
    :=
    \calK_\Lambda^{\cl}\cap \Image G_\Lambda^{\cl}.
\]
Elements of \(\Gamma_\Lambda^{\cl}\) are the clean removable directions, such
as pressure mean gauges, selected harmonic-pressure gauges, finite projection
artifacts, and clean periodization artifacts represented at the finite-window
level.
\end{definition}

\begin{definition}[Clean quotient and quotient distance]
\label{def:clean-quotient-distance}
The clean quotient space is
\[
    \calQ_\Lambda^{\cl}
    :=
    \calK_\Lambda^{\cl}/\Gamma_\Lambda^{\cl}.
\]
For \(d\in\calK_\Lambda^{\cl}\), define
\[
    \Dist_{\cl}(d,\Gamma_\Lambda^{\cl})
    :=
    \inf_{\gamma\in\Gamma_\Lambda^{\cl}}\|d-\gamma\|_{\cl}.
\]
Equivalently,
\[
    \|[d]\|_{\calQ_\Lambda^{\cl}}
    :=
    \Dist_{\cl}(d,\Gamma_\Lambda^{\cl})
\]
is the quotient norm of the class \([d]\in\calQ_\Lambda^{\cl}\).
\end{definition}

\begin{remark}[Gauge status]
The quotient removes only the finite-dimensional clean gauge directions that
have been explicitly placed in \(\Gamma_\Lambda^{\cl}\).  It does not assert
that every localized Navier--Stokes artifact is gauge, and it does not prove a
localized gauge-cleaning theorem.
\end{remark}

\begin{convention}[Nontrivial quotient regime]
\label{conv:nontrivial-quotient}
The main gap statements are formulated in the case
\(\calQ_\Lambda^{\cl}\ne\{0\}\).  If
\(\calQ_\Lambda^{\cl}=\{0\}\), then every constrained clean defect is already a
clean gauge direction, so there is no non-gauge clean finite-window obstruction
to detect.  In that degenerate case the anti-phantom statement is vacuous
rather than a positive-gap theorem.
\end{convention}

\section{Quotient Geometry}
\label{sec:quotient-geometry}

\begin{lemma}[Quotient compactness]
\label{lem:quotient-compactness}
The quotient \(\calQ_\Lambda^{\cl}\) is finite-dimensional.  Its unit sphere
\[
    S_\Lambda^{\cl}
    :=
    \{[d]\in\calQ_\Lambda^{\cl}:\ \|[d]\|_{\calQ_\Lambda^{\cl}}=1\}
\]
is compact.
\end{lemma}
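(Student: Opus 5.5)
The plan is to reduce everything to the standard fact that in a finite-dimensional normed space the unit sphere is compact (Heine--Borel together with equivalence of norms). Three things need checking: the quotient is finite-dimensional, the quantity \(\|[d]\|_{\calQ_\Lambda^{\cl}}\) is a genuine norm, and its unit sphere is closed and bounded.

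First, finite dimensionality. \(\calD_\Lambda^{\cl}\) is a finite product of finite-dimensional spaces \(\calD_k^{\cl}\), so it is finite-dimensional. Hence its subspace \(\calK_\Lambda^{\cl}\) is finite-dimensional too. Since \(\Gamma_\Lambda^{\cl}=\calK_\Lambda^{\cl}\cap\Image G_\Lambda^{\cl}\) is an intersection of linear subspaces, it is a linear subspace of \(\calK_\Lambda^{\cl}\). The quotient therefore has dimension \(\dim\calK_\Lambda^{\cl}-\dim\Gamma_\Lambda^{\cl}<\infty\).

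Second, the norm property. Well-definedness on classes is immediate: \(\Dist_{\cl}(d+\gamma_0,\Gamma_\Lambda^{\cl})=\Dist_{\cl}(d,\Gamma_\Lambda^{\cl})\) for \(\gamma_0\in\Gamma_\Lambda^{\cl}\), by translating the infimum. Homogeneity and the triangle inequality follow from the corresponding properties of \(\|\cdot\|_{\cl}\) and the fact that \(\Gamma_\Lambda^{\cl}\) is a subspace. The one point needing an argument is definiteness, which I expect to be the only nontrivial step. If \(\Dist_{\cl}(d,\Gamma_\Lambda^{\cl})=0\), then \(d\) lies in the closure of \(\Gamma_\Lambda^{\cl}\). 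A finite-dimensional subspace of a normed space is complete, hence closed, so \(d\in\Gamma_\Lambda^{\cl}\) and \([d]=0\). Alternatively, the infimum is attained by minimizing over the compact set \(\{\gamma\in\Gamma_\Lambda^{\cl}:\|\gamma\|_{\cl}\le 2\|d\|_{\cl}\}\).

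Third, compactness. Choose a linear isomorphism \(T:\calQ_\Lambda^{\cl}\to\R^m\) with \(m=\dim\calQ_\Lambda^{\cl}\). Any two norms on a finite-dimensional space are equivalent, so \(T\) is a homeomorphism when \(\R^m\) carries the Euclidean norm. Under \(T\), the sphere \(S_\Lambda^{\cl}\) is bounded. It is also closed, being the preimage of \(\{1\}\) under the continuous quotient norm. By Heine--Borel its image is compact, and hence \(S_\Lambda^{\cl}\) is compact.

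Finally, in the degenerate case \(\calQ_\Lambda^{\cl}=\{0\}\) of \Cref{conv:nontrivial-quotient}, the sphere is empty. It is then trivially compact, so the statement holds in all cases.
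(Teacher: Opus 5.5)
Your proposal is correct and follows the same route as the paper: \(\calQ_\Lambda^{\cl}\) is finite-dimensional because \(\Gamma_\Lambda^{\cl}\) is a subspace of the finite-dimensional \(\calK_\Lambda^{\cl}\), and then norm equivalence plus Heine--Borel make the closed, bounded unit sphere compact. What you add is an explicit check that the quotient distance is definite, using closedness of the finite-dimensional subspace \(\Gamma_\Lambda^{\cl}\), and a treatment of the empty-sphere degenerate case; the paper takes both points for granted.
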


\begin{proof}
Since \(\calK_\Lambda^{\cl}\) is finite-dimensional and
\(\Gamma_\Lambda^{\cl}\subset\calK_\Lambda^{\cl}\) is a linear subspace, the
quotient \(\calQ_\Lambda^{\cl}\) is finite-dimensional.  Every norm on a
finite-dimensional vector space induces the usual finite-dimensional topology.
The unit sphere of a finite-dimensional normed space is closed and bounded,
and hence compact.
\end{proof}

\begin{lemma}[Distance normalization]
\label{lem:distance-normalization}
If \(d\in\calK_\Lambda^{\cl}\) and
\(\Dist_{\cl}(d,\Gamma_\Lambda^{\cl})>0\), then
\[
    e:=\frac{d}{\Dist_{\cl}(d,\Gamma_\Lambda^{\cl})}
\]
satisfies
\[
    \Dist_{\cl}(e,\Gamma_\Lambda^{\cl})=1.
\]
\end{lemma}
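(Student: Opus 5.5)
The argument is a direct homogeneity computation for the quotient seminorm. Set \(\delta:=\Dist_{\cl}(d,\Gamma_\Lambda^{\cl})>0\), so that \(e=\delta^{-1}d\). Since \(\calK_\Lambda^{\cl}\) is a linear subspace by \Cref{def:clean-constraint-space}, \(e\in\calK_\Lambda^{\cl}\), and its distance to \(\Gamma_\Lambda^{\cl}\) is well defined by \Cref{def:clean-quotient-distance}. The plan is to prove the positive homogeneity identity
\[
    \Dist_{\cl}(\lambda d,\Gamma_\Lambda^{\cl})=\lambda\,\Dist_{\cl}(d,\Gamma_\Lambda^{\cl}),
    \qquad \lambda>0,
\]
and then take \(\lambda=\delta^{-1}\).

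To prove the identity, use that \(\Gamma_\Lambda^{\cl}=\calK_\Lambda^{\cl}\cap\Image G_\Lambda^{\cl}\) is an intersection of two linear subspaces, hence itself a linear subspace. Consequently the map \(\gamma\mapsto\lambda\gamma\) is a bijection of \(\Gamma_\Lambda^{\cl}\) onto itself when \(\lambda>0\). Reindexing the infimum and using homogeneity of the norm \(\|\cdot\|_{\cl}\) gives
\[
    \inf_{\gamma\in\Gamma_\Lambda^{\cl}}\|\lambda d-\gamma\|_{\cl}
    =\inf_{\gamma'\in\Gamma_\Lambda^{\cl}}\|\lambda d-\lambda\gamma'\|_{\cl}
    =\lambda\inf_{\gamma'\in\Gamma_\Lambda^{\cl}}\|d-\gamma'\|_{\cl}.
\]
Taking \(\lambda=\delta^{-1}\) then yields \(\Dist_{\cl}(e,\Gamma_\Lambda^{\cl})=\delta^{-1}\delta=1\).

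Equivalently, one can note that \(\|[\,\cdot\,]\|_{\calQ_\Lambda^{\cl}}\) is a norm on \(\calQ_\Lambda^{\cl}\) and that \([e]=\delta^{-1}[d]\), so \([e]\in S_\Lambda^{\cl}\) from \Cref{lem:quotient-compactness}. I do not expect any genuine obstacle. The only point needing care is the subspace property of \(\Gamma_\Lambda^{\cl}\), since the reindexing of the infimum requires invariance of \(\Gamma_\Lambda^{\cl}\) under positive scaling. The hypothesis \(\delta>0\) is used only to make the division legitimate; closedness of \(\Gamma_\Lambda^{\cl}\), which holds automatically in finite dimensions, is not needed here.
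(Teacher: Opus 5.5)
Your proof is correct and follows the same route as the paper: you prove positive homogeneity of the quotient distance by rescaling the infimum over the linear subspace $\Gamma_\Lambda^{\cl}$, then apply it with $\lambda=\Dist_{\cl}(d,\Gamma_\Lambda^{\cl})^{-1}$. The paper states the homogeneity in one line, while you spell out the reindexing step and restrict to $\lambda>0$, which is all that is needed.
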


\begin{proof}
Because \(\Gamma_\Lambda^{\cl}\) is a linear subspace, quotient distance is
positively homogeneous:
\[
    \Dist_{\cl}(\lambda d,\Gamma_\Lambda^{\cl})
    =
    |\lambda|\Dist_{\cl}(d,\Gamma_\Lambda^{\cl})
\]
for every scalar \(\lambda\).  Taking
\(\lambda=\Dist_{\cl}(d,\Gamma_\Lambda^{\cl})^{-1}\) proves the claim.
\end{proof}

\section{Observation, Reproduction, and Tax Detectors}
\label{sec:detectors}

\subsection{Observation channels}

\begin{definition}[Clean observation detector]
\label{def:clean-observation-detector}
Let \(\calY_\Lambda^{P}\), \(\calY_\Lambda^{F}\),
\(\calY_\Lambda^{E}\), and \(\calY_\Lambda^{T}\) be finite-dimensional normed
observation spaces.  The clean observation detector is a linear map
\[
    O_\Lambda^{\cl}
    =
    (O_\Lambda^P,O_\Lambda^F,O_\Lambda^E,O_\Lambda^T):
    \calK_\Lambda^{\cl}\to
    \calY_\Lambda^P\times\calY_\Lambda^F
    \times\calY_\Lambda^E\times\calY_\Lambda^T.
\]
The four channels represent active pressure, interscale flux, energy or
dissipation, and selected trace or adjoint-trace observations.
\end{definition}

\subsection{Reproduction residual}

\begin{definition}[Clean reproduction residual]
\label{def:clean-reproduction-residual}
For each \(k\in\Lambda_{\rm adj}\), fix a linear map
\[
    R_k^{\cl}:\calD_k^{\cl}\to\calD_{k+1}^{\cl}.
\]
For \(d=(d_k)_{k\in\Lambda}\in\calK_\Lambda^{\cl}\), define
\[
    \Rep_\Lambda^{\cl}(d)
    :=
    \left(
    \sum_{k\in\Lambda_{\rm adj}}
    \|d_{k+1}-R_k^{\cl}d_k\|_{\cl,k+1}^2
    \right)^{1/2},
\]
where \(\|\cdot\|_{\cl,k+1}\) is a fixed norm on
\(\calD_{k+1}^{\cl}\).
\end{definition}

\begin{remark}[Meaning of reproduction]
The residual \(\Rep_\Lambda^{\cl}\) measures whether a clean defect is
compatible with the chosen adjacent-scale reproduction maps.  It does not say
that Navier--Stokes actually generates these maps, and it does not prove
scale-uniform reproduction.
\end{remark}

\subsection{Tax functional}

\begin{definition}[Clean tax functional]
\label{def:clean-tax-functional}
A clean tax functional is a continuous map
\[
    \Tax_\Lambda^{\cl}:\calK_\Lambda^{\cl}\to[0,\infty)
\]
which is positively homogeneous:
\[
    \Tax_\Lambda^{\cl}(\lambda d)=|\lambda|\Tax_\Lambda^{\cl}(d),
    \qquad \lambda\in\R.
\]
It records the finite-window cost assigned to dissipation, flux, pressure, or
ledger-slack channels.  In this first paper it is an abstract detector, not a
proved pressure-tax lower bound.
\end{definition}

\subsection{Computational detector and gap constant}

\begin{definition}[Computational detector size]
\label{def:computational-detector-size}
Fix constants \(C_R>0\) and \(C_T>0\).  Define
\[
    \calM_\Lambda^{\rm comp}(d)
    :=
    \|O_\Lambda^{\cl}d\|
    +
    C_R\Rep_\Lambda^{\cl}(d)
    +
    C_T\Tax_\Lambda^{\cl}(d),
    \qquad d\in\calK_\Lambda^{\cl}.
\]
\end{definition}

\begin{assumption}[Clean detector gauge compatibility]
\label{ass:clean-detector-gauge-compatibility}
The clean detector is constant on clean gauge cosets.  More precisely, for
every \(d\in\calK_\Lambda^{\cl}\) and every
\(\gamma\in\Gamma_\Lambda^{\cl}\),
\[
    O_\Lambda^{\cl}(d+\gamma)=O_\Lambda^{\cl}d,
    \qquad
    \Rep_\Lambda^{\cl}(d+\gamma)=\Rep_\Lambda^{\cl}(d),
    \qquad
    \Tax_\Lambda^{\cl}(d+\gamma)=\Tax_\Lambda^{\cl}(d).
\]
Equivalently, \(\calM_\Lambda^{\rm comp}\) is constant on every affine coset
\(d+\Gamma_\Lambda^{\cl}\).
\end{assumption}

\begin{remark}[Why gauge compatibility is included]
Continuity and positive homogeneity of \(\calM_\Lambda^{\rm comp}\), together
with the qualitative kernel-free condition, do not by themselves force a
positive gap on the set
\(\{d:\Dist_{\cl}(d,\Gamma_\Lambda^{\cl})=1\}\).  That set is not compact in
\(\calK_\Lambda^{\cl}\) when gauge directions are nontrivial, and a sequence may
escape along \(\Gamma_\Lambda^{\cl}\) while keeping unit quotient distance.  The
compactness proof below is therefore made on the quotient detector, which is
well defined only under \Cref{ass:clean-detector-gauge-compatibility} or after
choosing an equivalent canonical gauge-slice formulation.
\end{remark}

\begin{lemma}[Continuity and homogeneity of the quotient detector]
\label{lem:detector-continuity-homogeneity}
The map \(\calM_\Lambda^{\rm comp}\) is continuous and positively homogeneous:
\[
    \calM_\Lambda^{\rm comp}(\lambda d)
    =
    |\lambda|\calM_\Lambda^{\rm comp}(d).
\]
Under \Cref{ass:clean-detector-gauge-compatibility}, it descends to a
well-defined continuous positively homogeneous map
\[
    \overline{\calM}_\Lambda^{\rm comp}:
    \calQ_\Lambda^{\cl}\to[0,\infty),
    \qquad
    \overline{\calM}_\Lambda^{\rm comp}([d])
    :=
    \calM_\Lambda^{\rm comp}(d).
\]
\end{lemma}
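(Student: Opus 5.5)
We prove the two assertions for \(\calM_\Lambda^{\rm comp}\) on \(\calK_\Lambda^{\cl}\) first, and only then pass to the quotient.

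\textbf{Continuity of the three terms.} The observation term \(d\mapsto\|O_\Lambda^{\cl}d\|\) is the composition of a linear map between finite-dimensional normed spaces, hence continuous, with a norm. For the reproduction term, each map \(d\mapsto d_{k+1}-R_k^{\cl}d_k\) is linear on the finite-dimensional space \(\calK_\Lambda^{\cl}\), hence continuous. Composing with the norm \(\|\cdot\|_{\cl,k+1}\), squaring, summing over the finite set \(\Lambda_{\rm adj}\) and taking the square root gives continuity of \(\Rep_\Lambda^{\cl}\). Here we use that the coordinate projections \(d\mapsto d_k\) are continuous, since all norms on \(\calD_\Lambda^{\cl}\) are equivalent. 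The tax term is continuous by \Cref{def:clean-tax-functional}. A sum with positive weights \(C_R,C_T\) is then continuous.

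\textbf{Homogeneity.} Linearity gives \(\|O_\Lambda^{\cl}(\lambda d)\|=|\lambda|\|O_\Lambda^{\cl}d\|\). Likewise \(\|(\lambda d)_{k+1}-R_k^{\cl}(\lambda d)_k\|=|\lambda|\|d_{k+1}-R_k^{\cl}d_k\|\), and pulling \(|\lambda|^2\) out of the square root gives \(\Rep_\Lambda^{\cl}(\lambda d)=|\lambda|\Rep_\Lambda^{\cl}(d)\). The tax term is homogeneous by hypothesis. Summing yields \(\calM_\Lambda^{\rm comp}(\lambda d)=|\lambda|\calM_\Lambda^{\rm comp}(d)\).

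\textbf{Descent to \(\calQ_\Lambda^{\cl}\).} By \Cref{ass:clean-detector-gauge-compatibility}, \(\calM_\Lambda^{\rm comp}(d+\gamma)=\calM_\Lambda^{\rm comp}(d)\) for every \(\gamma\in\Gamma_\Lambda^{\cl}\). So \(\overline{\calM}_\Lambda^{\rm comp}([d]):=\calM_\Lambda^{\rm comp}(d)\) does not depend on the representative, and it takes values in \([0,\infty)\). Homogeneity descends because \(\lambda[d]=[\lambda d]\).

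\textbf{Continuity on the quotient (the one nonroutine step).} We use that the quotient map \(\pi:\calK_\Lambda^{\cl}\to\calQ_\Lambda^{\cl}\) is open for the quotient norm, so \(\calQ_\Lambda^{\cl}\) carries the quotient topology. A map \(\overline f\) with \(\overline f\circ\pi\) continuous is then continuous.

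A more concrete route is to fix a linear complement \(W\) of \(\Gamma_\Lambda^{\cl}\) in \(\calK_\Lambda^{\cl}\). Then \(\pi|_W:W\to\calQ_\Lambda^{\cl}\) is a linear bijection between finite-dimensional spaces, hence a homeomorphism. We can therefore write \(\overline{\calM}_\Lambda^{\rm comp}=\calM_\Lambda^{\rm comp}|_W\circ(\pi|_W)^{-1}\), which is a composition of continuous maps.

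Alternatively, we can argue with sequences. If \([d_n]\to[d]\), choose \(\gamma_n\in\Gamma_\Lambda^{\cl}\) with \(\|d_n-d-\gamma_n\|_{\cl}\le\Dist_{\cl}(d_n-d,\Gamma_\Lambda^{\cl})+1/n\to0\). Then \(d_n-\gamma_n\to d\), and gauge invariance gives \(\calM_\Lambda^{\rm comp}(d_n)=\calM_\Lambda^{\rm comp}(d_n-\gamma_n)\to\calM_\Lambda^{\rm comp}(d)\). This second argument avoids the choice of complement, and it is the one we intend to write out.
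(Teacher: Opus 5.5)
Your proof is correct. The first three steps are the same as the paper's: continuity and homogeneity of the observation, reproduction, and tax terms, and well-definedness on cosets via \Cref{ass:clean-detector-gauge-compatibility}.

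The step where you differ is continuity on $\calQ_\Lambda^{\cl}$. The paper uses your second (complement) route. It picks a linear section $s:\calQ_\Lambda^{\cl}\to\calK_\Lambda^{\cl}$, whose image is a complement $W$ of $\Gamma_\Lambda^{\cl}$, and writes $\overline{\calM}_\Lambda^{\rm comp}=\calM_\Lambda^{\rm comp}\circ s$. Continuity of $s$ then comes from finite-dimensionality.

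The sequential argument you plan to write out is a genuinely different, if small, variation. It uses only four things: the quotient norm as $\Dist_{\cl}(\cdot,\Gamma_\Lambda^{\cl})$, a near-minimizing gauge $\gamma_n$, continuity of $\calM_\Lambda^{\rm comp}$ on $\calK_\Lambda^{\cl}$, and gauge invariance. It checks continuity directly in the quotient-norm topology, which is the one used to define $S_\Lambda^{\cl}$ in \Cref{lem:quotient-compactness}. It needs no choice of section and no finite-dimensionality at that step. So it would still work in a setting where a bounded linear section need not exist. The paper's version is a one-liner but relies on finite-dimensionality for the continuity of $s$.

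Your remark that the coordinate maps $d\mapsto d_k$ are continuous is also a point the paper passes over silently. It is needed because $\|\cdot\|_{\cl}$ is an arbitrary fixed norm on the product, not necessarily a product norm.
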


\begin{proof}
The observation map is linear between finite-dimensional normed spaces and is
therefore continuous.  The reproduction residual is a finite sum of norms of
linear expressions in \(d\), hence is continuous and positively homogeneous.
The tax functional is continuous and positively homogeneous by definition.
The weighted sum of these three terms has the same properties.

If \(d'\) is another representative of \([d]\), then
\(d'=d+\gamma\) for some \(\gamma\in\Gamma_\Lambda^{\cl}\).  By
\Cref{ass:clean-detector-gauge-compatibility},
\(\calM_\Lambda^{\rm comp}(d')=\calM_\Lambda^{\rm comp}(d)\).  Hence the
quotient detector is well defined.  Choose any linear section
\(s:\calQ_\Lambda^{\cl}\to\calK_\Lambda^{\cl}\) of the quotient map.  Since the
spaces are finite-dimensional,
\(\overline{\calM}_\Lambda^{\rm comp}=\calM_\Lambda^{\rm comp}\circ s\) is
continuous.  Homogeneity is inherited from the homogeneity of
\(\calM_\Lambda^{\rm comp}\).
\end{proof}

\begin{definition}[Computational anti-phantom constant]
\label{def:computational-antiphantom-constant}
Assume the nontrivial quotient regime of \Cref{conv:nontrivial-quotient} and
\Cref{ass:clean-detector-gauge-compatibility}.  The clean finite-window
computational anti-phantom constant is
\[
    \mu_\Lambda^{\rm comp}
    :=
    \inf_{q\in S_\Lambda^{\cl}}
    \overline{\calM}_\Lambda^{\rm comp}(q).
\]
Equivalently, because the detector is constant on clean gauge cosets,
\[
    \mu_\Lambda^{\rm comp}
    =
    \inf_{\Dist_{\cl}(d,\Gamma_\Lambda^{\cl})=1}
    \calM_\Lambda^{\rm comp}(d).
\]
\end{definition}

\section{Clean Computational Anti-Phantom Theorem}
\label{sec:main-theorem}

\subsection{Kernel-free condition}

\begin{assumption}[Kernel-free computational detector modulo gauge]
\label{ass:kernel-free-computational-detector}
For every \(d\in\calK_\Lambda^{\cl}\),
\[
    O_\Lambda^{\cl}d=0,
    \qquad
    \Rep_\Lambda^{\cl}(d)=0,
    \qquad
    \Tax_\Lambda^{\cl}(d)=0
    \quad
    \Longrightarrow
    \quad
    d\in\Gamma_\Lambda^{\cl}.
\]
\end{assumption}

\begin{remark}[Status of the kernel-free assumption]
\Cref{ass:kernel-free-computational-detector} is the substantive clean
finite-window hypothesis.  It says that there is no non-gauge clean defect
which is simultaneously invisible, exactly reproducible, and tax-free.  The
compactness theorem below proves that this qualitative statement is equivalent
to a quantitative finite-window gap.
\end{remark}

\subsection{Kernel-free characterization}

\begin{theorem}[Kernel-free characterization of the computational gap]
\label{thm:kernel-free-characterization}
Assume \(\calQ_\Lambda^{\cl}\ne\{0\}\) and
\Cref{ass:clean-detector-gauge-compatibility}.
The following are equivalent.
\begin{enumerate}[label=(\roman*),leftmargin=2em]
    \item \(\mu_\Lambda^{\rm comp}>0\).
    \item The kernel-free condition in
    \Cref{ass:kernel-free-computational-detector} holds.
\end{enumerate}
\end{theorem}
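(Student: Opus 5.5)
We prove the two implications separately, working on the quotient detector \(\overline{\calM}_\Lambda^{\rm comp}\) of \Cref{lem:detector-continuity-homogeneity}. The preliminary observation is that \(\calM_\Lambda^{\rm comp}(d)=0\) if and only if \(O_\Lambda^{\cl}d=0\), \(\Rep_\Lambda^{\cl}(d)=0\) and \(\Tax_\Lambda^{\cl}(d)=0\). This holds because all three summands are nonnegative and \(C_R,C_T>0\). So the kernel-free condition in \Cref{ass:kernel-free-computational-detector} says exactly that \(\calM_\Lambda^{\rm comp}(d)=0\) forces \(d\in\Gamma_\Lambda^{\cl}\). On the quotient this reads: \(\overline{\calM}_\Lambda^{\rm comp}(q)=0\) forces \(q=0\) in \(\calQ_\Lambda^{\cl}\).

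For (i)\(\Rightarrow\)(ii), suppose \(\mu_\Lambda^{\rm comp}>0\). Let \(d\in\calK_\Lambda^{\cl}\) have all three detector components equal to zero, and suppose for contradiction that \(d\notin\Gamma_\Lambda^{\cl}\).
\begin{enumerate}[label=(\alph*),leftmargin=2em]
\item Since \(\Gamma_\Lambda^{\cl}\) is a linear subspace of a finite-dimensional space, it is closed. Hence \(\Dist_{\cl}(d,\Gamma_\Lambda^{\cl})>0\).
\item \Cref{lem:distance-normalization} gives a normalized \(e\) with \([e]\in S_\Lambda^{\cl}\).
\item By positive homogeneity, \(\calM_\Lambda^{\rm comp}(e)=\Dist_{\cl}(d,\Gamma_\Lambda^{\cl})^{-1}\calM_\Lambda^{\rm comp}(d)=0\).
\item Therefore \(\mu_\Lambda^{\rm comp}\le\overline{\calM}_\Lambda^{\rm comp}([e])=0\), which contradicts (i).
\end{enumerate}

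For (ii)\(\Rightarrow\)(i), assume the kernel-free condition.
\begin{enumerate}[label=(\alph*),leftmargin=2em]
\item By \Cref{lem:quotient-compactness}, \(S_\Lambda^{\cl}\) is compact. It is nonempty because \(\calQ_\Lambda^{\cl}\ne\{0\}\).
\item By \Cref{lem:detector-continuity-homogeneity}, \(\overline{\calM}_\Lambda^{\rm comp}\) is continuous. So the infimum \(\mu_\Lambda^{\rm comp}\) is attained at some \(q_*=[d_*]\in S_\Lambda^{\cl}\).
\item If \(\mu_\Lambda^{\rm comp}=0\), then \(\calM_\Lambda^{\rm comp}(d_*)=0\). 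By the preliminary observation and (ii), \(d_*\in\Gamma_\Lambda^{\cl}\).
\item Then \(\|q_*\|_{\calQ_\Lambda^{\cl}}=0\), contradicting \(q_*\in S_\Lambda^{\cl}\). Hence \(\mu_\Lambda^{\rm comp}>0\).
\end{enumerate}

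The only delicate point is the compactness step, and it has already been handled. The argument must run on the quotient sphere rather than on \(\{d:\Dist_{\cl}(d,\Gamma_\Lambda^{\cl})=1\}\subset\calK_\Lambda^{\cl}\), which is not compact. This is why \Cref{ass:clean-detector-gauge-compatibility} and the descended map \(\overline{\calM}_\Lambda^{\rm comp}\) are used. A second small point is that the quotient norm must be a genuine norm, which needs \(\Gamma_\Lambda^{\cl}\) closed. That is automatic in finite dimensions.
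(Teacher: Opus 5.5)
Your proof is correct and follows the paper's argument essentially step for step. For (i)$\Rightarrow$(ii) you normalize a non-gauge kernel element onto the quotient unit sphere; you do this via \Cref{lem:distance-normalization} on a representative, whereas the paper divides the class $[d]$ by its quotient norm. For (ii)$\Rightarrow$(i) you use compactness of $S_\Lambda^{\cl}$ and continuity of the descended detector to attain the infimum, then nonnegativity of the terms and the kernel-free condition to rule out a zero minimum. Your remarks that $S_\Lambda^{\cl}$ is nonempty and that $C_R,C_T>0$ make $\calM_\Lambda^{\rm comp}(d)=0$ equivalent to the vanishing of all three components are details the paper leaves implicit.
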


\begin{proof}
First assume \(\mu_\Lambda^{\rm comp}>0\).  Let
\(d\in\calK_\Lambda^{\cl}\) satisfy
\[
    O_\Lambda^{\cl}d=0,
    \qquad
    \Rep_\Lambda^{\cl}(d)=0,
    \qquad
    \Tax_\Lambda^{\cl}(d)=0.
\]
Then \(\overline{\calM}_\Lambda^{\rm comp}([d])=0\).  If
\([d]\ne0\) in \(\calQ_\Lambda^{\cl}\), set
\[
    q=\frac{[d]}{\|[d]\|_{\calQ_\Lambda^{\cl}}}.
\]
By \Cref{lem:detector-continuity-homogeneity}, the quotient detector is
positively homogeneous, so
\(\overline{\calM}_\Lambda^{\rm comp}(q)=0\).  This contradicts the definition
of \(\mu_\Lambda^{\rm comp}\) as the infimum of the quotient detector on
\(S_\Lambda^{\cl}\).  Hence \([d]=0\), equivalently
\(d\in\Gamma_\Lambda^{\cl}\).

Conversely, assume \Cref{ass:kernel-free-computational-detector}.  Since
\(S_\Lambda^{\cl}\) is compact by \Cref{lem:quotient-compactness} and
\(\overline{\calM}_\Lambda^{\rm comp}\) is continuous by
\Cref{lem:detector-continuity-homogeneity}, the infimum defining
\(\mu_\Lambda^{\rm comp}\) is attained at some
\(q_\ast\in S_\Lambda^{\cl}\).  If \(\mu_\Lambda^{\rm comp}=0\), choose a
representative \(d_\ast\in\calK_\Lambda^{\cl}\) of \(q_\ast\).  Then
\[
    \calM_\Lambda^{\rm comp}(d_\ast)=
    \overline{\calM}_\Lambda^{\rm comp}(q_\ast)=0.
\]
Because all three detector terms are nonnegative, this gives
\[
    O_\Lambda^{\cl}d_\ast=0,
    \qquad
    \Rep_\Lambda^{\cl}(d_\ast)=0,
    \qquad
    \Tax_\Lambda^{\cl}(d_\ast)=0.
\]
By \Cref{ass:kernel-free-computational-detector},
\(d_\ast\in\Gamma_\Lambda^{\cl}\), so \(q_\ast=0\), contradicting
\(q_\ast\in S_\Lambda^{\cl}\).  Therefore
\(\mu_\Lambda^{\rm comp}>0\).
\end{proof}

\subsection{Quantitative anti-phantom inequality}

\begin{theorem}[Clean finite-window computational anti-phantom inequality]
\label{thm:clean-computational-antiphantom}
Assume \(\calQ_\Lambda^{\cl}\ne\{0\}\),
\Cref{ass:clean-detector-gauge-compatibility}, and
\Cref{ass:kernel-free-computational-detector}.  Then
\(\mu_\Lambda^{\rm comp}>0\), and every
\(d\in\calK_\Lambda^{\cl}\) satisfies
\[
    \|O_\Lambda^{\cl}d\|
    +
    C_R\Rep_\Lambda^{\cl}(d)
    +
    C_T\Tax_\Lambda^{\cl}(d)
    \ge
    \mu_\Lambda^{\rm comp}
    \Dist_{\cl}(d,\Gamma_\Lambda^{\cl}).
\]
\end{theorem}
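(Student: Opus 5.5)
The plan is to get positivity of \(\mu_\Lambda^{\rm comp}\) for free from \Cref{thm:kernel-free-characterization}, and then obtain the inequality by homogeneity, splitting according to whether \(d\) lies in the gauge subspace.

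First, since \(\calQ_\Lambda^{\cl}\ne\{0\}\), \Cref{ass:clean-detector-gauge-compatibility} holds, and \Cref{ass:kernel-free-computational-detector} holds, the implication (ii)\(\Rightarrow\)(i) of \Cref{thm:kernel-free-characterization} gives \(\mu_\Lambda^{\rm comp}>0\). Only the displayed inequality then remains. Its left-hand side is exactly \(\calM_\Lambda^{\rm comp}(d)\) by \Cref{def:computational-detector-size}.

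Fix \(d\in\calK_\Lambda^{\cl}\) and split into two cases.

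\emph{Case 1: \(\Dist_{\cl}(d,\Gamma_\Lambda^{\cl})=0\).} The right-hand side vanishes. The left-hand side is a sum of nonnegative terms, because \(C_R,C_T>0\), \(\Rep_\Lambda^{\cl}\ge0\) and \(\Tax_\Lambda^{\cl}\ge0\). So the inequality is trivial. We do not need \(d\in\Gamma_\Lambda^{\cl}\) here, although it does hold since \(\Gamma_\Lambda^{\cl}\) is closed in finite dimensions.

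\emph{Case 2: \(\delta:=\Dist_{\cl}(d,\Gamma_\Lambda^{\cl})>0\).} Set \(e=d/\delta\). By \Cref{lem:distance-normalization}, \(\|[e]\|_{\calQ_\Lambda^{\cl}}=1\), so \([e]\in S_\Lambda^{\cl}\). By \Cref{lem:detector-continuity-homogeneity}, \(\calM_\Lambda^{\rm comp}(e)=\overline{\calM}_\Lambda^{\rm comp}([e])\ge\mu_\Lambda^{\rm comp}\). Positive homogeneity of \(\calM_\Lambda^{\rm comp}\), also from \Cref{lem:detector-continuity-homogeneity}, then gives
\[
\calM_\Lambda^{\rm comp}(d)=\delta\,\calM_\Lambda^{\rm comp}(e)\ge\mu_\Lambda^{\rm comp}\,\delta,
\]
which is the claim.

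There is no real obstacle, since all the compactness work was done in \Cref{thm:kernel-free-characterization}. The only care needed is in Case 2: one must pass through the quotient detector, using \Cref{ass:clean-detector-gauge-compatibility}, so that the bound for the representative \(e\) really follows from the infimum over \(S_\Lambda^{\cl}\).
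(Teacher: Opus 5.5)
Your proposal is correct and follows essentially the same route as the paper: positivity of \(\mu_\Lambda^{\rm comp}\) comes from \Cref{thm:kernel-free-characterization}, the zero-distance case is trivial, and in the positive-distance case you normalize onto \(S_\Lambda^{\cl}\) and then use positive homogeneity of the quotient detector. The only cosmetic difference is that you normalize the representative \(d\) via \Cref{lem:distance-normalization} rather than the class \([d]\) directly, which amounts to the same argument.
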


\begin{proof}
The positivity of \(\mu_\Lambda^{\rm comp}\) follows from
\Cref{thm:kernel-free-characterization}.  If
\(\Dist_{\cl}(d,\Gamma_\Lambda^{\cl})=0\), then the right-hand side is zero
and the inequality follows from nonnegativity of the detector terms.  If
\(\Dist_{\cl}(d,\Gamma_\Lambda^{\cl})>0\), set
\[
    q=\frac{[d]}{\Dist_{\cl}(d,\Gamma_\Lambda^{\cl})}
    \in S_\Lambda^{\cl}.
\]
By definition of \(\mu_\Lambda^{\rm comp}\),
\[
    \overline{\calM}_\Lambda^{\rm comp}(q)
    \ge
    \mu_\Lambda^{\rm comp}.
\]
Using the positive homogeneity of the quotient detector and the identity
\(\overline{\calM}_\Lambda^{\rm comp}([d])=\calM_\Lambda^{\rm comp}(d)\) gives
\[
    \calM_\Lambda^{\rm comp}(d)
    \ge
    \mu_\Lambda^{\rm comp}
    \Dist_{\cl}(d,\Gamma_\Lambda^{\cl}),
\]
which is exactly the displayed inequality.
\end{proof}

\begin{remark}[Interpretation]
\Cref{thm:clean-computational-antiphantom} says that in a fixed clean quotient,
a non-gauge defect must pay in at least one of three ways: it is seen by the
combined observation channels, it fails to reproduce across the finite window,
or it pays positive tax.  This is a finite-dimensional anti-phantom theorem,
not a scale-uniform PDE theorem.
\end{remark}

\section{Finite-State Sector Interpretation}
\label{sec:finite-state-sector}

\subsection{Detector sectors}

The clean gap can be read as a finite-state alternative once the detector is
split into its component channels.  This section is only a bookkeeping
interpretation of the finite-dimensional theorem.  It does not assert universal
computation, undecidability, or any dynamics of Navier--Stokes itself.

\begin{definition}[Finite detector-sector set]
\label{def:detector-sector-set}
Let
\[
    \mathfrak S_\Lambda
    :=
    \{\mathsf P,\mathsf F,\mathsf E,\mathsf T,\mathsf R,\mathsf{Tax}\}.
\]
The sectors correspond respectively to pressure observation, flux
observation, energy or dissipation observation, trace observation,
reproduction failure, and tax.
\end{definition}

\begin{lemma}[Observation norm resolved by channels]
\label{lem:observation-channel-resolution}
There is a finite constant \(A_{\rm obs,\Lambda}<\infty\), depending only on
the fixed observation spaces and their chosen product norm, such that for
every \(d\in\calK_\Lambda^{\cl}\),
\[
    \|O_\Lambda^{\cl}d\|
    \le
    A_{\rm obs,\Lambda}
    \bigl(
        \|O_\Lambda^P d\|
        +
        \|O_\Lambda^F d\|
        +
        \|O_\Lambda^E d\|
        +
        \|O_\Lambda^T d\|
    \bigr).
\]
\end{lemma}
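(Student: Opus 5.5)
The claim is a norm-equivalence statement on the finite-dimensional product space \(\calY_\Lambda:=\calY_\Lambda^P\times\calY_\Lambda^F\times\calY_\Lambda^E\times\calY_\Lambda^T\). It does not depend on \(\calK_\Lambda^{\cl}\) or on any property of \(O_\Lambda^{\cl}\) beyond linearity. The plan is to prove, once and for all on \(\calY_\Lambda\), that the fixed product norm \(\|\cdot\|\) is dominated by the sum norm
\[
    \|(y_P,y_F,y_E,y_T)\|_{\Sigma}
    :=
    \|y_P\|+\|y_F\|+\|y_E\|+\|y_T\|.
\]
Then we apply the resulting bound pointwise to \(y=O_\Lambda^{\cl}d\), whose components are \(O_\Lambda^P d,\dots,O_\Lambda^T d\) by \Cref{def:clean-observation-detector}.

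First we check that \(\|\cdot\|_\Sigma\) is a norm on \(\calY_\Lambda\). Definiteness holds because each channel norm is definite. Homogeneity and the triangle inequality hold componentwise.

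Next, \(\calY_\Lambda\) is a finite-dimensional real vector space, being a finite product of finite-dimensional spaces. All norms on it are therefore equivalent, which is the same fact already used in \Cref{lem:quotient-compactness}. So there exists \(A_{\rm obs,\Lambda}<\infty\) with \(\|y\|\le A_{\rm obs,\Lambda}\|y\|_\Sigma\) for all \(y\in\calY_\Lambda\). This constant depends only on the two norms, hence only on the observation spaces and the chosen product norm, as the statement requires.

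If one prefers an explicit constant, there is a direct route that avoids the equivalence theorem. Write \(y=\iota_P y_P+\iota_F y_F+\iota_E y_E+\iota_T y_T\), where \(\iota_\bullet\) are the canonical inclusions of the channels into the product. By the triangle inequality,
\[
    \|y\|\le\sum_{\bullet}\|\iota_\bullet\|_{\rm op}\|y_\bullet\|.
\]
Each \(\iota_\bullet\) is a linear map between finite-dimensional spaces and hence bounded. We may then take
\[
    A_{\rm obs,\Lambda}=\max_\bullet\|\iota_\bullet\|_{\rm op}.
\]
For standard product norms such as \(\ell^1\), \(\ell^2\) or \(\ell^\infty\) of the channel norms, \(A_{\rm obs,\Lambda}=1\).

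There is no genuine obstacle here. The only point needing care is that the "chosen product norm" is not specified. This is why the argument goes through norm equivalence, or through boundedness of the inclusions, rather than assuming a specific formula.
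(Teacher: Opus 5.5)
Your proposal is correct and follows the paper's proof: both bound the chosen product norm on $\calY_\Lambda^P\times\calY_\Lambda^F\times\calY_\Lambda^E\times\calY_\Lambda^T$ by the sum of the channel norms using equivalence of norms in finite dimensions, then evaluate at $O_\Lambda^{\cl}d$. Your inclusion-map argument is a valid explicit variant, giving $A_{\rm obs,\Lambda}=\max_\bullet\|\iota_\bullet\|_{\rm op}$, which equals $1$ for $\ell^p$-type product norms.
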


\begin{proof}
The target of \(O_\Lambda^{\cl}\) is the finite product
\[
    \calY_\Lambda^P\times\calY_\Lambda^F
    \times\calY_\Lambda^E\times\calY_\Lambda^T.
\]
On this finite-dimensional product, the chosen product norm is bounded by the
sum norm
\[
    \|(y_P,y_F,y_E,y_T)\|_1
    :=
    \|y_P\|+\|y_F\|+\|y_E\|+\|y_T\|.
\]
Thus there exists \(A_{\rm obs,\Lambda}<\infty\) such that
\[
    \|(y_P,y_F,y_E,y_T)\|
    \le
    A_{\rm obs,\Lambda}\|(y_P,y_F,y_E,y_T)\|_1
\]
for every element of the product.  Applying this to
\[
    (y_P,y_F,y_E,y_T)
    =
    (O_\Lambda^P d,O_\Lambda^F d,
     O_\Lambda^E d,O_\Lambda^T d)
\]
gives the displayed estimate.
\end{proof}

\begin{definition}[Sector detector amplitudes]
\label{def:sector-detector-amplitudes}
For \(d\in\calK_\Lambda^{\cl}\), define
\[
\begin{aligned}
    D_{\mathsf P}(d)
        &:= A_{\rm obs,\Lambda}\|O_\Lambda^P d\|,\\
    D_{\mathsf F}(d)
        &:= A_{\rm obs,\Lambda}\|O_\Lambda^F d\|,\\
    D_{\mathsf E}(d)
        &:= A_{\rm obs,\Lambda}\|O_\Lambda^E d\|,\\
    D_{\mathsf T}(d)
        &:= A_{\rm obs,\Lambda}\|O_\Lambda^T d\|,\\
    D_{\mathsf R}(d)
        &:= C_R\Rep_\Lambda^{\cl}(d),\\
    D_{\mathsf{Tax}}(d)
        &:= C_T\Tax_\Lambda^{\cl}(d).
\end{aligned}
\]
For a non-gauge defect, meaning
\(\Dist_{\cl}(d,\Gamma_\Lambda^{\cl})>0\), define
\(\Sector_\Lambda(d)\) to be the first sector in the fixed order
\[
    \mathsf P\prec \mathsf F\prec \mathsf E
    \prec \mathsf T\prec \mathsf R\prec \mathsf{Tax}
\]
at which the maximum of
\(\{D_\sigma(d):\sigma\in\mathfrak S_\Lambda\}\) is attained.
\end{definition}

\subsection{Finite-state alternative}

\begin{proposition}[Finite-state detector alternative]
\label{prop:finite-state-detector-alternative}
Assume \(\calQ_\Lambda^{\cl}\ne\{0\}\),
\Cref{ass:clean-detector-gauge-compatibility}, and
\Cref{ass:kernel-free-computational-detector}.  If
\(d\in\calK_\Lambda^{\cl}\) and
\(\Dist_{\cl}(d,\Gamma_\Lambda^{\cl})>0\), then
\[
    D_{\Sector_\Lambda(d)}(d)
    \ge
    \frac{\mu_\Lambda^{\rm comp}}{6}
    \Dist_{\cl}(d,\Gamma_\Lambda^{\cl}).
\]
\end{proposition}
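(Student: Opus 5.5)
The plan is to combine \Cref{thm:clean-computational-antiphantom} with \Cref{lem:observation-channel-resolution} and then use the elementary bound that a sum of six nonnegative numbers is at most six times their maximum.

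\textbf{Step 1.} Fix \(d\in\calK_\Lambda^{\cl}\) with \(\Dist_{\cl}(d,\Gamma_\Lambda^{\cl})>0\). Under the stated hypotheses, \Cref{thm:clean-computational-antiphantom} gives
\[
    \mu_\Lambda^{\rm comp}\Dist_{\cl}(d,\Gamma_\Lambda^{\cl})
    \le
    \|O_\Lambda^{\cl}d\|+C_R\Rep_\Lambda^{\cl}(d)+C_T\Tax_\Lambda^{\cl}(d).
\]

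\textbf{Step 2.} Bound the observation term by \Cref{lem:observation-channel-resolution}:
\[
    \|O_\Lambda^{\cl}d\|\le A_{\rm obs,\Lambda}\bigl(\|O_\Lambda^P d\|+\|O_\Lambda^F d\|+\|O_\Lambda^E d\|+\|O_\Lambda^T d\|\bigr)
    = D_{\mathsf P}(d)+D_{\mathsf F}(d)+D_{\mathsf E}(d)+D_{\mathsf T}(d).
\]
The remaining two terms are exactly \(D_{\mathsf R}(d)\) and \(D_{\mathsf{Tax}}(d)\). Combining with Step 1 yields
\[
    \mu_\Lambda^{\rm comp}\Dist_{\cl}(d,\Gamma_\Lambda^{\cl})\le\sum_{\sigma\in\mathfrak S_\Lambda}D_\sigma(d).
\]

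\textbf{Step 3.} All six amplitudes are nonnegative. The set \(\mathfrak S_\Lambda\) is finite, so the maximum is attained, and by definition \(\Sector_\Lambda(d)\) is a sector at which it is attained. The tie-breaking order is irrelevant for the inequality and only makes the choice well defined. Hence
\[
    \sum_{\sigma}D_\sigma(d)\le 6\,D_{\Sector_\Lambda(d)}(d).
\]
Dividing by \(6\) gives the claim.

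No step is a real obstacle. The only point needing care is that \(A_{\rm obs,\Lambda}\) enters the amplitudes \(D_{\mathsf P},\dots,D_{\mathsf T}\) in exactly the way that makes Step 2 an identity rather than an inequality with a mismatched constant. The definition of the sector amplitudes was set up to ensure this.
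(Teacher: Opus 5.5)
Your proposal is correct and follows the paper's proof: bound $\calM_\Lambda^{\rm comp}(d)$ by $\sum_{\sigma\in\mathfrak S_\Lambda}D_\sigma(d)$ via \Cref{lem:observation-channel-resolution}, combine with \Cref{thm:clean-computational-antiphantom}, and pigeonhole over the six sectors. Your Step 3 is stated a little more cleanly than the paper's. The paper first says that some sector meets the bound and then that the tie-breaking rule picks a maximizing one. You instead argue that the maximizing amplitude is at least the average, which directly gives the bound for $\Sector_\Lambda(d)$.
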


\begin{proof}
By \Cref{lem:observation-channel-resolution},
\[
\begin{aligned}
    \calM_\Lambda^{\rm comp}(d)
    &=
    \|O_\Lambda^{\cl}d\|
    +
    C_R\Rep_\Lambda^{\cl}(d)
    +
    C_T\Tax_\Lambda^{\cl}(d)\\
    &\le
    \sum_{\sigma\in\mathfrak S_\Lambda}D_\sigma(d).
\end{aligned}
\]
By \Cref{thm:clean-computational-antiphantom},
\[
    \mu_\Lambda^{\rm comp}
    \Dist_{\cl}(d,\Gamma_\Lambda^{\cl})
    \le
    \calM_\Lambda^{\rm comp}(d).
\]
Combining the two inequalities gives
\[
    \mu_\Lambda^{\rm comp}
    \Dist_{\cl}(d,\Gamma_\Lambda^{\cl})
    \le
    \sum_{\sigma\in\mathfrak S_\Lambda}D_\sigma(d).
\]
Since \(\mathfrak S_\Lambda\) has six elements, at least one sector satisfies
\[
    D_\sigma(d)
    \ge
    \frac{\mu_\Lambda^{\rm comp}}{6}
    \Dist_{\cl}(d,\Gamma_\Lambda^{\cl}).
\]
The tie-breaking rule in \Cref{def:sector-detector-amplitudes} selects one
such maximizing sector, and the claim follows.
\end{proof}

\begin{corollary}[Sector cover of normalized quotient classes]
\label{cor:sector-cover}
Under the hypotheses of \Cref{prop:finite-state-detector-alternative}, the
quotient unit sphere is covered by the six sector sets
\[
\begin{aligned}
    \calS_\sigma
    :=
    \{[d]\in S_\Lambda^{\cl}:&
    \text{ there is a representative }d
    \text{ with }
    \Dist_{\cl}(d,\Gamma_\Lambda^{\cl})=1\\
    &\text{ and }
    D_\sigma(d)\ge \mu_\Lambda^{\rm comp}/6\},
    \qquad
    \sigma\in\mathfrak S_\Lambda .
\end{aligned}
 \]
\end{corollary}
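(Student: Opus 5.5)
The plan is to take an arbitrary class \(q\in S_\Lambda^{\cl}\) and show it lies in some \(\calS_\sigma\). First we produce a representative \(d\in\calK_\Lambda^{\cl}\) of \(q\). Then we check that \(\Dist_{\cl}(d,\Gamma_\Lambda^{\cl})=1\), and finally we invoke \Cref{prop:finite-state-detector-alternative}. Throughout, the hypotheses are those of the proposition: \(\calQ_\Lambda^{\cl}\ne\{0\}\), \Cref{ass:clean-detector-gauge-compatibility}, and \Cref{ass:kernel-free-computational-detector}.

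\emph{Step 1: a representative with unit distance.} By definition of the quotient \(\calQ_\Lambda^{\cl}=\calK_\Lambda^{\cl}/\Gamma_\Lambda^{\cl}\), the class \(q\) has a representative \(d\in\calK_\Lambda^{\cl}\) with \(q=[d]\). By \Cref{def:clean-quotient-distance}, the quotient norm of \([d]\) is exactly \(\Dist_{\cl}(d,\Gamma_\Lambda^{\cl})\). Since \(q\in S_\Lambda^{\cl}\), this gives \(\Dist_{\cl}(d,\Gamma_\Lambda^{\cl})=1\), for \emph{every} representative \(d\) of \(q\). No renormalization through \Cref{lem:distance-normalization} is needed here. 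In particular \(\Dist_{\cl}(d,\Gamma_\Lambda^{\cl})>0\), so \(d\) is a non-gauge defect and \(\Sector_\Lambda(d)\) is defined.

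\emph{Step 2: apply the sector alternative.} \Cref{prop:finite-state-detector-alternative} yields
\[
    D_{\Sector_\Lambda(d)}(d)\ge \frac{\mu_\Lambda^{\rm comp}}{6}\cdot 1 .
\]
With \(\sigma:=\Sector_\Lambda(d)\in\mathfrak S_\Lambda\), the representative \(d\) witnesses \([d]\in\calS_\sigma\). Since \(q\) was arbitrary, this gives \(S_\Lambda^{\cl}\subset\bigcup_{\sigma\in\mathfrak S_\Lambda}\calS_\sigma\). The reverse inclusion holds trivially, because each \(\calS_\sigma\subset S_\Lambda^{\cl}\) by definition.

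\emph{Main obstacle.} The only delicate point is that the amplitudes \(D_\sigma\) are defined on representatives, not on classes. The observation channels are gauge-invariant only jointly through \(O_\Lambda^{\cl}\) in \Cref{ass:clean-detector-gauge-compatibility}, so it is not clear that each individual \(O_\Lambda^P,\dots,O_\Lambda^T\) is gauge-invariant. This is exactly why the definition of \(\calS_\sigma\) asks only for \emph{some} representative. I would not try to prove representative-independence. The existential formulation of \(\calS_\sigma\) is what lets the argument close with any chosen representative.
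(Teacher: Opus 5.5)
Your proof is correct and is essentially the paper's argument: every representative of a class in $S_\Lambda^{\cl}$ has unit quotient distance, so \Cref{prop:finite-state-detector-alternative} places the class in $\calS_{\Sector_\Lambda(d)}$. Your closing worry about individual channels is unfounded but harmless to the proof: since $O_\Lambda^{\cl}$ is linear, \Cref{ass:clean-detector-gauge-compatibility} forces $O_\Lambda^{\cl}\gamma=0$, and hence $O_\Lambda^{P}\gamma=\dots=O_\Lambda^{T}\gamma=0$, for every $\gamma\in\Gamma_\Lambda^{\cl}$, so each $D_\sigma$ is in fact constant on gauge cosets.
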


\begin{proof}
Let \([d]\in S_\Lambda^{\cl}\), so
\(\Dist_{\cl}(d,\Gamma_\Lambda^{\cl})=1\).  Applying
\Cref{prop:finite-state-detector-alternative} gives a sector
\(\sigma=\Sector_\Lambda(d)\) such that
\[
    D_\sigma(d)\ge\frac{\mu_\Lambda^{\rm comp}}{6}.
\]
Thus \([d]\in\calS_\sigma\), proving the cover.
\end{proof}

\begin{remark}[Representative dependence]
The sector assignment is made for a chosen clean representative \(d\), not for
an abstract quotient class alone.  This is intentional: the present theorem is
a finite-window detector statement before any canonical gauge slice has been
selected.  A later gauge-slice theorem may turn the sector map into a
well-defined map on quotient classes, but that is not claimed here.
\end{remark}

\begin{remark}[No computational overinterpretation]
The word \emph{sector} means only that, in a fixed finite-dimensional window,
one of finitely many detector channels must account for a definite fraction of
the clean quotient gap.  The result does not encode a Turing machine, does not
produce a symbolic dynamics, and does not imply any undecidability statement.
\end{remark}

\section{Conditional Localized Transfer Corollary}
\label{sec:localized-transfer-corollary}

The clean gap can be inserted into a localized transfer theorem only after the
localized defect package has been compared with the clean quotient.  This
section records the algebraic consequence of such a comparison.  It does not
prove quotient lifting, residual absorption, or any localized Navier--Stokes
estimate.

\subsection{Localized transfer data}

\begin{definition}[Abstract localized detector package]
\label{def:abstract-localized-detector-package}
Let \(\calK_\Lambda^{\loc}\) be a finite-dimensional localized defect space
with a localized gauge subspace \(\Gamma_\Lambda^{\loc}\).  Define
\[
    \Dist_{\loc}(\mathfrak D,\Gamma_\Lambda^{\loc})
    :=
    \inf_{\gamma\in\Gamma_\Lambda^{\loc}}
    \|\mathfrak D-\gamma\|_{\loc}.
\]
Let
\[
    \Theta_\Lambda:\calK_\Lambda^{\loc}\to\calK_\Lambda^{\cl}
\]
be a local-to-clean chart, and let
\[
    \calM_\Lambda^{\loc}:\calK_\Lambda^{\loc}\to[0,\infty)
\]
be the localized detector size.  The quantity \(\calM_\Lambda^{\loc}\) is
intended to collect the localized pressure, flux, energy, trace, reproduction,
and tax observations, but no concrete PDE formula is assumed in this
definition.
\end{definition}

\begin{assumption}[Local-to-clean transfer comparison]
\label{ass:local-to-clean-transfer-comparison}
There are constants
\[
    0\le\varepsilon_G<1,
    \qquad
    \delta_G\ge0,
\]
and a nonnegative residual functional
\[
    \Err_\Lambda^{\loc}:\calK_\Lambda^{\loc}\to[0,\infty)
\]
such that every \(\mathfrak D\in\calK_\Lambda^{\loc}\) satisfies
\[
    \Dist_{\cl}(\Theta_\Lambda\mathfrak D,\Gamma_\Lambda^{\cl})
    \ge
    (1-\varepsilon_G)
    \Dist_{\loc}(\mathfrak D,\Gamma_\Lambda^{\loc})
    -
    \delta_G
\]
and
\[
    \calM_\Lambda^{\loc}(\mathfrak D)
    +
    \Err_\Lambda^{\loc}(\mathfrak D)
    \ge
    \calM_\Lambda^{\rm comp}(\Theta_\Lambda\mathfrak D).
\]
\end{assumption}

\begin{assumption}[Localized residual budget]
\label{ass:localized-residual-budget}
There are constants \(\eta_\Lambda\ge0\) and
\(\Delta_\Lambda\ge0\) such that every
\(\mathfrak D\in\calK_\Lambda^{\loc}\) satisfies
\[
    \Err_\Lambda^{\loc}(\mathfrak D)
    \le
    \eta_\Lambda
    \Dist_{\loc}(\mathfrak D,\Gamma_\Lambda^{\loc})
    +
    \Delta_\Lambda.
\]
\end{assumption}

\subsection{Transfer consequence}

\begin{corollary}[Conditional localized transfer from the computational gap]
\label{cor:localized-transfer-from-computational-gap}
Assume \(\calQ_\Lambda^{\cl}\ne\{0\}\),
\Cref{ass:clean-detector-gauge-compatibility},
\Cref{ass:kernel-free-computational-detector},
\Cref{ass:local-to-clean-transfer-comparison}, and
\Cref{ass:localized-residual-budget}.  Then every
\(\mathfrak D\in\calK_\Lambda^{\loc}\) satisfies
\[
\begin{aligned}
    \calM_\Lambda^{\loc}(\mathfrak D)
    \ge&
    \bigl(
        \mu_\Lambda^{\rm comp}(1-\varepsilon_G)
        -
        \eta_\Lambda
    \bigr)
    \Dist_{\loc}(\mathfrak D,\Gamma_\Lambda^{\loc})\\
    &-
    \mu_\Lambda^{\rm comp}\delta_G
    -
    \Delta_\Lambda.
\end{aligned}
\]
In particular, if
\[
    \eta_\Lambda
    <
    \mu_\Lambda^{\rm comp}(1-\varepsilon_G),
\]
then the localized detector controls the localized quotient distance up to the
explicit additive loss
\(\mu_\Lambda^{\rm comp}\delta_G+\Delta_\Lambda\).
\end{corollary}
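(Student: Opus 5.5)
The argument is a direct chain of the clean inequality with the two transfer hypotheses; no compactness beyond what \Cref{thm:clean-computational-antiphantom} already provides is needed. Fix \(\mathfrak D\in\calK_\Lambda^{\loc}\) and write \(\delta:=\Dist_{\loc}(\mathfrak D,\Gamma_\Lambda^{\loc})\).

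First, since \(\Theta_\Lambda\mathfrak D\in\calK_\Lambda^{\cl}\), \Cref{thm:clean-computational-antiphantom} (available under \(\calQ_\Lambda^{\cl}\ne\{0\}\), \Cref{ass:clean-detector-gauge-compatibility} and \Cref{ass:kernel-free-computational-detector}) gives
\[
    \calM_\Lambda^{\rm comp}(\Theta_\Lambda\mathfrak D)
    \ge
    \mu_\Lambda^{\rm comp}\Dist_{\cl}(\Theta_\Lambda\mathfrak D,\Gamma_\Lambda^{\cl}).
\]
Second, because \(\mu_\Lambda^{\rm comp}>0\) (indeed nonnegativity suffices), multiplying the distance comparison in \Cref{ass:local-to-clean-transfer-comparison} by \(\mu_\Lambda^{\rm comp}\) preserves its direction. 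This yields
\[
    \calM_\Lambda^{\rm comp}(\Theta_\Lambda\mathfrak D)
    \ge
    \mu_\Lambda^{\rm comp}(1-\varepsilon_G)\delta-\mu_\Lambda^{\rm comp}\delta_G .
\]
Note that this is valid even when the right side of the distance comparison is negative, since the lower bound by \(\mu_\Lambda^{\rm comp}\) times the clean distance is still at least \(\mu_\Lambda^{\rm comp}\) times that possibly negative quantity.

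Third, I combine this with the detector comparison \(\calM_\Lambda^{\loc}(\mathfrak D)\ge\calM_\Lambda^{\rm comp}(\Theta_\Lambda\mathfrak D)-\Err_\Lambda^{\loc}(\mathfrak D)\). The residual budget of \Cref{ass:localized-residual-budget} bounds the subtracted term by \(\Err_\Lambda^{\loc}(\mathfrak D)\le\eta_\Lambda\delta+\Delta_\Lambda\). Collecting the coefficients of \(\delta\) gives exactly the displayed inequality. For the ``in particular'' clause, observe that the coefficient \(\mu_\Lambda^{\rm comp}(1-\varepsilon_G)-\eta_\Lambda\) is strictly positive under the stated smallness condition. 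The inequality is then a genuine lower bound of \(\calM_\Lambda^{\loc}\) by a positive multiple of the localized quotient distance, minus the additive loss \(\mu_\Lambda^{\rm comp}\delta_G+\Delta_\Lambda\).

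The only point needing care is the sign bookkeeping in the second step: the multiplication must be by a nonnegative constant, and the lower bound must not be asserted only when the clean distance bound is positive. I expect no real obstacle beyond that. In particular, finiteness of \(\mu_\Lambda^{\rm comp}\) is automatic, since it is the infimum of a continuous function over a nonempty compact set by \Cref{lem:quotient-compactness}.
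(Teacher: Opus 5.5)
Your proposal is correct and follows essentially the same chain as the paper's proof: apply \Cref{thm:clean-computational-antiphantom} to $\Theta_\Lambda\mathfrak D$, insert the quotient-distance comparison after multiplying it by the nonnegative constant $\mu_\Lambda^{\rm comp}$, use the detector comparison, and then bound $\Err_\Lambda^{\loc}(\mathfrak D)$ with the residual budget. Your extra remarks on sign bookkeeping and on the finiteness of $\mu_\Lambda^{\rm comp}$ are accurate, though the paper's proof does not need them.
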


\begin{proof}
By \Cref{thm:clean-computational-antiphantom} applied to
\(\Theta_\Lambda\mathfrak D\),
\[
    \calM_\Lambda^{\rm comp}(\Theta_\Lambda\mathfrak D)
    \ge
    \mu_\Lambda^{\rm comp}
    \Dist_{\cl}(\Theta_\Lambda\mathfrak D,\Gamma_\Lambda^{\cl}).
\]
Using the quotient-distance comparison in
\Cref{ass:local-to-clean-transfer-comparison} gives
\[
    \calM_\Lambda^{\rm comp}(\Theta_\Lambda\mathfrak D)
    \ge
    \mu_\Lambda^{\rm comp}
    \bigl(
        (1-\varepsilon_G)
        \Dist_{\loc}(\mathfrak D,\Gamma_\Lambda^{\loc})
        -
        \delta_G
    \bigr).
\]
The detector comparison in
\Cref{ass:local-to-clean-transfer-comparison} therefore implies
\[
    \calM_\Lambda^{\loc}(\mathfrak D)
    \ge
    \mu_\Lambda^{\rm comp}(1-\varepsilon_G)
    \Dist_{\loc}(\mathfrak D,\Gamma_\Lambda^{\loc})
    -
    \mu_\Lambda^{\rm comp}\delta_G
    -
    \Err_\Lambda^{\loc}(\mathfrak D).
\]
Finally apply \Cref{ass:localized-residual-budget} to bound
\(\Err_\Lambda^{\loc}(\mathfrak D)\) from above.  This gives the displayed
inequality.  The final statement follows by positivity of the coefficient of
the localized quotient distance.
\end{proof}

\begin{remark}[Status of the localized corollary]
\Cref{cor:localized-transfer-from-computational-gap} is only a conditional
algebraic transfer statement.  It uses
\(c_\Lambda^{\cl}=\mu_\Lambda^{\rm comp}\), but it does not prove the
local-to-clean chart estimate, the quotient-distance comparison, the residual
budget, or the threshold inequality
\(\eta_\Lambda<\mu_\Lambda^{\rm comp}(1-\varepsilon_G)\).
\end{remark}

\section{Failure Modes}
\label{sec:failure-modes}

The preceding sections separate the clean finite-window obstruction from the
localized transfer obstruction.  This makes the possible failures explicit.

\begin{definition}[Fixed-window computational phantom]
\label{def:fixed-window-computational-phantom}
A fixed-window computational phantom is an element
\[
    d\in\calK_\Lambda^{\cl}
\]
such that
\[
    \Dist_{\cl}(d,\Gamma_\Lambda^{\cl})>0,
    \qquad
    O_\Lambda^{\cl}d=0,
    \qquad
    \Rep_\Lambda^{\cl}(d)=0,
    \qquad
    \Tax_\Lambda^{\cl}(d)=0.
\]
Equivalently, it is a non-gauge clean defect that is invisible, exactly
reproducible, and tax-free in the fixed finite window.
\end{definition}

\begin{definition}[Moving-window computational gap collapse]
\label{def:moving-window-gap-collapse}
A moving-window computational gap collapse is a sequence of finite windows
\(\Lambda_n\) and clean detector packages satisfying the fixed-window clean
detector gauge compatibility and kernel-free condition for each \(n\), but for
which
\[
    \mu_{\Lambda_n}^{\rm comp}\to0.
\]
This is not a contradiction to the fixed-window theorem.  It means only that
the positive constants obtained by compactness are not uniform along the
chosen sequence of windows.
\end{definition}

\begin{definition}[Asymptotic invisible reproducible tax-free sequence]
\label{def:asymptotic-invisible-sequence}
An asymptotic invisible reproducible tax-free sequence is a sequence
\((\Lambda_n,d_n)\) such that
\[
    \Dist_{\cl,n}(d_n,\Gamma_{\Lambda_n}^{\cl})=1
\]
and
\[
    \|O_{\Lambda_n}^{\cl}d_n\|
    +
    C_R\Rep_{\Lambda_n}^{\cl}(d_n)
    +
    C_T\Tax_{\Lambda_n}^{\cl}(d_n)
    \to0.
\]
Such a sequence is an asymptotic version of a clean phantom.  It is compatible
with every fixed-window theorem unless a window-uniform lower bound is proved.
\end{definition}

\begin{definition}[Localized transfer-threshold failure]
\label{def:localized-transfer-threshold-failure}
For a localized package satisfying the comparison hypotheses of
\Cref{ass:local-to-clean-transfer-comparison}, transfer-threshold failure
means that at least one of the following prevents
\Cref{cor:localized-transfer-from-computational-gap} from giving a useful
localized lower bound:
\[
    \eta_\Lambda
    \ge
    \mu_\Lambda^{\rm comp}(1-\varepsilon_G),
\]
or the additive loss
\[
    \mu_\Lambda^{\rm comp}\delta_G+\Delta_\Lambda
\]
is not controlled in the regime under consideration.
\end{definition}

\begin{proposition}[Exhaustion of finite-window failure modes]
\label{prop:failure-mode-exhaustion}
Fix a nontrivial clean quotient.  Suppose a localized lower bound of the form
\[
    \calM_\Lambda^{\loc}(\mathfrak D)
    \ge
    c_{\loc,\Lambda}
    \Dist_{\loc}(\mathfrak D,\Gamma_\Lambda^{\loc})
    -
    \Delta_{\loc,\Lambda}
\]
is not obtained from the clean computational anti-phantom mechanism for the
window \(\Lambda\).  Then at least one of the following is responsible:
\begin{enumerate}[label=(\roman*),leftmargin=2em]
    \item the clean detector gauge compatibility in
    \Cref{ass:clean-detector-gauge-compatibility} is unavailable, so the
    detector has not descended to the clean quotient;
    \item the fixed-window kernel-free condition fails, equivalently a
    fixed-window computational phantom exists;
    \item the clean gap is positive but too small for the desired window
    family, indicating moving-window gap collapse as a possible obstruction;
    \item the local-to-clean quotient or detector comparison in
    \Cref{ass:local-to-clean-transfer-comparison} is unavailable;
    \item the residual budget in \Cref{ass:localized-residual-budget} is
    unavailable;
    \item the transfer threshold or additive-loss control in
    \Cref{def:localized-transfer-threshold-failure} fails.
\end{enumerate}
\end{proposition}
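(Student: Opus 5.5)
The proof is a contrapositive: assume that none of (i)--(vi) holds and show that the clean mechanism then does produce a localized lower bound of the stated form. So suppose \Cref{ass:clean-detector-gauge-compatibility} holds, which negates (i). Suppose \Cref{ass:kernel-free-computational-detector} holds, which negates (ii). Suppose \Cref{ass:local-to-clean-transfer-comparison} and \Cref{ass:localized-residual-budget} hold, which negates (iv) and (v). Finally suppose the threshold $\eta_\Lambda<\mu_\Lambda^{\rm comp}(1-\varepsilon_G)$ holds and the additive loss $\mu_\Lambda^{\rm comp}\delta_G+\Delta_\Lambda$ is controlled in the regime under consideration, which negates (vi).

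Since the quotient is nontrivial, \Cref{thm:kernel-free-characterization} gives $\mu_\Lambda^{\rm comp}>0$. \Cref{cor:localized-transfer-from-computational-gap} then yields the desired bound with
\[
    c_{\loc,\Lambda}:=\mu_\Lambda^{\rm comp}(1-\varepsilon_G)-\eta_\Lambda>0,
    \qquad
    \Delta_{\loc,\Lambda}:=\mu_\Lambda^{\rm comp}\delta_G+\Delta_\Lambda .
\]
This bound is obtained entirely through the clean computational anti-phantom mechanism. That contradicts the hypothesis that no such bound was obtained from the mechanism, so at least one of (i), (ii), (iv), (v), (vi) must fail.

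Item (iii) enters only when the desired bound carries a prescribed constant or is required uniformly over a window family. In that case the construction above could still fall short even with all other assumptions in place. The only remaining way for that to happen is that $\mu_\Lambda^{\rm comp}$ itself is positive but too small, which is exactly item (iii). I will therefore split the final step into two cases. In the qualitative case, where only some $c_{\loc,\Lambda}>0$ is wanted, items (i), (ii), (iv), (v), (vi) already exhaust the possibilities. In the quantitative case, the deficiency is attributed either to the threshold in (vi) or to the size of $\mu_\Lambda^{\rm comp}$ in (iii).

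The main obstacle is interpretive rather than technical. The phrase ``not obtained from the clean mechanism'' has to be formalized as ``not derivable via \Cref{cor:localized-transfer-from-computational-gap} under its hypotheses, with the required constants.'' Once that reading is fixed, the argument is a direct enumeration of the hypotheses of the corollary together with the positivity of $c_{\loc,\Lambda}$. In particular, the equivalence in item (ii) between failure of the kernel-free condition and existence of a fixed-window computational phantom follows from \Cref{def:fixed-window-computational-phantom}, because $\Dist_{\cl}(d,\Gamma_\Lambda^{\cl})>0$ holds exactly when $d\notin\Gamma_\Lambda^{\cl}$, since $\Gamma_\Lambda^{\cl}$ is a closed subspace in finite dimensions.
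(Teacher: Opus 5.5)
Your proposal is correct and is essentially the paper's argument. The paper runs the same enumeration as a cascade rather than a contrapositive: if gauge compatibility fails, (i) holds; otherwise, if kernel-freeness fails, (ii) holds; otherwise \Cref{thm:kernel-free-characterization} gives $\mu_\Lambda^{\rm comp}>0$, and one of the inputs of \Cref{cor:localized-transfer-from-computational-gap} or its threshold must fail. Like you, it treats (iii) only as the additional degeneration of positive constants along a family of windows, and your remark that $\Dist_{\cl}(d,\Gamma_\Lambda^{\cl})>0$ exactly when $d\notin\Gamma_\Lambda^{\cl}$ supplies the justification for the equivalence in (ii), which the paper leaves implicit.
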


\begin{proof}
If \Cref{ass:clean-detector-gauge-compatibility} is unavailable, the clean
detector has not been shown to be a quotient detector, so the first alternative
is responsible.  If gauge compatibility holds but the fixed-window kernel-free
condition fails, then \Cref{def:fixed-window-computational-phantom} gives the
second alternative.  If both hold, then \Cref{thm:kernel-free-characterization}
gives \(\mu_\Lambda^{\rm comp}>0\) for the fixed window.  To pass from this clean
gap to a localized lower bound, the argument uses exactly the quotient and
detector comparison in \Cref{ass:local-to-clean-transfer-comparison}, the
residual budget in \Cref{ass:localized-residual-budget}, and the threshold
condition in \Cref{cor:localized-transfer-from-computational-gap}.  If all
these inputs hold with useful constants, the displayed localized lower bound
follows from that corollary.  Therefore, if the lower bound is not obtained,
one of the listed inputs or threshold requirements must fail.  Along a family
of windows, the additional possibility is that the fixed-window constants are
positive but degenerate, which is precisely the moving-window gap-collapse
alternative.
\end{proof}

\begin{remark}[Role of the taxonomy]
\Cref{prop:failure-mode-exhaustion} is a bookkeeping result.  It does not say
which failure mode actually occurs for Navier--Stokes.  Its purpose is to keep
future work honest: after the clean theorem is proved, the remaining
mathematics is either uniform clean-gap control, local-to-clean comparison,
residual absorption, or additive-loss control.
\end{remark}

\section{Pressure-Source Quotient Observability}
\label{sec:pressure-source-observability}

We now isolate the first PDE-facing compatibility input.  The purpose of this
section is not to prove a pressure estimate.  It is to state precisely what
must be controlled if the localized quotient distance is to see the active
pressure-source mismatch.  The model is deliberately aligned with the standard
Navier--Stokes pressure identity and with the pressure-splitting techniques
used in local regularity theory
\cite{SohrWahl1986,SereginSverak2002,SereginLectureNotes}.

\subsection{Pressure-source coordinate}

\begin{definition}[Localized pressure-source datum]
\label{def:localized-pressure-source-datum}
A localized pressure-source datum consists, for each \(k\in\Lambda\), of
finite-dimensional normed spaces
\[
    \calX_{{\rm src},k},
    \qquad
    \calZ_{{\rm src},k},
    \qquad
    \calU_k,
    \qquad
    \calP_k^{\rm act},
    \qquad
    \calR_k,
\]
coordinate maps
\[
    U_k:\calK_\Lambda^{\loc}\to\calU_k,
    \qquad
    P_k^{\rm act}:\calK_\Lambda^{\loc}\to\calP_k^{\rm act},
    \qquad
    R_k:\calK_\Lambda^{\loc}\to\calR_k,
\]
an assembly map
\[
    \mathscr A_{{\rm src},k}:
    \calP_k^{\rm act}\times\calU_k\times\calR_k
    \to
    \calZ_{{\rm src},k},
\]
and a fixed finite-dimensional source projection
\[
    \Pi_{{\rm src},k}:\calZ_{{\rm src},k}
    \to \calX_{{\rm src},k}.
\]
The associated pressure-source mismatch is
\[
    S_k^{\rm prs}(\mathfrak D)
    :=
    \Pi_{{\rm src},k}
    \mathscr A_{{\rm src},k}
    \left(
        P_k^{\rm act}(\mathfrak D),
        U_k(\mathfrak D),
        R_k(\mathfrak D)
    \right),
\]
where \(\mathscr A_{{\rm src},k}\) represents the finite-dimensional form of
\[
        -\Delta P_k^{\rm act}(\mathfrak D)
        -
        \partial_i\partial_j
        \bigl(
            U_{k,i}(\mathfrak D)U_{k,j}(\mathfrak D)
            +
            R_{k,ij}(\mathfrak D)
        \bigr)
    .
\]
\end{definition}

\begin{remark}[Status of the source coordinate]
\Cref{def:localized-pressure-source-datum} is a finite-window model datum.
The notation with \(-\Delta\) and \(\partial_i\partial_j\) records the intended
Navier--Stokes pressure identity, but the definition does not prove that the
coordinates are produced by a suitable weak solution or that cutoff,
projection, harmonic, or truncation errors are small.
\end{remark}

\begin{definition}[Pressure-source residual]
\label{def:pressure-source-residual}
Fix weights \(w_k>0\).  Define
\[
    \Err_{\rm src}^{\rm prs}(\mathfrak D)
    :=
    \left(
        \sum_{k\in\Lambda}
        w_k
        \|S_k^{\rm prs}(\mathfrak D)\|_{\calX_{{\rm src},k}}^2
    \right)^{1/2}.
\]
The quotient-compatible pressure-source residual is
\[
    \Err_{\rm src,q}^{\rm prs}(\mathfrak D)
    :=
    \inf_{\gamma\in\Gamma_\Lambda^{\loc}}
    \Err_{\rm src}^{\rm prs}(\mathfrak D-\gamma).
\]
\end{definition}

\begin{remark}[Gauge convention]
The quotient residual is used because this manuscript has not proved that
\(S_k^{\rm prs}\) is invariant under every localized gauge direction.  If a
later gauge-cleaning theorem proves such invariance, the raw residual
\(\Err_{\rm src}^{\rm prs}\) may replace \(\Err_{\rm src,q}^{\rm prs}\).
\end{remark}

\subsection{Structural observability model}

\begin{assumption}[Pressure-source quotient observability]
\label{ass:pressure-source-quotient-observability}
There exist constants
\[
    C_{\rm src}<\infty,
    \qquad
    \Delta_{\rm src}\ge0,
\]
such that every localized package \(\mathfrak D\in\calK_\Lambda^{\loc}\)
satisfies
\[
    \Err_{\rm src,q}^{\rm prs}(\mathfrak D)
    \le
    C_{\rm src}
    \Dist_{\loc}(\mathfrak D,\Gamma_\Lambda^{\loc})
    +
    \Delta_{\rm src}.
\]
\end{assumption}

\begin{proposition}[Conditional pressure-source quotient observability]
\label{prop:conditional-pressure-source-observability}
Under \Cref{ass:pressure-source-quotient-observability}, the active
pressure-source residual is controlled by the localized quotient distance up
to the additive error \(\Delta_{\rm src}\):
\[
    \Err_{\rm src,q}^{\rm prs}(\mathfrak D)
    \le
    C_{\rm src}
    \Dist_{\loc}(\mathfrak D,\Gamma_\Lambda^{\loc})
    +
    \Delta_{\rm src}
\]
for every \(\mathfrak D\in\calK_\Lambda^{\loc}\).
\end{proposition}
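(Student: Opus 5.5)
The statement of \Cref{prop:conditional-pressure-source-observability} is literally the inequality postulated in \Cref{ass:pressure-source-quotient-observability}, so the plan is simply to check that every quantity in it is well defined and then read the inequality off the assumption. No compactness or transfer machinery from the earlier sections is needed.

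The first step is to confirm that \(\Err_{\rm src}^{\rm prs}(\mathfrak D)\) is a well-defined element of \([0,\infty)\) for every \(\mathfrak D\in\calK_\Lambda^{\loc}\). The index set \(\Lambda\) is finite, each \(S_k^{\rm prs}(\mathfrak D)\) lies in the normed space \(\calX_{{\rm src},k}\), and the weights satisfy \(w_k>0\). Hence the weighted square sum is a finite nonnegative real number, and so is its square root.

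The second step is to note that \(\Err_{\rm src,q}^{\rm prs}(\mathfrak D)\) is an infimum of nonnegative reals over the nonempty set \(\Gamma_\Lambda^{\loc}\), which contains \(0\). It is therefore a finite nonnegative number, bounded above by \(\Err_{\rm src}^{\rm prs}(\mathfrak D)\). Likewise \(\Dist_{\loc}(\mathfrak D,\Gamma_\Lambda^{\loc})\) is finite and nonnegative, so both sides of the claimed inequality are finite and the comparison makes sense. No continuity or gauge invariance of \(S_k^{\rm prs}\) is required, because the quotient residual is defined directly as an infimum. This is exactly why the gauge-convention remark uses \(\Err_{\rm src,q}^{\rm prs}\).

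The final step is to fix an arbitrary \(\mathfrak D\in\calK_\Lambda^{\loc}\) and apply \Cref{ass:pressure-source-quotient-observability}, whose displayed inequality holds for every localized package with the constants \(C_{\rm src}<\infty\) and \(\Delta_{\rm src}\ge0\). That inequality is precisely the conclusion.

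There is no genuine obstacle. The only point worth flagging is interpretive: the proposition adds no mathematical content beyond the assumption, and the real difficulty (proving the assumption from Navier--Stokes pressure estimates) is deliberately left open.
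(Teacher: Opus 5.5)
Your proposal is correct and matches the paper's proof: the paper also notes that the displayed inequality is exactly \Cref{ass:pressure-source-quotient-observability} applied to each \(\mathfrak D\in\calK_\Lambda^{\loc}\), with no further pressure estimate. Your well-definedness checks are valid but not needed for the argument; these are finiteness of the weighted sum, and \(\Err_{\rm src,q}^{\rm prs}\) being a finite infimum because \(0\in\Gamma_\Lambda^{\loc}\).
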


\begin{proof}
This is exactly \Cref{ass:pressure-source-quotient-observability}.  No
additional pressure estimate is being used.
\end{proof}

\begin{remark}[Effect on the transfer theorem]
This structural-observability model does not change the form of
\Cref{cor:localized-transfer-from-computational-gap}; it keeps the original
localized quotient distance \(\Dist_{\loc}\).  Its cost is that
\Cref{ass:pressure-source-quotient-observability} becomes an explicit
component hypothesis that must be proved or assumed before pressure-source
terms can be absorbed into the localized residual budget.
\end{remark}

\subsection{What remains a PDE estimate}

To turn \Cref{ass:pressure-source-quotient-observability} into a theorem, one
must control the following components in the source norm
\(\calX_{{\rm src},k}\):
\begin{enumerate}[label=(\roman*),leftmargin=2em]
    \item the active pressure Poisson mismatch generated by the localized
    pressure splitting;
    \item cutoff commutator leakage from localizing
    \(-\Delta p=\partial_i\partial_j(u_i u_j)\);
    \item harmonic pressure gauge terms not removed by
    \(\Gamma_\Lambda^{\loc}\);
    \item finite projection and truncation errors in
    \(\Pi_{{\rm src},k}\);
    \item stress or covariance mismatch between the projected quadratic
    velocity term and the \(R_k\)-coordinate.
\end{enumerate}
None of these estimates is proved in this section.

\begin{proposition}[Pressure-source observability decision point]
\label{prop:pressure-source-decision-point}
The present manuscript establishes only the following conditional implication:
if \Cref{ass:pressure-source-quotient-observability} is supplied, then
\Cref{prop:conditional-pressure-source-observability} follows.  Removing the
assumption requires either:
\begin{enumerate}[label=(\roman*),leftmargin=2em]
    \item an enhanced quotient distance that includes
    \(\Err_{\rm src,q}^{\rm prs}\);
    \item a proof of \Cref{ass:pressure-source-quotient-observability} from
    localized Navier--Stokes pressure geometry; or
    \item a sharpened localized residual norm that includes
    \(\Err_{\rm src,q}^{\rm prs}\) and a residual-budget estimate for that
    sharpened norm.
\end{enumerate}
\end{proposition}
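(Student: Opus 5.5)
The statement has two parts: a conditional implication, and a classification of the ways one might remove the hypothesis. I would prove them separately.

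The implication is immediate. \Cref{prop:conditional-pressure-source-observability} is literally \Cref{ass:pressure-source-quotient-observability}. So once the assumption is supplied, the conclusion holds for every \(\mathfrak D\in\calK_\Lambda^{\loc}\) with the same constants \(C_{\rm src}\) and \(\Delta_{\rm src}\). I would then check that nothing else in \Cref{sec:pressure-source-observability} proves more. \Cref{def:localized-pressure-source-datum} and \Cref{def:pressure-source-residual} are definitions. The list of PDE components is explicitly marked as unproved. Hence the only pressure-source statement established in the manuscript is this conditional one.

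For the "removing the assumption" part, I would make precise what "removing" means. The aim is to obtain a usable bound of \(\Err_{\rm src,q}^{\rm prs}\) by a quantity the transfer corollary already controls, so that the pressure-source term can be absorbed into \(\Err_\Lambda^{\loc}\) under \Cref{ass:localized-residual-budget}. I would argue by cases on which side of the inequality is changed.

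\begin{itemize}
\item \emph{Keep both the distance \(\Dist_{\loc}\) and the residual norm.} Then the required bound is exactly \Cref{ass:pressure-source-quotient-observability}, and it must be derived from the localized pressure geometry. This is alternative (ii).
\item \emph{Change the left-hand geometry.} Replace \(\Dist_{\loc}\) by an enhanced distance \(\Dist_{\loc}+\Err_{\rm src,q}^{\rm prs}\). The source residual is then trivially bounded with \(C_{\rm src}=1\) and \(\Delta_{\rm src}=0\). This is alternative (i), at the price of comparing geometries later.
\item \emph{Change the residual side.} Place \(\Err_{\rm src,q}^{\rm prs}\) inside a sharpened residual norm. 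A budget estimate for that sharpened norm is then needed in place of \Cref{ass:localized-residual-budget}. This is alternative (iii).
\end{itemize}

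The main obstacle is that exhaustiveness is not a formal theorem. The phrase "requires either" rests on the observation that \Cref{cor:localized-transfer-from-computational-gap} involves only three objects: the distance, the detector/residual side, and the budget. A pressure-source term can enter only through one of these. I would therefore present the second part as the same kind of bookkeeping argument as \Cref{prop:failure-mode-exhaustion}, not as a deeper logical claim. Concretely, if none of (i)--(iii) holds, then \(\Err_{\rm src,q}^{\rm prs}\) is neither bounded in the distance nor absorbed by a budgeted residual. In that case the transfer framework has no slot through which the pressure-source mismatch is controlled.
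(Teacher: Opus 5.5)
Your proposal matches the paper's proof. Both treat the implication as tautological and justify the trichotomy as a bookkeeping split: the source residual enters through the quotient geometry (i), through a proof of the structural hypothesis with the geometry kept fixed (ii), or through the residual budget (iii), with exhaustiveness understood relative to the transfer architecture rather than as a formal theorem.

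One small scoping note. Later subsections of the same section do prove pressure-source statements, for example the enhanced-distance observability lemma, which is route (i), and the fixed-geometry commutator bound. Your ``nothing else proves more'' check should therefore be phrased as ``nothing proves \Cref{ass:pressure-source-quotient-observability} for the original \(\Dist_{\loc}\)''.
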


\begin{proof}
The three alternatives are precisely the three ways to make the source
residual appear in the existing transfer architecture.  The first changes the
quotient geometry, the second keeps the quotient geometry and proves the
structural hypothesis, and the third changes the residual budget.  Without at
least one of these inputs, the displayed pressure-source estimate is not a
consequence of the clean finite-window gap or of the abstract localized
transfer corollary.
\end{proof}

\subsection{Enhanced localized quotient distance with pressure-source observability}
\label{subsec:enhanced-localized-quotient-pressure-source}

We now record the enhanced-distance route in a concrete normalized local
model.  This subsection changes the localized quotient geometry by placing the
pressure-source residual directly into the defect distance.

\begin{definition}[Normalized pressure-source geometry]
\label{def:normalized-pressure-source-geometry}
Let
\[
    Q_1:=B_1\times(-1,0),
    \qquad
    I:=(-1,0),
\]
with
\[
    B_{1/2}\subset B_{3/4}\subset B_1.
\]
Fix a cutoff \(\eta\in C_c^\infty(B_1)\) such that
\[
    \eta\equiv1\quad\text{on }B_{3/4}.
\]
Set
\[
    X_{\rm src}
    :=
    L^{3/2}\bigl(I;L^{3/2}(B_1)\bigr)^{3\times3},
    \qquad
    Y_{\rm prs}
    :=
    L^{3/2}\bigl(I;L^{3/2}(B_{1/2})\bigr).
\]
For a localized velocity \(u\), define
\[
    f_{ij}:=u_i u_j.
\]
For a clean finite-window package, define the clean source tensor
\[
    F_{ij}^{\cl}:=U_iU_j+R_{ij}.
\]
The active localized pressure and clean active pressure are
\[
    p^{\rm act}:=R_iR_j(\eta f_{ij}),
    \qquad
    p^{\cl}:=P_{\rm prs}^{\cl}R_iR_j(F_{ij}^{\cl}),
\]
where \(P_{\rm prs}^{\cl}:Y_{\rm prs}\to Y_{\rm prs}\) is a fixed bounded
clean pressure projection.  All pressures in this subsection are restricted to
\(B_{1/2}\) when measured in \(Y_{\rm prs}\).  All source tensors are
extended by zero outside \(B_1\) before applying the whole-space Riesz
transforms.
\end{definition}

\begin{definition}[Concrete pressure-source mismatch]
\label{def:concrete-pressure-source-mismatch}
Let \(h^{\rm harm}\in Y_{\rm prs}\) denote the selected harmonic-gauge
residual, with \(h^{\rm harm}=0\) if no harmonic gauge residual is present.
Define the concrete pressure-source mismatch by
\[
    \mathfrak P_{\rm src}
    :=
    p^{\rm act}-p^{\cl}-h^{\rm harm}.
\]
Define
\[
    \Err_{\rm src}^{\rm prs}
    :=
    \|\mathfrak P_{\rm src}\|_{Y_{\rm prs}}.
\]
The quotient-compatible version is
\[
    \Err_{\rm src,q}^{\rm prs}(\mathfrak D)
    :=
    \inf_{\gamma\in\Gamma_\Lambda^{\loc}}
    \Err_{\rm src}^{\rm prs}(\mathfrak D-\gamma),
\]
which agrees with \Cref{def:pressure-source-residual} in this concrete
one-window pressure model.
\end{definition}

\begin{lemma}[Pressure-source mismatch decomposition]
\label{lem:pressure-source-mismatch-decomposition}
In the normalized model of
\Cref{def:normalized-pressure-source-geometry}, the pressure-source mismatch
admits the decomposition
\[
    \mathfrak P_{\rm src}
    =
    C_\eta(f)
    +
    E_{\rm act}^{\rm src}
    +
    E_{\rm proj}^{\cl}
    +
    E_{\rm harm},
\]
where
\[
    C_\eta(f)
    :=
    R_iR_j(\eta f_{ij})-\eta R_iR_j(f_{ij}),
\]
\[
    E_{\rm act}^{\rm src}
    :=
    \eta R_iR_j(f_{ij})-R_iR_j(F_{ij}^{\cl}),
\]
\[
    E_{\rm proj}^{\cl}
    :=
    (I-P_{\rm prs}^{\cl})R_iR_j(F_{ij}^{\cl}),
    \qquad
    E_{\rm harm}:=-h^{\rm harm}.
\]
Consequently,
\[
    \Err_{\rm src}^{\rm prs}
    \le
    \|C_\eta(f)\|_{Y_{\rm prs}}
    +
    \|E_{\rm act}^{\rm src}\|_{Y_{\rm prs}}
    +
    \|E_{\rm proj}^{\cl}\|_{Y_{\rm prs}}
    +
    \|E_{\rm harm}\|_{Y_{\rm prs}}.
\]
\end{lemma}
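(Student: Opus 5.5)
The plan is a direct telescoping identity followed by the triangle inequality in \(Y_{\rm prs}\). First, write \(p^{\rm act}=R_iR_j(\eta f_{ij})\) and \(p^{\cl}=P_{\rm prs}^{\cl}R_iR_j(F_{ij}^{\cl})\) as in \Cref{def:normalized-pressure-source-geometry}. Then insert and subtract the two intermediate quantities \(\eta R_iR_j(f_{ij})\) and \(R_iR_j(F_{ij}^{\cl})\):
\[
    \mathfrak P_{\rm src}
    =
    \bigl[R_iR_j(\eta f_{ij})-\eta R_iR_j(f_{ij})\bigr]
    +
    \bigl[\eta R_iR_j(f_{ij})-R_iR_j(F_{ij}^{\cl})\bigr]
    +
    \bigl[R_iR_j(F_{ij}^{\cl})-P_{\rm prs}^{\cl}R_iR_j(F_{ij}^{\cl})\bigr]
    -h^{\rm harm}.
\]
The brackets are, in order, \(C_\eta(f)\), \(E_{\rm act}^{\rm src}\), \(E_{\rm proj}^{\cl}\), and \(E_{\rm harm}\). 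The sum telescopes back to \(p^{\rm act}-p^{\cl}-h^{\rm harm}\), which is the definition in \Cref{def:concrete-pressure-source-mismatch}. This identity is purely algebraic, using only linearity of addition.

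The one point needing care is that each term is a well-defined element of \(Y_{\rm prs}\), so that the identity holds there and the triangle inequality applies. I would take this from the standing conventions. The source tensors are extended by zero outside \(B_1\), so whole-space Riesz transforms act on them; \(R_iR_j\) is bounded on \(L^{3/2}(\R^3)\) by Calder\'on--Zygmund theory, and this passes to \(L^{3/2}\) in time. Multiplication by the bounded function \(\eta\) preserves \(L^{3/2}\). All pressures are restricted to \(B_{1/2}\) when measured, and \(P_{\rm prs}^{\cl}\) is bounded on \(Y_{\rm prs}\). The decomposition requires \(R_iR_j(f_{ij})\) itself, not only \(R_iR_j(\eta f_{ij})\), to make sense. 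I would state it under the implicit hypothesis that \(f\in X_{\rm src}\) (i.e.\ \(u\in L^3\) on \(Q_1\), extended by zero). The identity on \(B_{1/2}\) is then meaningful, and each of the four terms individually lies in \(Y_{\rm prs}\).

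Finally, take the \(Y_{\rm prs}\) norm and apply the triangle inequality to the four-term sum. This gives the bound on \(\Err_{\rm src}^{\rm prs}=\|\mathfrak P_{\rm src}\|_{Y_{\rm prs}}\). I expect no real obstacle. The only subtle step is making sure the intermediate quantity \(\eta R_iR_j(f_{ij})\) is well defined, which is why I would make the integrability of \(f\) explicit rather than rely on \(\eta f\) alone.
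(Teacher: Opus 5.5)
Your proposal is correct and follows the paper's proof: insert and subtract $\eta R_iR_j(f_{ij})$ and $R_iR_j(F_{ij}^{\cl})$, observe that the sum telescopes back to $p^{\rm act}-p^{\cl}-h^{\rm harm}$, and apply the triangle inequality in $Y_{\rm prs}$. Your remark that one needs $f\in X_{\rm src}$ so that $R_iR_j(f_{ij})$ itself is defined is a worthwhile precision that the paper leaves implicit in the model setup.
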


\begin{proof}
Using the definitions of \(p^{\rm act}\) and \(p^{\cl}\),
\[
\begin{aligned}
    \mathfrak P_{\rm src}
    &=
    R_iR_j(\eta f_{ij})
    -
    P_{\rm prs}^{\cl}R_iR_j(F_{ij}^{\cl})
    -
    h^{\rm harm}\\
    &=
    \bigl(R_iR_j(\eta f_{ij})-\eta R_iR_j(f_{ij})\bigr)\\
    &\quad+
    \bigl(\eta R_iR_j(f_{ij})-R_iR_j(F_{ij}^{\cl})\bigr)\\
    &\quad+
    \bigl(R_iR_j(F_{ij}^{\cl})
      -P_{\rm prs}^{\cl}R_iR_j(F_{ij}^{\cl})\bigr)
    -
    h^{\rm harm}.
\end{aligned}
\]
This is the stated decomposition.  The norm bound follows from the triangle
inequality in \(Y_{\rm prs}\).
\end{proof}

\begin{lemma}[Fixed-geometry commutator bound]
\label{lem:fixed-geometry-commutator-bound}
There is a constant \(C_\eta<\infty\), depending only on the fixed cutoff and
the normalized balls, such that
\[
    \|C_\eta(f)\|_{Y_{\rm prs}}
    \le
    C_\eta
    \|(1-\eta)f\|_{L^{3/2}(I;L^{3/2}(B_1\setminus B_{3/4}))^{3\times3}}.
\]
This is a fixed-geometry estimate and contains no scale-uniform assertion.
\end{lemma}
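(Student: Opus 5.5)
The plan is to exploit the fact that the cutoff is identically one on the measurement ball, so that on \(B_{1/2}\) the commutator reduces to a Riesz transform of a source supported at positive distance from \(B_{1/2}\). There the singular integral has a smooth, bounded kernel, and the estimate reduces to H\"older's inequality on bounded sets. All steps are carried out at each fixed time \(t\in I\) and then integrated in time.

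\emph{Step 1 (reduction to a far-field term).} Recall from \Cref{def:normalized-pressure-source-geometry} that \(f_{ij}\) is extended by zero outside \(B_1\) and that \(\eta\equiv1\) on \(B_{3/4}\supset B_{1/2}\). Hence for \(x\in B_{1/2}\),
\[
    C_\eta(f)(x,t)
    =
    R_iR_j(\eta f_{ij})(x,t)-R_iR_j(f_{ij})(x,t)
    =
    -R_iR_j\bigl((1-\eta)f_{ij}\bigr)(x,t).
\]
The function \(g_{ij}:=(1-\eta)f_{ij}\) is supported in \(\overline{B_1}\setminus B_{3/4}\). Consequently \(\Dist(B_{1/2},\operatorname{supp}g_{ij})\ge 1/4\).

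\emph{Step 2 (kernel representation).} The operator \(R_iR_j\) is a Calder\'on--Zygmund operator. Its distributional kernel is a constant multiple of \(\delta_{ij}\delta_0\) plus the principal-value kernel
\[
    K_{ij}(z)=c\,\frac{\delta_{ij}|z|^2-3z_iz_j}{|z|^5}.
\]
Since \(x\) and \(\operatorname{supp}g\) are separated, the delta part contributes nothing, and no principal value is needed. Thus
\[
    R_iR_j g_{ij}(x,t)=\int K_{ij}(x-y)\,g_{ij}(y,t)\,dy,
    \qquad |K_{ij}(x-y)|\le c\,4^3 .
\]
This is the only step requiring care, since one must justify the off-diagonal kernel formula for an \(L^{3/2}\) function. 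I would obtain it by approximating \(g(\cdot,t)\) in \(L^{3/2}\) by smooth functions supported in \(B_1\setminus B_{3/4-\epsilon}\), using the standard kernel representation for these, and passing to the limit using \(L^{3/2}\)-boundedness of \(R_iR_j\).

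\emph{Step 3 (H\"older and time integration).} From Step 2 and H\"older's inequality on \(B_1\),
\[
    \sup_{x\in B_{1/2}}|C_\eta(f)(x,t)|
    \le C\,|B_1|^{1/3}\,\|g(\cdot,t)\|_{L^{3/2}(B_1\setminus B_{3/4})}.
\]
Taking the \(L^{3/2}(B_{1/2})\) norm costs a further factor \(|B_{1/2}|^{2/3}\). Raising to the power \(3/2\) and integrating over \(I\) then yields the claim. The constant \(C_\eta\) depends only on \(c\), the separation \(1/4\) and the ball volumes, together with a harmless finite factor from summing over \(i,j\). Equivalently, one may simply write \(C_\eta(f)=\eta\,C_\eta(f)+\ldots\) with these constants absorbed; in either form no scale enters, matching the fixed-geometry statement.
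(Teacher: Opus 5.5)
Your proof is correct and follows the paper's argument. On $B_{1/2}$ the commutator reduces to $-R_iR_j((1-\eta)f_{ij})$, whose source lies in the annulus $B_1\setminus B_{3/4}$ at distance $1/4$ from $B_{1/2}$, so the off-diagonal kernel is bounded there, and a fixed-time $L^{3/2}$ estimate integrated in time gives the claim. Your kernel formula, density argument, and explicit H\"older constants just fill in details the paper leaves implicit; the pointwise bound should read $2|c|\,4^3$, and the closing ``Equivalently'' sentence about $\eta\,C_\eta(f)$ is superfluous, but neither affects the argument.
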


\begin{proof}
On \(B_{1/2}\), \(\eta=1\).  Hence
\[
    C_\eta(f)
    =
    R_iR_j(\eta f_{ij})-R_iR_j(f_{ij})
    =
    -R_iR_j((1-\eta)f_{ij}).
\]
The source \((1-\eta)f\) is supported in
\[
    B_1\setminus B_{3/4},
\]
which is separated from \(B_{1/2}\).  Therefore the kernel of
\(R_iR_j\) is smooth and uniformly bounded as a map from the source annulus to
\(B_{1/2}\).  For each fixed time,
\[
    \|C_\eta(f)(t)\|_{L^{3/2}(B_{1/2})}
    \le
    C_\eta
    \|(1-\eta)f(t)\|_{L^{3/2}(B_1\setminus B_{3/4})^{3\times3}}.
\]
Taking the \(L^{3/2}\)-norm in time gives the claim.
\end{proof}

\begin{remark}[Fixed estimates versus structural terms]
\Cref{lem:fixed-geometry-commutator-bound} is the only estimate in this
subsection that comes from the fixed separated geometry.  The active source
residual \(E_{\rm act}^{\rm src}\), the clean projection residual
\(E_{\rm proj}^{\cl}\), and the harmonic gauge residual \(E_{\rm harm}\) are
not shown to be small or controlled by the original localized quotient
distance.
\end{remark}

\begin{definition}[Enhanced localized quotient distance]
\label{def:enhanced-localized-quotient-distance}
Fix \(\alpha_{\rm src}>0\).  Define
\[
    \Dist_{\loc}^{\sharp}
    (\mathfrak D,\Gamma_\Lambda^{\loc})
    :=
    \Dist_{\loc}(\mathfrak D,\Gamma_\Lambda^{\loc})
    +
    \alpha_{\rm src}
    \Err_{\rm src,q}^{\rm prs}(\mathfrak D).
\]
\end{definition}

\begin{lemma}[Enhanced-distance pressure-source observability]
\label{lem:enhanced-distance-pressure-source-observability}
For every localized package \(\mathfrak D\),
\[
    \Err_{\rm src,q}^{\rm prs}(\mathfrak D)
    \le
    \alpha_{\rm src}^{-1}
    \Dist_{\loc}^{\sharp}
    (\mathfrak D,\Gamma_\Lambda^{\loc}).
\]
\end{lemma}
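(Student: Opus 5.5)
The inequality is immediate from \Cref{def:enhanced-localized-quotient-distance}; no Navier--Stokes input is needed. The plan is to rearrange the definition and use nonnegativity of the original quotient distance.

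First, note that \(\Dist_{\loc}(\mathfrak D,\Gamma_\Lambda^{\loc})\ge0\), since it is an infimum of norms. Next, check that \(\Err_{\rm src,q}^{\rm prs}(\mathfrak D)\) is a finite nonnegative real number, being an infimum of norms \(\|\mathfrak P_{\rm src}\|_{Y_{\rm prs}}\) of elements of \(Y_{\rm prs}\). This guarantees that both sides of the inequality are well defined in \([0,\infty)\). Dropping the nonnegative term then gives
\[
    \Dist_{\loc}^{\sharp}(\mathfrak D,\Gamma_\Lambda^{\loc})
    =
    \Dist_{\loc}(\mathfrak D,\Gamma_\Lambda^{\loc})
    +
    \alpha_{\rm src}\Err_{\rm src,q}^{\rm prs}(\mathfrak D)
    \ge
    \alpha_{\rm src}\Err_{\rm src,q}^{\rm prs}(\mathfrak D).
\]
Finally, divide by the fixed constant \(\alpha_{\rm src}>0\) to obtain the claimed bound.

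The only point that needs any care is finiteness. In the concrete model the infimum over \(\gamma\in\Gamma_\Lambda^{\loc}\) is bounded above by the value at \(\gamma=0\). So it suffices to note that \(\mathfrak P_{\rm src}\in Y_{\rm prs}\) whenever \(\mathfrak D\) is an admissible package, which is implicit in \Cref{def:concrete-pressure-source-mismatch}. If finiteness were not assumed, the inequality would still hold in \([0,\infty]\), with both sides infinite.
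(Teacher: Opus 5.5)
Your proposal is correct and matches the paper's proof: drop the nonnegative term $\Dist_{\loc}(\mathfrak D,\Gamma_\Lambda^{\loc})$ from the definition of the enhanced distance, then divide by $\alpha_{\rm src}>0$. Your extra remark on finiteness is harmless, since the inequality also holds in $[0,\infty]$, but the paper does not need it.
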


\begin{proof}
By \Cref{def:enhanced-localized-quotient-distance},
\[
    \Dist_{\loc}^{\sharp}
    (\mathfrak D,\Gamma_\Lambda^{\loc})
    \ge
    \alpha_{\rm src}
    \Err_{\rm src,q}^{\rm prs}(\mathfrak D).
\]
Dividing by \(\alpha_{\rm src}>0\) gives the result.
\end{proof}

\begin{assumption}[Enhanced local-to-clean transfer comparison]
\label{ass:enhanced-local-to-clean-transfer-comparison}
There are constants \(0\le\varepsilon_G<1\) and \(\delta_G\ge0\) such that
every \(\mathfrak D\in\calK_\Lambda^{\loc}\) satisfies
\[
    \Dist_{\cl}(\Theta_\Lambda\mathfrak D,\Gamma_\Lambda^{\cl})
    \ge
    (1-\varepsilon_G)
    \Dist_{\loc}^{\sharp}(\mathfrak D,\Gamma_\Lambda^{\loc})
    -
    \delta_G
\]
and
\[
    \calM_\Lambda^{\loc}(\mathfrak D)
    +
    \Err_\Lambda^{\loc}(\mathfrak D)
    \ge
    \calM_\Lambda^{\rm comp}(\Theta_\Lambda\mathfrak D).
\]
\end{assumption}

\begin{assumption}[Enhanced localized residual budget]
\label{ass:enhanced-localized-residual-budget}
There are constants \(\eta_\Lambda\ge0\) and \(\Delta_\Lambda\ge0\) such that
every \(\mathfrak D\in\calK_\Lambda^{\loc}\) satisfies
\[
    \Err_\Lambda^{\loc}(\mathfrak D)
    \le
    \eta_\Lambda
    \Dist_{\loc}^{\sharp}(\mathfrak D,\Gamma_\Lambda^{\loc})
    +
    \Delta_\Lambda.
\]
\end{assumption}

\begin{theorem}[Conditional enhanced-distance localized transfer]
\label{thm:enhanced-distance-localized-transfer}
Assume \(\calQ_\Lambda^{\cl}\ne\{0\}\),
\Cref{ass:clean-detector-gauge-compatibility},
\Cref{ass:kernel-free-computational-detector},
\Cref{ass:enhanced-local-to-clean-transfer-comparison}, and
\Cref{ass:enhanced-localized-residual-budget}.  Then every
\(\mathfrak D\in\calK_\Lambda^{\loc}\) satisfies
\[
\begin{aligned}
    \calM_\Lambda^{\loc}(\mathfrak D)
    \ge&
    \bigl(
        \mu_\Lambda^{\rm comp}(1-\varepsilon_G)
        -
        \eta_\Lambda
    \bigr)
    \Dist_{\loc}^{\sharp}
    (\mathfrak D,\Gamma_\Lambda^{\loc})\\
    &-
    \mu_\Lambda^{\rm comp}\delta_G
    -
    \Delta_\Lambda.
\end{aligned}
\]
\end{theorem}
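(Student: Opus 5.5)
The plan is to repeat the argument of \Cref{cor:localized-transfer-from-computational-gap} verbatim, with \(\Dist_{\loc}\) replaced by the enhanced distance \(\Dist_{\loc}^{\sharp}\) from \Cref{def:enhanced-localized-quotient-distance}. The clean part of that argument never uses the structure of the localized distance; it only uses the three displayed inequalities. The hypotheses here state exactly those inequalities with \(\Dist_{\loc}^{\sharp}\) in place of \(\Dist_{\loc}\).

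\textbf{Step 1: clean gap.} Fix \(\mathfrak D\in\calK_\Lambda^{\loc}\). Since \(\Theta_\Lambda\mathfrak D\in\calK_\Lambda^{\cl}\), \Cref{thm:clean-computational-antiphantom} (valid under \(\calQ_\Lambda^{\cl}\ne\{0\}\), \Cref{ass:clean-detector-gauge-compatibility}, and \Cref{ass:kernel-free-computational-detector}) gives
\[
    \calM_\Lambda^{\rm comp}(\Theta_\Lambda\mathfrak D)\ge\mu_\Lambda^{\rm comp}\Dist_{\cl}(\Theta_\Lambda\mathfrak D,\Gamma_\Lambda^{\cl}),
\]
with \(\mu_\Lambda^{\rm comp}>0\).

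\textbf{Step 2: quotient comparison.} Insert the first inequality of \Cref{ass:enhanced-local-to-clean-transfer-comparison}. Multiplying it by \(\mu_\Lambda^{\rm comp}\ge0\) preserves its direction, so
\[
    \calM_\Lambda^{\rm comp}(\Theta_\Lambda\mathfrak D)\ge\mu_\Lambda^{\rm comp}(1-\varepsilon_G)\Dist_{\loc}^{\sharp}(\mathfrak D,\Gamma_\Lambda^{\loc})-\mu_\Lambda^{\rm comp}\delta_G.
\]

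\textbf{Step 3: detector comparison.} The second inequality of \Cref{ass:enhanced-local-to-clean-transfer-comparison} gives
\[
    \calM_\Lambda^{\loc}(\mathfrak D)\ge\calM_\Lambda^{\rm comp}(\Theta_\Lambda\mathfrak D)-\Err_\Lambda^{\loc}(\mathfrak D).
\]

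\textbf{Step 4: residual budget.} Bound the residual from above with \Cref{ass:enhanced-localized-residual-budget}:
\[
    \Err_\Lambda^{\loc}(\mathfrak D)\le\eta_\Lambda\Dist_{\loc}^{\sharp}(\mathfrak D,\Gamma_\Lambda^{\loc})+\Delta_\Lambda.
\]
Chaining Steps 2 through 4 and collecting the coefficient of \(\Dist_{\loc}^{\sharp}\) yields the stated inequality.

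There is no genuine obstacle. The only point to check is that Step 2 multiplies by a nonnegative constant. The sign of \(\mu_\Lambda^{\rm comp}(1-\varepsilon_G)-\eta_\Lambda\) never enters, because the inequality is asserted for all \(\mathfrak D\), and every step is a direct substitution that does not divide by this coefficient. It also helps to note that \(\Dist_{\loc}^{\sharp}\) is well defined and finite, being a sum of nonnegative finite quantities. No property of \(\Err_{\rm src,q}^{\rm prs}\) is needed beyond what the assumptions already encode.
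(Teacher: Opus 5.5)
Your proposal is correct and matches the paper's proof: apply \Cref{thm:clean-computational-antiphantom} to \(\Theta_\Lambda\mathfrak D\), insert the enhanced quotient comparison scaled by \(\mu_\Lambda^{\rm comp}\ge0\), use the detector comparison, and then bound \(\Err_\Lambda^{\loc}(\mathfrak D)\) with \Cref{ass:enhanced-localized-residual-budget}. Your observation that the sign of \(\mu_\Lambda^{\rm comp}(1-\varepsilon_G)-\eta_\Lambda\) never enters is also accurate.
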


\begin{proof}
The proof is the same algebraic transfer argument as in
\Cref{cor:localized-transfer-from-computational-gap}, with
\(\Dist_{\loc}\) replaced by \(\Dist_{\loc}^{\sharp}\).  Applying
\Cref{thm:clean-computational-antiphantom} to
\(\Theta_\Lambda\mathfrak D\) gives
\[
    \calM_\Lambda^{\rm comp}(\Theta_\Lambda\mathfrak D)
    \ge
    \mu_\Lambda^{\rm comp}
    \Dist_{\cl}(\Theta_\Lambda\mathfrak D,\Gamma_\Lambda^{\cl}).
\]
The enhanced quotient comparison in
\Cref{ass:enhanced-local-to-clean-transfer-comparison} implies
\[
    \calM_\Lambda^{\rm comp}(\Theta_\Lambda\mathfrak D)
    \ge
    \mu_\Lambda^{\rm comp}
    \bigl(
        (1-\varepsilon_G)
        \Dist_{\loc}^{\sharp}(\mathfrak D,\Gamma_\Lambda^{\loc})
        -
        \delta_G
    \bigr).
\]
Using the detector comparison from the same assumption and then subtracting
the residual term gives
\[
    \calM_\Lambda^{\loc}(\mathfrak D)
    \ge
    \mu_\Lambda^{\rm comp}(1-\varepsilon_G)
    \Dist_{\loc}^{\sharp}(\mathfrak D,\Gamma_\Lambda^{\loc})
    -
    \mu_\Lambda^{\rm comp}\delta_G
    -
    \Err_\Lambda^{\loc}(\mathfrak D).
\]
Finally apply \Cref{ass:enhanced-localized-residual-budget}.  This proves the
displayed estimate.
\end{proof}

\begin{remark}[Meaning of enhanced pressure-source observability]
\Cref{lem:enhanced-distance-pressure-source-observability} proves
pressure-source observability only relative to the enhanced quotient geometry.
It does not show that the original localized quotient distance
\(\Dist_{\loc}\) controls \(\Err_{\rm src,q}^{\rm prs}\).  It also makes no
scale-uniform claim and gives no Navier--Stokes regularity conclusion.
\end{remark}

\begin{remark}[Tradeoff]
The advantage of \(\Dist_{\loc}^{\sharp}\) is that pressure-source
observability is built into the quotient geometry.  The cost is that the
localized quotient geometry has changed.  A later theorem must either compare
\(\Dist_{\loc}^{\sharp}\) with the original localized quotient distance or
justify the enhanced distance as the natural defect distance for localized
Navier--Stokes packages.
\end{remark}

\subsection{One-sided comparison with the original localized distance}
\label{subsec:enhanced-original-distance-comparison}

The next question is whether the enhanced distance is merely a convenient
renorming of the original localized quotient distance, or whether it has added
a genuinely new pressure-source coordinate.  The present subsection proves
only a sufficient condition for one-sided comparison.  It does not prove that
condition from Navier--Stokes pressure geometry.

\begin{definition}[Quotient pressure-source component budget]
\label{def:quotient-pressure-source-component-budget}
For \(\gamma\in\Gamma_\Lambda^{\loc}\), let
\[
    C_\eta^\gamma,\qquad
    E_{{\rm act},\gamma}^{\rm src},\qquad
    E_{{\rm proj},\gamma}^{\cl},\qquad
    E_{{\rm harm},\gamma}
\]
denote the four pressure-source components in
\Cref{lem:pressure-source-mismatch-decomposition} evaluated on
\(\mathfrak D-\gamma\).  Define the quotient component budget by
\[
\begin{aligned}
    B_{\rm src,q}^{\rm prs}(\mathfrak D)
    :=
    \inf_{\gamma\in\Gamma_\Lambda^{\loc}}
    \bigl(
        &\|C_\eta^\gamma\|_{Y_{\rm prs}}
        +
        \|E_{{\rm act},\gamma}^{\rm src}\|_{Y_{\rm prs}}\\
        &+
        \|E_{{\rm proj},\gamma}^{\cl}\|_{Y_{\rm prs}}
        +
        \|E_{{\rm harm},\gamma}\|_{Y_{\rm prs}}
    \bigr).
\end{aligned}
\]
\end{definition}

\begin{lemma}[Component budget controls the quotient pressure residual]
\label{lem:component-budget-controls-pressure-residual}
For every localized package \(\mathfrak D\),
\[
    \Err_{\rm src,q}^{\rm prs}(\mathfrak D)
    \le
    B_{\rm src,q}^{\rm prs}(\mathfrak D).
\]
\end{lemma}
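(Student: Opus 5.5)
The plan is a pointwise-in-\(\gamma\) comparison followed by taking infima over \(\Gamma_\Lambda^{\loc}\). Fix a localized package \(\mathfrak D\) and an arbitrary gauge direction \(\gamma\in\Gamma_\Lambda^{\loc}\). Apply \Cref{lem:pressure-source-mismatch-decomposition} to the shifted package \(\mathfrak D-\gamma\). That lemma is a purely algebraic identity in the normalized model of \Cref{def:normalized-pressure-source-geometry}, valid for any localized velocity and clean package data, so it applies to \(\mathfrak D-\gamma\). The decomposition writes the mismatch of \(\mathfrak D-\gamma\) as \(C_\eta^\gamma+E_{{\rm act},\gamma}^{\rm src}+E_{{\rm proj},\gamma}^{\cl}+E_{{\rm harm},\gamma}\). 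The triangle inequality in \(Y_{\rm prs}\) recorded in that lemma then gives
\[
    \Err_{\rm src}^{\rm prs}(\mathfrak D-\gamma)
    \le
    \|C_\eta^\gamma\|_{Y_{\rm prs}}
    +\|E_{{\rm act},\gamma}^{\rm src}\|_{Y_{\rm prs}}
    +\|E_{{\rm proj},\gamma}^{\cl}\|_{Y_{\rm prs}}
    +\|E_{{\rm harm},\gamma}\|_{Y_{\rm prs}}.
\]

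Next, bound the left-hand side from below by the quotient residual. By \Cref{def:concrete-pressure-source-mismatch}, \(\Err_{\rm src,q}^{\rm prs}(\mathfrak D)\) is the infimum of \(\Err_{\rm src}^{\rm prs}(\mathfrak D-\gamma')\) over all \(\gamma'\in\Gamma_\Lambda^{\loc}\). Hence it is at most \(\Err_{\rm src}^{\rm prs}(\mathfrak D-\gamma)\) for our fixed \(\gamma\), and therefore at most the four-term component sum displayed above.

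The left side no longer depends on \(\gamma\). Taking the infimum over \(\gamma\in\Gamma_\Lambda^{\loc}\) on the right yields exactly \(B_{\rm src,q}^{\rm prs}(\mathfrak D)\) from \Cref{def:quotient-pressure-source-component-budget}, which proves the lemma. If the budget is \(+\infty\) the claim is trivial, and both sides are nonnegative, so no sign issues arise.

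The only point needing care is consistency of conventions, and I expect it to be the main (mild) obstacle. The components \(C_\eta^\gamma\) and the others must be defined from \(\mathfrak D-\gamma\) through the same data (\(f\), \(F^{\cl}\), \(h^{\rm harm}\)) that define \(\Err_{\rm src}^{\rm prs}(\mathfrak D-\gamma)\). Then the identity of \Cref{lem:pressure-source-mismatch-decomposition} holds for each shifted package, including the harmonic residual \(h^{\rm harm}\) associated with \(\mathfrak D-\gamma\). This is guaranteed by how \Cref{def:quotient-pressure-source-component-budget} is phrased. No linearity of the source map in \(\gamma\) is needed, since the argument never compares different \(\gamma\)'s.
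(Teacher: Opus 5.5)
Your proposal is correct and follows essentially the same route as the paper's proof. You apply \Cref{lem:pressure-source-mismatch-decomposition} and the triangle inequality to \(\mathfrak D-\gamma\) for each fixed \(\gamma\), then take the infimum over \(\Gamma_\Lambda^{\loc}\). You also make explicit the step \(\Err_{\rm src,q}^{\rm prs}(\mathfrak D)\le\Err_{\rm src}^{\rm prs}(\mathfrak D-\gamma)\), which the paper leaves implicit.
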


\begin{proof}
Fix \(\gamma\in\Gamma_\Lambda^{\loc}\).  Applying
\Cref{lem:pressure-source-mismatch-decomposition} to
\(\mathfrak D-\gamma\) gives
\[
\begin{aligned}
    \Err_{\rm src}^{\rm prs}(\mathfrak D-\gamma)
    \le&
        \|C_\eta^\gamma\|_{Y_{\rm prs}}
        +
        \|E_{{\rm act},\gamma}^{\rm src}\|_{Y_{\rm prs}}\\
        &+
        \|E_{{\rm proj},\gamma}^{\cl}\|_{Y_{\rm prs}}
        +
        \|E_{{\rm harm},\gamma}\|_{Y_{\rm prs}}.
\end{aligned}
\]
Taking the infimum over \(\gamma\in\Gamma_\Lambda^{\loc}\) gives the
displayed inequality.
\end{proof}

\begin{assumption}[Original-distance pressure-source component budget]
\label{ass:original-distance-pressure-source-component-budget}
There are constants
\[
    C_{\rm cmp}<\infty,
    \qquad
    \Delta_{\rm cmp}\ge0,
\]
such that every localized package \(\mathfrak D\in\calK_\Lambda^{\loc}\)
satisfies
\[
    B_{\rm src,q}^{\rm prs}(\mathfrak D)
    \le
    C_{\rm cmp}
    \Dist_{\loc}(\mathfrak D,\Gamma_\Lambda^{\loc})
    +
    \Delta_{\rm cmp}.
\]
\end{assumption}

\begin{proposition}[Conditional one-sided comparison of localized distances]
\label{prop:conditional-enhanced-original-distance-comparison}
Assume \Cref{ass:original-distance-pressure-source-component-budget}.  Then
every localized package \(\mathfrak D\in\calK_\Lambda^{\loc}\) satisfies
\[
    \Dist_{\loc}(\mathfrak D,\Gamma_\Lambda^{\loc})
    \le
    \Dist_{\loc}^{\sharp}(\mathfrak D,\Gamma_\Lambda^{\loc})
\]
and
\[
    \Dist_{\loc}^{\sharp}(\mathfrak D,\Gamma_\Lambda^{\loc})
    \le
    \bigl(1+\alpha_{\rm src}C_{\rm cmp}\bigr)
    \Dist_{\loc}(\mathfrak D,\Gamma_\Lambda^{\loc})
    +
    \alpha_{\rm src}\Delta_{\rm cmp}.
\]
In particular, if \(\Delta_{\rm cmp}=0\), then the enhanced distance is
bounded above and below by fixed multiples of the original distance in this
finite-window model.
\end{proposition}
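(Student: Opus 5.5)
The plan is to read both inequalities directly off \Cref{def:enhanced-localized-quotient-distance}. The lower bound needs no hypothesis. Since \(\Err_{\rm src}^{\rm prs}(\mathfrak D-\gamma)=\|\mathfrak P_{\rm src}\|_{Y_{\rm prs}}\ge0\) for every \(\gamma\in\Gamma_\Lambda^{\loc}\), the infimum \(\Err_{\rm src,q}^{\rm prs}(\mathfrak D)\) is nonnegative. It is also finite, because the infimum is over a nonempty set: \(0\in\Gamma_\Lambda^{\loc}\). Together with \(\alpha_{\rm src}>0\), this gives
\[
    \Dist_{\loc}^{\sharp}(\mathfrak D,\Gamma_\Lambda^{\loc})
    =
    \Dist_{\loc}(\mathfrak D,\Gamma_\Lambda^{\loc})
    +
    \alpha_{\rm src}\Err_{\rm src,q}^{\rm prs}(\mathfrak D)
    \ge
    \Dist_{\loc}(\mathfrak D,\Gamma_\Lambda^{\loc}).
\]

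For the upper bound, I would chain \Cref{lem:component-budget-controls-pressure-residual} with \Cref{ass:original-distance-pressure-source-component-budget}:
\[
    \Err_{\rm src,q}^{\rm prs}(\mathfrak D)
    \le
    B_{\rm src,q}^{\rm prs}(\mathfrak D)
    \le
    C_{\rm cmp}\Dist_{\loc}(\mathfrak D,\Gamma_\Lambda^{\loc})
    +
    \Delta_{\rm cmp}.
\]
Multiplying by \(\alpha_{\rm src}>0\) and adding \(\Dist_{\loc}(\mathfrak D,\Gamma_\Lambda^{\loc})\) to both sides yields the stated bound with constant \(1+\alpha_{\rm src}C_{\rm cmp}\) and additive term \(\alpha_{\rm src}\Delta_{\rm cmp}\).

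For the final assertion, set \(\Delta_{\rm cmp}=0\). The two inequalities then read
\[
    \Dist_{\loc}
    \le
    \Dist_{\loc}^{\sharp}
    \le
    (1+\alpha_{\rm src}C_{\rm cmp})\Dist_{\loc},
\]
which is the claimed two-sided equivalence with fixed multiples.

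There is no substantive obstacle; all the analytic content sits in \Cref{ass:original-distance-pressure-source-component-budget}. The only step needing care is the lower bound, where one must check that \(\Err_{\rm src,q}^{\rm prs}\) is nonnegative and finite. Nonnegativity holds because it is an infimum of norms, and finiteness holds because the infimum is over a nonempty set. This ensures the enhanced distance is a well-defined real number and not merely a formal sum.
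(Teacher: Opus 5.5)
Your proposal is correct and follows the paper's proof: the lower bound comes from the nonnegativity of $\Err_{\rm src,q}^{\rm prs}$ in the definition of $\Dist_{\loc}^{\sharp}$. The upper bound comes from chaining the component-budget lemma with the assumed budget estimate and multiplying by $\alpha_{\rm src}$, exactly as the paper does; your extra finiteness remark is harmless (and finiteness also follows from the assumed upper bound itself).
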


\begin{proof}
The lower bound follows directly from
\Cref{def:enhanced-localized-quotient-distance}, since
\(\Err_{\rm src,q}^{\rm prs}\ge0\).  For the upper bound, combine
\Cref{def:enhanced-localized-quotient-distance},
\Cref{lem:component-budget-controls-pressure-residual}, and
\Cref{ass:original-distance-pressure-source-component-budget}:
\[
\begin{aligned}
    \Dist_{\loc}^{\sharp}(\mathfrak D,\Gamma_\Lambda^{\loc})
    &=
    \Dist_{\loc}(\mathfrak D,\Gamma_\Lambda^{\loc})
    +
    \alpha_{\rm src}
    \Err_{\rm src,q}^{\rm prs}(\mathfrak D)\\
    &\le
    \Dist_{\loc}(\mathfrak D,\Gamma_\Lambda^{\loc})
    +
    \alpha_{\rm src}
    B_{\rm src,q}^{\rm prs}(\mathfrak D)\\
    &\le
    \bigl(1+\alpha_{\rm src}C_{\rm cmp}\bigr)
    \Dist_{\loc}(\mathfrak D,\Gamma_\Lambda^{\loc})
    +
    \alpha_{\rm src}\Delta_{\rm cmp}.
\end{aligned}
\]
If \(\Delta_{\rm cmp}=0\), this gives the stated two-sided multiplicative
comparison.
\end{proof}

\begin{remark}[Status of the comparison condition]
\Cref{prop:conditional-enhanced-original-distance-comparison} does not prove
that the original localized quotient distance controls the pressure-source
residual.  It identifies the exact componentwise pressure estimate needed for
that conclusion.  If \Cref{ass:original-distance-pressure-source-component-budget}
fails, then the pressure-source mismatch is not merely a bookkeeping term for
the original distance; it is evidence that the enhanced pressure-source
geometry may be a genuinely different localized defect geometry.
\end{remark}

\subsection{Concrete localized quotient norm: Model A}
\label{subsec:model-a-localized-quotient-norm}

We now make one explicit finite-window norm choice.  This is not presented as
the unique or canonical localized norm.  It is a model geometry whose purpose
is to make clear what changes when pressure-source observability is built
directly into the localized quotient distance.

\begin{definition}[Model A: weighted finite-dimensional localized coordinate norm]
\label{def:model-a-localized-coordinate-norm}
For each \(k\in\Lambda\), fix finite-dimensional normed spaces
\[
    V_{U,k},\quad
    V_{P,k},\quad
    V_{R,k},\quad
    V_{\Pi,k},\quad
    V_{\Phi,k},\quad
    V_{T,k},\quad
    V_{s,k},
\]
with norms
\[
    \|\cdot\|_{U,k},\quad
    \|\cdot\|_{P,k},\quad
    \|\cdot\|_{R,k},\quad
    \|\cdot\|_{\Pi,k},\quad
    \|\cdot\|_{\Phi,k},\quad
    \|\cdot\|_{T,k},\quad
    \|\cdot\|_{s,k}.
\]
A Model A localized package is a finite-window coordinate vector
\[
    D
    =
    (U_k,P_k^{\rm act},R_k,\Pi_k,\Phi_k,T_k,s_k)_{k\in\Lambda},
\]
where
\[
\begin{gathered}
    U_k\in V_{U,k},\qquad
    P_k^{\rm act}\in V_{P,k},\qquad
    R_k\in V_{R,k},\\
    \Pi_k\in V_{\Pi,k},\qquad
    \Phi_k\in V_{\Phi,k},\qquad
    T_k\in V_{T,k},\qquad
    s_k\in V_{s,k}.
\end{gathered}
\]
Here \(U_k\) is the localized velocity coordinate,
\(P_k^{\rm act}\) is the active pressure coordinate, \(R_k\) is the
Reynolds or covariance coordinate, \(\Pi_k\) is the flux coordinate,
\(\Phi_k\) is the energy or trace coordinate, \(T_k\) is the selected trace
coordinate, and \(s_k\) is the ledger slack coordinate.  Fix positive weights
\[
    w_U,w_P,w_R,w_\Pi,w_\Phi,w_T,w_s>0.
\]
Define
\[
\begin{aligned}
    \|D\|_{\loc,0}^2
    :=
    \sum_{k\in\Lambda}
    \bigl(
        &w_U\|U_k\|_{U,k}^2
        +
        w_P\|P_k^{\rm act}\|_{P,k}^2
        +
        w_R\|R_k\|_{R,k}^2\\
        &+
        w_\Pi\|\Pi_k\|_{\Pi,k}^2
        +
        w_\Phi\|\Phi_k\|_{\Phi,k}^2
        +
        w_T\|T_k\|_{T,k}^2
        +
        w_s\|s_k\|_{s,k}^2
    \bigr).
\end{aligned}
\]
Let \(\Gamma_\Lambda^{\loc}\) be a linear localized gauge subspace of this
finite-dimensional coordinate space.  The Model A baseline localized quotient
distance is
\[
    \Dist_{\loc,0}(D,\Gamma_\Lambda^{\loc})
    :=
    \inf_{\gamma\in\Gamma_\Lambda^{\loc}}
    \|D-\gamma\|_{\loc,0}.
\]
\end{definition}

\begin{definition}[Model A pressure-source quotient residual]
\label{def:model-a-pressure-source-quotient-residual}
Fix a nonnegative positively homogeneous pressure-source residual
\[
    \Err_{\rm src}^{\rm prs}:D\mapsto[0,\infty).
\]
In applications this is intended to be the pressure-source residual associated
with the concrete pressure mismatch of
\Cref{def:concrete-pressure-source-mismatch}.  In Model A, its positive
homogeneity is part of the finite-dimensional datum; it is not derived here
from the nonlinear Navier--Stokes source \(u_i u_j\).  Define
\[
    \Err_{\rm src,q}^{\rm prs}(D)
    :=
    \inf_{\gamma\in\Gamma_\Lambda^{\loc}}
    \Err_{\rm src}^{\rm prs}(D-\gamma).
\]
\end{definition}

\begin{definition}[Model A enhanced localized quotient distance]
\label{def:model-a-enhanced-localized-distance}
Fix \(\alpha_{\rm src}>0\).  Define
\[
    \Dist_{\loc,\alpha}^{\sharp}
    (D,\Gamma_\Lambda^{\loc})
    :=
    \Dist_{\loc,0}(D,\Gamma_\Lambda^{\loc})
    +
    \alpha_{\rm src}
    \Err_{\rm src,q}^{\rm prs}(D).
\]
\end{definition}

\begin{lemma}[Model A pressure-source observability]
\label{lem:model-a-pressure-source-observability}
For every Model A localized package \(D\),
\[
    \Err_{\rm src,q}^{\rm prs}(D)
    \le
    \alpha_{\rm src}^{-1}
    \Dist_{\loc,\alpha}^{\sharp}
    (D,\Gamma_\Lambda^{\loc}).
\]
\end{lemma}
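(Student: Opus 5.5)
The argument is the same one-line comparison used for \Cref{lem:enhanced-distance-pressure-source-observability}, now carried out inside Model A. I will start from \Cref{def:model-a-enhanced-localized-distance}, which writes the enhanced distance as the sum of two terms:
\[
    \Dist_{\loc,\alpha}^{\sharp}(D,\Gamma_\Lambda^{\loc})
    =
    \Dist_{\loc,0}(D,\Gamma_\Lambda^{\loc})
    +
    \alpha_{\rm src}\Err_{\rm src,q}^{\rm prs}(D).
\]
The first term is nonnegative, because it is an infimum of the norms \(\|D-\gamma\|_{\loc,0}\ge0\) from \Cref{def:model-a-localized-coordinate-norm}. Dropping it therefore gives
\[
    \Dist_{\loc,\alpha}^{\sharp}(D,\Gamma_\Lambda^{\loc})
    \ge
    \alpha_{\rm src}\Err_{\rm src,q}^{\rm prs}(D).
\]

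The second step checks that \(\Err_{\rm src,q}^{\rm prs}(D)\) is a finite, nonnegative real number, so that the inequality has a meaning and can be divided. This follows from \Cref{def:model-a-pressure-source-quotient-residual}: the raw residual \(\Err_{\rm src}^{\rm prs}\) takes values in \([0,\infty)\), and \(\Gamma_\Lambda^{\loc}\) contains \(0\). Hence the infimum lies in \([0,\Err_{\rm src}^{\rm prs}(D)]\). Since \(\alpha_{\rm src}>0\) is fixed, dividing by it gives the claimed bound with constant \(\alpha_{\rm src}^{-1}\).

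I do not expect a genuine obstacle. The statement is built into the definition of the enhanced Model A distance, and no homogeneity, continuity, or gauge invariance of \(\Err_{\rm src}^{\rm prs}\) is needed. The one point to state explicitly is the finiteness of the quotient residual, since otherwise the bound could read \(\infty\le\infty\); it is handled by the choice \(\gamma=0\) above. I will also remark that, as in the abstract case, this gives observability only relative to \(\Dist_{\loc,\alpha}^{\sharp}\) and not relative to the baseline \(\Dist_{\loc,0}\).
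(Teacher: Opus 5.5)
Your proposal is correct and matches the paper's proof: both drop the nonnegative baseline term $\Dist_{\loc,0}(D,\Gamma_\Lambda^{\loc})$ from \Cref{def:model-a-enhanced-localized-distance} and then divide by $\alpha_{\rm src}>0$. Your extra check that $\Err_{\rm src,q}^{\rm prs}(D)$ is finite, via the competitor $\gamma=0$, is a harmless addition that the paper leaves implicit.
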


\begin{proof}
By \Cref{def:model-a-enhanced-localized-distance},
\[
    \Dist_{\loc,\alpha}^{\sharp}
    (D,\Gamma_\Lambda^{\loc})
    \ge
    \alpha_{\rm src}
    \Err_{\rm src,q}^{\rm prs}(D).
\]
Divide by \(\alpha_{\rm src}>0\).
\end{proof}

\begin{lemma}[Homogeneity and gauge vanishing in Model A]
\label{lem:model-a-homogeneity-gauge-vanishing}
For every \(\lambda\ge0\) and every Model A localized package \(D\),
\[
    \Dist_{\loc,\alpha}^{\sharp}
    (\lambda D,\Gamma_\Lambda^{\loc})
    =
    \lambda
    \Dist_{\loc,\alpha}^{\sharp}
    (D,\Gamma_\Lambda^{\loc}).
\]
Moreover, if \(D\in\Gamma_\Lambda^{\loc}\), then
\[
    \Dist_{\loc,\alpha}^{\sharp}(D,\Gamma_\Lambda^{\loc})=0.
\]
\end{lemma}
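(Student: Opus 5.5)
The plan is to treat the two summands of \(\Dist_{\loc,\alpha}^{\sharp}\) from \Cref{def:model-a-enhanced-localized-distance} separately. For each, I will show homogeneity under \(\lambda\ge0\) and vanishing on gauge elements, and then add the results. The one structural fact used throughout is that \(\Gamma_\Lambda^{\loc}\) is a linear subspace. Hence for \(\lambda>0\) the map \(\gamma\mapsto\lambda\gamma\) is a bijection of \(\Gamma_\Lambda^{\loc}\), and for any fixed \(\gamma_0\in\Gamma_\Lambda^{\loc}\) the map \(\gamma\mapsto\gamma+\gamma_0\) is a bijection as well.

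For homogeneity with \(\lambda>0\), I substitute \(\gamma=\lambda\gamma'\) in the infimum defining \(\Dist_{\loc,0}\). This gives \(\|\lambda D-\lambda\gamma'\|_{\loc,0}=\lambda\|D-\gamma'\|_{\loc,0}\), since \(\|\cdot\|_{\loc,0}\) is a norm. The norm property holds because it is a weighted \(\ell^2\) combination of norms with positive weights. Pulling \(\lambda\) out of the infimum yields \(\Dist_{\loc,0}(\lambda D,\Gamma_\Lambda^{\loc})=\lambda\Dist_{\loc,0}(D,\Gamma_\Lambda^{\loc})\).

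The same substitution applies to \(\Err_{\rm src,q}^{\rm prs}\). Here I use the positive homogeneity of \(\Err_{\rm src}^{\rm prs}\), which \Cref{def:model-a-pressure-source-quotient-residual} stipulates as part of the Model A datum: \(\Err_{\rm src}^{\rm prs}(\lambda(D-\gamma'))=\lambda\Err_{\rm src}^{\rm prs}(D-\gamma')\). Taking infima gives the claim for this summand, and adding the two summands gives homogeneity of \(\Dist_{\loc,\alpha}^{\sharp}\) for \(\lambda>0\).

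The case \(\lambda=0\) reduces to the gauge-vanishing claim, because \(0\in\Gamma_\Lambda^{\loc}\). Both sides are then zero once gauge vanishing is established.

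For gauge vanishing, take \(D\in\Gamma_\Lambda^{\loc}\). Choosing \(\gamma=D\) in both infima gives \(\Dist_{\loc,0}(D,\Gamma_\Lambda^{\loc})\le\|0\|_{\loc,0}=0\) and \(\Err_{\rm src,q}^{\rm prs}(D)\le\Err_{\rm src}^{\rm prs}(0)\). Positive homogeneity with \(\lambda=0\) forces \(\Err_{\rm src}^{\rm prs}(0)=0\). Nonnegativity of both terms then gives equality with zero, and since \(\alpha_{\rm src}>0\) is fixed, \(\Dist_{\loc,\alpha}^{\sharp}(D,\Gamma_\Lambda^{\loc})=0\).

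The only point needing care is \(\Err_{\rm src}^{\rm prs}(0)=0\). If positive homogeneity is read only for \(\lambda>0\), I would instead get it from \(\Err_{\rm src}^{\rm prs}(0)=\Err_{\rm src}^{\rm prs}(2\cdot0)=2\,\Err_{\rm src}^{\rm prs}(0)\), which forces the value to be \(0\) because it is finite and nonnegative. Apart from this, the argument is routine.
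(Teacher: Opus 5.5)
Your proposal is correct and follows the paper's argument essentially step for step. For \(\lambda>0\) you substitute \(\gamma=\lambda\gamma'\) in both infima, using that \(\Gamma_\Lambda^{\loc}\) is linear and that \(\|\cdot\|_{\loc,0}\) and \(\Err_{\rm src}^{\rm prs}\) are positively homogeneous; for gauge vanishing you choose \(\gamma=D\) and use \(\Err_{\rm src}^{\rm prs}(0)=0\). Treating \(\lambda=0\) through gauge vanishing at \(0\in\Gamma_\Lambda^{\loc}\), and the doubling argument for \(\Err_{\rm src}^{\rm prs}(0)=0\), are harmless minor variants of the paper's direct handling of that case.
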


\begin{proof}
Because \(\Gamma_\Lambda^{\loc}\) is a linear subspace and
\(\|\cdot\|_{\loc,0}\) is a norm, the baseline quotient distance is
positively homogeneous:
\[
    \Dist_{\loc,0}(\lambda D,\Gamma_\Lambda^{\loc})
    =
    \lambda
    \Dist_{\loc,0}(D,\Gamma_\Lambda^{\loc})
    \qquad(\lambda\ge0).
\]
The same argument applies to the quotient pressure-source residual.  For
\(\lambda>0\), use the change of variables
\(\gamma=\lambda\gamma'\) in the infimum and the positive homogeneity of
\(\Err_{\rm src}^{\rm prs}\):
\[
\begin{aligned}
    \Err_{\rm src,q}^{\rm prs}(\lambda D)
    &=
    \inf_{\gamma\in\Gamma_\Lambda^{\loc}}
    \Err_{\rm src}^{\rm prs}(\lambda D-\gamma)\\
    &=
    \lambda
    \inf_{\gamma'\in\Gamma_\Lambda^{\loc}}
    \Err_{\rm src}^{\rm prs}(D-\gamma')
    =
    \lambda\Err_{\rm src,q}^{\rm prs}(D).
\end{aligned}
\]
The case \(\lambda=0\) follows from positive homogeneity, which gives
\(\Err_{\rm src}^{\rm prs}(0)=0\), and from \(\|0\|_{\loc,0}=0\).
Adding the two homogeneous terms proves the first claim.

If \(D\in\Gamma_\Lambda^{\loc}\), then choosing \(\gamma=D\) gives
\(\Dist_{\loc,0}(D,\Gamma_\Lambda^{\loc})=0\) and
\(\Err_{\rm src,q}^{\rm prs}(D)=0\).  Hence
\(\Dist_{\loc,\alpha}^{\sharp}(D,\Gamma_\Lambda^{\loc})=0\).
\end{proof}

\begin{remark}[Seminorm and gauge status]
\Cref{lem:model-a-homogeneity-gauge-vanishing} proves the formal
homogeneity and gauge-vanishing properties of Model A.  The object
\(\Dist_{\loc,\alpha}^{\sharp}\) should nevertheless be read as a
finite-window quotient gauge distance.  It is a genuine norm on the intended
physical quotient only after the pressure-source residual and the localized
gauge subspace are verified to be compatible with the same physical gauge and
to separate non-gauge classes.  That compatibility is not proved here.
\end{remark}

\begin{assumption}[Model A enhanced transfer comparison]
\label{ass:model-a-enhanced-transfer-comparison}
There are constants \(0\le\varepsilon_G<1\) and \(\delta_G\ge0\) such that
every Model A localized package \(D\) satisfies
\[
    \Dist_{\cl}(\Theta_\Lambda D,\Gamma_\Lambda^{\cl})
    \ge
    (1-\varepsilon_G)
    \Dist_{\loc,\alpha}^{\sharp}(D,\Gamma_\Lambda^{\loc})
    -
    \delta_G
\]
and
\[
    \calM_\Lambda^{\loc}(D)
    +
    \Err_\Lambda^{\loc}(D)
    \ge
    \calM_\Lambda^{\rm comp}(\Theta_\Lambda D).
\]
\end{assumption}

\begin{assumption}[Model A enhanced residual budget]
\label{ass:model-a-enhanced-residual-budget}
There are constants \(\eta_\Lambda\ge0\) and \(\Delta_\Lambda\ge0\) such that
every Model A localized package \(D\) satisfies
\[
    \Err_\Lambda^{\loc}(D)
    \le
    \eta_\Lambda
    \Dist_{\loc,\alpha}^{\sharp}(D,\Gamma_\Lambda^{\loc})
    +
    \Delta_\Lambda.
\]
\end{assumption}

\begin{theorem}[Conditional Model A enhanced-distance localized transfer]
\label{thm:model-a-enhanced-distance-localized-transfer}
Assume \(\calQ_\Lambda^{\cl}\ne\{0\}\),
\Cref{ass:clean-detector-gauge-compatibility},
\Cref{ass:kernel-free-computational-detector},
\Cref{ass:model-a-enhanced-transfer-comparison}, and
\Cref{ass:model-a-enhanced-residual-budget}.  Then every Model A localized
package \(D\) satisfies
\[
\begin{aligned}
    \calM_\Lambda^{\loc}(D)
    \ge&
    \bigl(
        \mu_\Lambda^{\rm comp}(1-\varepsilon_G)
        -
        \eta_\Lambda
    \bigr)
    \Dist_{\loc,\alpha}^{\sharp}
    (D,\Gamma_\Lambda^{\loc})\\
    &-
    \mu_\Lambda^{\rm comp}\delta_G
    -
    \Delta_\Lambda.
\end{aligned}
\]
\end{theorem}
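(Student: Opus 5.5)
The argument will be the same three-step algebraic transfer used for \Cref{cor:localized-transfer-from-computational-gap} and \Cref{thm:enhanced-distance-localized-transfer}, with the localized distance replaced by the Model A enhanced distance \(\Dist_{\loc,\alpha}^{\sharp}\). No property of Model A beyond the two Model A assumptions is needed. The structure in \Cref{def:model-a-localized-coordinate-norm} and \Cref{lem:model-a-homogeneity-gauge-vanishing} only serves to make the hypotheses meaningful.

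First, fix a Model A package \(D\) and apply \Cref{thm:clean-computational-antiphantom} to the clean element \(\Theta_\Lambda D\in\calK_\Lambda^{\cl}\). This is legitimate because \(\calQ_\Lambda^{\cl}\ne\{0\}\), \Cref{ass:clean-detector-gauge-compatibility} and \Cref{ass:kernel-free-computational-detector} are assumed. It gives
\[
\calM_\Lambda^{\rm comp}(\Theta_\Lambda D)\ge\mu_\Lambda^{\rm comp}\Dist_{\cl}(\Theta_\Lambda D,\Gamma_\Lambda^{\cl}),
\]
with \(\mu_\Lambda^{\rm comp}>0\).

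Second, insert the quotient comparison from \Cref{ass:model-a-enhanced-transfer-comparison}. Multiplying by \(\mu_\Lambda^{\rm comp}\ge0\) preserves the inequality, which yields
\[
\calM_\Lambda^{\rm comp}(\Theta_\Lambda D)\ge\mu_\Lambda^{\rm comp}(1-\varepsilon_G)\Dist_{\loc,\alpha}^{\sharp}(D,\Gamma_\Lambda^{\loc})-\mu_\Lambda^{\rm comp}\delta_G.
\]
Then use the detector comparison in the same assumption, \(\calM_\Lambda^{\loc}(D)\ge\calM_\Lambda^{\rm comp}(\Theta_\Lambda D)-\Err_\Lambda^{\loc}(D)\). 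This gives a lower bound on \(\calM_\Lambda^{\loc}(D)\) with the residual \(\Err_\Lambda^{\loc}(D)\) subtracted.

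Third, bound \(\Err_\Lambda^{\loc}(D)\) from above by \Cref{ass:model-a-enhanced-residual-budget}, namely by \(\eta_\Lambda\Dist_{\loc,\alpha}^{\sharp}(D,\Gamma_\Lambda^{\loc})+\Delta_\Lambda\). Collecting the coefficients of \(\Dist_{\loc,\alpha}^{\sharp}\) produces the displayed estimate.

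There is no genuine obstacle. The only point to check is the sign in the second step: \(\mu_\Lambda^{\rm comp}\) must be nonnegative so that multiplying the comparison inequality is valid, and the positivity from \Cref{thm:kernel-free-characterization} guarantees this. The statement asserts no threshold condition, so none needs to be verified.
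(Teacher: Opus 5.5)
Your proposal is correct and matches the paper's proof step for step. You apply \Cref{thm:clean-computational-antiphantom} to $\Theta_\Lambda D$, insert the quotient and detector comparisons of \Cref{ass:model-a-enhanced-transfer-comparison}, and close with \Cref{ass:model-a-enhanced-residual-budget}; the multiplication step only needs $\mu_\Lambda^{\rm comp}\ge0$, which already holds because it is an infimum of nonnegative quantities.
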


\begin{proof}
This is the localized transfer argument with the Model A distance
\(\Dist_{\loc,\alpha}^{\sharp}\) in place of the abstract localized distance.
By \Cref{thm:clean-computational-antiphantom},
\[
    \calM_\Lambda^{\rm comp}(\Theta_\Lambda D)
    \ge
    \mu_\Lambda^{\rm comp}
    \Dist_{\cl}(\Theta_\Lambda D,\Gamma_\Lambda^{\cl}).
\]
Using \Cref{ass:model-a-enhanced-transfer-comparison} gives
\[
    \calM_\Lambda^{\rm comp}(\Theta_\Lambda D)
    \ge
    \mu_\Lambda^{\rm comp}
    \bigl(
        (1-\varepsilon_G)
        \Dist_{\loc,\alpha}^{\sharp}(D,\Gamma_\Lambda^{\loc})
        -
        \delta_G
    \bigr).
\]
The detector comparison in
\Cref{ass:model-a-enhanced-transfer-comparison} then implies
\[
    \calM_\Lambda^{\loc}(D)
    \ge
    \mu_\Lambda^{\rm comp}(1-\varepsilon_G)
    \Dist_{\loc,\alpha}^{\sharp}(D,\Gamma_\Lambda^{\loc})
    -
    \mu_\Lambda^{\rm comp}\delta_G
    -
    \Err_\Lambda^{\loc}(D).
\]
Applying \Cref{ass:model-a-enhanced-residual-budget} gives the stated
estimate.
\end{proof}

\begin{remark}[Model A tradeoff]
Model A is a finite-window norm choice.  It makes pressure-source
observability part of the localized quotient geometry.  The remaining PDE
problem is to justify that this norm is natural for localized
Navier--Stokes packages, or to compare it with a more intrinsic quotient norm
derived from pressure splitting and local energy estimates.  No uniqueness or
canonicality of Model A, no scale-uniformity, and no Navier--Stokes regularity
conclusion is claimed.
\end{remark}

\subsection{Intrinsic localized norm candidate and Model A comparison}
\label{subsec:intrinsic-norm-candidate-model-a-comparison}

We next record the minimal comparison statement that would make Model A a
controlled finite-window coordinate norm.  The intrinsic norm below is still a
model datum.  Its role is to separate a purely finite-dimensional coordinate
comparison from the PDE problem of deriving such a norm from suitable weak
solutions, pressure splitting, and the local energy inequality.

\begin{definition}[Intrinsic localized package norm candidate]
\label{def:intrinsic-localized-package-norm-candidate}
For each \(k\in\Lambda\), let
\[
    \mathcal U_k,\quad
    \mathcal P_k,\quad
    \mathcal R_k,\quad
    \mathcal F_k,\quad
    \mathcal E_k,\quad
    \mathcal T_k,\quad
    \mathcal S_k
\]
be normed spaces for intrinsic localized velocity, active pressure,
Reynolds/covariance, flux, energy or trace, selected trace, and slack data.
The intended PDE examples are
\[
    \mathcal U_k=L^3(Q_k)^3,\qquad
    \mathcal P_k=L^{3/2}(Q_k),\qquad
    \mathcal R_k=L^{3/2}(Q_k)^{3\times3},
\]
with the remaining factors given by the corresponding finite-window flux,
energy, trace, and slack observables.  No scale-uniform assertion is attached
to these choices.

An intrinsic localized package is
\[
    \mathcal D
    =
    (u_k,p_k^{\rm act},r_k,\pi_k,\phi_k,\tau_k,\sigma_k)_{k\in\Lambda},
\]
with components in the seven spaces above.  Fix positive intrinsic weights
\[
    a_U,a_P,a_R,a_\Pi,a_\Phi,a_T,a_s>0,
\]
and define
\[
\begin{aligned}
    \|\mathcal D\|_{\loc,{\rm int}}^2
    :=
    \sum_{k\in\Lambda}
    \bigl(
        &a_U\|u_k\|_{\mathcal U_k}^2
        +
        a_P\|p_k^{\rm act}\|_{\mathcal P_k}^2
        +
        a_R\|r_k\|_{\mathcal R_k}^2\\
        &+
        a_\Pi\|\pi_k\|_{\mathcal F_k}^2
        +
        a_\Phi\|\phi_k\|_{\mathcal E_k}^2
        +
        a_T\|\tau_k\|_{\mathcal T_k}^2
        +
        a_s\|\sigma_k\|_{\mathcal S_k}^2
    \bigr).
\end{aligned}
\]
Let \(\Gamma_{\Lambda}^{\rm int}\) be a linear intrinsic gauge subspace and
define
\[
    \Dist_{\loc,{\rm int}}(\mathcal D,\Gamma_{\Lambda}^{\rm int})
    :=
    \inf_{\zeta\in\Gamma_{\Lambda}^{\rm int}}
    \|\mathcal D-\zeta\|_{\loc,{\rm int}}.
\]
\end{definition}

\begin{assumption}[Bounded coordinate extraction from the intrinsic norm]
\label{ass:bounded-coordinate-extraction}
There is a linear coordinate extraction map
\[
    \mathcal L_A:\mathfrak I_\Lambda^{\loc}\to\mathcal K_{\Lambda,A}^{\loc}
\]
from intrinsic packages to Model A coordinate packages, written
\[
    \mathcal L_A\mathcal D
    =
    (U_k,P_k^{\rm act},R_k,\Pi_k,\Phi_k,T_k,s_k)_{k\in\Lambda},
\]
and constants
\[
    \beta_{U,k},\beta_{P,k},\beta_{R,k},
    \beta_{\Pi,k},\beta_{\Phi,k},\beta_{T,k},\beta_{s,k}<\infty
\]
such that, for each \(k\in\Lambda\),
\[
\begin{gathered}
    \|U_k\|_{U,k}\le \beta_{U,k}\|u_k\|_{\mathcal U_k},\qquad
    \|P_k^{\rm act}\|_{P,k}\le
        \beta_{P,k}\|p_k^{\rm act}\|_{\mathcal P_k},\\
    \|R_k\|_{R,k}\le \beta_{R,k}\|r_k\|_{\mathcal R_k},\qquad
    \|\Pi_k\|_{\Pi,k}\le \beta_{\Pi,k}\|\pi_k\|_{\mathcal F_k},\\
    \|\Phi_k\|_{\Phi,k}\le
        \beta_{\Phi,k}\|\phi_k\|_{\mathcal E_k},\qquad
    \|T_k\|_{T,k}\le \beta_{T,k}\|\tau_k\|_{\mathcal T_k},\\
    \|s_k\|_{s,k}\le \beta_{s,k}\|\sigma_k\|_{\mathcal S_k}.
\end{gathered}
\]
Assume also that intrinsic gauges map into Model A gauges:
\[
    \mathcal L_A\Gamma_{\Lambda}^{\rm int}
    \subset
    \Gamma_\Lambda^{\loc}.
\]
\end{assumption}

\begin{lemma}[Intrinsic norm controls the Model A baseline distance]
\label{lem:intrinsic-controls-model-a-baseline}
Under \Cref{ass:bounded-coordinate-extraction}, there is a finite constant
\[
    C_{A\leftarrow{\rm int}}<\infty
\]
depending only on the finite window, the Model A weights, the intrinsic
weights, and the coordinate-extraction constants, such that
\[
    \|\mathcal L_A\mathcal D\|_{\loc,0}
    \le
    C_{A\leftarrow{\rm int}}
    \|\mathcal D\|_{\loc,{\rm int}}
\]
and
\[
    \Dist_{\loc,0}(\mathcal L_A\mathcal D,\Gamma_\Lambda^{\loc})
    \le
    C_{A\leftarrow{\rm int}}
    \Dist_{\loc,{\rm int}}(\mathcal D,\Gamma_{\Lambda}^{\rm int})
\]
for every intrinsic package \(\mathcal D\).
\end{lemma}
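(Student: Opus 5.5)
The proof is a termwise comparison of the two weighted $\ell^2$ sums, followed by an infimum argument that uses the linearity of $\mathcal L_A$ and the gauge inclusion $\mathcal L_A\Gamma_\Lambda^{\rm int}\subset\Gamma_\Lambda^{\loc}$.

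For the first inequality, fix an intrinsic package $\mathcal D$ and write $\mathcal L_A\mathcal D=(U_k,P_k^{\rm act},R_k,\Pi_k,\Phi_k,T_k,s_k)_{k\in\Lambda}$. Square each coordinate bound in \Cref{ass:bounded-coordinate-extraction} and multiply by the Model A weight. For the velocity coordinate this gives
\[
    w_U\|U_k\|_{U,k}^2
    \le
    \frac{w_U\beta_{U,k}^2}{a_U}\,
    a_U\|u_k\|_{\mathcal U_k}^2,
\]
and the other six coordinates are handled in the same way. Set
\[
    C_{A\leftarrow{\rm int}}^2
    :=
    \max_{k\in\Lambda}\max_{\bullet}
    \frac{w_\bullet\beta_{\bullet,k}^2}{a_\bullet},
\]
where $\bullet$ ranges over $U,P,R,\Pi,\Phi,T,s$. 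This maximum is over finitely many terms, since $\Lambda$ is a finite window, and $a_\bullet>0$, so $C_{A\leftarrow{\rm int}}<\infty$. Summing over coordinates and over $k\in\Lambda$ and then taking square roots gives $\|\mathcal L_A\mathcal D\|_{\loc,0}\le C_{A\leftarrow{\rm int}}\|\mathcal D\|_{\loc,{\rm int}}$. The constant depends only on the window, the two weight families and the extraction constants, as the statement requires.

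For the distance inequality, fix any $\zeta\in\Gamma_\Lambda^{\rm int}$. By linearity, $\mathcal L_A\mathcal D-\mathcal L_A\zeta=\mathcal L_A(\mathcal D-\zeta)$. By the gauge inclusion, $\mathcal L_A\zeta$ is an admissible competitor in the infimum that defines $\Dist_{\loc,0}$. Combining these with the first inequality applied to $\mathcal D-\zeta$ gives
\[
    \Dist_{\loc,0}(\mathcal L_A\mathcal D,\Gamma_\Lambda^{\loc})
    \le
    \|\mathcal L_A(\mathcal D-\zeta)\|_{\loc,0}
    \le
    C_{A\leftarrow{\rm int}}\|\mathcal D-\zeta\|_{\loc,{\rm int}}.
\]
Taking the infimum over $\zeta\in\Gamma_\Lambda^{\rm int}$ on the right-hand side proves the claim.

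There is no real analytic obstacle. Two points need care:

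\begin{itemize}
  \item The coordinate bounds must be applied to $\mathcal D-\zeta$. This is legitimate because they hold for every intrinsic package and $\mathcal L_A$ is linear.
  \item The gauge inclusion is essential. Without it, $\mathcal L_A\zeta$ would not be an admissible competitor and the distance bound could fail.
\end{itemize}

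The intrinsic spaces may be infinite-dimensional, for example the $L^3$ and $L^{3/2}$ spaces, but this does not matter. The argument uses only the finite collection of constants $\beta_{\bullet,k}$ and never uses compactness.
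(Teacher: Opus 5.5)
Your proposal is correct and follows the paper's proof essentially step for step. The paper uses the same constant $C_{A\leftarrow{\rm int}}^2=\max_{k\in\Lambda}\max_\bullet w_\bullet\beta_{\bullet,k}^2/a_\bullet$, obtained by comparing the two weighted sums term by term. It then uses linearity of $\mathcal L_A$ and the gauge inclusion $\mathcal L_A\zeta\in\Gamma_\Lambda^{\loc}$ to make $\mathcal L_A\zeta$ a competitor, and takes the infimum over $\zeta\in\Gamma_\Lambda^{\rm int}$.
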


\begin{proof}
Set
\[
    C_{A\leftarrow{\rm int}}^2
    :=
    \max_{k\in\Lambda}
    \max\left\{
    \frac{w_U\beta_{U,k}^2}{a_U},
    \frac{w_P\beta_{P,k}^2}{a_P},
    \frac{w_R\beta_{R,k}^2}{a_R},
    \frac{w_\Pi\beta_{\Pi,k}^2}{a_\Pi},
    \frac{w_\Phi\beta_{\Phi,k}^2}{a_\Phi},
    \frac{w_T\beta_{T,k}^2}{a_T},
    \frac{w_s\beta_{s,k}^2}{a_s}
    \right\}.
\]
This constant is finite because the window is finite and all listed constants
are finite.  The coordinate bounds in
\Cref{ass:bounded-coordinate-extraction} imply, term by term, that
\[
    \|\mathcal L_A\mathcal D\|_{\loc,0}^2
    \le
    C_{A\leftarrow{\rm int}}^2
    \|\mathcal D\|_{\loc,{\rm int}}^2.
\]
This proves the norm estimate.

For the quotient estimate, fix
\(\zeta\in\Gamma_{\Lambda}^{\rm int}\).  Since
\(\mathcal L_A\zeta\in\Gamma_\Lambda^{\loc}\),
\[
\begin{aligned}
    \Dist_{\loc,0}(\mathcal L_A\mathcal D,\Gamma_\Lambda^{\loc})
    &\le
    \|\mathcal L_A\mathcal D-\mathcal L_A\zeta\|_{\loc,0}\\
    &=
    \|\mathcal L_A(\mathcal D-\zeta)\|_{\loc,0}\\
    &\le
    C_{A\leftarrow{\rm int}}
    \|\mathcal D-\zeta\|_{\loc,{\rm int}}.
\end{aligned}
\]
Taking the infimum over \(\zeta\in\Gamma_{\Lambda}^{\rm int}\) gives the
claim.
\end{proof}

\begin{assumption}[Intrinsic pressure-source control]
\label{ass:intrinsic-pressure-source-control}
There are constants
\[
    C_{{\rm src},{\rm int}}<\infty,
    \qquad
    \Delta_{{\rm src},{\rm int}}\ge0,
\]
such that every intrinsic localized package \(\mathcal D\) satisfies
\[
    \Err_{\rm src,q}^{\rm prs}(\mathcal L_A\mathcal D)
    \le
    C_{{\rm src},{\rm int}}
    \Dist_{\loc,{\rm int}}(\mathcal D,\Gamma_{\Lambda}^{\rm int})
    +
    \Delta_{{\rm src},{\rm int}}.
\]
\end{assumption}

\begin{proposition}[Conditional intrinsic control of the Model A enhanced distance]
\label{prop:conditional-intrinsic-control-model-a-enhanced-distance}
Assume \Cref{ass:bounded-coordinate-extraction} and
\Cref{ass:intrinsic-pressure-source-control}.  Then every intrinsic localized
package \(\mathcal D\) satisfies
\[
\begin{aligned}
    \Dist_{\loc,\alpha}^{\sharp}
    (\mathcal L_A\mathcal D,\Gamma_\Lambda^{\loc})
    \le&
    \bigl(
        C_{A\leftarrow{\rm int}}
        +
        \alpha_{\rm src}C_{{\rm src},{\rm int}}
    \bigr)
    \Dist_{\loc,{\rm int}}(\mathcal D,\Gamma_{\Lambda}^{\rm int})\\
    &+
    \alpha_{\rm src}\Delta_{{\rm src},{\rm int}}.
\end{aligned}
\]
\end{proposition}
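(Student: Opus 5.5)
The plan is to unfold \Cref{def:model-a-enhanced-localized-distance} at the point \(D=\mathcal L_A\mathcal D\) and bound its two summands separately, each by an estimate already available in the paper. Explicitly,
\[
    \Dist_{\loc,\alpha}^{\sharp}(\mathcal L_A\mathcal D,\Gamma_\Lambda^{\loc})
    =
    \Dist_{\loc,0}(\mathcal L_A\mathcal D,\Gamma_\Lambda^{\loc})
    +
    \alpha_{\rm src}\Err_{\rm src,q}^{\rm prs}(\mathcal L_A\mathcal D),
\]
so the statement reduces to one linear upper bound for each term in terms of \(\Dist_{\loc,{\rm int}}(\mathcal D,\Gamma_\Lambda^{\rm int})\).

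For the baseline term I would invoke the quotient estimate of \Cref{lem:intrinsic-controls-model-a-baseline}. This uses only \Cref{ass:bounded-coordinate-extraction}, in particular the gauge inclusion \(\mathcal L_A\Gamma_\Lambda^{\rm int}\subset\Gamma_\Lambda^{\loc}\). It gives \(\Dist_{\loc,0}(\mathcal L_A\mathcal D,\Gamma_\Lambda^{\loc})\le C_{A\leftarrow{\rm int}}\Dist_{\loc,{\rm int}}(\mathcal D,\Gamma_\Lambda^{\rm int})\).

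For the pressure-source term I would apply \Cref{ass:intrinsic-pressure-source-control} directly and multiply by \(\alpha_{\rm src}>0\). This yields \(\alpha_{\rm src}\Err_{\rm src,q}^{\rm prs}(\mathcal L_A\mathcal D)\le\alpha_{\rm src}C_{{\rm src},{\rm int}}\Dist_{\loc,{\rm int}}+\alpha_{\rm src}\Delta_{{\rm src},{\rm int}}\).

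Adding the two bounds and grouping the coefficients of \(\Dist_{\loc,{\rm int}}\) gives exactly the displayed inequality, with multiplicative constant \(C_{A\leftarrow{\rm int}}+\alpha_{\rm src}C_{{\rm src},{\rm int}}\) and additive loss \(\alpha_{\rm src}\Delta_{{\rm src},{\rm int}}\).

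There is no genuine obstacle. The only point that needs care is in the baseline step: the infimum over Model A gauges must be dominated by the infimum over images of intrinsic gauges. That is where the gauge-inclusion part of \Cref{ass:bounded-coordinate-extraction} enters, and it is already handled inside \Cref{lem:intrinsic-controls-model-a-baseline}. The pressure-source part requires no further argument, because the assumption is stated directly for \(\mathcal L_A\mathcal D\).
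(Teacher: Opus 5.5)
Your proposal is correct and is the paper's proof: unfold the Model A enhanced distance at \(\mathcal L_A\mathcal D\), bound the baseline term by \Cref{lem:intrinsic-controls-model-a-baseline} and the pressure-source term by \Cref{ass:intrinsic-pressure-source-control}, then add the two bounds. You are also right that the gauge inclusion \(\mathcal L_A\Gamma_\Lambda^{\rm int}\subset\Gamma_\Lambda^{\loc}\) is only needed inside that lemma, so nothing further has to be checked.
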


\begin{proof}
By \Cref{def:model-a-enhanced-localized-distance},
\[
    \Dist_{\loc,\alpha}^{\sharp}
    (\mathcal L_A\mathcal D,\Gamma_\Lambda^{\loc})
    =
    \Dist_{\loc,0}(\mathcal L_A\mathcal D,\Gamma_\Lambda^{\loc})
    +
    \alpha_{\rm src}
    \Err_{\rm src,q}^{\rm prs}(\mathcal L_A\mathcal D).
\]
Apply \Cref{lem:intrinsic-controls-model-a-baseline} to the first term and
\Cref{ass:intrinsic-pressure-source-control} to the second.
\end{proof}

\begin{remark}[What remains PDE-facing]
\Cref{prop:conditional-intrinsic-control-model-a-enhanced-distance} proves a
finite-window implication from two explicit inputs: bounded extraction of
Model A coordinates from intrinsic localized data, and intrinsic control of
the pressure-source residual.  It does not derive either input from the
Navier--Stokes equations.  The PDE-facing work is to construct the intrinsic
package from a suitable weak solution, verify the coordinate bounds from the
chosen pressure splitting and local energy quantities, and prove or disprove
\Cref{ass:intrinsic-pressure-source-control}.
\end{remark}

\subsection{Cutoff commutator as an intrinsic pressure-source component}
\label{subsec:cutoff-commutator-intrinsic-component}

We now test the first component in the intrinsic pressure-source control
assumption.  The cutoff--Riesz commutator is the most favorable term because
the source is separated from the observation ball.  Even here, the natural
estimate is quadratic in the intrinsic velocity size, reflecting the nonlinear
source \(u_i u_j\).

\begin{definition}[Annular intrinsic velocity quotient]
\label{def:annular-intrinsic-velocity-quotient}
In the normalized pressure-source geometry, set
\[
    A_{3/4,1}:=B_1\setminus B_{3/4}.
\]
For an intrinsic package \(\mathcal D\), let \(u\) denote its velocity
component in this normalized window.  For
\(\zeta\in\Gamma_\Lambda^{\rm int}\), let \(\zeta_U\) denote the velocity
component of \(\zeta\).  Define
\[
    d_{U,{\rm ann}}(\mathcal D)
    :=
    \inf_{\zeta\in\Gamma_\Lambda^{\rm int}}
    \|u-\zeta_U\|_{L^3(I;L^3(A_{3/4,1}))}.
\]
\end{definition}

\begin{lemma}[Quadratic cutoff--Riesz commutator bound]
\label{lem:quadratic-cutoff-riesz-commutator-bound}
There is a constant \(C_\eta<\infty\), depending only on the fixed normalized
cutoff and balls, such that for every intrinsic package \(\mathcal D\),
\[
    \inf_{\zeta\in\Gamma_\Lambda^{\rm int}}
    \left\|
        C_\eta\bigl((u-\zeta_U)\otimes(u-\zeta_U)\bigr)
    \right\|_{Y_{\rm prs}}
    \le
    C_\eta
    d_{U,{\rm ann}}(\mathcal D)^2.
\]
\end{lemma}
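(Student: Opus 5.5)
The plan is to reduce the statement to the fixed-geometry commutator bound of \Cref{lem:fixed-geometry-commutator-bound}, applied to the tensor \(f^\zeta:=(u-\zeta_U)\otimes(u-\zeta_U)\) for each fixed gauge \(\zeta\in\Gamma_\Lambda^{\rm int}\). After that, Hölder's inequality turns the \(L^{3/2}\) norm of a quadratic tensor into the square of an \(L^3\) norm. Finally, we take the infimum over \(\zeta\).

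\emph{Step 1: the commutator bound for fixed \(\zeta\).} Fix \(\zeta\) and set \(v:=u-\zeta_U\), extended by zero outside \(B_1\), as in \Cref{def:normalized-pressure-source-geometry}. Since \(\eta\equiv1\) on \(B_{3/4}\supset B_{1/2}\), the proof of \Cref{lem:fixed-geometry-commutator-bound} gives, on \(B_{1/2}\),
\[
    C_\eta(v\otimes v)=-R_iR_j\bigl((1-\eta)v_iv_j\bigr).
\]
Here \((1-\eta)v_iv_j\) is supported in \(A_{3/4,1}\), at distance at least \(1/4\) from \(B_{1/2}\). The kernel of \(R_iR_j\) is smooth and bounded there, with size \(\lesssim|x-y|^{-3}\le 4^3\). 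Hence for each time \(t\),
\[
    \|C_\eta(v\otimes v)(t)\|_{L^{3/2}(B_{1/2})}
    \le C_\eta\|(1-\eta)v\otimes v(t)\|_{L^{3/2}(A_{3/4,1})}.
\]
Taking the \(L^{3/2}\) norm in time is exactly \Cref{lem:fixed-geometry-commutator-bound} with \(f=v\otimes v\).

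\emph{Step 2: quadratic Hölder.} Using \(0\le1-\eta\le1\) (which we may assume of the normalized cutoff; otherwise \(\|1-\eta\|_\infty\) is absorbed into \(C_\eta\)), we get pointwise \(|(1-\eta)v_iv_j|\le|v|^2\). Therefore
\[
    \|(1-\eta)v\otimes v\|_{L^{3/2}(I;L^{3/2}(A_{3/4,1}))}
    \le \bigl\||v|^2\bigr\|_{L^{3/2}_{t,x}}
    =\|v\|_{L^3(I;L^3(A_{3/4,1}))}^2 .
\]
The last equality is the identity \(\||v|^2\|_{L^{3/2}}=\|v\|_{L^3}^2\), with the matrix-norm constant absorbed into \(C_\eta\). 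Combining Steps 1 and 2 gives, for every \(\zeta\),
\[
    \|C_\eta(v\otimes v)\|_{Y_{\rm prs}}\le C_\eta\|u-\zeta_U\|_{L^3(I;L^3(A_{3/4,1}))}^2 .
\]

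\emph{Step 3: infimum.} The left side of the claim is an infimum over \(\zeta\), so it is bounded by the right side for any fixed \(\zeta\). Taking the infimum over \(\zeta\) and using that \(s\mapsto s^2\) is monotone on \([0,\infty)\), so that \(\inf_\zeta\|\cdot\|^2=(\inf_\zeta\|\cdot\|)^2\), yields \(C_\eta d_{U,{\rm ann}}(\mathcal D)^2\).

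The main obstacle is Step 1. Its kernel estimate must be justified honestly: the Riesz composition \(R_iR_j\) equals a multiple of \(\delta_{ij}\delta\) plus a Calderón--Zygmund kernel, and the delta part vanishes off the support. On the separated region the operator is therefore an integral operator with bounded kernel. Young's inequality on bounded domains then gives \(L^{3/2}(A_{3/4,1})\to L^{3/2}(B_{1/2})\) boundedness, since the balls are bounded. Everything else is Hölder and bookkeeping. Note that the constant \(C_\eta\) here is the same type of constant as in \Cref{lem:fixed-geometry-commutator-bound}, possibly enlarged by fixed norm-equivalence factors.
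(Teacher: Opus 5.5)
Your proposal is correct and follows the paper's proof essentially step for step: for each fixed $\zeta$ you apply the separated-support bound of \Cref{lem:fixed-geometry-commutator-bound} to $f=v\otimes v$ with $v=u-\zeta_U$, use H\"older to get $\|(1-\eta)v\otimes v\|_{L^{3/2}(I;L^{3/2}(A_{3/4,1}))}\lesssim\|v\|_{L^3(I;L^3(A_{3/4,1}))}^2$, and then take the infimum over $\zeta$. Your extra remarks only make explicit details the paper leaves implicit: the off-support kernel structure of $R_iR_j$, and the identity $\inf_\zeta s_\zeta^2=(\inf_\zeta s_\zeta)^2$ for $s_\zeta\ge0$.
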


\begin{proof}
Fix \(\zeta\in\Gamma_\Lambda^{\rm int}\) and set
\[
    v:=u-\zeta_U,
    \qquad
    f_{ij}:=v_i v_j.
\]
By the fixed separated-support estimate in
\Cref{lem:fixed-geometry-commutator-bound},
\[
    \|C_\eta(f)\|_{Y_{\rm prs}}
    \le
    C_\eta
    \|(1-\eta)f\|_{L^{3/2}(I;L^{3/2}(A_{3/4,1}))^{3\times3}}.
\]
Since \(\eta\equiv1\) on \(B_{3/4}\), the factor \(1-\eta\) is supported in
the annulus.  Hölder's inequality gives
\[
    \|(1-\eta)v_i v_j\|_{L^{3/2}(I;L^{3/2}(A_{3/4,1}))}
    \le
    C_\eta
    \|v\|_{L^3(I;L^3(A_{3/4,1}))}^2.
\]
Summing over the finitely many tensor components changes only the constant.
Thus
\[
    \left\|C_\eta(v\otimes v)\right\|_{Y_{\rm prs}}
    \le
    C_\eta
    \|u-\zeta_U\|_{L^3(I;L^3(A_{3/4,1}))}^2.
\]
Taking the infimum over \(\zeta\in\Gamma_\Lambda^{\rm int}\) proves the
claim.
\end{proof}

\begin{corollary}[Finite-amplitude linear commutator control]
\label{cor:finite-amplitude-linear-commutator-control}
Assume the intrinsic norm dominates the annular velocity quotient in the
sense that
\[
    d_{U,{\rm ann}}(\mathcal D)
    \le
    C_{U,{\rm ann}}
    \Dist_{\loc,{\rm int}}(\mathcal D,\Gamma_\Lambda^{\rm int})
\]
for a finite constant \(C_{U,{\rm ann}}\).  If, in addition,
\[
    d_{U,{\rm ann}}(\mathcal D)\le M,
\]
then
\[
    \inf_{\zeta\in\Gamma_\Lambda^{\rm int}}
    \left\|
        C_\eta\bigl((u-\zeta_U)\otimes(u-\zeta_U)\bigr)
    \right\|_{Y_{\rm prs}}
    \le
    C_\eta M C_{U,{\rm ann}}
    \Dist_{\loc,{\rm int}}(\mathcal D,\Gamma_\Lambda^{\rm int}).
\]
\end{corollary}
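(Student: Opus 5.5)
The plan is to start from \Cref{lem:quadratic-cutoff-riesz-commutator-bound}, which gives
\[
    \inf_{\zeta\in\Gamma_\Lambda^{\rm int}}
    \left\|
        C_\eta\bigl((u-\zeta_U)\otimes(u-\zeta_U)\bigr)
    \right\|_{Y_{\rm prs}}
    \le
    C_\eta\, d_{U,{\rm ann}}(\mathcal D)^2 .
\]
The quadratic right-hand side is then split as a product of two copies of \(d_{U,{\rm ann}}(\mathcal D)\). On one copy I will use the finite-amplitude hypothesis \(d_{U,{\rm ann}}(\mathcal D)\le M\). On the other I will use the domination hypothesis \(d_{U,{\rm ann}}(\mathcal D)\le C_{U,{\rm ann}}\Dist_{\loc,{\rm int}}(\mathcal D,\Gamma_\Lambda^{\rm int})\).

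Concretely, since \(d_{U,{\rm ann}}(\mathcal D)\ge0\), we can write
\[
    d_{U,{\rm ann}}(\mathcal D)^2
    = d_{U,{\rm ann}}(\mathcal D)\cdot d_{U,{\rm ann}}(\mathcal D)
    \le M\cdot C_{U,{\rm ann}}\Dist_{\loc,{\rm int}}(\mathcal D,\Gamma_\Lambda^{\rm int}).
\]
Each factor is bounded by a nonnegative quantity, so the product inequality is legitimate. This uses \(M\ge0\), which is implicit because \(M\) bounds a nonnegative quantity. Multiplying by the nonnegative constant \(C_\eta\) and chaining with the lemma gives the displayed estimate, with the same constant \(C_\eta\) as in \Cref{lem:quadratic-cutoff-riesz-commutator-bound}.

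I do not expect a genuine obstacle. The only points to check are that the infimum over \(\zeta\) on the left is exactly the quantity controlled by \Cref{lem:quadratic-cutoff-riesz-commutator-bound}, and that \(d_{U,{\rm ann}}\) is defined with the same gauge subspace \(\Gamma_\Lambda^{\rm int}\) appearing in \(\Dist_{\loc,{\rm int}}\). Both hold by \Cref{def:annular-intrinsic-velocity-quotient}.

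I would also add a remark after the proof. The resulting linear constant \(C_\eta M C_{U,{\rm ann}}\) depends on the amplitude bound \(M\), so this is a finite-amplitude statement and not a homogeneous linear bound of the type required in \Cref{ass:intrinsic-pressure-source-control}.
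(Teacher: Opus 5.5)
Your proposal is correct and is essentially the paper's own proof. The paper likewise invokes \Cref{lem:quadratic-cutoff-riesz-commutator-bound} to get the bound \(C_\eta\, d_{U,{\rm ann}}(\mathcal D)^2\), then factors \(d_{U,{\rm ann}}(\mathcal D)^2\le M\, d_{U,{\rm ann}}(\mathcal D)\le M C_{U,{\rm ann}}\Dist_{\loc,{\rm int}}(\mathcal D,\Gamma_\Lambda^{\rm int})\).
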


\begin{proof}
By \Cref{lem:quadratic-cutoff-riesz-commutator-bound},
\[
    \inf_{\zeta\in\Gamma_\Lambda^{\rm int}}
    \left\|
        C_\eta\bigl((u-\zeta_U)\otimes(u-\zeta_U)\bigr)
    \right\|_{Y_{\rm prs}}
    \le
    C_\eta d_{U,{\rm ann}}(\mathcal D)^2.
\]
If \(d_{U,{\rm ann}}(\mathcal D)\le M\), then
\[
    d_{U,{\rm ann}}(\mathcal D)^2
    \le
    M d_{U,{\rm ann}}(\mathcal D)
    \le
    M C_{U,{\rm ann}}
    \Dist_{\loc,{\rm int}}(\mathcal D,\Gamma_\Lambda^{\rm int}).
\]
This proves the estimate.
\end{proof}

\begin{remark}[Quadratic obstruction]
\Cref{lem:quadratic-cutoff-riesz-commutator-bound} is a genuine fixed-scale
pressure estimate, but it is quadratic in the annular velocity quotient.  It
does not by itself prove the linear intrinsic pressure-source control
assumption in \Cref{ass:intrinsic-pressure-source-control}.  The linear form
in \Cref{cor:finite-amplitude-linear-commutator-control} requires a
finite-amplitude restriction and is not a scale-uniform smallness statement.
The active source residual, clean projection residual, and harmonic gauge
residual remain separate components.
\end{remark}

\subsection{Active source residual and covariance mismatch}
\label{subsec:active-source-covariance-mismatch}

We next isolate the active source residual.  The point is to avoid hiding a
nonlinear covariance mismatch inside the pressure operator.  Once the cutoff
commutator has been separated, the remaining active source term is controlled
by a source-level mismatch through the \(L^{3/2}\)-boundedness of the Riesz
transforms.

\begin{definition}[Active covariance mismatch]
\label{def:active-covariance-mismatch}
In the normalized pressure-source geometry, define the active source mismatch
by
\[
    M_{ij}^{\rm act}
    :=
    \eta f_{ij}-F_{ij}^{\cl}
    =
    \eta u_i u_j-(U_iU_j+R_{ij}),
\]
with all source tensors extended by zero outside \(B_1\).  For an intrinsic
package \(\mathcal D\), define the quotient source mismatch size
\[
    d_{\rm act,src}(\mathcal D)
    :=
    \inf_{\zeta\in\Gamma_\Lambda^{\rm int}}
    \|M^{\rm act}(\mathcal D-\zeta)\|_
    {L^{3/2}(I;L^{3/2}(B_1))^{3\times3}}.
\]
\end{definition}

\begin{lemma}[Active residual source decomposition]
\label{lem:active-residual-source-decomposition}
In the normalized model,
\[
    E_{\rm act}^{\rm src}
    =
    R_iR_j(M_{ij}^{\rm act})
    -
    C_\eta(f).
\]
Consequently, there is a finite constant \(C_{\rm Riesz}\), depending only on
the Riesz-transform bound at exponent \(3/2\), such that
\[
    \|E_{\rm act}^{\rm src}\|_{Y_{\rm prs}}
    \le
    C_{\rm Riesz}
    \|M^{\rm act}\|_{L^{3/2}(I;L^{3/2}(B_1))^{3\times3}}
    +
    \|C_\eta(f)\|_{Y_{\rm prs}}.
\]
\end{lemma}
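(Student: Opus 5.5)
The identity is pure algebra from the definitions, followed by one application of the $L^{3/2}$ Riesz bound and the triangle inequality.

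First, I would expand $R_iR_j(M_{ij}^{\rm act})$ using linearity of the Riesz transforms and the definition $M_{ij}^{\rm act}=\eta f_{ij}-F_{ij}^{\cl}$, with all tensors extended by zero outside $B_1$:
\[
    R_iR_j(M_{ij}^{\rm act})
    =
    R_iR_j(\eta f_{ij})-R_iR_j(F_{ij}^{\cl}).
\]
Next, I would recall from \Cref{lem:pressure-source-mismatch-decomposition} that
\[
    E_{\rm act}^{\rm src}=\eta R_iR_j(f_{ij})-R_iR_j(F_{ij}^{\cl})
    \quad\text{and}\quad
    C_\eta(f)=R_iR_j(\eta f_{ij})-\eta R_iR_j(f_{ij}).
\]
Subtracting the second identity from the displayed expansion cancels $R_iR_j(\eta f_{ij})$ and gives $E_{\rm act}^{\rm src}=R_iR_j(M_{ij}^{\rm act})-C_\eta(f)$. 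This is the claimed decomposition.

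For the norm bound, I would apply the triangle inequality in $Y_{\rm prs}$. The commutator term contributes $\|C_\eta(f)\|_{Y_{\rm prs}}$ directly. For the Riesz term, at each fixed time I would bound
\[
    \|R_iR_j(M_{ij}^{\rm act})(t)\|_{L^{3/2}(B_{1/2})}
    \le
    \|R_iR_j(M_{ij}^{\rm act})(t)\|_{L^{3/2}(\R^3)}.
\]
By $L^{3/2}(\R^3)$-boundedness of the Riesz transforms, the right-hand side is at most a constant times $\sum_{i,j}\|M_{ij}^{\rm act}(t)\|_{L^{3/2}(\R^3)}$. Since $M^{\rm act}$ is supported in $B_1$, this equals $\sum_{i,j}\|M_{ij}^{\rm act}(t)\|_{L^{3/2}(B_1)}$. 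Summing over the nine index pairs, comparing the sum with the matrix norm on $3\times3$ tensors, and taking the $L^{3/2}$ norm in time yields the constant $C_{\rm Riesz}$. This constant depends only on the Riesz bound at exponent $3/2$ and the dimension.

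The only point needing care is whether $f_{ij}$ itself, and not just $\eta f_{ij}$, is extended by zero. $C_\eta(f)$ and $\eta R_iR_j(f_{ij})$ use $f$ in the same way in both identities, so the cancellation is unaffected. Otherwise no step is an obstacle; it is bookkeeping plus a classical Calderón--Zygmund bound.
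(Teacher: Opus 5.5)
Your proposal is correct and follows essentially the same route as the paper: rearrange the definition of $C_\eta(f)$ to get $E_{\rm act}^{\rm src}=R_iR_j(M_{ij}^{\rm act})-C_\eta(f)$ by linearity, then apply the triangle inequality in $Y_{\rm prs}$ and the $L^{3/2}$-boundedness of $R_iR_j$. Your extra steps add detail the paper leaves implicit but change nothing in the argument: bounding the $B_{1/2}$ norm by the $\R^3$ norm, using that $M^{\rm act}$ is supported in $B_1$, and integrating the fixed-time bound in $t$.
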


\begin{proof}
By definition,
\[
    C_\eta(f)=R_iR_j(\eta f_{ij})-\eta R_iR_j(f_{ij}).
\]
Thus
\[
    \eta R_iR_j(f_{ij})
    =
    R_iR_j(\eta f_{ij})-C_\eta(f).
\]
Using the definition of \(E_{\rm act}^{\rm src}\),
\[
\begin{aligned}
    E_{\rm act}^{\rm src}
    &=
    \eta R_iR_j(f_{ij})-R_iR_j(F_{ij}^{\cl})\\
    &=
    R_iR_j(\eta f_{ij})-R_iR_j(F_{ij}^{\cl})-C_\eta(f)\\
    &=
    R_iR_j(M_{ij}^{\rm act})-C_\eta(f).
\end{aligned}
\]
Taking the \(Y_{\rm prs}\)-norm, using the triangle inequality, and applying
the \(L^{3/2}\)-boundedness of \(R_iR_j\) gives the estimate.
\end{proof}

\begin{proposition}[Common-gauge active-source residual control]
\label{prop:conditional-active-source-residual-control}
For every intrinsic package \(\mathcal D\),
\[
\begin{aligned}
    &\inf_{\zeta\in\Gamma_\Lambda^{\rm int}}
    \|E_{\rm act}^{\rm src}(\mathcal D-\zeta)\|_{Y_{\rm prs}}\\
    &\quad\le
    \inf_{\zeta\in\Gamma_\Lambda^{\rm int}}
    \bigl(
    C_{\rm Riesz}
    \|M^{\rm act}(\mathcal D-\zeta)\|_
    {L^{3/2}(I;L^{3/2}(B_1))^{3\times3}}
    +
    C_\eta
    \|u_\zeta\|_{L^3(I;L^3(A_{3/4,1}))}^2
    \bigr),
\end{aligned}
\]
where \(u_\zeta\) is the velocity component of \(\mathcal D-\zeta\).  In
particular, if there is a common representative \(\zeta_\diamond\) and constants
\(M_U,C_U,\eta_{\rm act},\Delta_{\rm act}\ge0\) such that, with
\(\rho_{\rm int}(\mathcal D)=\Dist_{\loc,{\rm int}}(\mathcal D,\Gamma_\Lambda^{\rm int})\),
\[
    \|u_{\zeta_\diamond}\|_{L^3(I;L^3(A_{3/4,1}))}\le M_U,
    \qquad
    \|u_{\zeta_\diamond}\|_{L^3(I;L^3(A_{3/4,1}))}\le C_U\rho_{\rm int}(\mathcal D),
\]
and
\[
    C_{\rm Riesz}
    \|M^{\rm act}(\mathcal D-\zeta_\diamond)\|_
    {L^{3/2}(I;L^{3/2}(B_1))^{3\times3}}
    \le
    \eta_{\rm act}\rho_{\rm int}(\mathcal D)+\Delta_{\rm act},
\]
then
\[
\begin{aligned}
    \inf_{\zeta\in\Gamma_\Lambda^{\rm int}}
    \|E_{\rm act}^{\rm src}(\mathcal D-\zeta)\|_{Y_{\rm prs}}
    \le
    (\eta_{\rm act}+C_\eta M_U C_U)
    \rho_{\rm int}(\mathcal D)+\Delta_{\rm act}.
\end{aligned}
\]
\end{proposition}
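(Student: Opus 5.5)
The plan is to prove the first inequality pointwise in the gauge parameter and then take infima. Fix \(\zeta\in\Gamma_\Lambda^{\rm int}\). Apply \Cref{lem:active-residual-source-decomposition} to the shifted package \(\mathcal D-\zeta\). Its velocity component is \(u_\zeta\), and its source tensor is \(f^\zeta=u_\zeta\otimes u_\zeta\). This gives
\[
    \|E_{\rm act}^{\rm src}(\mathcal D-\zeta)\|_{Y_{\rm prs}}
    \le
    C_{\rm Riesz}\|M^{\rm act}(\mathcal D-\zeta)\|_{L^{3/2}(I;L^{3/2}(B_1))^{3\times3}}
    +
    \|C_\eta(u_\zeta\otimes u_\zeta)\|_{Y_{\rm prs}}.
\]
For the commutator term I would repeat the argument in the proof of \Cref{lem:quadratic-cutoff-riesz-commutator-bound}, but for this fixed \(\zeta\) and without taking an infimum. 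That means combining \Cref{lem:fixed-geometry-commutator-bound}, the support of \(1-\eta\) in \(A_{3/4,1}\), and Hölder's inequality \(\|v_iv_j\|_{L^{3/2}}\le\|v\|_{L^3}^2\) on the annulus. The result is \(\|C_\eta(u_\zeta\otimes u_\zeta)\|_{Y_{\rm prs}}\le C_\eta\|u_\zeta\|_{L^3(I;L^3(A_{3/4,1}))}^2\), where \(C_\eta\) absorbs the tensor-component and Hölder constants as in that lemma.

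Substituting this bound gives an inequality for each fixed \(\zeta\). The left side is at least \(\inf_{\zeta}\|E_{\rm act}^{\rm src}(\mathcal D-\zeta)\|\). Taking the infimum of the right side over \(\zeta\) then yields the first display. The key point is that both pieces are evaluated at the same \(\zeta\). This is why the statement keeps the joint infimum and does not split it into two separate infima.

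For the second claim I would specialize the pointwise bound to \(\zeta=\zeta_\diamond\). Under the two velocity hypotheses, I would bound
\[
    \|u_{\zeta_\diamond}\|^2\le M_U\|u_{\zeta_\diamond}\|\le M_UC_U\rho_{\rm int}(\mathcal D),
\]
which is the finite-amplitude trick of \Cref{cor:finite-amplitude-linear-commutator-control}. The \(M^{\rm act}\) term is bounded by \(\eta_{\rm act}\rho_{\rm int}+\Delta_{\rm act}\) by hypothesis. Adding the two bounds gives \((\eta_{\rm act}+C_\eta M_UC_U)\rho_{\rm int}+\Delta_{\rm act}\) for the right side at \(\zeta_\diamond\), and this dominates the infimum.

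I expect the only delicate step to be the bookkeeping in the first step. One has to check that \Cref{lem:active-residual-source-decomposition} and \Cref{lem:fixed-geometry-commutator-bound} really apply to the shifted package \(\mathcal D-\zeta\). This requires the objects \(f\), \(F^{\cl}\), and \(M^{\rm act}\) to be defined on \(\mathcal D-\zeta\) through its own components, which is how \Cref{def:active-covariance-mismatch} is set up. One also has to make sure the constant \(C_\eta\) is the same one that appears in \Cref{lem:quadratic-cutoff-riesz-commutator-bound}. Beyond that, the argument is purely algebraic.
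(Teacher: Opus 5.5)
Your proposal is correct and is essentially the paper's proof: for each fixed $\zeta$ you apply the active-residual decomposition and the separated-support commutator bound to $\mathcal D-\zeta$, take the infimum over that common representative, and then evaluate at $\zeta_\diamond$ using $\|u_{\zeta_\diamond}\|^2\le M_UC_U\rho_{\rm int}(\mathcal D)$. Your observation that both pieces must be evaluated on the same $\zeta$, with no split into separate infima, is exactly the point the paper stresses.
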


\begin{proof}
Apply \Cref{lem:active-residual-source-decomposition} to the same representative
\(\mathcal D-\zeta\) and then use the fixed separated-support estimate for the
commutator.  This gives, for every \(\zeta\in\Gamma_\Lambda^{\rm int}\),
\[
\begin{aligned}
    \|E_{\rm act}^{\rm src}(\mathcal D-\zeta)\|_{Y_{\rm prs}}
    \le&
    C_{\rm Riesz}
    \|M^{\rm act}(\mathcal D-\zeta)\|_
    {L^{3/2}(I;L^{3/2}(B_1))^{3\times3}}\\
    &+
    C_\eta
    \|u_\zeta\|_{L^3(I;L^3(A_{3/4,1}))}^2.
\end{aligned}
\]
Taking the infimum over the same \(\zeta\) proves the first estimate.  The
second estimate follows by evaluating this common-gauge upper bound at
\(\zeta_\diamond\) and using
\(\|u_{\zeta_\diamond}\|^2\le M_UC_U\rho_{\rm int}(\mathcal D)\).
\end{proof}

\begin{remark}[Active-source status]
\Cref{prop:conditional-active-source-residual-control} reduces the active
pressure residual to a source-level covariance mismatch and the already
identified cutoff commutator only on a common representative.  It deliberately
avoids taking separate infima for the covariance mismatch and the commutator,
because separate optimizing gauges would not control the sum.  Control by the
intrinsic quotient distance must therefore be supplied by a same-gauge
localized covariance coordinate or treated as an obstruction.
\end{remark}

\subsection{Clean projection residual as a pressure-source tail}
\label{subsec:clean-projection-residual-pressure-source-tail}

The clean projection residual is the part of the clean active pressure source
lost by the chosen finite-dimensional pressure projection.  This subsection
records only boundedness of that tail.  No decay in the projection dimension
is claimed.

\begin{assumption}[Bounded clean pressure projection]
\label{ass:bounded-clean-pressure-projection}
For a fixed finite-dimensional pressure space and an index \(N\), let
\[
    P_{{\rm prs},N}^{\cl}:Y_{\rm prs}\to Y_{\rm prs}
\]
be a bounded linear projection.  Its operator norm is denoted by
\[
    \|P_{{\rm prs},N}^{\cl}\|_{Y_{\rm prs}\to Y_{\rm prs}}.
\]
\end{assumption}

\begin{definition}[Clean projection residual]
\label{def:clean-projection-residual-tail}
Define
\[
    E_{{\rm proj},N}^{\cl}
    :=
    (I-P_{{\rm prs},N}^{\cl})R_iR_j(F_{ij}^{\cl}).
\]
For an intrinsic package \(D\), define the quotient clean projection residual
by
\[
    d_{{\rm proj},q}(D)
    :=
    \inf_{\zeta\in\Gamma_\Lambda^{\rm int}}
    \|E_{{\rm proj},N}^{\cl}(D-\zeta)\|_{Y_{\rm prs}}.
\]
\end{definition}

\begin{lemma}[Bounded projection-tail estimate]
\label{lem:bounded-projection-tail-estimate}
Under \Cref{ass:bounded-clean-pressure-projection},
\[
    \|E_{{\rm proj},N}^{\cl}\|_{Y_{\rm prs}}
    \le
    \bigl(
        1+\|P_{{\rm prs},N}^{\cl}\|_{Y_{\rm prs}\to Y_{\rm prs}}
    \bigr)
    \|R_iR_j(F_{ij}^{\cl})\|_{Y_{\rm prs}}.
\]
If, moreover,
\[
    F^{\cl}\in L^{3/2}(I;L^{3/2}(B_1))^{3\times3},
\]
with zero extension outside \(B_1\), then
\[
    \|E_{{\rm proj},N}^{\cl}\|_{Y_{\rm prs}}
    \le
    \bigl(
        1+\|P_{{\rm prs},N}^{\cl}\|_{Y_{\rm prs}\to Y_{\rm prs}}
    \bigr)
    C_{\rm CZ}
    \|F^{\cl}\|_{L^{3/2}(I;L^{3/2}(B_1))^{3\times3}},
\]
where \(C_{\rm CZ}\) is the Calderon--Zygmund constant for \(R_iR_j\) at
exponent \(3/2\), followed by restriction to \(B_{1/2}\).
\end{lemma}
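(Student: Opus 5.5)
The first estimate follows from the triangle inequality in \(Y_{\rm prs}\) applied to the definition in \Cref{def:clean-projection-residual-tail}. Write \(g:=R_iR_j(F_{ij}^{\cl})\in Y_{\rm prs}\), understood as restricted to \(B_{1/2}\) in accordance with the convention of \Cref{def:normalized-pressure-source-geometry}. Then
\[
    \|E_{{\rm proj},N}^{\cl}\|_{Y_{\rm prs}}
    =
    \|g-P_{{\rm prs},N}^{\cl}g\|_{Y_{\rm prs}}
    \le
    \|g\|_{Y_{\rm prs}}
    +
    \|P_{{\rm prs},N}^{\cl}\|_{Y_{\rm prs}\to Y_{\rm prs}}\|g\|_{Y_{\rm prs}}.
\]
Only boundedness of \(P_{{\rm prs},N}^{\cl}\) from \Cref{ass:bounded-clean-pressure-projection} is used. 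The projection property is not needed, and no decay in \(N\) is claimed.

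For the second estimate, it remains to bound \(\|g\|_{Y_{\rm prs}}\) by the source norm. Extend \(F^{\cl}(t)\) by zero outside \(B_1\). For almost every \(t\in I\), this gives an element of \(L^{3/2}(\R^3)^{3\times3}\) with the same norm. The Calder\'on--Zygmund theorem gives boundedness of \(R_iR_j\) on \(L^{3/2}(\R^3)\). Summing over the nine index pairs \((i,j)\) and absorbing the count into \(C_{\rm CZ}\), we obtain
\[
    \|R_iR_j F_{ij}^{\cl}(t)\|_{L^{3/2}(\R^3)}
    \le
    C_{\rm CZ}\|F^{\cl}(t)\|_{L^{3/2}(B_1)^{3\times3}} .
\]
Restriction to \(B_{1/2}\) does not increase the norm. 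Raising to the power \(3/2\) and integrating over \(I\) gives
\[
    \|g\|_{Y_{\rm prs}}
    \le
    C_{\rm CZ}\|F^{\cl}\|_{L^{3/2}(I;L^{3/2}(B_1))^{3\times3}} .
\]
Inserting this into the first estimate yields the claim.

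The only point needing care is measurability in time, so that \(t\mapsto R_iR_jF_{ij}^{\cl}(t)\) is a Bochner-measurable \(L^{3/2}\)-valued map. This follows because \(R_iR_j\) is a bounded linear operator applied pointwise in time to a measurable \(L^{3/2}\)-valued function. Fixing the convention for \(C_{\rm CZ}\) (per component or summed) is pure bookkeeping. Otherwise the argument is routine; there is no real obstacle.
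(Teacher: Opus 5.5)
Your proposal is correct and follows essentially the same route as the paper: the triangle inequality with the operator bound for \(P_{{\rm prs},N}^{\cl}\), then the Calder\'on--Zygmund estimate for the zero extension of \(F^{\cl}\) followed by restriction to \(B_{1/2}\). Your extra remarks on applying the estimate pointwise in time, measurability, and absorbing the index count into \(C_{\rm CZ}\) only make explicit what the paper's shorter proof leaves implicit.
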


\begin{proof}
By definition,
\[
    E_{{\rm proj},N}^{\cl}
    =
    R_iR_j(F_{ij}^{\cl})
    -
    P_{{\rm prs},N}^{\cl}R_iR_j(F_{ij}^{\cl}).
\]
The triangle inequality and boundedness of \(P_{{\rm prs},N}^{\cl}\) give
\[
\begin{aligned}
    \|E_{{\rm proj},N}^{\cl}\|_{Y_{\rm prs}}
    &\le
    \|R_iR_j(F_{ij}^{\cl})\|_{Y_{\rm prs}}
    +
    \|P_{{\rm prs},N}^{\cl}R_iR_j(F_{ij}^{\cl})\|_{Y_{\rm prs}}\\
    &\le
    \bigl(
        1+\|P_{{\rm prs},N}^{\cl}\|_{Y_{\rm prs}\to Y_{\rm prs}}
    \bigr)
    \|R_iR_j(F_{ij}^{\cl})\|_{Y_{\rm prs}}.
\end{aligned}
\]
If \(F^{\cl}\in L^{3/2}(I;L^{3/2}(B_1))^{3\times3}\), then the
Calderon--Zygmund estimate for the zero extension of \(F^{\cl}\) gives
\[
    \|R_iR_j(F_{ij}^{\cl})\|_{Y_{\rm prs}}
    \le
    C_{\rm CZ}
    \|F^{\cl}\|_{L^{3/2}(I;L^{3/2}(B_1))^{3\times3}}.
\]
Substituting this into the first estimate proves the second.
\end{proof}

\begin{proposition}[Quotient projection-tail bound]
\label{prop:quotient-projection-tail-bound}
Under \Cref{ass:bounded-clean-pressure-projection}, every intrinsic package
\(D\) satisfies
\[
\begin{aligned}
    d_{{\rm proj},q}(D)
    \le&
    \bigl(
        1+\|P_{{\rm prs},N}^{\cl}\|_{Y_{\rm prs}\to Y_{\rm prs}}
    \bigr)
    \inf_{\zeta\in\Gamma_\Lambda^{\rm int}}
    \|R_iR_j(F_{ij}^{\cl}(D-\zeta))\|_{Y_{\rm prs}}.
\end{aligned}
\]
If \(F^{\cl}(D-\zeta)\in
L^{3/2}(I;L^{3/2}(B_1))^{3\times3}\) for the relevant representatives, then
\[
\begin{aligned}
    d_{{\rm proj},q}(D)
    \le&
    \bigl(
        1+\|P_{{\rm prs},N}^{\cl}\|_{Y_{\rm prs}\to Y_{\rm prs}}
    \bigr)
    C_{\rm CZ}\\
    &\times
    \inf_{\zeta\in\Gamma_\Lambda^{\rm int}}
    \|F^{\cl}(D-\zeta)\|_{L^{3/2}(I;L^{3/2}(B_1))^{3\times3}}.
\end{aligned}
\]
\end{proposition}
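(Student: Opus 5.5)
The plan is to apply \Cref{lem:bounded-projection-tail-estimate} representative by representative and then pass to the infimum over the intrinsic gauge. Fix an arbitrary \(\zeta\in\Gamma_\Lambda^{\rm int}\) and consider the representative \(D-\zeta\). Its clean source tensor is \(F_{ij}^{\cl}(D-\zeta)\), and its projection residual is
\[
    E_{{\rm proj},N}^{\cl}(D-\zeta)
    =
    (I-P_{{\rm prs},N}^{\cl})R_iR_j\bigl(F_{ij}^{\cl}(D-\zeta)\bigr).
\]
The first estimate of \Cref{lem:bounded-projection-tail-estimate} uses only the triangle inequality and the operator-norm bound on \(P_{{\rm prs},N}^{\cl}\) from \Cref{ass:bounded-clean-pressure-projection}. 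It therefore applies verbatim to this representative and gives
\[
    \|E_{{\rm proj},N}^{\cl}(D-\zeta)\|_{Y_{\rm prs}}
    \le
    \bigl(1+\|P_{{\rm prs},N}^{\cl}\|_{Y_{\rm prs}\to Y_{\rm prs}}\bigr)
    \|R_iR_j(F_{ij}^{\cl}(D-\zeta))\|_{Y_{\rm prs}}.
\]

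Next I take the infimum over \(\zeta\). The left-hand side is bounded below by \(d_{{\rm proj},q}(D)\) by \Cref{def:clean-projection-residual-tail}, uniformly in \(\zeta\). Hence \(d_{{\rm proj},q}(D)\) is at most the right-hand side for every \(\zeta\). Since the prefactor \(1+\|P_{{\rm prs},N}^{\cl}\|\) is a nonnegative constant independent of \(\zeta\), it pulls outside the infimum, and this yields the first displayed bound.

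For the second bound, I restrict to representatives with \(F^{\cl}(D-\zeta)\in L^{3/2}(I;L^{3/2}(B_1))^{3\times3}\), extended by zero outside \(B_1\). The Calder\'on--Zygmund step of \Cref{lem:bounded-projection-tail-estimate} gives
\[
    \|R_iR_j(F_{ij}^{\cl}(D-\zeta))\|_{Y_{\rm prs}}
    \le
    C_{\rm CZ}\|F^{\cl}(D-\zeta)\|_{L^{3/2}(I;L^{3/2}(B_1))^{3\times3}}.
\]
I insert this into the per-\(\zeta\) inequality and again take the infimum with the constants outside.

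The only point needing care is the meaning of the infimum when some representatives fail the integrability hypothesis. I would read the statement as taking the infimum over the relevant representatives, with the convention that the norm is \(+\infty\) otherwise, so the inequality is trivially preserved. The infimum of a product of a fixed constant with a family of nonnegative reals equals the constant times the infimum, so there is no real obstacle. In particular, no common-gauge issue arises here, unlike in \Cref{prop:conditional-active-source-residual-control}, because only one term appears under the infimum.
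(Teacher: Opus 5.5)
Your proposal is correct and matches the paper's proof: apply \Cref{lem:bounded-projection-tail-estimate} to the representative \(D-\zeta\) for an arbitrary \(\zeta\in\Gamma_\Lambda^{\rm int}\), take the infimum over \(\zeta\), and obtain the Calder\'on--Zygmund form in the same way from the lemma's second estimate. Your \(+\infty\) convention for representatives that fail the integrability hypothesis is a harmless clarification of the paper's phrase ``for the relevant representatives.''
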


\begin{proof}
Apply \Cref{lem:bounded-projection-tail-estimate} to \(D-\zeta\) for an
arbitrary \(\zeta\in\Gamma_\Lambda^{\rm int}\), and then take the infimum
over \(\zeta\).  The Calderon--Zygmund form follows in the same way from the
second estimate in \Cref{lem:bounded-projection-tail-estimate}.
\end{proof}

\begin{remark}[Projection tail]
The clean projection residual is a finite-window tail component.  Boundedness
of the clean projection only proves that the tail is a well-defined controlled
residual.  Any decay in \(N\), compactness of the clean pressure source, or
absorption into the original localized quotient distance is a separate theorem
target.
\end{remark}

\subsection{Harmonic gauge residual as a pressure-source component}
\label{subsec:harmonic-gauge-residual-pressure-source-component}

The final term in the pressure-source decomposition is the harmonic gauge
tail.  We keep the discussion at a fixed finite-window level.  The purpose is
only to record that the retained harmonic tail is a well-defined pressure
source component once a harmonic gauge projection and observation map have
been fixed.

\begin{assumption}[Fixed harmonic pressure observation datum]
\label{ass:fixed-harmonic-pressure-observation-datum}
Let \(Y_{\rm harm}\) be a fixed finite-window normed space for harmonic
pressure components.  Let
\[
    \mathcal H_M\subset Y_{\rm harm}
\]
be a chosen finite-dimensional harmonic gauge space, and let
\[
    \Pi_{\rm harm,M}:Y_{\rm harm}\to \mathcal H_M
\]
be a bounded projection.  If the harmonic pressure norm is not already
realized inside the pressure observation space \(Y_{\rm prs}\), we also fix a
bounded observation map
\[
    J_{\rm harm}:Y_{\rm harm}\to Y_{\rm prs}.
\]
The operator norms are denoted by
\[
    \|\Pi_{\rm harm,M}\|_{Y_{\rm harm}\to Y_{\rm harm}},
    \qquad
    \|J_{\rm harm}\|_{Y_{\rm harm}\to Y_{\rm prs}}.
\]
\end{assumption}

\begin{definition}[Harmonic gauge residual]
\label{def:harmonic-gauge-residual-pressure-source}
For a harmonic pressure component \(p_{\rm harm}(D)\in Y_{\rm harm}\), define
the retained harmonic tail by
\[
    h_{\rm harm}(D)
    :=
    (I-\Pi_{\rm harm,M})p_{\rm harm}(D).
\]
In the direct-observation convention, where the tail is already an element of
\(Y_{\rm prs}\), define
\[
    E_{\rm harm}(D):=-h_{\rm harm}(D).
\]
In the observation-map convention of
\Cref{ass:fixed-harmonic-pressure-observation-datum}, the same notation means
\[
    E_{\rm harm}(D):=-J_{\rm harm}h_{\rm harm}(D).
\]
This is the harmonic term \(E_{\rm harm}=-h^{\rm harm}\) appearing in
\Cref{lem:pressure-source-mismatch-decomposition}.
\end{definition}

\begin{lemma}[Fixed-window harmonic residual bound]
\label{lem:fixed-window-harmonic-residual-bound}
In the direct-observation convention,
\[
    \|E_{\rm harm}(D)\|_{Y_{\rm prs}}
    =
    \|h_{\rm harm}(D)\|_{Y_{\rm prs}}.
\]
Under the observation-map convention of
\Cref{ass:fixed-harmonic-pressure-observation-datum},
\[
    \|E_{\rm harm}(D)\|_{Y_{\rm prs}}
    \le
    \|J_{\rm harm}\|_{Y_{\rm harm}\to Y_{\rm prs}}
    \|(I-\Pi_{\rm harm,M})p_{\rm harm}(D)\|_{Y_{\rm harm}},
\]
and hence
\[
    \|E_{\rm harm}(D)\|_{Y_{\rm prs}}
    \le
    \|J_{\rm harm}\|_{Y_{\rm harm}\to Y_{\rm prs}}
    \bigl(
        1+\|\Pi_{\rm harm,M}\|_{Y_{\rm harm}\to Y_{\rm harm}}
    \bigr)
    \|p_{\rm harm}(D)\|_{Y_{\rm harm}}.
\]
\end{lemma}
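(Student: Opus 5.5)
The three assertions of \Cref{lem:fixed-window-harmonic-residual-bound} follow directly from \Cref{def:harmonic-gauge-residual-pressure-source} and the operator norms fixed in \Cref{ass:fixed-harmonic-pressure-observation-datum}. No Navier--Stokes input is needed. I will take the two conventions separately and then chain the estimates.

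\emph{Direct-observation convention.} Here \(E_{\rm harm}(D)=-h_{\rm harm}(D)\) with \(h_{\rm harm}(D)\in Y_{\rm prs}\). Absolute homogeneity of the norm, \(\|-x\|=\|x\|\), gives the stated identity at once.

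\emph{Observation-map convention.} Here \(E_{\rm harm}(D)=-J_{\rm harm}h_{\rm harm}(D)\). The steps are as follows.
\begin{enumerate}[label=(\alph*),leftmargin=2em]
    \item Drop the sign as before.
    \item Use the definition of the operator norm of the bounded map \(J_{\rm harm}:Y_{\rm harm}\to Y_{\rm prs}\) to get
    \[
        \|J_{\rm harm}h_{\rm harm}(D)\|_{Y_{\rm prs}}
        \le
        \|J_{\rm harm}\|_{Y_{\rm harm}\to Y_{\rm prs}}
        \|h_{\rm harm}(D)\|_{Y_{\rm harm}}.
    \]
    \item Substitute \(h_{\rm harm}(D)=(I-\Pi_{\rm harm,M})p_{\rm harm}(D)\). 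This gives the first displayed inequality.
    \item For the second inequality, apply the triangle inequality in \(Y_{\rm harm}\):
    \[
        \|(I-\Pi_{\rm harm,M})p\|_{Y_{\rm harm}}
        \le
        \|p\|_{Y_{\rm harm}}+\|\Pi_{\rm harm,M}p\|_{Y_{\rm harm}}
        \le
        \bigl(1+\|\Pi_{\rm harm,M}\|_{Y_{\rm harm}\to Y_{\rm harm}}\bigr)\|p\|_{Y_{\rm harm}},
    \]
    with \(p=p_{\rm harm}(D)\). This is the same argument as in \Cref{lem:bounded-projection-tail-estimate}.
    \item Multiply by \(\|J_{\rm harm}\|_{Y_{\rm harm}\to Y_{\rm prs}}\).
\end{enumerate}

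The only point needing care is bookkeeping. \(\Pi_{\rm harm,M}\) maps into \(\mathcal H_M\subset Y_{\rm harm}\), so \(I-\Pi_{\rm harm,M}\) is a well-defined bounded operator on \(Y_{\rm harm}\). Its operator norm is read in \(Y_{\rm harm}\to Y_{\rm harm}\) through this inclusion, which is how the statement records it. Beyond this there is no genuine obstacle. No decay in \(M\) is claimed, and none is needed.
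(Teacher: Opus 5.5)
Your proof is correct and follows essentially the same argument as the paper. In the direct-observation convention you use absolute homogeneity of the norm. In the observation-map convention you use the operator-norm bound for \(J_{\rm harm}\) and then the triangle-inequality estimate \(\|(I-\Pi_{\rm harm,M})p\|_{Y_{\rm harm}}\le(1+\|\Pi_{\rm harm,M}\|_{Y_{\rm harm}\to Y_{\rm harm}})\|p\|_{Y_{\rm harm}}\).
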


\begin{proof}
In the direct-observation convention,
\[
    E_{\rm harm}(D)=-h_{\rm harm}(D),
\]
so the first identity follows from homogeneity of the norm.  In the
observation-map convention,
\[
    E_{\rm harm}(D)
    =
    -J_{\rm harm}(I-\Pi_{\rm harm,M})p_{\rm harm}(D).
\]
Boundedness of \(J_{\rm harm}\) gives the first inequality.  The second
follows from
\[
    \|(I-\Pi_{\rm harm,M})p_{\rm harm}(D)\|_{Y_{\rm harm}}
    \le
    \bigl(
        1+\|\Pi_{\rm harm,M}\|_{Y_{\rm harm}\to Y_{\rm harm}}
    \bigr)
    \|p_{\rm harm}(D)\|_{Y_{\rm harm}}.
\]
\end{proof}

\begin{definition}[Quotient harmonic gauge residual]
\label{def:quotient-harmonic-gauge-residual}
Define
\[
    d_{{\rm harm},q}(D)
    :=
    \inf_{\zeta\in\Gamma_\Lambda^{\rm int}}
    \|E_{\rm harm}(D-\zeta)\|_{Y_{\rm prs}}.
\]
\end{definition}

\begin{proposition}[Quotient harmonic-tail bound]
\label{prop:quotient-harmonic-tail-bound}
Under \Cref{ass:fixed-harmonic-pressure-observation-datum},
\[
    d_{{\rm harm},q}(D)
    \le
    \|J_{\rm harm}\|_{Y_{\rm harm}\to Y_{\rm prs}}
    \inf_{\zeta\in\Gamma_\Lambda^{\rm int}}
    \|(I-\Pi_{\rm harm,M})p_{\rm harm}(D-\zeta)\|_{Y_{\rm harm}}.
\]
Consequently,
\[
    d_{{\rm harm},q}(D)
    \le
    \|J_{\rm harm}\|_{Y_{\rm harm}\to Y_{\rm prs}}
    \bigl(
        1+\|\Pi_{\rm harm,M}\|_{Y_{\rm harm}\to Y_{\rm harm}}
    \bigr)
    \inf_{\zeta\in\Gamma_\Lambda^{\rm int}}
    \|p_{\rm harm}(D-\zeta)\|_{Y_{\rm harm}}.
\]
\end{proposition}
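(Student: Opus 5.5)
The plan is to argue pointwise in the gauge variable and then pass to the infimum, as in the proof of \Cref{prop:quotient-projection-tail-bound}. Fix an arbitrary \(\zeta\in\Gamma_\Lambda^{\rm int}\) and apply \Cref{lem:fixed-window-harmonic-residual-bound} to the package \(D-\zeta\). In the observation-map convention of \Cref{ass:fixed-harmonic-pressure-observation-datum} this gives
\[
    \|E_{\rm harm}(D-\zeta)\|_{Y_{\rm prs}}
    \le
    \|J_{\rm harm}\|_{Y_{\rm harm}\to Y_{\rm prs}}
    \|(I-\Pi_{\rm harm,M})p_{\rm harm}(D-\zeta)\|_{Y_{\rm harm}}.
\]
The direct-observation convention corresponds to taking \(J_{\rm harm}\) to be the inclusion, with \(Y_{\rm harm}\) realized inside \(Y_{\rm prs}\). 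In that case the identity in the lemma gives the same bound, with operator norm at most one, provided \(Y_{\rm harm}\) carries the \(Y_{\rm prs}\)-norm.

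Next I take the infimum over \(\zeta\) on both sides. The left-hand side becomes \(d_{{\rm harm},q}(D)\) by \Cref{def:quotient-harmonic-gauge-residual}. On the right, \(\|J_{\rm harm}\|\) is a nonnegative constant independent of \(\zeta\), so it factors out of the infimum. This proves the first displayed estimate.

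For the consequence, I bound the tail for each \(\zeta\) using the triangle inequality and boundedness of \(\Pi_{\rm harm,M}\):
\[
    \|(I-\Pi_{\rm harm,M})p\|_{Y_{\rm harm}}
    \le
    \bigl(1+\|\Pi_{\rm harm,M}\|_{Y_{\rm harm}\to Y_{\rm harm}}\bigr)\|p\|_{Y_{\rm harm}},
\]
with \(p=p_{\rm harm}(D-\zeta)\). This is the second inequality of the lemma. Multiplying by the \(\zeta\)-independent constant \(\|J_{\rm harm}\|\) and taking the infimum over \(\zeta\) yields the second estimate. The multiplier \(1+\|\Pi_{\rm harm,M}\|\) is also \(\zeta\)-independent and nonnegative, so it too factors out of the infimum.

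There is no genuine obstacle here; the argument is purely formal. The only point needing care is the order of operations: the estimate must be applied to the same representative \(D-\zeta\) on both sides before taking the infimum, rather than comparing infima taken separately. No linearity of \(p_{\rm harm}\) in \(D\) is needed, because the argument never splits \(p_{\rm harm}(D-\zeta)\).
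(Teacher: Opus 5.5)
Your proposal is correct and matches the paper's proof. You apply \Cref{lem:fixed-window-harmonic-residual-bound} to the single representative $D-\zeta$ and take the infimum over $\zeta\in\Gamma_\Lambda^{\rm int}$, and for the second estimate you use the lemma's second bound before taking the infimum; your remark treating the direct-observation convention via $J_{\rm harm}$ as an inclusion is a harmless addition that the paper leaves implicit.
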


\begin{proof}
Apply \Cref{lem:fixed-window-harmonic-residual-bound} to \(D-\zeta\) and
then take the infimum over \(\zeta\in\Gamma_\Lambda^{\rm int}\).  The second
estimate follows from the second bound in
\Cref{lem:fixed-window-harmonic-residual-bound} before taking the infimum.
\end{proof}

\begin{remark}[Harmonic tail]
The harmonic gauge residual is a fixed finite-window tail component.
Boundedness of the harmonic projection and of the harmonic observation map
only proves that the residual is a well-defined controlled component.  Any
decay in the harmonic degree \(M\), compatibility with the physical pressure
gauge, or absorption into the original localized quotient distance is a
separate theorem target.
\end{remark}

\subsection{Assembled intrinsic pressure-source component budget}
\label{subsec:assembled-intrinsic-pressure-source-component-budget}

We now collect the four pressure-source components in the intrinsic quotient
geometry.  The important point is that the quotient infimum must be taken over
a single common gauge representative after the component norms have been
summed.  Taking separate infima for the four components would use different
gauges and would not, by itself, control the pressure-source residual.

\begin{definition}[Intrinsic quotient pressure-source residual]
\label{def:intrinsic-quotient-pressure-source-residual}
For an intrinsic package \(\mathcal D\), define
\[
    \Err_{{\rm src},{\rm int}}^{\rm prs}(\mathcal D)
    :=
    \inf_{\zeta\in\Gamma_\Lambda^{\rm int}}
    \|\mathfrak P_{\rm src}(\mathcal D-\zeta)\|_{Y_{\rm prs}},
\]
where \(\mathfrak P_{\rm src}\) is the concrete pressure-source mismatch from
\Cref{def:concrete-pressure-source-mismatch}, evaluated through the
normalized intrinsic pressure-source coordinates.
\end{definition}

\begin{definition}[Common-gauge intrinsic component budget]
\label{def:common-gauge-intrinsic-component-budget}
For \(\zeta\in\Gamma_\Lambda^{\rm int}\), let
\[
    C_\eta^\zeta,\qquad
    E_{{\rm act},\zeta}^{\rm src},\qquad
    E_{{\rm proj},\zeta}^{\cl},\qquad
    E_{{\rm harm},\zeta}
\]
denote the cutoff commutator, active source residual, clean projection
residual, and harmonic gauge residual evaluated on \(\mathcal D-\zeta\).
Define
\[
\begin{aligned}
    \mathfrak B_{{\rm src},{\rm int}}^{\rm prs}(\mathcal D)
    :=
    \inf_{\zeta\in\Gamma_\Lambda^{\rm int}}
    \bigl(
        &\|C_\eta^\zeta\|_{Y_{\rm prs}}
        +
        \|E_{{\rm act},\zeta}^{\rm src}\|_{Y_{\rm prs}}\\
        &+
        \|E_{{\rm proj},\zeta}^{\cl}\|_{Y_{\rm prs}}
        +
        \|E_{{\rm harm},\zeta}\|_{Y_{\rm prs}}
    \bigr).
\end{aligned}
\]
\end{definition}

\begin{definition}[Resolved intrinsic source-size budget]
\label{def:resolved-intrinsic-source-size-budget}
For \(\zeta\in\Gamma_\Lambda^{\rm int}\), write \(u_\zeta\) for the velocity
component of \(\mathcal D-\zeta\), \(M_\zeta^{\rm act}\) for the active
covariance mismatch of \(\mathcal D-\zeta\), \(F_\zeta^{\cl}\) for the clean
source \(F^{\cl}\) associated with \(\mathcal D-\zeta\), and
\(p_{{\rm harm},\zeta}\) for its harmonic pressure component.  Define
\[
\begin{aligned}
    \mathfrak S_{{\rm src},{\rm int}}^{\rm prs}(\mathcal D)
    :=
    \inf_{\zeta\in\Gamma_\Lambda^{\rm int}}
    \bigl(&
        2C_\eta
        \|u_\zeta\|_{L^3(I;L^3(A_{3/4,1}))}^2\\
        &+
        C_{\rm Riesz}
        \|M_\zeta^{\rm act}\|
        _{L^{3/2}(I;L^{3/2}(B_1))^{3\times3}}\\
        &+
        \bigl(
            1+\|P_{{\rm prs},N}^{\cl}\|_{Y_{\rm prs}\to Y_{\rm prs}}
        \bigr)
        \|R_iR_j(F_{\zeta,ij}^{\cl})\|_{Y_{\rm prs}}\\
        &+
        \|J_{\rm harm}\|_{Y_{\rm harm}\to Y_{\rm prs}}
        \|(I-\Pi_{\rm harm,M})p_{{\rm harm},\zeta}\|_{Y_{\rm harm}}
    \bigr).
\end{aligned}
\]
If the clean source is measured directly in
\(L^{3/2}(I;L^{3/2}(B_1))^{3\times3}\), the third line may be replaced by
\[
    \bigl(
        1+\|P_{{\rm prs},N}^{\cl}\|_{Y_{\rm prs}\to Y_{\rm prs}}
    \bigr)
    C_{\rm CZ}
    \|F_\zeta^{\cl}\|_{L^{3/2}(I;L^{3/2}(B_1))^{3\times3}}.
\]
\end{definition}

\begin{proposition}[Intrinsic pressure-source component budget]
\label{prop:intrinsic-pressure-source-component-budget}
Under the fixed-window pressure-source assumptions above,
\[
    \Err_{{\rm src},{\rm int}}^{\rm prs}(\mathcal D)
    \le
    \mathfrak B_{{\rm src},{\rm int}}^{\rm prs}(\mathcal D)
    \le
    \mathfrak S_{{\rm src},{\rm int}}^{\rm prs}(\mathcal D).
\]
The same conclusion holds with the Calderon--Zygmund version of
\(\mathfrak S_{{\rm src},{\rm int}}^{\rm prs}\) when the clean sources have
the stated \(L^{3/2}\) integrability.
\end{proposition}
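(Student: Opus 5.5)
The plan is to prove both inequalities pointwise in the gauge representative \(\zeta\in\Gamma_\Lambda^{\rm int}\), and only then take infima. Because \(\mathfrak B\) and \(\mathfrak S\) are each defined as an infimum over a single common \(\zeta\), no separate-infimum issue arises. It therefore suffices to show, for each fixed \(\zeta\),
\[
    \|\mathfrak P_{\rm src}(\mathcal D-\zeta)\|_{Y_{\rm prs}}
    \le
    \|C_\eta^\zeta\|_{Y_{\rm prs}}
    +\|E_{{\rm act},\zeta}^{\rm src}\|_{Y_{\rm prs}}
    +\|E_{{\rm proj},\zeta}^{\cl}\|_{Y_{\rm prs}}
    +\|E_{{\rm harm},\zeta}\|_{Y_{\rm prs}}
    \le
    s(\zeta),
\]
where \(s(\zeta)\) is the bracket inside the infimum defining \(\mathfrak S_{{\rm src},{\rm int}}^{\rm prs}\). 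Taking \(\inf_\zeta\) of the left side gives \(\Err_{{\rm src},{\rm int}}^{\rm prs}\le\)\,(infimum of the middle expression)\(=\mathfrak B\). Taking the infimum of the middle and right expressions over the same \(\zeta\) gives \(\mathfrak B\le\mathfrak S\).

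The first inequality is exactly \Cref{lem:pressure-source-mismatch-decomposition} applied to the package \(\mathcal D-\zeta\), with \(f=u_\zeta\otimes u_\zeta\) and \(F^{\cl}=F^{\cl}_\zeta\). It is just the triangle inequality after the telescoping identity.

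For the second inequality I bound the four terms separately, always at the same \(\zeta\).
\begin{enumerate}[label=(\roman*),leftmargin=2em]
    \item \emph{Commutator.} \Cref{lem:fixed-geometry-commutator-bound} together with the Hölder step in the proof of \Cref{lem:quadratic-cutoff-riesz-commutator-bound} gives \(\|C_\eta^\zeta\|\le C_\eta\|u_\zeta\|_{L^3(I;L^3(A_{3/4,1}))}^2\). Here \(C_\eta\) is enlarged once to absorb the Hölder and tensor-component constants.
    \item \emph{Active residual.} \Cref{lem:active-residual-source-decomposition} gives \(\|E_{{\rm act},\zeta}^{\rm src}\|\le C_{\rm Riesz}\|M^{\rm act}_\zeta\|+\|C_\eta^\zeta\|\). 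The last term is bounded again by (i). This is why the commutator appears twice and the coefficient in \(\mathfrak S\) is \(2C_\eta\).
    \item \emph{Projection tail.} The first estimate of \Cref{lem:bounded-projection-tail-estimate} bounds the projection tail by \((1+\|P^{\cl}_{{\rm prs},N}\|)\|R_iR_j(F^{\cl}_{\zeta,ij})\|_{Y_{\rm prs}}\).
    \item \emph{Harmonic tail.} \Cref{lem:fixed-window-harmonic-residual-bound} bounds the harmonic tail by \(\|J_{\rm harm}\|\,\|(I-\Pi_{{\rm harm},M})p_{{\rm harm},\zeta}\|_{Y_{\rm harm}}\). In the direct-observation convention one takes \(J_{\rm harm}\) to be the identity embedding, with norm one, and the bound becomes an equality.
\end{enumerate}
Summing (i)–(iv) gives the pointwise bound by \(s(\zeta)\).

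For the Calderón–Zygmund variant, I replace step (iii) with the second estimate of \Cref{lem:bounded-projection-tail-estimate}. This is valid whenever \(F^{\cl}_\zeta\in L^{3/2}(I;L^{3/2}(B_1))^{3\times3}\), and it yields the stated alternative third line.

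The only delicate point, and the main obstacle, is bookkeeping rather than analysis. The constant \(C_\eta\) in \(\mathfrak S\) must be the same one that dominates the quadratic commutator bound; I will fix it as the enlarged constant from \Cref{lem:quadratic-cutoff-riesz-commutator-bound}. In addition, every component must be evaluated on the same representative \(\mathcal D-\zeta\) before any infimum is taken. If one wants monotone infima, the bounds must also hold for all \(\zeta\), including those for which some quantity is infinite. In that case the inequality holds trivially, so I will note it and move on.
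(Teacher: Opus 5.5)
Your proposal is correct and follows essentially the same route as the paper. Fix \(\zeta\), apply the four-term decomposition and the triangle inequality, then bound the commutator, active-residual, projection-tail, and harmonic-tail terms on the same representative, with the commutator counted twice to give \(2C_\eta\), before taking a single common infimum. The Calder\'on--Zygmund variant is also handled exactly as in the paper, by swapping in the second estimate of the projection-tail lemma.
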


\begin{proof}
Fix \(\zeta\in\Gamma_\Lambda^{\rm int}\).  Applying
\Cref{lem:pressure-source-mismatch-decomposition} to
\(\mathcal D-\zeta\) and then using the triangle inequality gives
\[
\begin{aligned}
    \|\mathfrak P_{\rm src}(\mathcal D-\zeta)\|_{Y_{\rm prs}}
    \le&
        \|C_\eta^\zeta\|_{Y_{\rm prs}}
        +
        \|E_{{\rm act},\zeta}^{\rm src}\|_{Y_{\rm prs}}\\
        &+
        \|E_{{\rm proj},\zeta}^{\cl}\|_{Y_{\rm prs}}
        +
        \|E_{{\rm harm},\zeta}\|_{Y_{\rm prs}}.
\end{aligned}
\]
Taking the infimum over the same gauge representative \(\zeta\) proves
\[
    \Err_{{\rm src},{\rm int}}^{\rm prs}(\mathcal D)
    \le
    \mathfrak B_{{\rm src},{\rm int}}^{\rm prs}(\mathcal D).
\]

For the second inequality, use the fixed component estimates on the same
representative \(\mathcal D-\zeta\).  The separated commutator estimate gives
\[
    \|C_\eta^\zeta\|_{Y_{\rm prs}}
    \le
    C_\eta
    \|u_\zeta\|_{L^3(I;L^3(A_{3/4,1}))}^2.
\]
\Cref{lem:active-residual-source-decomposition} gives
\[
    \|E_{{\rm act},\zeta}^{\rm src}\|_{Y_{\rm prs}}
    \le
    C_{\rm Riesz}
    \|M_\zeta^{\rm act}\|
    _{L^{3/2}(I;L^{3/2}(B_1))^{3\times3}}
    +
    \|C_\eta^\zeta\|_{Y_{\rm prs}}.
\]
Together these two estimates contribute the first two lines in
\(\mathfrak S_{{\rm src},{\rm int}}^{\rm prs}\).  The clean projection term is
bounded by \Cref{lem:bounded-projection-tail-estimate}, and the harmonic term
is bounded by \Cref{lem:fixed-window-harmonic-residual-bound}.  Summing these
four bounds and then taking the infimum over
\(\zeta\in\Gamma_\Lambda^{\rm int}\) proves
\[
    \mathfrak B_{{\rm src},{\rm int}}^{\rm prs}(\mathcal D)
    \le
    \mathfrak S_{{\rm src},{\rm int}}^{\rm prs}(\mathcal D).
\]
The Calderon--Zygmund version follows by using the second estimate in
\Cref{lem:bounded-projection-tail-estimate} for the clean projection term.
\end{proof}

\begin{assumption}[Intrinsic source-size budget compatibility]
\label{ass:intrinsic-source-size-budget-compatibility}
There are constants
\[
    \eta_{{\rm src},{\rm int}}\ge0,
    \qquad
    \Delta_{{\rm src},{\rm int}}\ge0,
\]
such that every intrinsic package \(\mathcal D\) satisfies
\[
    \mathfrak S_{{\rm src},{\rm int}}^{\rm prs}(\mathcal D)
    \le
    \eta_{{\rm src},{\rm int}}
    \Dist_{\loc,{\rm int}}(\mathcal D,\Gamma_\Lambda^{\rm int})
    +
    \Delta_{{\rm src},{\rm int}}.
\]
\end{assumption}

\begin{corollary}[Conditional intrinsic pressure-source control]
\label{cor:conditional-intrinsic-pressure-source-control}
Assume \Cref{ass:intrinsic-source-size-budget-compatibility}.  Then
\[
    \Err_{{\rm src},{\rm int}}^{\rm prs}(\mathcal D)
    \le
    \eta_{{\rm src},{\rm int}}
    \Dist_{\loc,{\rm int}}(\mathcal D,\Gamma_\Lambda^{\rm int})
    +
    \Delta_{{\rm src},{\rm int}}.
\]
\end{corollary}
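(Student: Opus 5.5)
The corollary follows by chaining the two-step component budget with the compatibility assumption. Fix an intrinsic package \(\mathcal D\). By \Cref{prop:intrinsic-pressure-source-component-budget}, which holds under the fixed-window pressure-source data already in force (the bounded clean projection of \Cref{ass:bounded-clean-pressure-projection} and the harmonic observation datum of \Cref{ass:fixed-harmonic-pressure-observation-datum}), we have the chain
\[
    \Err_{{\rm src},{\rm int}}^{\rm prs}(\mathcal D)
    \le
    \mathfrak B_{{\rm src},{\rm int}}^{\rm prs}(\mathcal D)
    \le
    \mathfrak S_{{\rm src},{\rm int}}^{\rm prs}(\mathcal D).
\]
We then apply \Cref{ass:intrinsic-source-size-budget-compatibility} to the right-hand side. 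This bounds \(\mathfrak S_{{\rm src},{\rm int}}^{\rm prs}(\mathcal D)\) by \(\eta_{{\rm src},{\rm int}}\Dist_{\loc,{\rm int}}(\mathcal D,\Gamma_\Lambda^{\rm int})+\Delta_{{\rm src},{\rm int}}\). Transitivity of \(\le\) gives the displayed estimate.

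The only point that needs care is that the proposition's chain uses a single common gauge representative \(\zeta\) throughout. The infimum in \(\mathfrak S_{{\rm src},{\rm int}}^{\rm prs}\) is taken after summing the four resolved component bounds on the same \(\mathcal D-\zeta\). Consequently the assumption, being stated for that same common-gauge quantity, can be applied directly, and no separate-infima issue arises. If the Calderon--Zygmund variant of \(\mathfrak S_{{\rm src},{\rm int}}^{\rm prs}\) is the one assumed to satisfy the compatibility bound, the same argument applies verbatim using the corresponding version of the proposition.

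There is no real obstacle. The substantive content, namely whether \Cref{ass:intrinsic-source-size-budget-compatibility} actually holds, given the quadratic commutator term and the uncontrolled tails, is assumed rather than proved. The proof is therefore a two-line composition of inequalities.
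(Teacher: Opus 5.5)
Your proposal is correct and matches the paper's proof: the paper also just combines the chain \(\Err_{{\rm src},{\rm int}}^{\rm prs}\le\mathfrak B_{{\rm src},{\rm int}}^{\rm prs}\le\mathfrak S_{{\rm src},{\rm int}}^{\rm prs}\) from \Cref{prop:intrinsic-pressure-source-component-budget} with the bound in \Cref{ass:intrinsic-source-size-budget-compatibility}. Your note that the fixed-window projection and harmonic data must be in force for the proposition correctly states a hypothesis the paper leaves implicit.
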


\begin{proof}
Combine \Cref{prop:intrinsic-pressure-source-component-budget} with
\Cref{ass:intrinsic-source-size-budget-compatibility}.
\end{proof}

\begin{remark}[Status of the assembled budget]
\Cref{prop:intrinsic-pressure-source-component-budget} is a fixed-window
assembly statement.  It proves that the pressure-source residual is bounded by
the common-gauge sum of the four already isolated components, and then by the
resolved source-size budget.  It does not prove that this budget is small,
scale-uniform, or controlled by the original intrinsic quotient distance.
\Cref{ass:intrinsic-source-size-budget-compatibility} is the exact remaining
compatibility input needed for the conditional control in
\Cref{cor:conditional-intrinsic-pressure-source-control}.
\end{remark}

\subsection{Same-gauge compatibility criterion}
\label{subsec:same-gauge-compatibility-criterion}

The previous subsection reduces intrinsic pressure-source control to the
resolved source-size budget.  We now record a sufficient condition for that
budget to satisfy the compatibility assumption.  This criterion is deliberately
same-gauge: all component estimates are required on one representative
\(\mathcal D-\zeta_\ast\).  The cutoff commutator part follows from the
finite-amplitude annular velocity control; the other components remain
explicit inputs.

\begin{assumption}[Same-gauge component compatibility datum]
\label{ass:same-gauge-component-compatibility-datum}
There are constants
\[
    M_U,C_U<\infty,\qquad
    \eta_{\rm act},\eta_{\rm proj},\eta_{\rm harm}\ge0,
    \qquad
    \Delta_{\rm act},\Delta_{\rm proj},\Delta_{\rm harm}\ge0
\]
such that for every intrinsic package \(\mathcal D\) there exists a gauge
representative \(\zeta_\ast=\zeta_\ast(\mathcal D)\in
\Gamma_\Lambda^{\rm int}\) with the following properties.  Writing
\[
    \rho_{\rm int}(\mathcal D)
    :=
    \Dist_{\loc,{\rm int}}(\mathcal D,\Gamma_\Lambda^{\rm int}),
\]
the annular velocity component satisfies
\[
    \|u_{\zeta_\ast}\|_{L^3(I;L^3(A_{3/4,1}))}\le M_U,
    \qquad
    \|u_{\zeta_\ast}\|_{L^3(I;L^3(A_{3/4,1}))}
    \le
    C_U\rho_{\rm int}(\mathcal D).
\]
The active covariance, clean projection, and harmonic tail entries satisfy
\[
    C_{\rm Riesz}
    \|M_{\zeta_\ast}^{\rm act}\|
    _{L^{3/2}(I;L^{3/2}(B_1))^{3\times3}}
    \le
    \eta_{\rm act}\rho_{\rm int}(\mathcal D)+\Delta_{\rm act},
\]
\[
    \bigl(
        1+\|P_{{\rm prs},N}^{\cl}\|_{Y_{\rm prs}\to Y_{\rm prs}}
    \bigr)
    \|R_iR_j(F_{\zeta_\ast,ij}^{\cl})\|_{Y_{\rm prs}}
    \le
    \eta_{\rm proj}\rho_{\rm int}(\mathcal D)+\Delta_{\rm proj},
\]
and
\[
    \|J_{\rm harm}\|_{Y_{\rm harm}\to Y_{\rm prs}}
    \|(I-\Pi_{\rm harm,M})p_{{\rm harm},\zeta_\ast}\|_{Y_{\rm harm}}
    \le
    \eta_{\rm harm}\rho_{\rm int}(\mathcal D)+\Delta_{\rm harm}.
\]
\end{assumption}

\begin{lemma}[Finite-amplitude same-gauge commutator compatibility]
\label{lem:finite-amplitude-same-gauge-commutator-compatibility}
Under the annular velocity part of
\Cref{ass:same-gauge-component-compatibility-datum},
\[
    2C_\eta
    \|u_{\zeta_\ast}\|_{L^3(I;L^3(A_{3/4,1}))}^2
    \le
    2C_\eta M_U C_U
    \Dist_{\loc,{\rm int}}(\mathcal D,\Gamma_\Lambda^{\rm int}).
\]
\end{lemma}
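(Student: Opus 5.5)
The plan is to reproduce, on the single representative \(\zeta_\ast=\zeta_\ast(\mathcal D)\) from \Cref{ass:same-gauge-component-compatibility-datum}, the squaring trick used in \Cref{cor:finite-amplitude-linear-commutator-control}. Write
\[
    a:=\|u_{\zeta_\ast}\|_{L^3(I;L^3(A_{3/4,1}))}\ge0,
    \qquad
    \rho_{\rm int}(\mathcal D)=\Dist_{\loc,{\rm int}}(\mathcal D,\Gamma_\Lambda^{\rm int}).
\]
The assumption provides two bounds on this same quantity, the amplitude bound \(a\le M_U\) and the linear bound \(a\le C_U\rho_{\rm int}(\mathcal D)\). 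The key step is to split the square asymmetrically,
\[
    a^2=a\cdot a\le M_U\cdot C_U\rho_{\rm int}(\mathcal D).
\]
Here the first factor is estimated by the amplitude bound and the second by the linear bound. Both inequalities use \(a\ge0\), and monotonicity of multiplication by a nonnegative number is legitimate since \(M_U\ge0\) (it bounds a norm). Multiplying by \(2C_\eta\ge0\) then gives exactly the displayed estimate.

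The only point that needs care, rather than a real obstacle, is that both bounds refer to the same representative \(\zeta_\ast\). This is what \Cref{ass:same-gauge-component-compatibility-datum} guarantees. Because of it, no infimum over separate gauges has to be exchanged with the product, which is the issue flagged in the remark after \Cref{prop:conditional-active-source-residual-control}.

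It is worth noting that no pressure estimate is used here. The constant \(C_\eta\) enters only as a multiplicative factor, and the factor \(2\) is inherited from \(\mathfrak S_{{\rm src},{\rm int}}^{\rm prs}\), where the commutator appears both directly and inside the active-source bound of \Cref{lem:active-residual-source-decomposition}.
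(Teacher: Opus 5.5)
Your proof is correct and matches the paper's argument: you bound one factor of $\|u_{\zeta_\ast}\|_{L^3(I;L^3(A_{3/4,1}))}^2$ by the amplitude bound $M_U$ and the other by the linear bound $C_U\Dist_{\loc,{\rm int}}(\mathcal D,\Gamma_\Lambda^{\rm int})$ on the same representative $\zeta_\ast$, and then multiply by $2C_\eta$.
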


\begin{proof}
By the two annular velocity bounds in
\Cref{ass:same-gauge-component-compatibility-datum},
\[
    \|u_{\zeta_\ast}\|_{L^3(I;L^3(A_{3/4,1}))}^2
    \le
    M_U
    \|u_{\zeta_\ast}\|_{L^3(I;L^3(A_{3/4,1}))}
    \le
    M_U C_U
    \Dist_{\loc,{\rm int}}(\mathcal D,\Gamma_\Lambda^{\rm int}).
\]
Multiplying by \(2C_\eta\) proves the claim.
\end{proof}

\begin{proposition}[Same-gauge sufficient condition for source-size compatibility]
\label{prop:same-gauge-sufficient-source-size-compatibility}
Assume \Cref{ass:same-gauge-component-compatibility-datum}.  Then
\Cref{ass:intrinsic-source-size-budget-compatibility} holds with
\[
    \eta_{{\rm src},{\rm int}}
    =
    2C_\eta M_U C_U
    +
    \eta_{\rm act}
    +
    \eta_{\rm proj}
    +
    \eta_{\rm harm}
\]
and
\[
    \Delta_{{\rm src},{\rm int}}
    =
    \Delta_{\rm act}
    +
    \Delta_{\rm proj}
    +
    \Delta_{\rm harm}.
\]
\end{proposition}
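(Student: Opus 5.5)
The plan is to evaluate the infimum defining \(\mathfrak S_{{\rm src},{\rm int}}^{\rm prs}(\mathcal D)\) in \Cref{def:resolved-intrinsic-source-size-budget} at the single representative \(\zeta_\ast=\zeta_\ast(\mathcal D)\) supplied by \Cref{ass:same-gauge-component-compatibility-datum}. Since \(\zeta_\ast\in\Gamma_\Lambda^{\rm int}\), the infimum is bounded above by the value of the four-line bracket at \(\zeta=\zeta_\ast\). This is the step where the same-gauge structure matters: all four entries are estimated on the common representative \(\mathcal D-\zeta_\ast\). We do not bound four separate infima, which by the remark following \Cref{prop:conditional-active-source-residual-control} would not control the sum.

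Next I bound the four lines one at a time, with \(\rho_{\rm int}(\mathcal D)=\Dist_{\loc,{\rm int}}(\mathcal D,\Gamma_\Lambda^{\rm int})\). The first line, \(2C_\eta\|u_{\zeta_\ast}\|_{L^3(I;L^3(A_{3/4,1}))}^2\), is bounded by \(2C_\eta M_UC_U\rho_{\rm int}(\mathcal D)\) using \Cref{lem:finite-amplitude-same-gauge-commutator-compatibility}. The second, third and fourth lines are bounded directly by the active covariance, clean projection and harmonic tail inequalities of \Cref{ass:same-gauge-component-compatibility-datum}. They contribute \(\eta_{\rm act}\rho_{\rm int}+\Delta_{\rm act}\), \(\eta_{\rm proj}\rho_{\rm int}+\Delta_{\rm proj}\) and \(\eta_{\rm harm}\rho_{\rm int}+\Delta_{\rm harm}\), respectively.

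Adding the four bounds gives
\[
    \mathfrak S_{{\rm src},{\rm int}}^{\rm prs}(\mathcal D)
    \le
    \bigl(2C_\eta M_UC_U+\eta_{\rm act}+\eta_{\rm proj}+\eta_{\rm harm}\bigr)\rho_{\rm int}(\mathcal D)
    +\Delta_{\rm act}+\Delta_{\rm proj}+\Delta_{\rm harm}.
\]
This is exactly \Cref{ass:intrinsic-source-size-budget-compatibility} with the stated constants, and these constants are finite and nonnegative.

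There is no real analytic obstacle. The only point needing care is consistency of the third line. If the Calderon--Zygmund variant of \(\mathfrak S_{{\rm src},{\rm int}}^{\rm prs}\) is used, then the projection hypothesis must be read in that variant, or it must be noted that \(\|R_iR_j F^{\cl}\|_{Y_{\rm prs}}\le C_{\rm CZ}\|F^{\cl}\|\) goes in the wrong direction for this purpose. I would therefore state the conclusion for the \(Y_{\rm prs}\) form of the budget, which matches the assumption as written.
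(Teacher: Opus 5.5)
Your proposal is correct and matches the paper's proof. Like the paper, you bound the infimum defining \(\mathfrak S_{{\rm src},{\rm int}}^{\rm prs}(\mathcal D)\) by its value at the single representative \(\zeta_\ast(\mathcal D)\), control the commutator line by \Cref{lem:finite-amplitude-same-gauge-commutator-compatibility} and the other three lines directly by \Cref{ass:same-gauge-component-compatibility-datum}, and sum. Your caveat about the Calderon--Zygmund variant is accurate and goes beyond what the paper says: that variant's third line is the larger quantity, so the \(Y_{\rm prs}\)-form projection hypothesis does not control it.
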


\begin{proof}
By definition of \(\mathfrak S_{{\rm src},{\rm int}}^{\rm prs}\), its
infimum is bounded above by evaluating the source-size expression at the
single representative \(\zeta_\ast(\mathcal D)\).  The cutoff commutator
contribution is bounded by
\Cref{lem:finite-amplitude-same-gauge-commutator-compatibility}.  The active
covariance, projection, and harmonic contributions are bounded by the three
remaining estimates in
\Cref{ass:same-gauge-component-compatibility-datum}.  Summing these four
estimates gives
\[
    \mathfrak S_{{\rm src},{\rm int}}^{\rm prs}(\mathcal D)
    \le
    \eta_{{\rm src},{\rm int}}
    \Dist_{\loc,{\rm int}}(\mathcal D,\Gamma_\Lambda^{\rm int})
    +
    \Delta_{{\rm src},{\rm int}},
\]
with the displayed constants.
\end{proof}

\begin{corollary}[Same-gauge conditional pressure-source control]
\label{cor:same-gauge-conditional-pressure-source-control}
Under \Cref{ass:same-gauge-component-compatibility-datum},
\[
    \Err_{{\rm src},{\rm int}}^{\rm prs}(\mathcal D)
    \le
    \bigl(
        2C_\eta M_U C_U
        +
        \eta_{\rm act}
        +
        \eta_{\rm proj}
        +
        \eta_{\rm harm}
    \bigr)
    \Dist_{\loc,{\rm int}}(\mathcal D,\Gamma_\Lambda^{\rm int})
    +
    \Delta_{\rm act}
    +
    \Delta_{\rm proj}
    +
    \Delta_{\rm harm}.
\]
\end{corollary}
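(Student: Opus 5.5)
The corollary is a two-step chaining of results already in place, so the plan is to compose them rather than re-derive any component estimate. First, invoke \Cref{prop:same-gauge-sufficient-source-size-compatibility}. Under \Cref{ass:same-gauge-component-compatibility-datum}, it yields \Cref{ass:intrinsic-source-size-budget-compatibility} with the explicit constants
\[
    \eta_{{\rm src},{\rm int}}
    =
    2C_\eta M_U C_U+\eta_{\rm act}+\eta_{\rm proj}+\eta_{\rm harm},
    \qquad
    \Delta_{{\rm src},{\rm int}}
    =
    \Delta_{\rm act}+\Delta_{\rm proj}+\Delta_{\rm harm}.
\]
Second, with that assumption verified, apply \Cref{cor:conditional-intrinsic-pressure-source-control}. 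It bounds \(\Err_{{\rm src},{\rm int}}^{\rm prs}(\mathcal D)\) by \(\eta_{{\rm src},{\rm int}}\Dist_{\loc,{\rm int}}(\mathcal D,\Gamma_\Lambda^{\rm int})+\Delta_{{\rm src},{\rm int}}\). Substituting the constants gives exactly the displayed estimate.

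Equivalently, and more transparently, I would write out the chain
\[
    \Err_{{\rm src},{\rm int}}^{\rm prs}(\mathcal D)
    \le
    \mathfrak B_{{\rm src},{\rm int}}^{\rm prs}(\mathcal D)
    \le
    \mathfrak S_{{\rm src},{\rm int}}^{\rm prs}(\mathcal D),
\]
which is \Cref{prop:intrinsic-pressure-source-component-budget}. I would then bound the infimum defining \(\mathfrak S_{{\rm src},{\rm int}}^{\rm prs}\) by its value at the single representative \(\zeta_\ast(\mathcal D)\) supplied by the datum. The commutator line is handled by \Cref{lem:finite-amplitude-same-gauge-commutator-compatibility}, which gives \(2C_\eta M_UC_U\rho_{\rm int}\). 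The remaining three lines are bounded directly by the three hypotheses of the datum. Summing the four bounds finishes the argument.

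There is no genuine obstacle here. The only point needing care is the same-gauge bookkeeping. The upper bound for an infimum must come from evaluating all four components at one common \(\zeta_\ast\), and not from separate infima. This is precisely why the datum posits a single \(\zeta_\ast(\mathcal D)\) and why \(\mathfrak S_{{\rm src},{\rm int}}^{\rm prs}\) is defined with the infimum outside the sum.

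A secondary check is that the factor \(2\) in front of \(C_\eta\) is accounted for correctly. One copy of the commutator comes from the decomposition of \Cref{lem:pressure-source-mismatch-decomposition}. The second copy comes from \Cref{lem:active-residual-source-decomposition}, which re-introduces the commutator inside \(E_{\rm act}^{\rm src}\). Both copies are already absorbed into the definition of \(\mathfrak S_{{\rm src},{\rm int}}^{\rm prs}\).
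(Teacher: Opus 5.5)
Your proposal is correct and matches the paper's proof, which is exactly the two-step combination of \Cref{prop:same-gauge-sufficient-source-size-compatibility} with \Cref{cor:conditional-intrinsic-pressure-source-control}. Your unfolded chain through $\mathfrak B_{{\rm src},{\rm int}}^{\rm prs}\le\mathfrak S_{{\rm src},{\rm int}}^{\rm prs}$ evaluated at the single representative $\zeta_\ast$, and your accounting for the factor $2C_\eta$, are also consistent with how the paper sets up those intermediate results.
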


\begin{proof}
Combine
\Cref{prop:same-gauge-sufficient-source-size-compatibility} with
\Cref{cor:conditional-intrinsic-pressure-source-control}.
\end{proof}

\begin{remark}[What the criterion proves]
\Cref{prop:same-gauge-sufficient-source-size-compatibility} proves only a
finite-window implication.  The cutoff commutator contribution is handled by
finite-amplitude annular velocity control.  The active covariance, clean
projection, and harmonic tail entries are not proved from the intrinsic
distance here; they are named compatibility inputs.  No smallness,
scale-uniform control, decay in \(N\) or \(M\), or Navier--Stokes regularity is
claimed.
\end{remark}

\subsection{Active covariance coordinate convention}
\label{subsec:active-covariance-coordinate-convention}

We next isolate the active covariance component.  There are two logically
different choices.  One may treat the mismatch
\(\eta u_i u_j-(U_iU_j+R_{ij})\) as a residual to be estimated, or one may
choose the covariance coordinate so that it records this mismatch exactly.
For the finite-window model we first take the second, covariance-resolved
choice.  This does not prove that the convention is canonical for localized
Navier--Stokes packages; it only removes the active source term inside the
chosen model geometry.

\begin{definition}[Active covariance reconstruction residual]
\label{def:active-covariance-reconstruction-residual}
For a package \(D\), define
\[
    \mathcal R_{\rm cov}^{\rm act}(D)
    :=
    \eta u_i u_j-(U_iU_j+R_{ij})
    \in L^{3/2}(I;L^{3/2}(B_1))^{3\times3}.
\]
For a common gauge representative \(\zeta_\ast\), set
\[
    d_{\rm cov}^{\rm act}(D;\zeta_\ast)
    :=
    C_{\rm Riesz}
    \|\mathcal R_{\rm cov}^{\rm act}(D-\zeta_\ast)\|
    _{L^{3/2}(I;L^{3/2}(B_1))^{3\times3}}.
\]
\end{definition}

\begin{convention}[Covariance-resolved finite-window package]
\label{conv:covariance-resolved-finite-window-package}
In the covariance-resolved finite-window model, the Reynolds/covariance
coordinate is chosen on the same-gauge representative so that
\[
    R_{ij}
    =
    \eta u_i u_j-U_iU_j
\]
in \(L^{3/2}(I;L^{3/2}(B_1))^{3\times3}\).  Equivalently,
\[
    \mathcal R_{\rm cov}^{\rm act}(D-\zeta_\ast)=0
\]
for the representative used in the same-gauge source-size budget.
\end{convention}

\begin{lemma}[Active covariance compatibility in the resolved model]
\label{lem:active-covariance-compatibility-resolved-model}
Under \Cref{conv:covariance-resolved-finite-window-package}, the active
covariance entry in
\Cref{ass:same-gauge-component-compatibility-datum} holds with
\[
    \eta_{\rm act}=0,
    \qquad
    \Delta_{\rm act}=0.
\]
\end{lemma}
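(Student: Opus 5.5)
The claim reduces to an identification followed by a one-line evaluation. The active covariance entry in \Cref{ass:same-gauge-component-compatibility-datum} asks that, on the chosen representative \(\zeta_\ast\),
\[
    C_{\rm Riesz}
    \|M_{\zeta_\ast}^{\rm act}\|_{L^{3/2}(I;L^{3/2}(B_1))^{3\times3}}
    \le
    \eta_{\rm act}\rho_{\rm int}(\mathcal D)+\Delta_{\rm act}.
\]
Here \(M^{\rm act}_{\zeta_\ast}\) is the active source mismatch of \Cref{def:active-covariance-mismatch}, evaluated on \(\mathcal D-\zeta_\ast\). The first step is to observe that this is the same tensor as the reconstruction residual of \Cref{def:active-covariance-reconstruction-residual}. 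Both are defined by the formula
\[
    \eta u_iu_j-(U_iU_j+R_{ij}),
\]
with all sources extended by zero outside \(B_1\), and both are evaluated on the same package \(\mathcal D-\zeta_\ast\). Hence the left-hand side above is exactly \(d_{\rm cov}^{\rm act}(\mathcal D;\zeta_\ast)\).

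The second step invokes \Cref{conv:covariance-resolved-finite-window-package}. On the same-gauge representative used in the source-size budget, the covariance coordinate is chosen so that \(\mathcal R_{\rm cov}^{\rm act}(\mathcal D-\zeta_\ast)=0\) in \(L^{3/2}(I;L^{3/2}(B_1))^{3\times3}\). Its norm therefore vanishes, so \(d_{\rm cov}^{\rm act}(\mathcal D;\zeta_\ast)=0\). Since \(\rho_{\rm int}(\mathcal D)\ge0\), the required inequality \(0\le 0\cdot\rho_{\rm int}(\mathcal D)+0\) holds for every intrinsic package, which gives \(\eta_{\rm act}=\Delta_{\rm act}=0\).

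The only point needing care, and the main obstacle, is gauge consistency. The convention must be imposed on the same representative \(\zeta_\ast(\mathcal D)\) that is used for the other three entries (annular velocity, projection, harmonic) of \Cref{ass:same-gauge-component-compatibility-datum}. Otherwise the same-gauge structure emphasized in \Cref{prop:conditional-active-source-residual-control} would be lost. I would state explicitly that \(\zeta_\ast\) is fixed first and the covariance coordinate of \(\mathcal D-\zeta_\ast\) is then resolved according to the convention. This leaves the other entries unchanged: the annular velocity and harmonic entries do not involve \(R\). The projection entry does involve \(R\) through \(F^{\cl}\), but that entry remains a named input rather than something this lemma proves. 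No analytic estimate is needed, and no claim is made that the convention is canonical.
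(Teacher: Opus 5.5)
Your proposal is correct and matches the paper's proof. In both, the entry \(C_{\rm Riesz}\|M^{\rm act}_{\zeta_\ast}\|\) is recognized as the norm of the reconstruction residual \(\mathcal R^{\rm act}_{\rm cov}(\mathcal D-\zeta_\ast)\), which vanishes by the covariance-resolved convention, so the bound holds with \(\eta_{\rm act}=\Delta_{\rm act}=0\). Your additional remarks are sensible clarifications that the paper leaves implicit: fixing \(\zeta_\ast\) before resolving \(R\), and noting that the projection entry still depends on \(R\) through \(F^{\cl}\).
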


\begin{proof}
By \Cref{conv:covariance-resolved-finite-window-package},
\[
    M_{\zeta_\ast}^{\rm act}
    =
    \eta u_i u_j-(U_iU_j+R_{ij})
    =
    0
\]
for the common representative used in the source-size budget.  Therefore
\[
    C_{\rm Riesz}
    \|M_{\zeta_\ast}^{\rm act}\|
    _{L^{3/2}(I;L^{3/2}(B_1))^{3\times3}}
    =
    0,
\]
which is the active covariance compatibility bound with
\(\eta_{\rm act}=\Delta_{\rm act}=0\).
\end{proof}

\begin{remark}[Cost of resolving covariance]
The covariance-resolved convention makes the active source residual vanish in
the finite-window coordinate model.  The cost is that \(R\) is no longer an
arbitrary Reynolds coordinate; it must store the localized covariance defect
generated by \(\eta u_i u_j-U_iU_j\).  A later PDE theorem must justify that
this is the right localized package coordinate, or compare it with a more
intrinsic covariance variable.  No compactness, smallness, or scale-uniform
control is obtained from the convention alone.
\end{remark}

\subsection{Clean projection-tail compatibility}
\label{subsec:clean-projection-tail-compatibility}

The clean projection tail is the next unresolved component in the same-gauge
criterion.  There are two possible routes.  A genuine approximation route
would prove decay of the tail as \(N\to\infty\) from compactness or additional
regularity of the clean pressure source.  We do not assume such compactness
here.  Instead we record the finite-window enhanced-distance route, in which
the clean projection tail is built into the intrinsic quotient geometry.

\begin{definition}[Intrinsic clean projection-tail size]
\label{def:intrinsic-clean-projection-tail-size}
For an intrinsic package \(\mathcal D\) and a representative
\(\zeta\in\Gamma_\Lambda^{\rm int}\), define
\[
    \mathcal T_{\rm proj}(\mathcal D;\zeta)
    :=
    \bigl(
        1+\|P_{{\rm prs},N}^{\cl}\|_{Y_{\rm prs}\to Y_{\rm prs}}
    \bigr)
    \|R_iR_j(F_{\zeta,ij}^{\cl})\|_{Y_{\rm prs}}.
\]
The quotient projection-tail size is
\[
    \mathcal T_{{\rm proj},q}(\mathcal D)
    :=
    \inf_{\zeta\in\Gamma_\Lambda^{\rm int}}
    \mathcal T_{\rm proj}(\mathcal D;\zeta).
\]
\end{definition}

\begin{definition}[Projection-tail enhanced intrinsic distance]
\label{def:projection-tail-enhanced-intrinsic-distance}
Fix \(\alpha_{\rm proj}>0\).  Define
\[
\begin{aligned}
    \Dist_{\loc,{\rm int}}^{\sharp,{\rm proj}}
    (\mathcal D,\Gamma_\Lambda^{\rm int})
    :=
    \inf_{\zeta\in\Gamma_\Lambda^{\rm int}}
    \bigl(
        &\|\mathcal D-\zeta\|_{\loc,{\rm int}}\\
        &+
        \alpha_{\rm proj}
        \mathcal T_{\rm proj}(\mathcal D;\zeta)
    \bigr).
\end{aligned}
\]
\end{definition}

\begin{lemma}[Projection-tail observability in the enhanced geometry]
\label{lem:projection-tail-observability-enhanced-geometry}
For every intrinsic package \(\mathcal D\),
\[
    \mathcal T_{{\rm proj},q}(\mathcal D)
    \le
    \alpha_{\rm proj}^{-1}
    \Dist_{\loc,{\rm int}}^{\sharp,{\rm proj}}
    (\mathcal D,\Gamma_\Lambda^{\rm int}).
\]
\end{lemma}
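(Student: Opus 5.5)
The inequality follows from a termwise comparison inside the infimum that defines the enhanced distance, followed by an elementary inequality between infima. Unlike \Cref{lem:enhanced-distance-pressure-source-observability}, the enhanced distance is not written as a baseline distance plus a separately infimized residual. Both terms of \Cref{def:projection-tail-enhanced-intrinsic-distance} sit under a single common-gauge infimum. For that reason I will compare the quantities representative by representative rather than dividing a sum.

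First, fix \(\zeta\in\Gamma_\Lambda^{\rm int}\). Since \(\|\mathcal D-\zeta\|_{\loc,{\rm int}}\ge0\), I have
\[
    \alpha_{\rm proj}\mathcal T_{\rm proj}(\mathcal D;\zeta)
    \le
    \|\mathcal D-\zeta\|_{\loc,{\rm int}}
    +
    \alpha_{\rm proj}\mathcal T_{\rm proj}(\mathcal D;\zeta).
\]
Second, I bound the left side from below by the quotient tail, \(\alpha_{\rm proj}\mathcal T_{{\rm proj},q}(\mathcal D)\le\alpha_{\rm proj}\mathcal T_{\rm proj}(\mathcal D;\zeta)\). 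This holds because \(\mathcal T_{{\rm proj},q}\) is the infimum over all representatives of \(\mathcal T_{\rm proj}(\mathcal D;\cdot)\) (\Cref{def:intrinsic-clean-projection-tail-size}), and \(\alpha_{\rm proj}>0\). Combining the two displays shows that \(\alpha_{\rm proj}\mathcal T_{{\rm proj},q}(\mathcal D)\) is a lower bound for the bracketed expression at every \(\zeta\). Taking the infimum over \(\zeta\in\Gamma_\Lambda^{\rm int}\) then gives
\[
    \alpha_{\rm proj}\mathcal T_{{\rm proj},q}(\mathcal D)
    \le
    \Dist_{\loc,{\rm int}}^{\sharp,{\rm proj}}(\mathcal D,\Gamma_\Lambda^{\rm int}).
\]
Dividing by \(\alpha_{\rm proj}>0\) yields the claim.

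There is no genuine obstacle; the only point needing care is the order of the infima. The inequality \(\inf_\zeta(a_\zeta+b_\zeta)\ge\inf_\zeta b_\zeta\) for \(a_\zeta\ge0\) goes in the correct direction. I would also note that the nonnegativity of \(\mathcal T_{\rm proj}\) (a norm times a positive factor) and the finiteness of \(\|P^{\cl}_{{\rm prs},N}\|\) from \Cref{ass:bounded-clean-pressure-projection} ensure every quantity is a well-defined element of \([0,\infty]\). If the infimum is \(+\infty\), the inequality holds trivially.
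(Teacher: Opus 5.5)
Your proof is correct and is essentially the paper's own argument. For each fixed \(\zeta\) one has \(\mathcal T_{{\rm proj},q}(\mathcal D)\le\mathcal T_{\rm proj}(\mathcal D;\zeta)\le\alpha_{\rm proj}^{-1}\bigl(\|\mathcal D-\zeta\|_{\loc,{\rm int}}+\alpha_{\rm proj}\mathcal T_{\rm proj}(\mathcal D;\zeta)\bigr)\), and taking the infimum over \(\zeta\) gives the claim; the only cosmetic difference is that you carry the factor \(\alpha_{\rm proj}\) along and divide at the end, whereas the paper divides first.
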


\begin{proof}
For any \(\zeta\in\Gamma_\Lambda^{\rm int}\),
\[
    \mathcal T_{{\rm proj},q}(\mathcal D)
    \le
    \mathcal T_{\rm proj}(\mathcal D;\zeta)
    \le
    \alpha_{\rm proj}^{-1}
    \bigl(
        \|\mathcal D-\zeta\|_{\loc,{\rm int}}
        +
        \alpha_{\rm proj}
        \mathcal T_{\rm proj}(\mathcal D;\zeta)
    \bigr).
\]
Taking the infimum over \(\zeta\in\Gamma_\Lambda^{\rm int}\) proves the
claim.
\end{proof}

\begin{remark}[Projection-tail fork]
\Cref{lem:projection-tail-observability-enhanced-geometry} proves clean
projection-tail observability only after changing the quotient geometry.  It
does not prove that the original intrinsic distance
\(\Dist_{\loc,{\rm int}}\) controls the projection tail, and it does not prove
decay as \(N\to\infty\).  Such decay would require a separate compactness or
approximation theorem for the clean pressure source.
\end{remark}

\subsection{Harmonic-tail compatibility}
\label{subsec:harmonic-tail-compatibility}

The harmonic tail has the same structural status as the clean projection tail.
Boundedness of the harmonic projection and observation map makes the tail a
well-defined finite-window residual, but does not prove that it is controlled
by the original intrinsic quotient distance.  We therefore record the
enhanced-distance route and leave decay in the harmonic degree \(M\) to a
separate harmonic approximation theorem.

\begin{definition}[Intrinsic harmonic-tail size]
\label{def:intrinsic-harmonic-tail-size}
For an intrinsic package \(\mathcal D\) and a representative
\(\zeta\in\Gamma_\Lambda^{\rm int}\), define
\[
    \mathcal T_{\rm harm}(\mathcal D;\zeta)
    :=
    \|J_{\rm harm}\|_{Y_{\rm harm}\to Y_{\rm prs}}
    \|(I-\Pi_{\rm harm,M})p_{{\rm harm},\zeta}\|_{Y_{\rm harm}}.
\]
The quotient harmonic-tail size is
\[
    \mathcal T_{{\rm harm},q}(\mathcal D)
    :=
    \inf_{\zeta\in\Gamma_\Lambda^{\rm int}}
    \mathcal T_{\rm harm}(\mathcal D;\zeta).
\]
\end{definition}

\begin{definition}[Harmonic-tail enhanced intrinsic distance]
\label{def:harmonic-tail-enhanced-intrinsic-distance}
Fix \(\alpha_{\rm harm}>0\).  Define
\[
\begin{aligned}
    \Dist_{\loc,{\rm int}}^{\sharp,{\rm harm}}
    (\mathcal D,\Gamma_\Lambda^{\rm int})
    :=
    \inf_{\zeta\in\Gamma_\Lambda^{\rm int}}
    \bigl(
        &\|\mathcal D-\zeta\|_{\loc,{\rm int}}\\
        &+
        \alpha_{\rm harm}
        \mathcal T_{\rm harm}(\mathcal D;\zeta)
    \bigr).
\end{aligned}
\]
\end{definition}

\begin{lemma}[Harmonic-tail observability in the enhanced geometry]
\label{lem:harmonic-tail-observability-enhanced-geometry}
For every intrinsic package \(\mathcal D\),
\[
    \mathcal T_{{\rm harm},q}(\mathcal D)
    \le
    \alpha_{\rm harm}^{-1}
    \Dist_{\loc,{\rm int}}^{\sharp,{\rm harm}}
    (\mathcal D,\Gamma_\Lambda^{\rm int}).
\]
\end{lemma}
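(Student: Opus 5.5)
The argument is the one used for \Cref{lem:projection-tail-observability-enhanced-geometry}, with the projection tail replaced by the harmonic tail. I would fix an intrinsic package \(\mathcal D\) and an arbitrary representative \(\zeta\in\Gamma_\Lambda^{\rm int}\), and derive a pointwise-in-\(\zeta\) inequality before optimizing.

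First, by \Cref{def:intrinsic-harmonic-tail-size}, \(\mathcal T_{{\rm harm},q}(\mathcal D)\) is an infimum over \(\Gamma_\Lambda^{\rm int}\). Hence for this particular \(\zeta\),
\[
    \mathcal T_{{\rm harm},q}(\mathcal D)\le\mathcal T_{\rm harm}(\mathcal D;\zeta).
\]
Second, since \(\|\mathcal D-\zeta\|_{\loc,{\rm int}}\ge0\) and \(\alpha_{\rm harm}>0\), we have
\[
    \mathcal T_{\rm harm}(\mathcal D;\zeta)
    \le
    \alpha_{\rm harm}^{-1}
    \bigl(
        \|\mathcal D-\zeta\|_{\loc,{\rm int}}
        +
        \alpha_{\rm harm}\mathcal T_{\rm harm}(\mathcal D;\zeta)
    \bigr).
\]
Chaining the two bounds gives an upper bound for \(\mathcal T_{{\rm harm},q}(\mathcal D)\) by \(\alpha_{\rm harm}^{-1}\) times the bracketed quantity. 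That bracketed quantity is exactly the expression whose infimum over \(\zeta\) defines \(\Dist_{\loc,{\rm int}}^{\sharp,{\rm harm}}\) in \Cref{def:harmonic-tail-enhanced-intrinsic-distance}.

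Third, the left-hand side does not depend on \(\zeta\), so I would take the infimum over \(\zeta\in\Gamma_\Lambda^{\rm int}\) on the right. This yields the claimed inequality.

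No step is a real obstacle. The one point to handle carefully is that both quantities are infima over the \emph{same} gauge set. The chain must therefore be written for an arbitrary common \(\zeta\) before optimizing, rather than comparing two separately optimized infima. Nonnegativity of \(\mathcal T_{\rm harm}\) and of the norm ensures all quantities lie in \([0,\infty)\), so no degenerate cases arise.
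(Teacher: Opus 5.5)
Your proposal is correct and matches the paper's proof: fix $\zeta\in\Gamma_\Lambda^{\rm int}$, chain $\mathcal T_{{\rm harm},q}(\mathcal D)\le\mathcal T_{\rm harm}(\mathcal D;\zeta)\le\alpha_{\rm harm}^{-1}\bigl(\|\mathcal D-\zeta\|_{\loc,{\rm int}}+\alpha_{\rm harm}\mathcal T_{\rm harm}(\mathcal D;\zeta)\bigr)$, then take the infimum over $\zeta$. Your point about writing the chain for one common $\zeta$ before optimizing is the only care the argument needs, and the paper's proof proceeds the same way.
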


\begin{proof}
For any \(\zeta\in\Gamma_\Lambda^{\rm int}\),
\[
    \mathcal T_{{\rm harm},q}(\mathcal D)
    \le
    \mathcal T_{\rm harm}(\mathcal D;\zeta)
    \le
    \alpha_{\rm harm}^{-1}
    \bigl(
        \|\mathcal D-\zeta\|_{\loc,{\rm int}}
        +
        \alpha_{\rm harm}
        \mathcal T_{\rm harm}(\mathcal D;\zeta)
    \bigr).
\]
Taking the infimum over \(\zeta\in\Gamma_\Lambda^{\rm int}\) proves the
claim.
\end{proof}

\begin{remark}[Harmonic-tail fork]
\Cref{lem:harmonic-tail-observability-enhanced-geometry} proves harmonic-tail
observability only after changing the quotient geometry.  It does not prove
that the original intrinsic distance controls the harmonic tail, does not
prove decay as \(M\to\infty\), and does not justify compatibility with the
physical pressure gauge.  Those are separate theorem targets.
\end{remark}

\subsection{Combined pressure-tail enhanced geometry}
\label{subsec:combined-pressure-tail-enhanced-geometry}

The projection and harmonic tails should enter later transfer statements
through a single enhanced quotient geometry.  The following definition keeps
the two weights separate but takes one common quotient infimum.

\begin{definition}[Combined pressure-tail enhanced intrinsic distance]
\label{def:combined-pressure-tail-enhanced-intrinsic-distance}
Fix \(\alpha_{\rm proj},\alpha_{\rm harm}>0\).  Define
\[
\begin{aligned}
    \Dist_{\loc,{\rm int}}^{\sharp,{\rm tail}}
    (\mathcal D,\Gamma_\Lambda^{\rm int})
    :=
    \inf_{\zeta\in\Gamma_\Lambda^{\rm int}}
    \bigl(
        &\|\mathcal D-\zeta\|_{\loc,{\rm int}}\\
        &+
        \alpha_{\rm proj}\mathcal T_{\rm proj}(\mathcal D;\zeta)
        +
        \alpha_{\rm harm}\mathcal T_{\rm harm}(\mathcal D;\zeta)
    \bigr).
\end{aligned}
\]
\end{definition}

\begin{lemma}[Simultaneous pressure-tail observability]
\label{lem:simultaneous-pressure-tail-observability}
For every intrinsic package \(\mathcal D\),
\[
    \mathcal T_{{\rm proj},q}(\mathcal D)
    +
    \mathcal T_{{\rm harm},q}(\mathcal D)
    \le
    \max\{\alpha_{\rm proj}^{-1},\alpha_{\rm harm}^{-1}\}
    \Dist_{\loc,{\rm int}}^{\sharp,{\rm tail}}
    (\mathcal D,\Gamma_\Lambda^{\rm int}).
\]
In particular, each tail is separately controlled by the same right-hand side.
\end{lemma}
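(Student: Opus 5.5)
The argument mirrors the proofs of \Cref{lem:projection-tail-observability-enhanced-geometry} and \Cref{lem:harmonic-tail-observability-enhanced-geometry}. The only new point is that both tails must be evaluated on the same representative before any infimum is taken. Set
\(\alpha_\ast:=\max\{\alpha_{\rm proj}^{-1},\alpha_{\rm harm}^{-1}\}\),
so that
\(\alpha_{\rm proj}\alpha_\ast\ge1\) and \(\alpha_{\rm harm}\alpha_\ast\ge1\).

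Fix an arbitrary \(\zeta\in\Gamma_\Lambda^{\rm int}\). By the definitions of the quotient tail sizes in \Cref{def:intrinsic-clean-projection-tail-size} and \Cref{def:intrinsic-harmonic-tail-size}, each infimum is bounded by its value at this \(\zeta\):
\[
    \mathcal T_{{\rm proj},q}(\mathcal D)+\mathcal T_{{\rm harm},q}(\mathcal D)
    \le
    \mathcal T_{\rm proj}(\mathcal D;\zeta)+\mathcal T_{\rm harm}(\mathcal D;\zeta).
\]
Since both tail sizes are nonnegative, we can multiply by the factors \(\alpha_{\rm proj}\alpha_\ast\ge1\) and \(\alpha_{\rm harm}\alpha_\ast\ge1\). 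Nonnegativity of the norm then lets us add \(\alpha_\ast\|\mathcal D-\zeta\|_{\loc,{\rm int}}\). Together these give
\[
    \mathcal T_{\rm proj}(\mathcal D;\zeta)+\mathcal T_{\rm harm}(\mathcal D;\zeta)
    \le
    \alpha_\ast\bigl(
        \|\mathcal D-\zeta\|_{\loc,{\rm int}}
        +\alpha_{\rm proj}\mathcal T_{\rm proj}(\mathcal D;\zeta)
        +\alpha_{\rm harm}\mathcal T_{\rm harm}(\mathcal D;\zeta)
    \bigr).
\]
The left-hand side of the claim does not depend on \(\zeta\). Taking the infimum over \(\zeta\in\Gamma_\Lambda^{\rm int}\) on the right therefore yields exactly
\(\alpha_\ast\Dist_{\loc,{\rm int}}^{\sharp,{\rm tail}}(\mathcal D,\Gamma_\Lambda^{\rm int})\),
by \Cref{def:combined-pressure-tail-enhanced-intrinsic-distance}.

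The ``in particular'' statement follows because each quotient tail is nonnegative, so each one is bounded by the sum. There is no genuine obstacle here. The only point of care is the order of operations: the two tails are bounded at a common \(\zeta\) first, and the single infimum is taken afterwards. The reverse order would pair tails from different gauges and would not match the common-infimum definition of the combined distance.
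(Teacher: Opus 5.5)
Your proposal is correct and follows the paper's proof: you bound the two quotient tails at a common $\zeta$, use $\mathcal T_{\rm proj}+\mathcal T_{\rm harm}\le\max\{\alpha_{\rm proj}^{-1},\alpha_{\rm harm}^{-1}\}(\alpha_{\rm proj}\mathcal T_{\rm proj}+\alpha_{\rm harm}\mathcal T_{\rm harm})$, add the nonnegative norm term, and take the single infimum over $\zeta$. The ``in particular'' part then follows from nonnegativity of each tail, exactly as in the paper.
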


\begin{proof}
Fix \(\zeta\in\Gamma_\Lambda^{\rm int}\).  By definition of the two quotient
tail sizes,
\[
    \mathcal T_{{\rm proj},q}(\mathcal D)
    +
    \mathcal T_{{\rm harm},q}(\mathcal D)
    \le
    \mathcal T_{\rm proj}(\mathcal D;\zeta)
    +
    \mathcal T_{\rm harm}(\mathcal D;\zeta).
\]
Since
\[
    \mathcal T_{\rm proj}+\mathcal T_{\rm harm}
    \le
    \max\{\alpha_{\rm proj}^{-1},\alpha_{\rm harm}^{-1}\}
    \bigl(
        \alpha_{\rm proj}\mathcal T_{\rm proj}
        +
        \alpha_{\rm harm}\mathcal T_{\rm harm}
    \bigr),
\]
we obtain
\[
\begin{aligned}
    \mathcal T_{{\rm proj},q}(\mathcal D)
    +
    \mathcal T_{{\rm harm},q}(\mathcal D)
    \le&
    \max\{\alpha_{\rm proj}^{-1},\alpha_{\rm harm}^{-1}\}\\
    &\times
    \bigl(
        \|\mathcal D-\zeta\|_{\loc,{\rm int}}
        +
        \alpha_{\rm proj}\mathcal T_{\rm proj}(\mathcal D;\zeta)
        +
        \alpha_{\rm harm}\mathcal T_{\rm harm}(\mathcal D;\zeta)
    \bigr).
\end{aligned}
\]
Taking the infimum over \(\zeta\) proves the displayed estimate.  The
individual controls follow because both tail sizes are nonnegative.
\end{proof}

\begin{remark}[Status of the combined tail geometry]
\Cref{lem:simultaneous-pressure-tail-observability} combines the two
enhanced-distance bookkeeping steps into one finite-window quotient geometry.
It does not compare
\(\Dist_{\loc,{\rm int}}^{\sharp,{\rm tail}}\) with the original intrinsic
distance, and it does not prove decay in \(N\) or \(M\).  A later theorem must
justify this enhanced pressure-tail geometry or prove the corresponding
projection and harmonic approximation estimates.
\end{remark}

\subsection{Pressure-source transfer in the combined enhanced-tail geometry}
\label{subsec:pressure-source-transfer-combined-enhanced-tail}

We now state pressure-source control in the combined enhanced-tail geometry.
The localized defect distance is
\(\Dist_{\loc,{\rm int}}^{\sharp,{\rm tail}}\).  We do not compare it with
the original intrinsic distance, and we do not use projection-tail decay in
\(N\) or harmonic-tail decay in \(M\).

\begin{assumption}[Same-gauge enhanced-tail compatibility]
\label{ass:same-gauge-enhanced-tail-compatibility}
For every intrinsic package \(\mathcal D\), there exists a representative
\[
    \zeta_\ast=\zeta_\ast(\mathcal D)\in\Gamma_\Lambda^{\rm int}
\]
chosen in the combined enhanced-tail quotient, so that
\[
\begin{aligned}
    \Dist_{\loc,{\rm int}}^{\sharp,{\rm tail}}
    (\mathcal D,\Gamma_\Lambda^{\rm int})
    =
    &\|\mathcal D-\zeta_\ast\|_{\loc,{\rm int}}\\
    &+
    \alpha_{\rm proj}\mathcal T_{\rm proj}(\mathcal D;\zeta_\ast)
    +
    \alpha_{\rm harm}\mathcal T_{\rm harm}(\mathcal D;\zeta_\ast).
\end{aligned}
\]
Assume also that the cutoff commutator and active covariance core satisfy
\[
\begin{aligned}
    &2C_\eta
    \|u_{\zeta_\ast}\|_{L^3(I;L^3(A_{3/4,1}))}^2\\
    &\qquad+
    C_{\rm Riesz}
    \|M_{\zeta_\ast}^{\rm act}\|
    _{L^{3/2}(I;L^{3/2}(B_1))^{3\times3}}\\
    &\le
    \eta_{\rm core}
    \Dist_{\loc,{\rm int}}^{\sharp,{\rm tail}}
    (\mathcal D,\Gamma_\Lambda^{\rm int})
    +
    \Delta_{\rm core}.
\end{aligned}
\]
\end{assumption}

\begin{theorem}[Conditional pressure-source control in the enhanced-tail geometry]
\label{thm:pressure-source-control-enhanced-tail-geometry}
Assume \Cref{ass:same-gauge-enhanced-tail-compatibility}.  Then every
intrinsic package \(\mathcal D\) satisfies
\[
    \Err_{{\rm src},{\rm int}}^{\rm prs}(\mathcal D)
    \le
    \eta_{\rm src}^{\sharp,{\rm tail}}
    \Dist_{\loc,{\rm int}}^{\sharp,{\rm tail}}
    (\mathcal D,\Gamma_\Lambda^{\rm int})
    +
    \Delta_{\rm src}^{\sharp,{\rm tail}},
\]
where
\[
    \eta_{\rm src}^{\sharp,{\rm tail}}
    :=
    \eta_{\rm core}
    +
    \max\{\alpha_{\rm proj}^{-1},\alpha_{\rm harm}^{-1}\},
    \qquad
    \Delta_{\rm src}^{\sharp,{\rm tail}}
    :=
    \Delta_{\rm core}.
\]
\end{theorem}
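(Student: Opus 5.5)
The plan is to evaluate everything on the single representative \(\zeta_\ast=\zeta_\ast(\mathcal D)\) supplied by \Cref{ass:same-gauge-enhanced-tail-compatibility}, so that no separate infima are ever taken. First, by \Cref{def:intrinsic-quotient-pressure-source-residual} and the first inequality of \Cref{prop:intrinsic-pressure-source-component-budget} applied before the infimum (that is, \Cref{lem:pressure-source-mismatch-decomposition} on \(\mathcal D-\zeta_\ast\) plus the triangle inequality), we get
\[
    \Err_{{\rm src},{\rm int}}^{\rm prs}(\mathcal D)
    \le
    \|C_\eta^{\zeta_\ast}\|_{Y_{\rm prs}}
    +\|E_{{\rm act},\zeta_\ast}^{\rm src}\|_{Y_{\rm prs}}
    +\|E_{{\rm proj},\zeta_\ast}^{\cl}\|_{Y_{\rm prs}}
    +\|E_{{\rm harm},\zeta_\ast}\|_{Y_{\rm prs}}.
\]
Next I bound each term on this same representative exactly as in the proof of the second inequality of \Cref{prop:intrinsic-pressure-source-component-budget}. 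The commutator bound of \Cref{lem:quadratic-cutoff-riesz-commutator-bound} (at fixed \(\zeta_\ast\)) together with \Cref{lem:active-residual-source-decomposition} gives the first two terms at most \(2C_\eta\|u_{\zeta_\ast}\|^2_{L^3(I;L^3(A_{3/4,1}))}+C_{\rm Riesz}\|M^{\rm act}_{\zeta_\ast}\|\). \Cref{lem:bounded-projection-tail-estimate} bounds the projection term by \(\mathcal T_{\rm proj}(\mathcal D;\zeta_\ast)\), and \Cref{lem:fixed-window-harmonic-residual-bound} bounds the harmonic term by \(\mathcal T_{\rm harm}(\mathcal D;\zeta_\ast)\) (in the direct-observation convention, read \(\|J_{\rm harm}\|\) as \(1\)).

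The core pair is then bounded by \(\eta_{\rm core}\Dist_{\loc,{\rm int}}^{\sharp,{\rm tail}}+\Delta_{\rm core}\) directly by hypothesis. For the tail pair, I use the elementary inequality from the proof of \Cref{lem:simultaneous-pressure-tail-observability}, but now at the fixed \(\zeta_\ast\) rather than through quotient tail sizes:
\[
    \mathcal T_{\rm proj}(\mathcal D;\zeta_\ast)+\mathcal T_{\rm harm}(\mathcal D;\zeta_\ast)
    \le
    \max\{\alpha_{\rm proj}^{-1},\alpha_{\rm harm}^{-1}\}
    \bigl(\alpha_{\rm proj}\mathcal T_{\rm proj}(\mathcal D;\zeta_\ast)+\alpha_{\rm harm}\mathcal T_{\rm harm}(\mathcal D;\zeta_\ast)\bigr).
\]
Adding the nonnegative term \(\|\mathcal D-\zeta_\ast\|_{\loc,{\rm int}}\) inside the parentheses and using the attainment identity in \Cref{ass:same-gauge-enhanced-tail-compatibility} bounds the right-hand side by \(\max\{\alpha_{\rm proj}^{-1},\alpha_{\rm harm}^{-1}\}\Dist_{\loc,{\rm int}}^{\sharp,{\rm tail}}(\mathcal D,\Gamma_\Lambda^{\rm int})\). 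Summing the two bounds yields the stated constants \(\eta_{\rm src}^{\sharp,{\rm tail}}\) and \(\Delta_{\rm src}^{\sharp,{\rm tail}}\).

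The main point of care, more than a genuine obstacle, is the same-gauge issue. It would be wrong to invoke \Cref{lem:simultaneous-pressure-tail-observability} as stated, because it controls only the quotient tails \(\mathcal T_{\cdot,q}\), which may be attained at different gauges than the core terms. That is why the attainment of the enhanced-tail infimum at \(\zeta_\ast\) is used explicitly. Everything else is routine triangle-inequality bookkeeping.
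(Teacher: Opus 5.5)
Your proposal is correct and follows essentially the same route as the paper: both evaluate the component budget of \Cref{prop:intrinsic-pressure-source-component-budget} at the single representative $\zeta_\ast$, bound the core pair $2C_\eta\|u_{\zeta_\ast}\|^2+C_{\rm Riesz}\|M^{\rm act}_{\zeta_\ast}\|$ directly by \Cref{ass:same-gauge-enhanced-tail-compatibility}, and bound $\mathcal T_{\rm proj}(\mathcal D;\zeta_\ast)+\mathcal T_{\rm harm}(\mathcal D;\zeta_\ast)$ at that same representative using the max-weight inequality together with the attainment identity. Your extra details, namely tracing where the factor $2C_\eta$ comes from and noting how to read $\|J_{\rm harm}\|$ in the direct-observation convention, only make explicit what the paper's shorter proof leaves implicit.
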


\begin{proof}
By the component-budget estimate in
\Cref{prop:intrinsic-pressure-source-component-budget}, evaluated at the
common representative \(\zeta_\ast\),
\[
\begin{aligned}
    \Err_{{\rm src},{\rm int}}^{\rm prs}(\mathcal D)
    \le&
    2C_\eta
    \|u_{\zeta_\ast}\|_{L^3(I;L^3(A_{3/4,1}))}^2\\
    &+
    C_{\rm Riesz}
    \|M_{\zeta_\ast}^{\rm act}\|
    _{L^{3/2}(I;L^{3/2}(B_1))^{3\times3}}\\
    &+
    \mathcal T_{\rm proj}(\mathcal D;\zeta_\ast)
    +
    \mathcal T_{\rm harm}(\mathcal D;\zeta_\ast).
\end{aligned}
\]
The first two terms are bounded by
\Cref{ass:same-gauge-enhanced-tail-compatibility}.  For the two tail terms,
the defining property of \(\zeta_\ast\) gives
\[
\begin{aligned}
    &\mathcal T_{\rm proj}(\mathcal D;\zeta_\ast)
    +
    \mathcal T_{\rm harm}(\mathcal D;\zeta_\ast)\\
    &\le
    \max\{\alpha_{\rm proj}^{-1},\alpha_{\rm harm}^{-1}\}
    \Dist_{\loc,{\rm int}}^{\sharp,{\rm tail}}
    (\mathcal D,\Gamma_\Lambda^{\rm int}).
\end{aligned}
\]
This is the same enhanced-tail observability mechanism as
\Cref{lem:simultaneous-pressure-tail-observability}, now applied on the
common representative.  Combining the two estimates proves the theorem with
the displayed constants.
\end{proof}

\begin{assumption}[Enhanced-tail local-to-clean transfer comparison]
\label{ass:enhanced-tail-local-to-clean-transfer-comparison}
There are constants \(0\le\varepsilon_G<1\) and \(\delta_G\ge0\) such that
every intrinsic package \(\mathcal D\) satisfies
\[
    \Dist_{\cl}(\Theta_\Lambda\mathcal D,\Gamma_\Lambda^{\cl})
    \ge
    (1-\varepsilon_G)
    \Dist_{\loc,{\rm int}}^{\sharp,{\rm tail}}
    (\mathcal D,\Gamma_\Lambda^{\rm int})
    -
    \delta_G
\]
and
\[
    \calM_\Lambda^{\loc}(\mathcal D)
    +
    \Err_\Lambda^{\loc}(\mathcal D)
    \ge
    \calM_\Lambda^{\rm comp}(\Theta_\Lambda\mathcal D).
\]
\end{assumption}

\begin{assumption}[Enhanced-tail localized residual budget]
\label{ass:enhanced-tail-localized-residual-budget}
There are constants
\[
    \eta_\Lambda^{\sharp,{\rm tail}}\ge0,
    \qquad
    \Delta_\Lambda^{\sharp,{\rm tail}}\ge0
\]
such that every intrinsic package \(\mathcal D\) satisfies
\[
    \Err_\Lambda^{\loc}(\mathcal D)
    \le
    \eta_\Lambda^{\sharp,{\rm tail}}
    \Dist_{\loc,{\rm int}}^{\sharp,{\rm tail}}
    (\mathcal D,\Gamma_\Lambda^{\rm int})
    +
    \Delta_\Lambda^{\sharp,{\rm tail}}.
\]
\end{assumption}

\begin{theorem}[Conditional enhanced-tail localized transfer]
\label{thm:enhanced-tail-localized-transfer}
Assume \(\calQ_\Lambda^{\cl}\ne\{0\}\),
\Cref{ass:clean-detector-gauge-compatibility},
\Cref{ass:kernel-free-computational-detector},
\Cref{ass:enhanced-tail-local-to-clean-transfer-comparison}, and
\Cref{ass:enhanced-tail-localized-residual-budget}.  Then every intrinsic
package \(\mathcal D\) satisfies
\[
\begin{aligned}
    \calM_\Lambda^{\loc}(\mathcal D)
    \ge&
    \bigl(
        \mu_\Lambda^{\rm comp}(1-\varepsilon_G)
        -
        \eta_\Lambda^{\sharp,{\rm tail}}
    \bigr)
    \Dist_{\loc,{\rm int}}^{\sharp,{\rm tail}}
    (\mathcal D,\Gamma_\Lambda^{\rm int})\\
    &-
    \mu_\Lambda^{\rm comp}\delta_G
    -
    \Delta_\Lambda^{\sharp,{\rm tail}}.
\end{aligned}
\]
\end{theorem}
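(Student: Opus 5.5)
The plan is to run the same three-line algebraic transfer argument already used for \Cref{cor:localized-transfer-from-computational-gap} and \Cref{thm:enhanced-distance-localized-transfer}, with the abstract localized distance replaced by the combined enhanced-tail distance \(\Dist_{\loc,{\rm int}}^{\sharp,{\rm tail}}\). Fix an intrinsic package \(\mathcal D\). Since \(\calQ_\Lambda^{\cl}\ne\{0\}\) and \Cref{ass:clean-detector-gauge-compatibility} and \Cref{ass:kernel-free-computational-detector} hold, \Cref{thm:clean-computational-antiphantom} applies to the clean element \(\Theta_\Lambda\mathcal D\in\calK_\Lambda^{\cl}\). It gives
\[
    \calM_\Lambda^{\rm comp}(\Theta_\Lambda\mathcal D)
    \ge
    \mu_\Lambda^{\rm comp}
    \Dist_{\cl}(\Theta_\Lambda\mathcal D,\Gamma_\Lambda^{\cl}).
\]

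Next I insert the quotient comparison from \Cref{ass:enhanced-tail-local-to-clean-transfer-comparison}. Because \(\mu_\Lambda^{\rm comp}>0\), multiplying that inequality by \(\mu_\Lambda^{\rm comp}\) preserves its direction, which yields
\[
    \calM_\Lambda^{\rm comp}(\Theta_\Lambda\mathcal D)
    \ge
    \mu_\Lambda^{\rm comp}(1-\varepsilon_G)
    \Dist_{\loc,{\rm int}}^{\sharp,{\rm tail}}(\mathcal D,\Gamma_\Lambda^{\rm int})
    -\mu_\Lambda^{\rm comp}\delta_G .
\]
The detector comparison in the same assumption then bounds the left side above by \(\calM_\Lambda^{\loc}(\mathcal D)+\Err_\Lambda^{\loc}(\mathcal D)\). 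Rearranging isolates \(\calM_\Lambda^{\loc}(\mathcal D)\) on the left, with \(-\Err_\Lambda^{\loc}(\mathcal D)\) on the right.

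Finally I bound \(\Err_\Lambda^{\loc}(\mathcal D)\) from above using \Cref{ass:enhanced-tail-localized-residual-budget}. This produces the coefficient \(\mu_\Lambda^{\rm comp}(1-\varepsilon_G)-\eta_\Lambda^{\sharp,{\rm tail}}\) on \(\Dist_{\loc,{\rm int}}^{\sharp,{\rm tail}}\), together with the additive loss \(\mu_\Lambda^{\rm comp}\delta_G+\Delta_\Lambda^{\sharp,{\rm tail}}\), which is exactly the claimed estimate.

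There is no real analytic obstacle in this argument. The one point to check is that \(\Theta_\Lambda\) is defined on the intrinsic packages and maps them into \(\calK_\Lambda^{\cl}\), so that \Cref{thm:clean-computational-antiphantom} may be invoked. This is implicit in \Cref{ass:enhanced-tail-local-to-clean-transfer-comparison}, which is stated for \(\Theta_\Lambda\mathcal D\), and I would say so explicitly in the proof. The only other care needed is the sign bookkeeping when multiplying by the nonnegative constant \(\mu_\Lambda^{\rm comp}\).
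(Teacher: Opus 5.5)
Your proposal is correct and follows the paper's proof essentially line for line. You apply \Cref{thm:clean-computational-antiphantom} to \(\Theta_\Lambda\mathcal D\), multiply the enhanced-tail quotient comparison by \(\mu_\Lambda^{\rm comp}\), use the detector comparison, and then bound \(\Err_\Lambda^{\loc}\) via \Cref{ass:enhanced-tail-localized-residual-budget}. Your remark that \(\Theta_\Lambda\) must map intrinsic packages into \(\calK_\Lambda^{\cl}\) is a reasonable explicit note of something the paper leaves implicit.
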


\begin{proof}
This is the same algebraic transfer argument as
\Cref{cor:localized-transfer-from-computational-gap}, with the localized
distance replaced by
\(\Dist_{\loc,{\rm int}}^{\sharp,{\rm tail}}\).  By
\Cref{thm:clean-computational-antiphantom},
\[
    \calM_\Lambda^{\rm comp}(\Theta_\Lambda\mathcal D)
    \ge
    \mu_\Lambda^{\rm comp}
    \Dist_{\cl}(\Theta_\Lambda\mathcal D,\Gamma_\Lambda^{\cl}).
\]
Using
\Cref{ass:enhanced-tail-local-to-clean-transfer-comparison}, we get
\[
    \calM_\Lambda^{\rm comp}(\Theta_\Lambda\mathcal D)
    \ge
    \mu_\Lambda^{\rm comp}
    \bigl(
        (1-\varepsilon_G)
        \Dist_{\loc,{\rm int}}^{\sharp,{\rm tail}}
        (\mathcal D,\Gamma_\Lambda^{\rm int})
        -
        \delta_G
    \bigr).
\]
The detector comparison in the same assumption gives
\[
    \calM_\Lambda^{\loc}(\mathcal D)
    \ge
    \mu_\Lambda^{\rm comp}(1-\varepsilon_G)
    \Dist_{\loc,{\rm int}}^{\sharp,{\rm tail}}
    (\mathcal D,\Gamma_\Lambda^{\rm int})
    -
    \mu_\Lambda^{\rm comp}\delta_G
    -
    \Err_\Lambda^{\loc}(\mathcal D).
\]
Applying \Cref{ass:enhanced-tail-localized-residual-budget} proves the
displayed estimate.
\end{proof}

\begin{remark}[Status of enhanced-tail transfer]
\Cref{thm:pressure-source-control-enhanced-tail-geometry} and
\Cref{thm:enhanced-tail-localized-transfer} prove pressure-source control and
localized transfer only in the combined enhanced-tail geometry.  They do not
claim
\[
    \Dist_{\loc,{\rm int}}^{\sharp,{\rm tail}}
    \lesssim
    \Dist_{\loc,{\rm int}},
\]
do not prove projection-tail decay as \(N\to\infty\), do not prove
harmonic-tail decay as \(M\to\infty\), and do not give scale-uniform control or
Navier--Stokes regularity.  Comparing this enhanced geometry back to the
original intrinsic geometry is a separate PDE-facing approximation theorem.
\end{remark}

\subsection{Subclaim decomposition for enhanced-tail transfer}
\label{subsec:enhanced-tail-transfer-subclaim-decomposition}

We now split the two enhanced-tail transfer assumptions into smaller
finite-window subclaims.  This does not prove the PDE estimates behind those
subclaims.  Its purpose is to identify the exact estimates that would replace
the single opaque assumptions
\Cref{ass:enhanced-tail-local-to-clean-transfer-comparison} and
\Cref{ass:enhanced-tail-localized-residual-budget}.

For an intrinsic package \(\mathcal D\) and a common representative
\(\zeta\in\Gamma_\Lambda^{\rm int}\), define the three enhanced-tail
visibility channels
\[
    X_0(\mathcal D;\zeta)
    :=
    \|\mathcal D-\zeta\|_{\loc,{\rm int}},
\]
\[
    X_{\rm proj}(\mathcal D;\zeta)
    :=
    \alpha_{\rm proj}\mathcal T_{\rm proj}(\mathcal D;\zeta),
    \qquad
    X_{\rm harm}(\mathcal D;\zeta)
    :=
    \alpha_{\rm harm}\mathcal T_{\rm harm}(\mathcal D;\zeta).
\]
Thus
\[
    \Dist_{\loc,{\rm int}}^{\sharp,{\rm tail}}
    (\mathcal D,\Gamma_\Lambda^{\rm int})
    \le
    X_0(\mathcal D;\zeta)
    +
    X_{\rm proj}(\mathcal D;\zeta)
    +
    X_{\rm harm}(\mathcal D;\zeta)
\]
for every common representative \(\zeta\).

\begin{assumption}[Three-channel enhanced-tail chart visibility]
\label{ass:three-channel-enhanced-tail-chart-visibility}
There is a same-gauge representative selection
\[
    \zeta_\ast(\mathcal D)\in\Gamma_\Lambda^{\rm int}
\]
and constants
\[
    \kappa_0,\kappa_{\rm proj},\kappa_{\rm harm}>0,
    \qquad
    \delta_0,\delta_{\rm proj},\delta_{\rm harm}\ge0,
\]
such that every intrinsic package \(\mathcal D\) satisfies
\[
    \Dist_{\cl}(\Theta_\Lambda\mathcal D,\Gamma_\Lambda^{\cl})
    \ge
    \kappa_0 X_0(\mathcal D;\zeta_\ast)-\delta_0,
\]
\[
    \Dist_{\cl}(\Theta_\Lambda\mathcal D,\Gamma_\Lambda^{\cl})
    \ge
    \kappa_{\rm proj}X_{\rm proj}(\mathcal D;\zeta_\ast)
    -\delta_{\rm proj},
\]
and
\[
    \Dist_{\cl}(\Theta_\Lambda\mathcal D,\Gamma_\Lambda^{\cl})
    \ge
    \kappa_{\rm harm}X_{\rm harm}(\mathcal D;\zeta_\ast)
    -\delta_{\rm harm}.
\]
\end{assumption}

\begin{proposition}[Enhanced-tail quotient comparison from channel visibility]
\label{prop:enhanced-tail-quotient-comparison-from-channels}
Assume
\Cref{ass:three-channel-enhanced-tail-chart-visibility}.  Set
\[
    \kappa_\ast
    :=
    \min\{\kappa_0,\kappa_{\rm proj},\kappa_{\rm harm}\},
    \qquad
    \lambda_G
    :=
    \min\left\{1,\frac{\kappa_\ast}{3}\right\},
\]
and
\[
    \delta_G^{\rm vis}
    :=
    \frac{\delta_0+\delta_{\rm proj}+\delta_{\rm harm}}{3}.
\]
Then
\[
    \Dist_{\cl}(\Theta_\Lambda\mathcal D,\Gamma_\Lambda^{\cl})
    \ge
    \lambda_G
    \Dist_{\loc,{\rm int}}^{\sharp,{\rm tail}}
    (\mathcal D,\Gamma_\Lambda^{\rm int})
    -
    \delta_G^{\rm vis}
\]
for every intrinsic package \(\mathcal D\).  Equivalently, the quotient
comparison part of
\Cref{ass:enhanced-tail-local-to-clean-transfer-comparison} holds with
\[
    1-\varepsilon_G=\lambda_G,
    \qquad
    \delta_G=\delta_G^{\rm vis}.
\]
\end{proposition}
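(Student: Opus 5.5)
The plan is to average the three channel inequalities of \Cref{ass:three-channel-enhanced-tail-chart-visibility} and then compare with the enhanced-tail distance evaluated at the single representative \(\zeta_\ast(\mathcal D)\). Write \(\rho:=\Dist_{\cl}(\Theta_\Lambda\mathcal D,\Gamma_\Lambda^{\cl})\). Adding the three displayed lower bounds gives
\[
    3\rho
    \ge
    \kappa_0 X_0(\mathcal D;\zeta_\ast)
    +\kappa_{\rm proj}X_{\rm proj}(\mathcal D;\zeta_\ast)
    +\kappa_{\rm harm}X_{\rm harm}(\mathcal D;\zeta_\ast)
    -(\delta_0+\delta_{\rm proj}+\delta_{\rm harm}).
\]
Each channel is nonnegative, being a norm or a nonnegative multiple of a norm. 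We may therefore replace every \(\kappa\) by \(\kappa_\ast\). Dividing by three then yields
\[
    \rho
    \ge
    \frac{\kappa_\ast}{3}\bigl(X_0+X_{\rm proj}+X_{\rm harm}\bigr)(\mathcal D;\zeta_\ast)
    -\delta_G^{\rm vis}.
\]

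Next we use the observation recorded just before the assumption. Since \(\zeta_\ast(\mathcal D)\) is an admissible element of \(\Gamma_\Lambda^{\rm int}\), the infimum in \Cref{def:combined-pressure-tail-enhanced-intrinsic-distance} satisfies
\[
    \Dist_{\loc,{\rm int}}^{\sharp,{\rm tail}}(\mathcal D,\Gamma_\Lambda^{\rm int})
    \le
    X_0+X_{\rm proj}+X_{\rm harm}
\]
at \(\zeta_\ast\). This is the one place where the direction matters. The enhanced-tail distance is an infimum, so it sits below the channel sum at any fixed representative, and that channel sum is in turn bounded by the lower bound on \(\rho\). Combining the two inequalities gives \(\rho\ge(\kappa_\ast/3)\Dist_{\loc,{\rm int}}^{\sharp,{\rm tail}}-\delta_G^{\rm vis}\).

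Finally we pass from \(\kappa_\ast/3\) to \(\lambda_G=\min\{1,\kappa_\ast/3\}\le\kappa_\ast/3\). The enhanced distance is nonnegative, so shrinking its coefficient only weakens the lower bound, and the claimed inequality follows. The cap at \(1\) guarantees \(\lambda_G\in(0,1]\), so that \(\varepsilon_G:=1-\lambda_G\in[0,1)\). This is exactly the admissible range required in \Cref{ass:enhanced-tail-local-to-clean-transfer-comparison}, which gives the stated identification \(1-\varepsilon_G=\lambda_G\), \(\delta_G=\delta_G^{\rm vis}\).

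The only delicate point is the same-gauge bookkeeping. All three channel bounds must be evaluated at the common \(\zeta_\ast\), not at separately optimized gauges. Otherwise the sum could not be compared with the single infimum defining \(\Dist_{\loc,{\rm int}}^{\sharp,{\rm tail}}\). The assumption supplies exactly this common representative, so no further obstacle arises. The remainder of the argument is routine algebra together with nonnegativity of the channels.
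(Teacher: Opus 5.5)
Your proof is correct and follows the paper's argument essentially step for step: you sum the three channel inequalities at the common representative \(\zeta_\ast\), use nonnegativity of the channels to pass to the coefficient \(\kappa_\ast/3\), bound the enhanced-tail infimum by the channel sum at \(\zeta_\ast\), and then weaken the coefficient to \(\lambda_G\). You also check that \(\lambda_G\in(0,1]\), so that \(\varepsilon_G=1-\lambda_G\in[0,1)\) lies in the admissible range for the identification with the transfer assumption; the paper leaves this point implicit.
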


\begin{proof}
Write
\[
    A(\mathcal D)
    :=
    \Dist_{\cl}(\Theta_\Lambda\mathcal D,\Gamma_\Lambda^{\cl}).
\]
Adding the three visibility inequalities in
\Cref{ass:three-channel-enhanced-tail-chart-visibility} gives
\[
\begin{aligned}
    3A(\mathcal D)
    \ge&
    \kappa_0X_0(\mathcal D;\zeta_\ast)
    +
    \kappa_{\rm proj}X_{\rm proj}(\mathcal D;\zeta_\ast)
    +
    \kappa_{\rm harm}X_{\rm harm}(\mathcal D;\zeta_\ast)\\
    &-
    \delta_0-\delta_{\rm proj}-\delta_{\rm harm}.
\end{aligned}
\]
Since each channel is nonnegative,
\[
\begin{aligned}
    A(\mathcal D)
    \ge&
    \frac{\kappa_\ast}{3}
    \bigl(
        X_0(\mathcal D;\zeta_\ast)
        +
        X_{\rm proj}(\mathcal D;\zeta_\ast)
        +
        X_{\rm harm}(\mathcal D;\zeta_\ast)
    \bigr)\\
    &-
    \delta_G^{\rm vis}.
\end{aligned}
\]
The sum inside the parentheses is bounded below by
\(\Dist_{\loc,{\rm int}}^{\sharp,{\rm tail}}
(\mathcal D,\Gamma_\Lambda^{\rm int})\).  Replacing
\(\kappa_\ast/3\) by the smaller coefficient \(\lambda_G\le\kappa_\ast/3\)
if necessary gives the displayed comparison.
\end{proof}

\begin{definition}[Decomposed enhanced-tail residual ledger]
\label{def:decomposed-enhanced-tail-residual-ledger}
For the enhanced-tail transfer branch, introduce nonnegative residual
functionals
\[
    \mathcal R_{\rm core},\quad
    \mathcal R_{\rm tail},\quad
    \mathcal R_{\rm chart},\quad
    \mathcal R_{\rm loc},\quad
    \mathcal R_{\rm rep},\quad
    \mathcal R_{\rm gate}.
\]
The first two are the pressure-source pieces already isolated:
\[
    \mathcal R_{\rm core}(\mathcal D)
    :=
    2C_\eta
    \|u_{\zeta_\ast}\|_{L^3(I;L^3(A_{3/4,1}))}^2
    +
    C_{\rm Riesz}
    \|M_{\zeta_\ast}^{\rm act}\|
    _{L^{3/2}(I;L^{3/2}(B_1))^{3\times3}},
\]
where \(\zeta_\ast\) is the selected same-gauge representative, and
\[
    \mathcal R_{\rm tail}(\mathcal D)
    :=
    \mathcal T_{{\rm proj},q}(\mathcal D)
    +
    \mathcal T_{{\rm harm},q}(\mathcal D).
\]
The remaining residuals denote, respectively, chart-commutation loss,
localization leakage, reproduction drift, and detector or gate-tax mismatch.
They are finite-window residual functionals; no smallness or scale-uniformity
is attached to their definition.
\end{definition}

\begin{assumption}[Enhanced-tail detector and residual sub-budgets]
\label{ass:enhanced-tail-detector-residual-subbudgets}
The localized detector comparison is mediated by the residual ledger in
\Cref{def:decomposed-enhanced-tail-residual-ledger}:
\[
\begin{aligned}
    \calM_\Lambda^{\loc}(\mathcal D)
    &+
    \mathcal R_{\rm core}(\mathcal D)
    +
    \mathcal R_{\rm tail}(\mathcal D)
    +
    \mathcal R_{\rm chart}(\mathcal D)\\
    &+
    \mathcal R_{\rm loc}(\mathcal D)
    +
    \mathcal R_{\rm rep}(\mathcal D)
    +
    \mathcal R_{\rm gate}(\mathcal D)
    \ge
    \calM_\Lambda^{\rm comp}(\Theta_\Lambda\mathcal D).
\end{aligned}
\]
In addition, for
\[
    a\in\{{\rm chart},{\rm loc},{\rm rep},{\rm gate}\},
\]
there are constants \(\eta_a,\Delta_a\ge0\) such that
\[
    \mathcal R_a(\mathcal D)
    \le
    \eta_a
    \Dist_{\loc,{\rm int}}^{\sharp,{\rm tail}}
    (\mathcal D,\Gamma_\Lambda^{\rm int})
    +
    \Delta_a.
\]
\end{assumption}

\begin{proposition}[Enhanced-tail residual budget from sub-budgets]
\label{prop:enhanced-tail-residual-budget-from-subbudgets}
Assume \Cref{ass:same-gauge-enhanced-tail-compatibility} and
\Cref{ass:enhanced-tail-detector-residual-subbudgets}.  Define
\[
\begin{aligned}
    \Err_{\Lambda,{\rm dec}}^{\loc}(\mathcal D)
    :=
    &\mathcal R_{\rm core}(\mathcal D)
    +
    \mathcal R_{\rm tail}(\mathcal D)
    +
    \mathcal R_{\rm chart}(\mathcal D)\\
    &+
    \mathcal R_{\rm loc}(\mathcal D)
    +
    \mathcal R_{\rm rep}(\mathcal D)
    +
    \mathcal R_{\rm gate}(\mathcal D).
\end{aligned}
\]
Then
\[
    \calM_\Lambda^{\loc}(\mathcal D)
    +
    \Err_{\Lambda,{\rm dec}}^{\loc}(\mathcal D)
    \ge
    \calM_\Lambda^{\rm comp}(\Theta_\Lambda\mathcal D)
\]
and
\[
    \Err_{\Lambda,{\rm dec}}^{\loc}(\mathcal D)
    \le
    \eta_{\Lambda,{\rm dec}}^{\sharp,{\rm tail}}
    \Dist_{\loc,{\rm int}}^{\sharp,{\rm tail}}
    (\mathcal D,\Gamma_\Lambda^{\rm int})
    +
    \Delta_{\Lambda,{\rm dec}}^{\sharp,{\rm tail}},
\]
where
\[
    \eta_{\Lambda,{\rm dec}}^{\sharp,{\rm tail}}
    =
    \eta_{\rm core}
    +
    \max\{\alpha_{\rm proj}^{-1},\alpha_{\rm harm}^{-1}\}
    +
    \eta_{\rm chart}
    +
    \eta_{\rm loc}
    +
    \eta_{\rm rep}
    +
    \eta_{\rm gate}
\]
and
\[
    \Delta_{\Lambda,{\rm dec}}^{\sharp,{\rm tail}}
    =
    \Delta_{\rm core}
    +
    \Delta_{\rm chart}
    +
    \Delta_{\rm loc}
    +
    \Delta_{\rm rep}
    +
    \Delta_{\rm gate}.
\]
\end{proposition}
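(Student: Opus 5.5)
The plan is to prove the two displayed conclusions separately. The first is a pure rewriting and the second is a termwise summation of bounds already available.

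For the detector comparison, I would note that \(\Err_{\Lambda,{\rm dec}}^{\loc}(\mathcal D)\) is, by definition, exactly the sum of the six residual functionals that appear on the left-hand side of the first display in \Cref{ass:enhanced-tail-detector-residual-subbudgets}. So the inequality
\[
    \calM_\Lambda^{\loc}(\mathcal D)+\Err_{\Lambda,{\rm dec}}^{\loc}(\mathcal D)\ge\calM_\Lambda^{\rm comp}(\Theta_\Lambda\mathcal D)
\]
is that assumption, regrouped. No further work is needed here.

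For the residual budget, I would bound the six summands one at a time and add the results.
\begin{enumerate}[label=(\roman*),leftmargin=2em]
    \item \(\mathcal R_{\rm core}\). This is, by definition, the left-hand side of the core inequality in \Cref{ass:same-gauge-enhanced-tail-compatibility}, evaluated on the selected representative \(\zeta_\ast\). It is therefore bounded by \(\eta_{\rm core}\Dist_{\loc,{\rm int}}^{\sharp,{\rm tail}}+\Delta_{\rm core}\).
    \item \(\mathcal R_{\rm tail}=\mathcal T_{{\rm proj},q}+\mathcal T_{{\rm harm},q}\). \Cref{lem:simultaneous-pressure-tail-observability} gives the bound \(\max\{\alpha_{\rm proj}^{-1},\alpha_{\rm harm}^{-1}\}\Dist_{\loc,{\rm int}}^{\sharp,{\rm tail}}\), with no additive constant. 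This matches the absence of a tail \(\Delta\) in \(\Delta_{\Lambda,{\rm dec}}^{\sharp,{\rm tail}}\).
    \item \(\mathcal R_{\rm chart},\mathcal R_{\rm loc},\mathcal R_{\rm rep},\mathcal R_{\rm gate}\). Each is bounded by its own sub-budget in \Cref{ass:enhanced-tail-detector-residual-subbudgets}.
\end{enumerate}
Summing the six bounds yields exactly the stated \(\eta_{\Lambda,{\rm dec}}^{\sharp,{\rm tail}}\) and \(\Delta_{\Lambda,{\rm dec}}^{\sharp,{\rm tail}}\).

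The only delicate point is consistency of the representative \(\zeta_\ast\). \(\mathcal R_{\rm core}\) in \Cref{def:decomposed-enhanced-tail-residual-ledger} is defined using ``the selected same-gauge representative''. The core bound in \Cref{ass:same-gauge-enhanced-tail-compatibility} is stated for the representative attaining the enhanced-tail infimum. I would read the ledger's \(\zeta_\ast\) as that same selection, and state this explicitly in the proof. If the selection in \Cref{ass:three-channel-enhanced-tail-chart-visibility} differs, it plays no role here, since only the detector and residual assumptions are used.

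The tail term raises no such issue. It is defined through quotient infima and is handled by the lemma independently of any representative choice. I expect no genuine obstacle beyond this bookkeeping.
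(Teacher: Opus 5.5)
Your proposal is correct and is essentially the paper's proof: the detector comparison is the first display of the sub-budget assumption regrouped, and the residual bound sums six pieces — the core estimate from the same-gauge enhanced-tail compatibility assumption, the simultaneous pressure-tail observability lemma for $\mathcal R_{\rm tail}$ (with no additive term), and the four remaining sub-budgets. Your explicit identification of the ledger's $\zeta_\ast$ with the enhanced-tail minimizer matches the convention the paper uses implicitly.
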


\begin{proof}
The detector comparison is exactly the first assertion in
\Cref{ass:enhanced-tail-detector-residual-subbudgets} after substituting the
definition of \(\Err_{\Lambda,{\rm dec}}^{\loc}\).

The core residual is bounded by
\Cref{ass:same-gauge-enhanced-tail-compatibility}.  The tail residual is
bounded by \Cref{lem:simultaneous-pressure-tail-observability}:
\[
    \mathcal R_{\rm tail}(\mathcal D)
    \le
    \max\{\alpha_{\rm proj}^{-1},\alpha_{\rm harm}^{-1}\}
    \Dist_{\loc,{\rm int}}^{\sharp,{\rm tail}}
    (\mathcal D,\Gamma_\Lambda^{\rm int}).
\]
The chart, localization, reproduction, and gate residuals are bounded by the
sub-budget estimates in
\Cref{ass:enhanced-tail-detector-residual-subbudgets}.  Summing these six
estimates gives the displayed constants.
\end{proof}

\begin{corollary}[Subclaim criterion for enhanced-tail localized transfer]
\label{cor:subclaim-criterion-enhanced-tail-localized-transfer}
Assume
\Cref{ass:three-channel-enhanced-tail-chart-visibility},
\Cref{ass:same-gauge-enhanced-tail-compatibility},
\Cref{ass:enhanced-tail-detector-residual-subbudgets},
\(\calQ_\Lambda^{\cl}\ne\{0\}\),
\Cref{ass:clean-detector-gauge-compatibility}, and
\Cref{ass:kernel-free-computational-detector}.  Then the enhanced-tail
localized transfer estimate holds with
\[
    1-\varepsilon_G
    =
    \lambda_G,
    \qquad
    \delta_G=\delta_G^{\rm vis},
\]
and with
\[
    \eta_\Lambda^{\sharp,{\rm tail}}
    =
    \eta_{\Lambda,{\rm dec}}^{\sharp,{\rm tail}},
    \qquad
    \Delta_\Lambda^{\sharp,{\rm tail}}
    =
    \Delta_{\Lambda,{\rm dec}}^{\sharp,{\rm tail}}.
\]
\end{corollary}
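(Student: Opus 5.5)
The plan is to check the two hypotheses of \Cref{thm:enhanced-tail-localized-transfer}, namely \Cref{ass:enhanced-tail-local-to-clean-transfer-comparison} and \Cref{ass:enhanced-tail-localized-residual-budget}, with the stated constants, and then apply that theorem directly. The corollary is an assembly statement; no new estimate beyond the two preceding propositions is needed.

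First, for the quotient-distance half of \Cref{ass:enhanced-tail-local-to-clean-transfer-comparison}, invoke \Cref{prop:enhanced-tail-quotient-comparison-from-channels}. It uses only \Cref{ass:three-channel-enhanced-tail-chart-visibility} and gives
\[
    \Dist_{\cl}(\Theta_\Lambda\mathcal D,\Gamma_\Lambda^{\cl})
    \ge
    \lambda_G\Dist_{\loc,{\rm int}}^{\sharp,{\rm tail}}(\mathcal D,\Gamma_\Lambda^{\rm int})
    -\delta_G^{\rm vis}.
\]
Setting \(\varepsilon_G:=1-\lambda_G\), the fact that \(0<\lambda_G\le 1\) (because \(\kappa_\ast>0\)) gives \(0\le\varepsilon_G<1\), as the assumption requires. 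Also \(\delta_G:=\delta_G^{\rm vis}\ge0\).

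Second, for the detector half and the residual budget, take \(\Err_\Lambda^{\loc}:=\Err_{\Lambda,{\rm dec}}^{\loc}\), a nonnegative functional since it is a sum of nonnegative residuals. \Cref{prop:enhanced-tail-residual-budget-from-subbudgets} applies under \Cref{ass:same-gauge-enhanced-tail-compatibility} and \Cref{ass:enhanced-tail-detector-residual-subbudgets}. It yields both the detector comparison \(\calM_\Lambda^{\loc}+\Err_{\Lambda,{\rm dec}}^{\loc}\ge\calM_\Lambda^{\rm comp}\circ\Theta_\Lambda\) and the budget bound with constants \(\eta_{\Lambda,{\rm dec}}^{\sharp,{\rm tail}}\) and \(\Delta_{\Lambda,{\rm dec}}^{\sharp,{\rm tail}}\). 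These are exactly \Cref{ass:enhanced-tail-localized-residual-budget} and the remaining part of \Cref{ass:enhanced-tail-local-to-clean-transfer-comparison}.

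Finally, apply \Cref{thm:enhanced-tail-localized-transfer}, using \(\calQ_\Lambda^{\cl}\ne\{0\}\), \Cref{ass:clean-detector-gauge-compatibility} and \Cref{ass:kernel-free-computational-detector}. This gives the localized estimate with \(1-\varepsilon_G=\lambda_G\), \(\delta_G=\delta_G^{\rm vis}\) and the decomposed residual constants.

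The only point needing care is consistency of the same-gauge representative \(\zeta_\ast\). \Cref{ass:three-channel-enhanced-tail-chart-visibility} and \Cref{ass:same-gauge-enhanced-tail-compatibility} each provide a selection, and \(\mathcal R_{\rm core}\) is defined with the selected one. However, \Cref{prop:enhanced-tail-quotient-comparison-from-channels} only uses the inequality \(\Dist^{\sharp,{\rm tail}}\le X_0+X_{\rm proj}+X_{\rm harm}\), which holds for every representative. Likewise, \Cref{prop:enhanced-tail-residual-budget-from-subbudgets} uses the \(\zeta_\ast\) of \Cref{ass:same-gauge-enhanced-tail-compatibility}, and its tail term is bounded through quotient infima. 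The two selections therefore never need to agree, and the assembly goes through without any compatibility condition between them.
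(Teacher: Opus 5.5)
Your proposal is correct and follows the paper's proof exactly: \Cref{prop:enhanced-tail-quotient-comparison-from-channels} supplies the quotient-distance half, and \Cref{prop:enhanced-tail-residual-budget-from-subbudgets} with \(\Err_\Lambda^{\loc}:=\Err_{\Lambda,{\rm dec}}^{\loc}\) supplies the detector comparison and residual budget, after which \Cref{thm:enhanced-tail-localized-transfer} finishes. Your explicit checks are correct refinements that the paper leaves implicit: you verify that \(0\le\varepsilon_G<1\), and you observe that the two representative selections need not coincide.
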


\begin{proof}
\Cref{prop:enhanced-tail-quotient-comparison-from-channels} gives the
quotient-distance part of the enhanced-tail local-to-clean comparison.
\Cref{prop:enhanced-tail-residual-budget-from-subbudgets} supplies both the
detector comparison and the residual budget, with the decomposed residual
\(\Err_{\Lambda,{\rm dec}}^{\loc}\).  Applying
\Cref{thm:enhanced-tail-localized-transfer} with these constants proves the
claim.
\end{proof}

\begin{remark}[Status of the subclaim decomposition]
\Cref{cor:subclaim-criterion-enhanced-tail-localized-transfer} is a
finite-window reduction of the enhanced-tail transfer assumptions to smaller
subclaims.  It does not prove the three chart-visibility estimates, the
chart-commutation residual bound, the localization leakage bound, the
reproduction drift bound, or the gate-tax mismatch bound.  It also does not
compare
\(\Dist_{\loc,{\rm int}}^{\sharp,{\rm tail}}\) with
\(\Dist_{\loc,{\rm int}}\), prove decay in \(N\) or \(M\), or claim
scale-uniformity or Navier--Stokes regularity.
\end{remark}

\subsection{Intrinsic-core chart visibility from a quotient left inverse}
\label{subsec:intrinsic-core-chart-visibility-left-inverse}

We next isolate the first chart-visibility channel in
\Cref{ass:three-channel-enhanced-tail-chart-visibility}.  The result below is
purely finite-dimensional.  It says that intrinsic-core visibility follows if
the clean quotient chart admits a bounded left inverse up to an explicit
additive reconstruction defect and if the chosen same-gauge representative is
not much larger than the intrinsic quotient distance.

\begin{definition}[Intrinsic and clean quotient classes]
\label{def:intrinsic-clean-quotient-classes}
Write
\[
    [\mathcal D]_{\rm int}
    \in
    \mathfrak I_\Lambda^{\loc}/\Gamma_\Lambda^{\rm int},
    \qquad
    [\Theta_\Lambda\mathcal D]_{\cl}
    \in
    \calK_\Lambda^{\cl}/\Gamma_\Lambda^{\cl}
\]
for the intrinsic and clean quotient classes.  Their quotient norms are
\[
    \|[\mathcal D]_{\rm int}\|_{\loc,{\rm int}/\Gamma}
    =
    \Dist_{\loc,{\rm int}}(\mathcal D,\Gamma_\Lambda^{\rm int})
\]
and
\[
    \|[\Theta_\Lambda\mathcal D]_{\cl}\|_{\cl/\Gamma}
    =
    \Dist_{\cl}(\Theta_\Lambda\mathcal D,\Gamma_\Lambda^{\cl}).
\]
\end{definition}

\begin{assumption}[Bounded intrinsic-core quotient left inverse]
\label{ass:bounded-intrinsic-core-quotient-left-inverse}
There is a linear map
\[
    L_{\rm core}:
    \calK_\Lambda^{\cl}/\Gamma_\Lambda^{\cl}
    \to
    \mathfrak I_\Lambda^{\loc}/\Gamma_\Lambda^{\rm int}
\]
and constants \(C_{\rm core}<\infty\) and
\(\delta_{\rm rec}\ge0\) such that
\[
    \|L_{\rm core}q\|_{\loc,{\rm int}/\Gamma}
    \le
    C_{\rm core}\|q\|_{\cl/\Gamma}
\]
for every clean quotient class \(q\), and
\[
    \|[\mathcal D]_{\rm int}
    -
    L_{\rm core}[\Theta_\Lambda\mathcal D]_{\cl}\|
    _{\loc,{\rm int}/\Gamma}
    \le
    \delta_{\rm rec}
\]
for every intrinsic package \(\mathcal D\).
\end{assumption}

\begin{assumption}[Controlled same-gauge core representative]
\label{ass:controlled-same-gauge-core-representative}
The same-gauge representative selection \(\zeta_\ast(\mathcal D)\) used in
\Cref{ass:three-channel-enhanced-tail-chart-visibility} satisfies
\[
    X_0(\mathcal D;\zeta_\ast)
    \le
    C_{\rm sel}
    \Dist_{\loc,{\rm int}}(\mathcal D,\Gamma_\Lambda^{\rm int})
    +
    \delta_{\rm sel}
\]
for constants \(C_{\rm sel}<\infty\) and \(\delta_{\rm sel}\ge0\).
\end{assumption}

\begin{lemma}[Intrinsic-core chart visibility from a quotient left inverse]
\label{lem:intrinsic-core-chart-visibility-left-inverse}
Assume \Cref{ass:bounded-intrinsic-core-quotient-left-inverse} and
\Cref{ass:controlled-same-gauge-core-representative}.  Then the intrinsic-core
visibility channel
\[
    \Dist_{\cl}(\Theta_\Lambda\mathcal D,\Gamma_\Lambda^{\cl})
    \ge
    \kappa_0X_0(\mathcal D;\zeta_\ast)-\delta_0
\]
holds with
\[
    \kappa_0
    =
    \frac{1}{C_{\rm core}C_{\rm sel}},
    \qquad
    \delta_0
    =
    \frac{\delta_{\rm sel}}{C_{\rm core}C_{\rm sel}}
    +
    \frac{\delta_{\rm rec}}{C_{\rm core}}.
\]
\end{lemma}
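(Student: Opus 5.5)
The plan is a chain of three inequalities: bound the intrinsic quotient distance by the clean quotient distance through the left inverse, then insert that into the selection bound, then solve for the clean distance.

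\emph{Step 1.} Apply the triangle inequality in the quotient norm \(\|\cdot\|_{\loc,{\rm int}/\Gamma}\) to the decomposition \([\mathcal D]_{\rm int} = L_{\rm core}[\Theta_\Lambda\mathcal D]_{\cl} + \bigl([\mathcal D]_{\rm int}-L_{\rm core}[\Theta_\Lambda\mathcal D]_{\cl}\bigr)\). Using both parts of \Cref{ass:bounded-intrinsic-core-quotient-left-inverse} and \Cref{def:intrinsic-clean-quotient-classes}, this gives
\[
    \Dist_{\loc,{\rm int}}(\mathcal D,\Gamma_\Lambda^{\rm int})
    \le
    C_{\rm core}\Dist_{\cl}(\Theta_\Lambda\mathcal D,\Gamma_\Lambda^{\cl})
    +
    \delta_{\rm rec}.
\]

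\emph{Step 2.} Insert this into \Cref{ass:controlled-same-gauge-core-representative}:
\[
    X_0(\mathcal D;\zeta_\ast)
    \le
    C_{\rm sel}C_{\rm core}\Dist_{\cl}(\Theta_\Lambda\mathcal D,\Gamma_\Lambda^{\cl})
    +
    C_{\rm sel}\delta_{\rm rec}
    +
    \delta_{\rm sel}.
\]

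\emph{Step 3.} Divide by \(C_{\rm core}C_{\rm sel}\) and rearrange:
\[
    \Dist_{\cl}(\Theta_\Lambda\mathcal D,\Gamma_\Lambda^{\cl})
    \ge
    \frac{1}{C_{\rm core}C_{\rm sel}}X_0(\mathcal D;\zeta_\ast)
    -
    \frac{\delta_{\rm sel}}{C_{\rm core}C_{\rm sel}}
    -
    \frac{\delta_{\rm rec}}{C_{\rm core}},
\]
which gives exactly the stated \(\kappa_0\) and \(\delta_0\).

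The only real obstacle is the division in Step 3, which needs \(C_{\rm core}C_{\rm sel}>0\). I would state this positivity as an implicit convention, since otherwise \(\kappa_0\) is undefined.

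The degenerate cases can be handled as follows. If \(C_{\rm sel}=0\), then \(X_0\le\delta_{\rm sel}\), and the bound holds for any \(\kappa_0\) with a suitably adjusted offset. If \(C_{\rm core}=0\), then the intrinsic quotient distance is at most \(\delta_{\rm rec}\). Replacing a vanishing constant by any positive larger one keeps both hypotheses valid, so positivity can be assumed without loss of generality.
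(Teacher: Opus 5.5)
Your proof is correct and follows the paper's argument. Both first combine the triangle inequality with the reconstruction bound to get $\Dist_{\loc,{\rm int}}(\mathcal D,\Gamma_\Lambda^{\rm int})\le C_{\rm core}\Dist_{\cl}(\Theta_\Lambda\mathcal D,\Gamma_\Lambda^{\cl})+\delta_{\rm rec}$ and then chain it with the selection bound; the only difference is order, since the paper divides by $C_{\rm core}$ and $C_{\rm sel}$ before substituting, while you substitute first and divide once, and your remark that $C_{\rm core}C_{\rm sel}>0$ is needed (and can be arranged by enlarging the constants) makes explicit a positivity the paper uses without comment.
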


\begin{proof}
By the reconstruction property and the boundedness of \(L_{\rm core}\),
\[
\begin{aligned}
    \Dist_{\loc,{\rm int}}(\mathcal D,\Gamma_\Lambda^{\rm int})
    &=
    \|[\mathcal D]_{\rm int}\|_{\loc,{\rm int}/\Gamma}\\
    &\le
    \|L_{\rm core}[\Theta_\Lambda\mathcal D]_{\cl}\|
    _{\loc,{\rm int}/\Gamma}
    +
    \delta_{\rm rec}\\
    &\le
    C_{\rm core}
    \Dist_{\cl}(\Theta_\Lambda\mathcal D,\Gamma_\Lambda^{\cl})
    +
    \delta_{\rm rec}.
\end{aligned}
\]
Therefore
\[
    \Dist_{\cl}(\Theta_\Lambda\mathcal D,\Gamma_\Lambda^{\cl})
    \ge
    \frac{1}{C_{\rm core}}
    \Dist_{\loc,{\rm int}}(\mathcal D,\Gamma_\Lambda^{\rm int})
    -
    \frac{\delta_{\rm rec}}{C_{\rm core}}.
\]
The controlled-representative assumption gives
\[
    \Dist_{\loc,{\rm int}}(\mathcal D,\Gamma_\Lambda^{\rm int})
    \ge
    \frac{1}{C_{\rm sel}}X_0(\mathcal D;\zeta_\ast)
    -
    \frac{\delta_{\rm sel}}{C_{\rm sel}}.
\]
Substituting this lower bound into the previous inequality yields the stated
constants.
\end{proof}

\begin{remark}[Status of intrinsic-core visibility]
The preceding lemma proves only a finite-dimensional sufficient condition for
the intrinsic-core visibility channel.  The existence of \(L_{\rm core}\), the
size of the reconstruction defect \(\delta_{\rm rec}\), and the controlled
same-gauge representative property are not derived from Navier--Stokes data
here.  The lemma does not address projection-tail visibility or harmonic-tail
visibility.
\end{remark}

\subsection{Pressure-tail chart visibility from clean tail recovery}
\label{subsec:pressure-tail-chart-visibility-recovery}

We now record the analogous finite-window mechanism for the two pressure-tail
visibility channels.  The point is not that clean quotient distance must see
these tails automatically.  Rather, if the clean quotient carries bounded
recovery maps for the selected projection and harmonic tail data, then the
remaining two visibility inequalities in
\Cref{ass:three-channel-enhanced-tail-chart-visibility} follow.

\begin{definition}[Projection and harmonic tail data]
\label{def:projection-harmonic-tail-data}
Let \(\mathcal Y_{\rm proj}^{\rm tail}\) and
\(\mathcal Y_{\rm harm}^{\rm tail}\) be finite-window normed spaces.  For an
intrinsic package \(\mathcal D\) and a common representative
\(\zeta\in\Gamma_\Lambda^{\rm int}\), let
\[
    \tau_{\rm proj}(\mathcal D;\zeta)
    \in
    \mathcal Y_{\rm proj}^{\rm tail},
    \qquad
    \tau_{\rm harm}(\mathcal D;\zeta)
    \in
    \mathcal Y_{\rm harm}^{\rm tail}
\]
be tail data satisfying
\[
    \|\tau_{\rm proj}(\mathcal D;\zeta)\|
    _{\mathcal Y_{\rm proj}^{\rm tail}}
    =
    \mathcal T_{\rm proj}(\mathcal D;\zeta),
    \qquad
    \|\tau_{\rm harm}(\mathcal D;\zeta)\|
    _{\mathcal Y_{\rm harm}^{\rm tail}}
    =
    \mathcal T_{\rm harm}(\mathcal D;\zeta).
\]
\end{definition}

\begin{assumption}[Clean quotient recovery of pressure tails]
\label{ass:clean-quotient-pressure-tail-recovery}
There are linear maps
\[
    L_{\rm proj}^{\rm tail}:
    \calK_\Lambda^{\cl}/\Gamma_\Lambda^{\cl}
    \to
    \mathcal Y_{\rm proj}^{\rm tail},
    \qquad
    L_{\rm harm}^{\rm tail}:
    \calK_\Lambda^{\cl}/\Gamma_\Lambda^{\cl}
    \to
    \mathcal Y_{\rm harm}^{\rm tail},
\]
constants \(C_{\rm proj},C_{\rm harm}<\infty\), and reconstruction defects
\(\delta_{\rm proj}^{\rm rec},\delta_{\rm harm}^{\rm rec}\ge0\), such that
\[
    \|L_{\rm proj}^{\rm tail}q\|_{\mathcal Y_{\rm proj}^{\rm tail}}
    \le
    C_{\rm proj}\|q\|_{\cl/\Gamma},
    \qquad
    \|L_{\rm harm}^{\rm tail}q\|_{\mathcal Y_{\rm harm}^{\rm tail}}
    \le
    C_{\rm harm}\|q\|_{\cl/\Gamma}
\]
for every clean quotient class \(q\).  For the selected same-gauge
representative \(\zeta_\ast(\mathcal D)\), assume
\[
    \left\|
        \tau_{\rm proj}(\mathcal D;\zeta_\ast)
        -
        L_{\rm proj}^{\rm tail}
        [\Theta_\Lambda\mathcal D]_{\cl}
    \right\|_{\mathcal Y_{\rm proj}^{\rm tail}}
    \le
    \delta_{\rm proj}^{\rm rec}
\]
and
\[
    \left\|
        \tau_{\rm harm}(\mathcal D;\zeta_\ast)
        -
        L_{\rm harm}^{\rm tail}
        [\Theta_\Lambda\mathcal D]_{\cl}
    \right\|_{\mathcal Y_{\rm harm}^{\rm tail}}
    \le
    \delta_{\rm harm}^{\rm rec}.
\]
\end{assumption}

\begin{lemma}[Pressure-tail chart visibility from recovery]
\label{lem:pressure-tail-chart-visibility-recovery}
Assume \Cref{ass:clean-quotient-pressure-tail-recovery}.  Then
\[
    \Dist_{\cl}(\Theta_\Lambda\mathcal D,\Gamma_\Lambda^{\cl})
    \ge
    \kappa_{\rm proj}
    X_{\rm proj}(\mathcal D;\zeta_\ast)
    -
    \delta_{\rm proj}
\]
with
\[
    \kappa_{\rm proj}
    =
    \frac{1}{\alpha_{\rm proj}C_{\rm proj}},
    \qquad
    \delta_{\rm proj}
    =
    \frac{\delta_{\rm proj}^{\rm rec}}{C_{\rm proj}},
\]
and
\[
    \Dist_{\cl}(\Theta_\Lambda\mathcal D,\Gamma_\Lambda^{\cl})
    \ge
    \kappa_{\rm harm}
    X_{\rm harm}(\mathcal D;\zeta_\ast)
    -
    \delta_{\rm harm}
\]
with
\[
    \kappa_{\rm harm}
    =
    \frac{1}{\alpha_{\rm harm}C_{\rm harm}},
    \qquad
    \delta_{\rm harm}
    =
    \frac{\delta_{\rm harm}^{\rm rec}}{C_{\rm harm}}.
\]
\end{lemma}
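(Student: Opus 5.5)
The plan is to mirror the argument of \Cref{lem:intrinsic-core-chart-visibility-left-inverse}, treating the two tail channels separately but identically. I give the projection channel in detail; the harmonic channel follows by replacing ``proj'' with ``harm'' throughout. Write \(q:=[\Theta_\Lambda\mathcal D]_{\cl}\). By \Cref{def:intrinsic-clean-quotient-classes}, \(\|q\|_{\cl/\Gamma}=\Dist_{\cl}(\Theta_\Lambda\mathcal D,\Gamma_\Lambda^{\cl})\).

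First I convert the channel into a tail datum norm. By the definition of \(X_{\rm proj}\) and \Cref{def:projection-harmonic-tail-data},
\[
    X_{\rm proj}(\mathcal D;\zeta_\ast)
    =
    \alpha_{\rm proj}\mathcal T_{\rm proj}(\mathcal D;\zeta_\ast)
    =
    \alpha_{\rm proj}
    \|\tau_{\rm proj}(\mathcal D;\zeta_\ast)\|_{\mathcal Y_{\rm proj}^{\rm tail}}.
\]
Next I use the triangle inequality, together with the reconstruction bound and the boundedness of \(L_{\rm proj}^{\rm tail}\) from \Cref{ass:clean-quotient-pressure-tail-recovery}:
\[
    \|\tau_{\rm proj}(\mathcal D;\zeta_\ast)\|
    \le
    \|L_{\rm proj}^{\rm tail}q\|+\delta_{\rm proj}^{\rm rec}
    \le
    C_{\rm proj}\|q\|_{\cl/\Gamma}+\delta_{\rm proj}^{\rm rec}.
\]

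Multiplying by \(\alpha_{\rm proj}\) gives
\[
    X_{\rm proj}\le \alpha_{\rm proj}C_{\rm proj}\Dist_{\cl}(\Theta_\Lambda\mathcal D,\Gamma_\Lambda^{\cl})+\alpha_{\rm proj}\delta_{\rm proj}^{\rm rec}.
\]
Dividing by \(\alpha_{\rm proj}C_{\rm proj}\) then yields \(\Dist_{\cl}\ge \kappa_{\rm proj}X_{\rm proj}-\delta_{\rm proj}\) with \(\kappa_{\rm proj}=1/(\alpha_{\rm proj}C_{\rm proj})\) and \(\delta_{\rm proj}=\delta_{\rm proj}^{\rm rec}/C_{\rm proj}\). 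These are exactly the stated constants, since the factor \(\alpha_{\rm proj}\) cancels in the additive term.

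The only point requiring care is the degenerate case \(C_{\rm proj}=0\) (or \(C_{\rm harm}=0\)), where the stated constants are undefined. I would handle it by noting that any bound with \(C_{\rm proj}=0\) also holds with a positive \(C_{\rm proj}\), so one may assume without loss of generality that \(C_{\rm proj},C_{\rm harm}>0\); this matches the positivity of the \(\kappa\)'s required in \Cref{ass:three-channel-enhanced-tail-chart-visibility}. Otherwise the argument is routine.
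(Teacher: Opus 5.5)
Your proposal is correct and matches the paper's proof essentially step for step. The paper also uses the recovery defect and the boundedness of $L_{\rm proj}^{\rm tail}$ to get $\mathcal T_{\rm proj}(\mathcal D;\zeta_\ast)\le C_{\rm proj}\Dist_{\cl}(\Theta_\Lambda\mathcal D,\Gamma_\Lambda^{\cl})+\delta_{\rm proj}^{\rm rec}$, then rescales via $X_{\rm proj}=\alpha_{\rm proj}\mathcal T_{\rm proj}$ and treats the harmonic channel identically. Your note on the degenerate case $C_{\rm proj}=0$ or $C_{\rm harm}=0$ (enlarge the constant to a positive value) is a harmless addition. The paper leaves it implicit, since its stated constants already presuppose $C_{\rm proj},C_{\rm harm}>0$.
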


\begin{proof}
We prove the projection-tail estimate; the harmonic-tail estimate is
identical.  By the recovery defect bound and boundedness of
\(L_{\rm proj}^{\rm tail}\),
\[
\begin{aligned}
    \mathcal T_{\rm proj}(\mathcal D;\zeta_\ast)
    &=
    \|\tau_{\rm proj}(\mathcal D;\zeta_\ast)\|
    _{\mathcal Y_{\rm proj}^{\rm tail}}\\
    &\le
    \|L_{\rm proj}^{\rm tail}
    [\Theta_\Lambda\mathcal D]_{\cl}\|
    _{\mathcal Y_{\rm proj}^{\rm tail}}
    +
    \delta_{\rm proj}^{\rm rec}\\
    &\le
    C_{\rm proj}
    \Dist_{\cl}(\Theta_\Lambda\mathcal D,\Gamma_\Lambda^{\cl})
    +
    \delta_{\rm proj}^{\rm rec}.
\end{aligned}
\]
Since \(X_{\rm proj}=\alpha_{\rm proj}\mathcal T_{\rm proj}\), this gives
\[
    \Dist_{\cl}(\Theta_\Lambda\mathcal D,\Gamma_\Lambda^{\cl})
    \ge
    \frac{1}{\alpha_{\rm proj}C_{\rm proj}}
    X_{\rm proj}(\mathcal D;\zeta_\ast)
    -
    \frac{\delta_{\rm proj}^{\rm rec}}{C_{\rm proj}}.
\]
The same argument with \(L_{\rm harm}^{\rm tail}\) proves the harmonic
estimate.
\end{proof}

\begin{corollary}[Finite-window sufficient condition for three-channel visibility]
\label{cor:finite-window-three-channel-visibility-sufficient-condition}
Assume
\Cref{ass:bounded-intrinsic-core-quotient-left-inverse},
\Cref{ass:controlled-same-gauge-core-representative}, and
\Cref{ass:clean-quotient-pressure-tail-recovery}.  Then
\Cref{ass:three-channel-enhanced-tail-chart-visibility} holds with the
constants from
\Cref{lem:intrinsic-core-chart-visibility-left-inverse} and
\Cref{lem:pressure-tail-chart-visibility-recovery}.
\end{corollary}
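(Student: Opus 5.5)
The corollary is a bookkeeping assembly: \Cref{ass:three-channel-enhanced-tail-chart-visibility} consists of a same-gauge representative selection together with three visibility inequalities, and each inequality has already been derived from the stated hypotheses. So I would simply verify each clause in turn, taking care that one and the same representative \(\zeta_\ast(\mathcal D)\) serves all three channels.

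First, I fix the selection \(\zeta_\ast(\mathcal D)\in\Gamma_\Lambda^{\rm int}\) referred to in \Cref{ass:controlled-same-gauge-core-representative}. By hypothesis this is the selection used in the three-channel visibility assumption. I also require that the recovery defects in \Cref{ass:clean-quotient-pressure-tail-recovery} are stated for this same \(\zeta_\ast\), which is exactly how that assumption is phrased. This gives a single common representative, as the three-channel assumption demands.

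Second, I apply \Cref{lem:intrinsic-core-chart-visibility-left-inverse}, using \Cref{ass:bounded-intrinsic-core-quotient-left-inverse} and \Cref{ass:controlled-same-gauge-core-representative}. This gives the core channel with
\[
\kappa_0=(C_{\rm core}C_{\rm sel})^{-1},\qquad
\delta_0=\delta_{\rm sel}/(C_{\rm core}C_{\rm sel})+\delta_{\rm rec}/C_{\rm core}.
\]
Third, I apply \Cref{lem:pressure-tail-chart-visibility-recovery}, using \Cref{ass:clean-quotient-pressure-tail-recovery}. This gives the projection and harmonic channels with
\[
\kappa_{\rm proj}=(\alpha_{\rm proj}C_{\rm proj})^{-1},\qquad
\kappa_{\rm harm}=(\alpha_{\rm harm}C_{\rm harm})^{-1},
\]
and the corresponding defects \(\delta_{\rm proj}=\delta_{\rm proj}^{\rm rec}/C_{\rm proj}\) and \(\delta_{\rm harm}=\delta_{\rm harm}^{\rm rec}/C_{\rm harm}\).

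The one step that needs care is the sign requirements on the constants: the three-channel assumption requires \(\kappa_0,\kappa_{\rm proj},\kappa_{\rm harm}>0\) and \(\delta\)'s \(\ge0\). The \(\delta\)'s are nonnegative automatically. The \(\kappa\)'s are positive and finite only if \(C_{\rm core},C_{\rm sel},C_{\rm proj},C_{\rm harm}\) are strictly positive, which the lemmas implicitly need in order to divide. If any of these constants is zero, I would first replace it by a positive number. This is harmless because each hypothesis is an upper bound with constant \(C\), so enlarging \(C\) keeps it true. The quotient \(\delta_{\rm rec}/C_{\rm core}\) remains well defined after this replacement. With these normalizations, all three inequalities hold for every intrinsic package \(\mathcal D\) with the stated constants, which is exactly \Cref{ass:three-channel-enhanced-tail-chart-visibility}.
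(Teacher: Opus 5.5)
Your proposal is correct and matches the paper's proof. The paper likewise obtains the core channel from \Cref{lem:intrinsic-core-chart-visibility-left-inverse} and the two tail channels from \Cref{lem:pressure-tail-chart-visibility-recovery}, all on the common representative $\zeta_\ast(\mathcal D)$. Your extra step of enlarging any vanishing constant among $C_{\rm core},C_{\rm sel},C_{\rm proj},C_{\rm harm}$, so that the $\kappa$'s are defined and strictly positive, is a harmless refinement that the paper leaves implicit.
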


\begin{proof}
The intrinsic-core visibility inequality follows from
\Cref{lem:intrinsic-core-chart-visibility-left-inverse}.  The projection-tail
and harmonic-tail visibility inequalities follow from
\Cref{lem:pressure-tail-chart-visibility-recovery}.
\end{proof}

\begin{remark}[Status of pressure-tail recovery]
\Cref{lem:pressure-tail-chart-visibility-recovery} is a finite-window
observability statement conditional on the clean quotient carrying tail
recovery maps.  It does not prove projection-tail decay as \(N\to\infty\), does
not prove harmonic-tail decay as \(M\to\infty\), and does not compare the
enhanced-tail distance with the original intrinsic distance.
\end{remark}

\subsection{Chart residual sub-budget in the enhanced-tail geometry}
\label{subsec:chart-residual-subbudget-enhanced-tail}

We now replace the placeholder chart residual
\(\mathcal R_{\rm chart}\) in
\Cref{def:decomposed-enhanced-tail-residual-ledger} by explicit
finite-window chart mismatch components.  This subsection is a chart
compatibility module.  It does not introduce a new Navier--Stokes pressure
estimate.

\begin{assumption}[Enhanced-tail same-gauge selector]
\label{ass:enhanced-tail-same-gauge-selector}
For every intrinsic package \(\mathcal D\), fix a same-gauge representative
\[
    \zeta_\ast(\mathcal D)\in\Gamma_\Lambda^{\rm int}
\]
and write
\[
    \mathcal D_\ast:=\mathcal D-\zeta_\ast(\mathcal D).
\]
There is a selector defect \(\delta_{\rm sel}^{\sharp}\ge0\) such that
\[
\begin{aligned}
    &\|\mathcal D_\ast\|_{\loc,{\rm int}}
    +
    \alpha_{\rm proj}
    \mathcal T_{\rm proj}(\mathcal D;\zeta_\ast)
    +
    \alpha_{\rm harm}
    \mathcal T_{\rm harm}(\mathcal D;\zeta_\ast)\\
    &\le
    \Dist_{\loc,{\rm int}}^{\sharp,{\rm tail}}
    (\mathcal D,\Gamma_\Lambda^{\rm int})
    +
    \delta_{\rm sel}^{\sharp}.
\end{aligned}
\]
In the exact minimizer case one may take
\(\delta_{\rm sel}^{\sharp}=0\).
\end{assumption}

\begin{definition}[Finite-window chart residual operators]
\label{def:finite-window-chart-residual-operators}
Let
\[
    Y_{\rm chart}^{O},\qquad
    Y_{\rm chart}^{\rm Rep},\qquad
    Y_{\rm chart}^{\rm Tax},\qquad
    Y_{\rm chart}^{\rm Tail}
\]
be finite-dimensional normed spaces.  Fix bounded finite-window operators
\[
    K_{\rm chart}^{O}:
    \mathfrak I_\Lambda^{\loc}\to Y_{\rm chart}^{O},
    \qquad
    K_{\rm chart}^{\rm Rep}:
    \mathfrak I_\Lambda^{\loc}\to Y_{\rm chart}^{\rm Rep},
\]
\[
    K_{\rm chart}^{\rm Tax}:
    \mathfrak I_\Lambda^{\loc}\to Y_{\rm chart}^{\rm Tax},
    \qquad
    K_{\rm chart}^{\rm Tail}:
    \mathfrak I_\Lambda^{\loc}\to Y_{\rm chart}^{\rm Tail}.
\]
The operator \(K_{\rm chart}^{O}\) records observation-coordinate chart
mismatch, namely the mismatch between localized observation channels and
clean observation channels after applying the local-to-clean chart.  The
operator \(K_{\rm chart}^{\rm Rep}\) records reproduction-coordinate chart
mismatch between localized reproduction residuals and clean adjacent-scale
reproduction residuals after charting.  The operator
\(K_{\rm chart}^{\rm Tax}\) records tax or gate-coordinate chart mismatch.
The operator \(K_{\rm chart}^{\rm Tail}\) records pressure-tail chart
mismatch between projection/harmonic tail data measured in the enhanced-tail
intrinsic geometry and the corresponding clean tail coordinates recovered in
the clean quotient.

These are finite-window model-level residual operators.  Their boundedness is
part of the chart datum, not a PDE estimate.
\end{definition}

\begin{definition}[Component chart residuals]
\label{def:component-chart-residuals}
For an intrinsic package \(\mathcal D\), define
\[
    \mathcal R_{\rm chart}^{O}(\mathcal D)
    :=
    \|K_{\rm chart}^{O}\mathcal D_\ast\|_{Y_{\rm chart}^{O}},
\]
\[
    \mathcal R_{\rm chart}^{\rm Rep}(\mathcal D)
    :=
    \|K_{\rm chart}^{\rm Rep}\mathcal D_\ast\|
    _{Y_{\rm chart}^{\rm Rep}},
\]
\[
    \mathcal R_{\rm chart}^{\rm Tax}(\mathcal D)
    :=
    \|K_{\rm chart}^{\rm Tax}\mathcal D_\ast\|
    _{Y_{\rm chart}^{\rm Tax}},
\]
and
\[
    \mathcal R_{\rm chart}^{\rm Tail}(\mathcal D)
    :=
    \|K_{\rm chart}^{\rm Tail}\mathcal D_\ast\|
    _{Y_{\rm chart}^{\rm Tail}}.
\]
The total chart residual is
\[
    \mathcal R_{\rm chart}(\mathcal D)
    :=
    \mathcal R_{\rm chart}^{O}(\mathcal D)
    +
    \mathcal R_{\rm chart}^{\rm Rep}(\mathcal D)
    +
    \mathcal R_{\rm chart}^{\rm Tax}(\mathcal D)
    +
    \mathcal R_{\rm chart}^{\rm Tail}(\mathcal D).
\]
\end{definition}

\begin{lemma}[Componentwise chart residual bounds]
\label{lem:componentwise-chart-residual-bounds}
Set
\[
    C_{\rm chart}^{O}
    :=
    \|K_{\rm chart}^{O}\|_{\mathfrak I_\Lambda^{\loc}
    \to Y_{\rm chart}^{O}},
    \qquad
    C_{\rm chart}^{\rm Rep}
    :=
    \|K_{\rm chart}^{\rm Rep}\|_{\mathfrak I_\Lambda^{\loc}
    \to Y_{\rm chart}^{\rm Rep}},
\]
\[
    C_{\rm chart}^{\rm Tax}
    :=
    \|K_{\rm chart}^{\rm Tax}\|_{\mathfrak I_\Lambda^{\loc}
    \to Y_{\rm chart}^{\rm Tax}},
    \qquad
    C_{\rm chart}^{\rm Tail}
    :=
    \|K_{\rm chart}^{\rm Tail}\|_{\mathfrak I_\Lambda^{\loc}
    \to Y_{\rm chart}^{\rm Tail}}.
\]
Then
\[
    \mathcal R_{\rm chart}^{O}(\mathcal D)
    \le
    C_{\rm chart}^{O}\|\mathcal D_\ast\|_{\loc,{\rm int}},
\]
\[
    \mathcal R_{\rm chart}^{\rm Rep}(\mathcal D)
    \le
    C_{\rm chart}^{\rm Rep}\|\mathcal D_\ast\|_{\loc,{\rm int}},
\]
\[
    \mathcal R_{\rm chart}^{\rm Tax}(\mathcal D)
    \le
    C_{\rm chart}^{\rm Tax}\|\mathcal D_\ast\|_{\loc,{\rm int}},
\]
and
\[
    \mathcal R_{\rm chart}^{\rm Tail}(\mathcal D)
    \le
    C_{\rm chart}^{\rm Tail}\|\mathcal D_\ast\|_{\loc,{\rm int}}.
\]
\end{lemma}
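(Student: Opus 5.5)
The four bounds follow directly from the definition of the operator norm. The chart residuals in \Cref{def:component-chart-residuals} are all evaluated on the single selected representative \(\mathcal D_\ast=\mathcal D-\zeta_\ast(\mathcal D)\) fixed in \Cref{ass:enhanced-tail-same-gauge-selector}. We therefore treat \(\mathcal D_\ast\) as a fixed element of \(\mathfrak I_\Lambda^{\loc}\), equipped with the intrinsic norm \(\|\cdot\|_{\loc,{\rm int}}\). We take no infimum over gauges at this stage, so there is no risk of mixing optimizing representatives across the four components.

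For each \(a\in\{O,{\rm Rep},{\rm Tax},{\rm Tail}\}\), the operator \(K_{\rm chart}^{a}\) is a bounded linear map from \((\mathfrak I_\Lambda^{\loc},\|\cdot\|_{\loc,{\rm int}})\) into the finite-dimensional normed space \(Y_{\rm chart}^{a}\), as stated in \Cref{def:finite-window-chart-residual-operators}. Hence its operator norm \(C_{\rm chart}^{a}\) is finite. By the defining property of the operator norm, for every \(x\in\mathfrak I_\Lambda^{\loc}\) we have
\[
    \|K_{\rm chart}^{a}x\|_{Y_{\rm chart}^{a}}
    \le
    C_{\rm chart}^{a}\|x\|_{\loc,{\rm int}}.
\]
Applying this with \(x=\mathcal D_\ast\) and recalling that \(\mathcal R_{\rm chart}^{a}(\mathcal D)=\|K_{\rm chart}^{a}\mathcal D_\ast\|_{Y_{\rm chart}^{a}}\) gives each of the four displayed inequalities.

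The only point needing care is that the operator norms are taken with respect to the intrinsic norm on the domain. This is the norm in which \(\|\mathcal D_\ast\|_{\loc,{\rm int}}\) is measured, and it is the norm implicit in the notation \(\|K\|_{\mathfrak I_\Lambda^{\loc}\to Y}\). Finiteness of the constants is part of the chart datum and is not something we need to derive. Summing the four bounds also gives \(\mathcal R_{\rm chart}(\mathcal D)\le(C_{\rm chart}^{O}+C_{\rm chart}^{\rm Rep}+C_{\rm chart}^{\rm Tax}+C_{\rm chart}^{\rm Tail})\|\mathcal D_\ast\|_{\loc,{\rm int}}\). Together with the selector bound in \Cref{ass:enhanced-tail-same-gauge-selector}, this gives the chart sub-budget in \Cref{ass:enhanced-tail-detector-residual-subbudgets}, but that step lies beyond the present lemma.
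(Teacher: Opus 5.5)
Your proof is correct and matches the paper's argument: each bound is the operator-norm estimate for \(K_{\rm chart}^{a}\) evaluated at the selected representative \(\mathcal D_\ast\). Your closing remark also matches the paper, which combines the summed bound with the selector estimate (\Cref{lem:enhanced-tail-selector-core-bound}) in \Cref{prop:chart-residual-subbudget-enhanced-tail}.
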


\begin{proof}
Each estimate is the operator-norm bound for the corresponding finite-window
chart residual operator, evaluated at \(\mathcal D_\ast\).
\end{proof}

\begin{lemma}[Enhanced-tail selector bound for the core norm]
\label{lem:enhanced-tail-selector-core-bound}
Under \Cref{ass:enhanced-tail-same-gauge-selector},
\[
    \|\mathcal D_\ast\|_{\loc,{\rm int}}
    \le
    \Dist_{\loc,{\rm int}}^{\sharp,{\rm tail}}
    (\mathcal D,\Gamma_\Lambda^{\rm int})
    +
    \delta_{\rm sel}^{\sharp}.
\]
\end{lemma}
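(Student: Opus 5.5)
The bound in \Cref{lem:enhanced-tail-selector-core-bound} follows directly from the selector inequality in \Cref{ass:enhanced-tail-same-gauge-selector} by discarding nonnegative terms; no new estimate is needed. I would start from the displayed selector inequality
\[
    \|\mathcal D_\ast\|_{\loc,{\rm int}}
    +
    \alpha_{\rm proj}\mathcal T_{\rm proj}(\mathcal D;\zeta_\ast)
    +
    \alpha_{\rm harm}\mathcal T_{\rm harm}(\mathcal D;\zeta_\ast)
    \le
    \Dist_{\loc,{\rm int}}^{\sharp,{\rm tail}}(\mathcal D,\Gamma_\Lambda^{\rm int})
    +
    \delta_{\rm sel}^{\sharp}.
\]

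The key step is to check that the two tail contributions on the left are nonnegative. By \Cref{def:intrinsic-clean-projection-tail-size}, \(\mathcal T_{\rm proj}(\mathcal D;\zeta_\ast)\) is the product of the positive factor \(1+\|P_{{\rm prs},N}^{\cl}\|\) and a norm. Likewise, by \Cref{def:intrinsic-harmonic-tail-size}, \(\mathcal T_{\rm harm}(\mathcal D;\zeta_\ast)\) is the product of an operator norm and a norm. Both are therefore nonnegative. Since \(\alpha_{\rm proj},\alpha_{\rm harm}>0\), both weighted tail terms are \(\ge 0\).

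Dropping these two terms from the left-hand side only decreases it, so \(\|\mathcal D_\ast\|_{\loc,{\rm int}}\) is bounded by the same right-hand side. That is exactly the claimed inequality.

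I expect no real obstacle. The only point to state explicitly is that the tail sizes are nonnegative, and this is immediate from their definitions as products of norms with nonnegative constants. In the exact-minimizer case \(\delta_{\rm sel}^\sharp=0\), and the bound reduces to \(\|\mathcal D_\ast\|_{\loc,{\rm int}}\le\Dist_{\loc,{\rm int}}^{\sharp,{\rm tail}}\).
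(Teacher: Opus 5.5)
Your proposal is correct and follows the paper's proof: both drop the two nonnegative weighted tail terms from the left-hand side of the selector inequality in \Cref{ass:enhanced-tail-same-gauge-selector}. Your explicit check that \(\mathcal T_{\rm proj}\) and \(\mathcal T_{\rm harm}\) are nonnegative spells out what the paper simply asserts.
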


\begin{proof}
The defining inequality in
\Cref{ass:enhanced-tail-same-gauge-selector} is a sum of three nonnegative
terms on the left-hand side.  Dropping the two tail terms gives the stated
bound.
\end{proof}

\begin{proposition}[Chart residual sub-budget]
\label{prop:chart-residual-subbudget-enhanced-tail}
Assume the enhanced-tail same-gauge selector and the chart operators above.
Set
\[
    C_{\rm chart}
    :=
    C_{\rm chart}^{O}
    +
    C_{\rm chart}^{\rm Rep}
    +
    C_{\rm chart}^{\rm Tax}
    +
    C_{\rm chart}^{\rm Tail}.
\]
Then
\[
    \mathcal R_{\rm chart}(\mathcal D)
    \le
    C_{\rm chart}
    \Dist_{\loc,{\rm int}}^{\sharp,{\rm tail}}
    (\mathcal D,\Gamma_\Lambda^{\rm int})
    +
    C_{\rm chart}\delta_{\rm sel}^{\sharp}.
\]
Equivalently, the chart residual satisfies
\[
    \mathcal R_{\rm chart}(\mathcal D)
    \le
    \eta_{\rm chart}
    \Dist_{\loc,{\rm int}}^{\sharp,{\rm tail}}
    (\mathcal D,\Gamma_\Lambda^{\rm int})
    +
    \Delta_{\rm chart},
\]
with
\[
    \eta_{\rm chart}=C_{\rm chart},
    \qquad
    \Delta_{\rm chart}=C_{\rm chart}\delta_{\rm sel}^{\sharp}.
\]
In the exact minimizer case, \(\delta_{\rm sel}^{\sharp}=0\), and hence
\(\Delta_{\rm chart}=0\).
\end{proposition}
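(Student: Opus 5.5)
The plan is to sum the four componentwise bounds of \Cref{lem:componentwise-chart-residual-bounds} and then replace the core norm \(\|\mathcal D_\ast\|_{\loc,{\rm int}}\) by the enhanced-tail distance using \Cref{lem:enhanced-tail-selector-core-bound}. No new estimate is needed; the only points to check are that all four chart residuals are evaluated on the same selected representative \(\mathcal D_\ast=\mathcal D-\zeta_\ast(\mathcal D)\), and that the selector defect enters additively with coefficient \(C_{\rm chart}\).

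First, by \Cref{def:component-chart-residuals}, \(\mathcal R_{\rm chart}\) is the sum of \(\mathcal R_{\rm chart}^{O}\), \(\mathcal R_{\rm chart}^{\rm Rep}\), \(\mathcal R_{\rm chart}^{\rm Tax}\) and \(\mathcal R_{\rm chart}^{\rm Tail}\). All four are computed on the common representative \(\mathcal D_\ast\) fixed by \Cref{ass:enhanced-tail-same-gauge-selector}. Applying the four operator-norm bounds of \Cref{lem:componentwise-chart-residual-bounds} and adding them gives
\[
    \mathcal R_{\rm chart}(\mathcal D)
    \le
    C_{\rm chart}\|\mathcal D_\ast\|_{\loc,{\rm int}}.
\]
Because a single representative is used throughout, no issue of separately optimized gauges arises, unlike the caution recorded for the pressure-source components.

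Second, \Cref{lem:enhanced-tail-selector-core-bound} gives \(\|\mathcal D_\ast\|_{\loc,{\rm int}}\le\Dist_{\loc,{\rm int}}^{\sharp,{\rm tail}}(\mathcal D,\Gamma_\Lambda^{\rm int})+\delta_{\rm sel}^{\sharp}\). Multiplying by \(C_{\rm chart}\ge0\) and inserting this into the previous display yields the stated bound with \(\eta_{\rm chart}=C_{\rm chart}\) and \(\Delta_{\rm chart}=C_{\rm chart}\delta_{\rm sel}^{\sharp}\). In the exact minimizer case \(\delta_{\rm sel}^{\sharp}=0\), so \(\Delta_{\rm chart}=0\).

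The only step needing care is the finiteness of the four operator norms. This holds because the chart operators are bounded by hypothesis in \Cref{def:finite-window-chart-residual-operators}, so \(C_{\rm chart}<\infty\). I would state this explicitly and otherwise treat the proof as a direct two-line combination of the two preceding lemmas.
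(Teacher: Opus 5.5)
Your proposal is correct and is the paper's proof: sum the four operator-norm bounds of \Cref{lem:componentwise-chart-residual-bounds} on the common representative \(\mathcal D_\ast\) to get \(\mathcal R_{\rm chart}(\mathcal D)\le C_{\rm chart}\|\mathcal D_\ast\|_{\loc,{\rm int}}\). Then apply the selector bound of \Cref{lem:enhanced-tail-selector-core-bound}.
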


\begin{proof}
Summing the four estimates in
\Cref{lem:componentwise-chart-residual-bounds} gives
\[
    \mathcal R_{\rm chart}(\mathcal D)
    \le
    C_{\rm chart}\|\mathcal D_\ast\|_{\loc,{\rm int}}.
\]
Apply \Cref{lem:enhanced-tail-selector-core-bound}.
\end{proof}

\begin{proposition}[Affine chart residual variant]
\label{prop:affine-chart-residual-variant}
Suppose that for each
\[
    a\in\{O,{\rm Rep},{\rm Tax},{\rm Tail}\}
\]
one has an affine finite-window estimate
\[
    \mathcal R_{\rm chart}^{a}(\mathcal D)
    \le
    C_{\rm chart}^{a}\|\mathcal D_\ast\|_{\loc,{\rm int}}
    +
    \Delta_{\rm chart}^{a}
\]
with \(C_{\rm chart}^{a},\Delta_{\rm chart}^{a}\ge0\).  Then
\[
    \mathcal R_{\rm chart}(\mathcal D)
    \le
    \eta_{\rm chart}
    \Dist_{\loc,{\rm int}}^{\sharp,{\rm tail}}
    (\mathcal D,\Gamma_\Lambda^{\rm int})
    +
    \Delta_{\rm chart},
\]
where
\[
    \eta_{\rm chart}
    =
    C_{\rm chart}^{O}
    +
    C_{\rm chart}^{\rm Rep}
    +
    C_{\rm chart}^{\rm Tax}
    +
    C_{\rm chart}^{\rm Tail},
\]
and
\[
    \Delta_{\rm chart}
    =
    \eta_{\rm chart}\delta_{\rm sel}^{\sharp}
    +
    \Delta_{\rm chart}^{O}
    +
    \Delta_{\rm chart}^{\rm Rep}
    +
    \Delta_{\rm chart}^{\rm Tax}
    +
    \Delta_{\rm chart}^{\rm Tail}.
\]
\end{proposition}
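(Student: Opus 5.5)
The plan is to mirror the proof of \Cref{prop:chart-residual-subbudget-enhanced-tail}, keeping the additive defects \(\Delta_{\rm chart}^{a}\) alongside the linear terms. The bound will be assembled in three steps.

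First, I sum the four assumed affine estimates over \(a\in\{O,{\rm Rep},{\rm Tax},{\rm Tail}\}\). By \Cref{def:component-chart-residuals}, \(\mathcal R_{\rm chart}\) is exactly the sum of the four component residuals, so
\[
    \mathcal R_{\rm chart}(\mathcal D)
    \le
    \eta_{\rm chart}\|\mathcal D_\ast\|_{\loc,{\rm int}}
    +
    \Delta_{\rm chart}^{O}+\Delta_{\rm chart}^{\rm Rep}
    +\Delta_{\rm chart}^{\rm Tax}+\Delta_{\rm chart}^{\rm Tail},
\]
where \(\eta_{\rm chart}\) is the sum of the four constants \(C_{\rm chart}^{a}\).

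Second, I apply \Cref{lem:enhanced-tail-selector-core-bound}, which holds under \Cref{ass:enhanced-tail-same-gauge-selector}. This replaces \(\|\mathcal D_\ast\|_{\loc,{\rm int}}\) by \(\Dist_{\loc,{\rm int}}^{\sharp,{\rm tail}}(\mathcal D,\Gamma_\Lambda^{\rm int})+\delta_{\rm sel}^{\sharp}\). The substitution is legitimate because \(\eta_{\rm chart}\ge0\), so multiplying the selector bound by \(\eta_{\rm chart}\) preserves the direction of the inequality.

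Third, I collect terms. The linear coefficient is \(\eta_{\rm chart}\). The additive constant is \(\eta_{\rm chart}\delta_{\rm sel}^{\sharp}\) plus the four defects \(\Delta_{\rm chart}^{a}\), which is exactly the stated \(\Delta_{\rm chart}\).

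There is no real obstacle in this argument. The only point needing care is the standing hypothesis: the statement implicitly uses the same-gauge selector of \Cref{ass:enhanced-tail-same-gauge-selector}, both to define \(\mathcal D_\ast\) and to invoke the core-norm bound. I would state this explicitly in the written proof, since the affine variant is read under the same selector convention as \Cref{prop:chart-residual-subbudget-enhanced-tail}.
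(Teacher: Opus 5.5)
Your proposal is correct and follows the paper's proof: you sum the four affine component bounds, then apply \Cref{lem:enhanced-tail-selector-core-bound}, using \(\eta_{\rm chart}\ge0\) to multiply the selector bound. Your remark that the statement tacitly relies on \Cref{ass:enhanced-tail-same-gauge-selector}, both to define \(\mathcal D_\ast\) and to invoke the core-norm bound, is accurate and worth making explicit.
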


\begin{proof}
Sum the four affine component estimates and then use
\Cref{lem:enhanced-tail-selector-core-bound}.
\end{proof}

\begin{corollary}[Chart contribution to the enhanced-tail residual ledger]
\label{cor:chart-contribution-enhanced-tail-ledger}
In the decomposed enhanced-tail residual budget from
\Cref{prop:enhanced-tail-residual-budget-from-subbudgets}, the chart
contribution may be taken to be
\[
    \eta_{\rm chart}
    =
    C_{\rm chart}^{O}
    +
    C_{\rm chart}^{\rm Rep}
    +
    C_{\rm chart}^{\rm Tax}
    +
    C_{\rm chart}^{\rm Tail}
\]
and
\[
    \Delta_{\rm chart}
    =
    \eta_{\rm chart}\delta_{\rm sel}^{\sharp}
    +
    \Delta_{\rm chart}^{O}
    +
    \Delta_{\rm chart}^{\rm Rep}
    +
    \Delta_{\rm chart}^{\rm Tax}
    +
    \Delta_{\rm chart}^{\rm Tail},
\]
with all \(\Delta_{\rm chart}^{a}=0\) in the purely linear operator-norm
case.  Consequently
\[
\begin{aligned}
    \eta_{\Lambda,{\rm dec}}^{\sharp,{\rm tail}}
    =
    &\eta_{\rm core}
    +
    \max\{\alpha_{\rm proj}^{-1},\alpha_{\rm harm}^{-1}\}
    +
    \eta_{\rm chart}\\
    &+
    \eta_{\rm loc}
    +
    \eta_{\rm rep}
    +
    \eta_{\rm gate},
\end{aligned}
\]
and
\[
    \Delta_{\Lambda,{\rm dec}}^{\sharp,{\rm tail}}
    =
    \Delta_{\rm core}
    +
    \Delta_{\rm chart}
    +
    \Delta_{\rm loc}
    +
    \Delta_{\rm rep}
    +
    \Delta_{\rm gate}.
\]
\end{corollary}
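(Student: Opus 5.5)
The plan is to read off the chart contribution from \Cref{prop:affine-chart-residual-variant}, and then substitute it into the decomposed budget of \Cref{prop:enhanced-tail-residual-budget-from-subbudgets}. No new estimate is needed; the work is bookkeeping of constants.

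First, I will check that the hypotheses of the affine variant hold. In the purely linear operator-norm case, \Cref{lem:componentwise-chart-residual-bounds} supplies, for each \(a\in\{O,{\rm Rep},{\rm Tax},{\rm Tail}\}\), the bound
\[
    \mathcal R_{\rm chart}^{a}(\mathcal D)\le C_{\rm chart}^{a}\|\mathcal D_\ast\|_{\loc,{\rm int}}.
\]
This is the affine estimate with \(\Delta_{\rm chart}^{a}=0\). In the general case, the affine component estimates are taken as given. Applying \Cref{prop:affine-chart-residual-variant}, which sums the four bounds and then invokes \Cref{lem:enhanced-tail-selector-core-bound}, gives the chart sub-budget
\[
    \mathcal R_{\rm chart}\le \eta_{\rm chart}\Dist_{\loc,{\rm int}}^{\sharp,{\rm tail}}+\Delta_{\rm chart},
\]
with exactly the displayed \(\eta_{\rm chart}\) and \(\Delta_{\rm chart}\). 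In the linear case this reduces to \Cref{prop:chart-residual-subbudget-enhanced-tail}, since then \(\Delta_{\rm chart}=C_{\rm chart}\delta_{\rm sel}^{\sharp}\).

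Second, I will note that this is precisely the \(a={\rm chart}\) entry of the sub-budget hypothesis in \Cref{ass:enhanced-tail-detector-residual-subbudgets}. With it, together with the remaining entries for \({\rm loc}\), \({\rm rep}\) and \({\rm gate}\) and with \Cref{ass:same-gauge-enhanced-tail-compatibility}, \Cref{prop:enhanced-tail-residual-budget-from-subbudgets} yields the stated formulas for \(\eta_{\Lambda,{\rm dec}}^{\sharp,{\rm tail}}\) and \(\Delta_{\Lambda,{\rm dec}}^{\sharp,{\rm tail}}\) by direct substitution. The tail contribution \(\max\{\alpha_{\rm proj}^{-1},\alpha_{\rm harm}^{-1}\}\) enters with no additive term, because of \Cref{lem:simultaneous-pressure-tail-observability}.

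The only step that needs care, and which I would state explicitly, is consistency of representatives. The chart residuals are evaluated at \(\mathcal D_\ast=\mathcal D-\zeta_\ast\), where \(\zeta_\ast\) is the selector of \Cref{ass:enhanced-tail-same-gauge-selector}. The core residual \(\mathcal R_{\rm core}\), however, uses the \(\zeta_\ast\) of \Cref{ass:same-gauge-enhanced-tail-compatibility}. I would take these to be the same selection, with \(\delta_{\rm sel}^{\sharp}=0\) when \(\zeta_\ast\) is an exact minimizer. Even if they differed, the argument would still go through, because each sub-budget bound is stated against the same quantity \(\Dist_{\loc,{\rm int}}^{\sharp,{\rm tail}}\) and the bounds are simply added.
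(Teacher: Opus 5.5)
Your proposal matches the paper's proof: the paper also takes the chart sub-budget from \Cref{prop:chart-residual-subbudget-enhanced-tail} (linear case) or \Cref{prop:affine-chart-residual-variant} (affine case) and inserts it as the chart entry of the decomposed ledger in \Cref{prop:enhanced-tail-residual-budget-from-subbudgets}. Your extra remark about the two selections \(\zeta_\ast\) is correct and harmless, since the residual upper bound only adds six estimates, each taken against the same quantity \(\Dist_{\loc,{\rm int}}^{\sharp,{\rm tail}}(\mathcal D,\Gamma_\Lambda^{\rm int})\).
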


\begin{proof}
This is exactly
\Cref{prop:enhanced-tail-residual-budget-from-subbudgets} with the chart
sub-budget supplied by
\Cref{prop:chart-residual-subbudget-enhanced-tail} or by the affine variant
\Cref{prop:affine-chart-residual-variant}.
\end{proof}

\begin{remark}[Status of the chart residual sub-budget]
This subsection proves only a finite-window chart residual sub-budget.  Once
the chart mismatch operators and same-gauge representative selection are
fixed, the chart residual is controlled by the enhanced-tail quotient
geometry.  The result does not prove local-to-clean quotient lifting, the
chart-visibility inequalities, scale-uniformity, smallness, or
Navier--Stokes regularity.  It also does not compare
\[
    \Dist_{\loc,{\rm int}}^{\sharp,{\rm tail}}
\]
with the original intrinsic distance
\[
    \Dist_{\loc,{\rm int}}.
\]
That comparison remains a separate approximation or geometry theorem.
\end{remark}

\subsection{Localization leakage sub-budget in the enhanced-tail geometry}
\label{subsec:localization-leakage-subbudget-enhanced-tail}

We next replace the placeholder localization leakage residual
\(\mathcal R_{\rm loc}\) in
\Cref{def:decomposed-enhanced-tail-residual-ledger}.  The convention in this
finite-window bookkeeping step is to separate energy, flux, pressure, and
momentum leakage.  These are localization-channel residuals, not new
Navier--Stokes estimates.

\begin{definition}[Finite-window localization leakage operators]
\label{def:finite-window-localization-leakage-operators}
Let
\[
    Y_{\loc}^{\rm en},\qquad
    Y_{\loc}^{\rm flux},\qquad
    Y_{\loc}^{\rm prs},\qquad
    Y_{\loc}^{\rm mom}
\]
be finite-dimensional normed spaces.  Fix bounded finite-window operators
\[
    K_{\loc}^{\rm en}:
    \mathfrak I_\Lambda^{\loc}\to Y_{\loc}^{\rm en},
    \qquad
    K_{\loc}^{\rm flux}:
    \mathfrak I_\Lambda^{\loc}\to Y_{\loc}^{\rm flux},
\]
\[
    K_{\loc}^{\rm prs}:
    \mathfrak I_\Lambda^{\loc}\to Y_{\loc}^{\rm prs},
    \qquad
    K_{\loc}^{\rm mom}:
    \mathfrak I_\Lambda^{\loc}\to Y_{\loc}^{\rm mom}.
\]
The operator \(K_{\loc}^{\rm en}\) records localization leakage in the
energy or trace ledger.  The operator \(K_{\loc}^{\rm flux}\) records leakage
from flux terms generated by a cutoff.  The operator \(K_{\loc}^{\rm prs}\)
records the pressure contribution to the localized energy or flux budget.
The operator \(K_{\loc}^{\rm mom}\) records the mismatch in localized
momentum residuals.  Boundedness of these operators is a finite-window
localization datum.
\end{definition}

\begin{definition}[Localization leakage residuals]
\label{def:localization-leakage-residuals}
For an intrinsic package \(\mathcal D\), define
\[
    \mathcal R_{\loc}^{\rm en}(\mathcal D)
    :=
    \|K_{\loc}^{\rm en}\mathcal D_\ast\|_{Y_{\loc}^{\rm en}},
\]
\[
    \mathcal R_{\loc}^{\rm flux}(\mathcal D)
    :=
    \|K_{\loc}^{\rm flux}\mathcal D_\ast\|_{Y_{\loc}^{\rm flux}},
\]
\[
    \mathcal R_{\loc}^{\rm prs}(\mathcal D)
    :=
    \|K_{\loc}^{\rm prs}\mathcal D_\ast\|_{Y_{\loc}^{\rm prs}},
\]
and
\[
    \mathcal R_{\loc}^{\rm mom}(\mathcal D)
    :=
    \|K_{\loc}^{\rm mom}\mathcal D_\ast\|_{Y_{\loc}^{\rm mom}}.
\]
The total localization leakage residual is
\[
    \mathcal R_{\rm loc}(\mathcal D)
    :=
    \mathcal R_{\loc}^{\rm en}(\mathcal D)
    +
    \mathcal R_{\loc}^{\rm flux}(\mathcal D)
    +
    \mathcal R_{\loc}^{\rm prs}(\mathcal D)
    +
    \mathcal R_{\loc}^{\rm mom}(\mathcal D).
\]
\end{definition}

\begin{lemma}[Componentwise localization leakage bounds]
\label{lem:componentwise-localization-leakage-bounds}
Set
\[
    C_{\loc}^{\rm en}
    :=
    \|K_{\loc}^{\rm en}\|_{\mathfrak I_\Lambda^{\loc}
    \to Y_{\loc}^{\rm en}},
    \qquad
    C_{\loc}^{\rm flux}
    :=
    \|K_{\loc}^{\rm flux}\|_{\mathfrak I_\Lambda^{\loc}
    \to Y_{\loc}^{\rm flux}},
\]
\[
    C_{\loc}^{\rm prs}
    :=
    \|K_{\loc}^{\rm prs}\|_{\mathfrak I_\Lambda^{\loc}
    \to Y_{\loc}^{\rm prs}},
    \qquad
    C_{\loc}^{\rm mom}
    :=
    \|K_{\loc}^{\rm mom}\|_{\mathfrak I_\Lambda^{\loc}
    \to Y_{\loc}^{\rm mom}}.
\]
Then
\[
    \mathcal R_{\loc}^{\rm en}(\mathcal D)
    \le
    C_{\loc}^{\rm en}\|\mathcal D_\ast\|_{\loc,{\rm int}},
\]
\[
    \mathcal R_{\loc}^{\rm flux}(\mathcal D)
    \le
    C_{\loc}^{\rm flux}\|\mathcal D_\ast\|_{\loc,{\rm int}},
\]
\[
    \mathcal R_{\loc}^{\rm prs}(\mathcal D)
    \le
    C_{\loc}^{\rm prs}\|\mathcal D_\ast\|_{\loc,{\rm int}},
\]
and
\[
    \mathcal R_{\loc}^{\rm mom}(\mathcal D)
    \le
    C_{\loc}^{\rm mom}\|\mathcal D_\ast\|_{\loc,{\rm int}}.
\]
\end{lemma}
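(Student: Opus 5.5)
The lemma is the localization analogue of \Cref{lem:componentwise-chart-residual-bounds}, and I would prove it the same way: each of the four estimates is the operator-norm inequality for one of the bounded operators of \Cref{def:finite-window-localization-leakage-operators}, evaluated at the selected same-gauge representative \(\mathcal D_\ast=\mathcal D-\zeta_\ast(\mathcal D)\) from \Cref{ass:enhanced-tail-same-gauge-selector}.

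The plan is as follows.

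First, I fix the norm on the source space. I read the operator norm \(\|K\|_{\mathfrak I_\Lambda^{\loc}\to Y}\) with respect to the intrinsic norm \(\|\cdot\|_{\loc,{\rm int}}\) of \Cref{def:intrinsic-localized-package-norm-candidate}. This is the same convention already used implicitly in \Cref{lem:componentwise-chart-residual-bounds}. Boundedness of the four operators is part of the localization datum, so each constant \(C_{\loc}^{a}\), for \(a\in\{{\rm en},{\rm flux},{\rm prs},{\rm mom}\}\), is finite.

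Second, I use the defining property of the operator norm, namely
\[
    \|K x\|_{Y}\le\|K\|_{\mathfrak I_\Lambda^{\loc}\to Y}\,\|x\|_{\loc,{\rm int}}
\]
for every \(x\in\mathfrak I_\Lambda^{\loc}\). This inequality holds as a supremum bound, whether or not the supremum is attained. I apply it with \(x=\mathcal D_\ast\), which lies in \(\mathfrak I_\Lambda^{\loc}\) because \(\Gamma_\Lambda^{\rm int}\) is a linear subspace of that space.

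Third, I compare with \Cref{def:localization-leakage-residuals}. The left-hand sides of the four resulting inequalities are exactly \(\mathcal R_{\loc}^{\rm en}\), \(\mathcal R_{\loc}^{\rm flux}\), \(\mathcal R_{\loc}^{\rm prs}\) and \(\mathcal R_{\loc}^{\rm mom}\), which gives the four claimed bounds.

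I see no genuine obstacle. The only point needing care is bookkeeping: the residuals are defined on the same representative \(\mathcal D_\ast\) that appears on the right-hand side. Because of this, no infimum over gauges is taken separately for each component, and the common-gauge issue stressed earlier in the paper does not arise.
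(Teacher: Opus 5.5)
Your proposal is correct and matches the paper's proof. Each bound is simply the operator-norm inequality for the bounded leakage operator $K_{\loc}^{a}$, taken with respect to $\|\cdot\|_{\loc,{\rm int}}$ and evaluated at the selected representative $\mathcal D_\ast$; your remarks that $\mathcal D_\ast\in\mathfrak I_\Lambda^{\loc}$ and that no per-component gauge infimum is involved are correct bookkeeping that the paper leaves implicit.
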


\begin{proof}
Each inequality is the boundedness estimate for the corresponding
finite-window localization leakage operator, evaluated on the selected
representative \(\mathcal D_\ast\).
\end{proof}

\begin{proposition}[Localization leakage sub-budget]
\label{prop:localization-leakage-subbudget-enhanced-tail}
Assume the enhanced-tail same-gauge selector from
\Cref{ass:enhanced-tail-same-gauge-selector} and the finite-window
localization leakage operators above.  Set
\[
    C_{\loc}
    :=
    C_{\loc}^{\rm en}
    +
    C_{\loc}^{\rm flux}
    +
    C_{\loc}^{\rm prs}
    +
    C_{\loc}^{\rm mom}.
\]
Then
\[
    \mathcal R_{\rm loc}(\mathcal D)
    \le
    C_{\loc}
    \Dist_{\loc,{\rm int}}^{\sharp,{\rm tail}}
    (\mathcal D,\Gamma_\Lambda^{\rm int})
    +
    C_{\loc}\delta_{\rm sel}^{\sharp}.
\]
Equivalently, the localization leakage residual satisfies
\[
    \mathcal R_{\rm loc}(\mathcal D)
    \le
    \eta_{\loc}
    \Dist_{\loc,{\rm int}}^{\sharp,{\rm tail}}
    (\mathcal D,\Gamma_\Lambda^{\rm int})
    +
    \Delta_{\loc},
\]
with
\[
    \eta_{\loc}=C_{\loc},
    \qquad
    \Delta_{\loc}=C_{\loc}\delta_{\rm sel}^{\sharp}.
\]
If the selector is an exact enhanced-tail minimizer, then
\(\delta_{\rm sel}^{\sharp}=0\) and \(\Delta_{\loc}=0\).
\end{proposition}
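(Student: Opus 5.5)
The argument will follow the chart residual sub-budget of \Cref{prop:chart-residual-subbudget-enhanced-tail} line for line, with the chart operators replaced by the four localization leakage operators of \Cref{def:finite-window-localization-leakage-operators}.

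The first step applies \Cref{lem:componentwise-localization-leakage-bounds} to each of the four channels (energy, flux, pressure, momentum). All four are evaluated on the single same-gauge representative \(\mathcal D_\ast=\mathcal D-\zeta_\ast(\mathcal D)\) that is fixed by \Cref{ass:enhanced-tail-same-gauge-selector}. Adding the four operator-norm bounds and using the definition of \(\mathcal R_{\rm loc}\) in \Cref{def:localization-leakage-residuals} gives
\[
    \mathcal R_{\rm loc}(\mathcal D)
    \le
    C_{\loc}\|\mathcal D_\ast\|_{\loc,{\rm int}},
\]
where \(C_{\loc}\) is the sum of the four operator norms. Each of these norms is finite because the operators are bounded by assumption.

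The second step replaces the core norm of \(\mathcal D_\ast\) by the enhanced-tail distance. \Cref{lem:enhanced-tail-selector-core-bound} gives \(\|\mathcal D_\ast\|_{\loc,{\rm int}}\le\Dist_{\loc,{\rm int}}^{\sharp,{\rm tail}}(\mathcal D,\Gamma_\Lambda^{\rm int})+\delta_{\rm sel}^{\sharp}\). That lemma follows by discarding the two nonnegative tail terms in the selector inequality. Multiplying by \(C_{\loc}\ge0\) produces the displayed bound with \(\eta_{\loc}=C_{\loc}\) and \(\Delta_{\loc}=C_{\loc}\delta_{\rm sel}^{\sharp}\). The reformulation in the \(\eta_{\loc},\Delta_{\loc}\) notation only renames these constants. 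In the exact-minimizer case we set \(\delta_{\rm sel}^{\sharp}=0\), and then \(\Delta_{\loc}=0\).

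No step is a genuine obstacle. The one point to check is that every component residual is evaluated on the same \(\mathcal D_\ast\) that appears in the selector inequality, so that a single application of \Cref{lem:enhanced-tail-selector-core-bound} controls all four components simultaneously. \Cref{def:localization-leakage-residuals} guarantees this, since every residual there is defined through \(\mathcal D_\ast\) and not through a separate infimum for each channel.
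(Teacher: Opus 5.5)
Your proposal is correct and is essentially the paper's own proof. The paper likewise sums the four operator-norm bounds of \Cref{lem:componentwise-localization-leakage-bounds} on the common representative $\mathcal D_\ast$ to get $\mathcal R_{\rm loc}(\mathcal D)\le C_{\loc}\|\mathcal D_\ast\|_{\loc,{\rm int}}$, and then applies \Cref{lem:enhanced-tail-selector-core-bound}.
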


\begin{proof}
Summing the componentwise estimates in
\Cref{lem:componentwise-localization-leakage-bounds} gives
\[
    \mathcal R_{\rm loc}(\mathcal D)
    \le
    C_{\loc}\|\mathcal D_\ast\|_{\loc,{\rm int}}.
\]
The selector bound in
\Cref{lem:enhanced-tail-selector-core-bound} gives the stated estimate.
\end{proof}

\begin{proposition}[Affine localization leakage variant]
\label{prop:affine-localization-leakage-variant}
Suppose that each localization leakage component satisfies an affine
finite-window estimate
\[
    \mathcal R_{\loc}^{a}(\mathcal D)
    \le
    C_{\loc}^{a}\|\mathcal D_\ast\|_{\loc,{\rm int}}
    +
    \Delta_{\loc}^{a},
    \qquad
    a\in\{{\rm en},{\rm flux},{\rm prs},{\rm mom}\}.
\]
Then
\[
    \mathcal R_{\rm loc}(\mathcal D)
    \le
    \eta_{\loc}
    \Dist_{\loc,{\rm int}}^{\sharp,{\rm tail}}
    (\mathcal D,\Gamma_\Lambda^{\rm int})
    +
    \Delta_{\loc},
\]
where
\[
    \eta_{\loc}
    =
    C_{\loc}^{\rm en}
    +
    C_{\loc}^{\rm flux}
    +
    C_{\loc}^{\rm prs}
    +
    C_{\loc}^{\rm mom},
\]
and
\[
    \Delta_{\loc}
    =
    \eta_{\loc}\delta_{\rm sel}^{\sharp}
    +
    \Delta_{\loc}^{\rm en}
    +
    \Delta_{\loc}^{\rm flux}
    +
    \Delta_{\loc}^{\rm prs}
    +
    \Delta_{\loc}^{\rm mom}.
\]
\end{proposition}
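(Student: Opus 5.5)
The argument mirrors the affine chart variant in \Cref{prop:affine-chart-residual-variant}, with the chart components replaced by the four localization leakage components. I first sum the four affine estimates over \(a\in\{{\rm en},{\rm flux},{\rm prs},{\rm mom}\}\). By \Cref{def:localization-leakage-residuals}, \(\mathcal R_{\rm loc}\) is exactly this sum, which gives
\[
    \mathcal R_{\rm loc}(\mathcal D)
    \le
    \eta_{\loc}\|\mathcal D_\ast\|_{\loc,{\rm int}}
    +
    \Delta_{\loc}^{\rm en}+\Delta_{\loc}^{\rm flux}
    +\Delta_{\loc}^{\rm prs}+\Delta_{\loc}^{\rm mom},
\]
with \(\eta_{\loc}\) the sum of the four coefficients \(C_{\loc}^{a}\). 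Everything here is evaluated on the single selected representative \(\mathcal D_\ast=\mathcal D-\zeta_\ast(\mathcal D)\). Because all four estimates use the same representative, no issue of separate optimizing gauges arises.

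Next I replace the core norm of the selected representative by the enhanced-tail distance using \Cref{lem:enhanced-tail-selector-core-bound}. That lemma is valid under \Cref{ass:enhanced-tail-same-gauge-selector}, which I take as the standing selector hypothesis, since \(\mathcal D_\ast\) is defined through it. It yields
\(\|\mathcal D_\ast\|_{\loc,{\rm int}}\le\Dist_{\loc,{\rm int}}^{\sharp,{\rm tail}}(\mathcal D,\Gamma_\Lambda^{\rm int})+\delta_{\rm sel}^{\sharp}\).
Multiplying by \(\eta_{\loc}\ge0\) preserves the inequality. Collecting the additive terms then gives \(\Delta_{\loc}=\eta_{\loc}\delta_{\rm sel}^{\sharp}+\sum_a\Delta_{\loc}^a\), which is the stated constant.

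There is no real obstacle in this proof. The only point needing care is to note explicitly that the affine hypotheses are imposed on the same representative \(\mathcal D_\ast\) produced by the selector, so that the selector bound applies. It is also worth noting that the purely linear case \(\Delta_{\loc}^a=0\) recovers \Cref{prop:localization-leakage-subbudget-enhanced-tail}.
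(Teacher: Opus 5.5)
Your proposal is correct and matches the paper's proof: sum the four affine component estimates on the common representative $\mathcal D_\ast$, then apply the selector bound of \Cref{lem:enhanced-tail-selector-core-bound} under \Cref{ass:enhanced-tail-same-gauge-selector}. Your step that multiplies the selector bound by $\eta_{\loc}\ge0$ relies on the $C_{\loc}^{a}$ being nonnegative, as the chart variant states explicitly; the paper's one-line proof uses the same assumption without stating it.
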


\begin{proof}
Sum the four affine estimates and then use
\Cref{lem:enhanced-tail-selector-core-bound}.
\end{proof}

\begin{corollary}[Localization contribution to the enhanced-tail residual ledger]
\label{cor:localization-contribution-enhanced-tail-ledger}
In the decomposed enhanced-tail residual budget from
\Cref{prop:enhanced-tail-residual-budget-from-subbudgets}, the localization
contribution may be taken to be
\[
    \eta_{\loc}
    =
    C_{\loc}^{\rm en}
    +
    C_{\loc}^{\rm flux}
    +
    C_{\loc}^{\rm prs}
    +
    C_{\loc}^{\rm mom}
\]
and
\[
    \Delta_{\loc}
    =
    \eta_{\loc}\delta_{\rm sel}^{\sharp}
    +
    \Delta_{\loc}^{\rm en}
    +
    \Delta_{\loc}^{\rm flux}
    +
    \Delta_{\loc}^{\rm prs}
    +
    \Delta_{\loc}^{\rm mom},
\]
with all \(\Delta_{\loc}^{a}=0\) in the purely linear operator-norm case.
Thus the decomposed residual-budget constants remain
\[
\begin{aligned}
    \eta_{\Lambda,{\rm dec}}^{\sharp,{\rm tail}}
    =
    &\eta_{\rm core}
    +
    \max\{\alpha_{\rm proj}^{-1},\alpha_{\rm harm}^{-1}\}
    +
    \eta_{\rm chart}\\
    &+
    \eta_{\loc}
    +
    \eta_{\rm rep}
    +
    \eta_{\rm gate},
\end{aligned}
\]
and
\[
    \Delta_{\Lambda,{\rm dec}}^{\sharp,{\rm tail}}
    =
    \Delta_{\rm core}
    +
    \Delta_{\rm chart}
    +
    \Delta_{\loc}
    +
    \Delta_{\rm rep}
    +
    \Delta_{\rm gate}.
\]
\end{corollary}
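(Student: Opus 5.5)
The statement is a bookkeeping corollary.  The plan is to show that the localization sub-budget of \Cref{ass:enhanced-tail-detector-residual-subbudgets}, namely
\[
    \mathcal R_{\rm loc}(\mathcal D)
    \le
    \eta_{\loc}
    \Dist_{\loc,{\rm int}}^{\sharp,{\rm tail}}(\mathcal D,\Gamma_\Lambda^{\rm int})
    +\Delta_{\loc},
\]
holds with the displayed constants.  Once it does, \Cref{prop:enhanced-tail-residual-budget-from-subbudgets} applies verbatim with these values inserted.

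First, I would treat the general affine case.  Assume each component satisfies
\[
    \mathcal R_{\loc}^{a}(\mathcal D)
    \le
    C_{\loc}^{a}\|\mathcal D_\ast\|_{\loc,{\rm int}}+\Delta_{\loc}^{a},
    \qquad
    a\in\{{\rm en},{\rm flux},{\rm prs},{\rm mom}\}.
\]
Then \Cref{prop:affine-localization-leakage-variant} gives the sub-budget directly, with $\eta_{\loc}=\sum_a C_{\loc}^a$ and $\Delta_{\loc}=\eta_{\loc}\delta_{\rm sel}^\sharp+\sum_a\Delta_{\loc}^a$.  This rests on \Cref{lem:enhanced-tail-selector-core-bound}, and so on the selector of \Cref{ass:enhanced-tail-same-gauge-selector}.

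Second, I would handle the purely linear case.  Here the operator-norm bounds of \Cref{lem:componentwise-localization-leakage-bounds} are exactly the affine bounds with $\Delta_{\loc}^a=0$.  \Cref{prop:localization-leakage-subbudget-enhanced-tail} then gives $\Delta_{\loc}=C_{\loc}\delta_{\rm sel}^\sharp$, which agrees with the general formula.

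Third, I would substitute this sub-budget into \Cref{prop:enhanced-tail-residual-budget-from-subbudgets}.  Its other inputs are left unchanged: the core bound from \Cref{ass:same-gauge-enhanced-tail-compatibility}, the tail bound from \Cref{lem:simultaneous-pressure-tail-observability}, the chart contribution from \Cref{cor:chart-contribution-enhanced-tail-ledger}, and the reproduction and gate sub-budgets.  Summing the six estimates yields the stated $\eta_{\Lambda,{\rm dec}}^{\sharp,{\rm tail}}$ and $\Delta_{\Lambda,{\rm dec}}^{\sharp,{\rm tail}}$.

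The only point needing care is consistency of the representative.  The same selected $\zeta_\ast$ (equivalently $\mathcal D_\ast$) must be used in the leakage residuals of \Cref{def:localization-leakage-residuals} and in the selector inequality.  That inequality is what converts $\|\mathcal D_\ast\|_{\loc,{\rm int}}$ into the enhanced-tail distance.  I would check that the selector in \Cref{ass:enhanced-tail-same-gauge-selector} is the one referred to in \Cref{def:decomposed-enhanced-tail-residual-ledger}.  Beyond that, the argument is addition of nonnegative terms.
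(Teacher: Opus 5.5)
Your proposal is correct and matches the paper's proof, which simply inserts either the linear sub-budget of \Cref{prop:localization-leakage-subbudget-enhanced-tail} or the affine variant of \Cref{prop:affine-localization-leakage-variant} into \Cref{prop:enhanced-tail-residual-budget-from-subbudgets}; your three steps spell out exactly this. Your check that the same selected representative $\zeta_\ast$ is used in the leakage residuals and in the selector inequality is a sensible consistency point that the paper leaves implicit.
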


\begin{proof}
Use
\Cref{prop:localization-leakage-subbudget-enhanced-tail} or
\Cref{prop:affine-localization-leakage-variant} as the localization
sub-budget in
\Cref{prop:enhanced-tail-residual-budget-from-subbudgets}.
\end{proof}

\begin{remark}[Status of the localization leakage sub-budget]
This subsection proves only a finite-window localization leakage sub-budget.
It says that, once the localization leakage operators and the same-gauge
representative selection are fixed, the localization residual is controlled
by the enhanced-tail quotient geometry.  It does not prove a local energy
inequality estimate, a pressure localization estimate, a momentum residual
estimate, smallness, scale-uniform localization control, or
Navier--Stokes regularity.  It also does not compare
\[
    \Dist_{\loc,{\rm int}}^{\sharp,{\rm tail}}
\]
with
\[
    \Dist_{\loc,{\rm int}}.
\]
\end{remark}

\subsection{Reproduction drift sub-budget in the enhanced-tail geometry}
\label{subsec:reproduction-drift-subbudget-enhanced-tail}

We next replace the placeholder reproduction residual
\(\mathcal R_{\rm rep}\) in
\Cref{def:decomposed-enhanced-tail-residual-ledger}.  This is a
finite-window bookkeeping module.  It records the drift between adjacent
scale packages after a reproduction model has been fixed; it does not prove
that Navier--Stokes dynamically generates the reproduction maps, and it does
not prove scale-uniform reproduction.

\begin{definition}[Finite-window reproduction drift operators]
\label{def:finite-window-reproduction-drift-operators}
Let
\[
    \Lambda_{\rm adj}
    :=
    \{k\in\Lambda:\ k+1\in\Lambda\}
\]
be the adjacent-scale index set.  For each
\(k\in\Lambda_{\rm adj}\), let \(Y_k^{\rm rep}\) be a
finite-dimensional normed space and fix a bounded finite-window operator
\[
    K_k^{\rm rep}:
    \mathfrak I_\Lambda^{\loc}\to Y_k^{\rm rep}.
\]
The quantity \(K_k^{\rm rep}\mathcal D_\ast\) records the model-level drift
between the selected intrinsic package at scale \(k+1\) and the package
predicted from scale \(k\) by the chosen adjacent-scale reproduction map.
Schematically, it represents a coordinate-extracted or projected term of the
form
\[
    \mathcal D_{\ast,k+1}
    -
    \mathcal R_k^{\rm int}\mathcal D_{\ast,k},
\]
possibly after clean/local charting.  The map
\(\mathcal R_k^{\rm int}\) is fixed finite-window reproduction data here; no
Navier--Stokes generation of this map is asserted.
\end{definition}

\begin{definition}[Reproduction drift residuals]
\label{def:reproduction-drift-residuals}
For an intrinsic package \(\mathcal D\), define the component reproduction
drift residuals by
\[
    \mathcal R_k^{\rm rep}(\mathcal D)
    :=
    \|K_k^{\rm rep}\mathcal D_\ast\|_{Y_k^{\rm rep}},
    \qquad
    k\in\Lambda_{\rm adj}.
\]
The total reproduction drift residual is
\[
    \mathcal R_{\rm rep}(\mathcal D)
    :=
    \sum_{k\in\Lambda_{\rm adj}}
    \mathcal R_k^{\rm rep}(\mathcal D).
\]
\end{definition}

\begin{lemma}[Componentwise reproduction drift bounds]
\label{lem:componentwise-reproduction-drift-bounds}
For each \(k\in\Lambda_{\rm adj}\), set
\[
    C_k^{\rm rep}
    :=
    \|K_k^{\rm rep}\|_{\mathfrak I_\Lambda^{\loc}\to Y_k^{\rm rep}},
    \qquad
    C_{\rm rep}
    :=
    \sum_{k\in\Lambda_{\rm adj}}C_k^{\rm rep}.
\]
Then
\[
    \mathcal R_k^{\rm rep}(\mathcal D)
    \le
    C_k^{\rm rep}\|\mathcal D_\ast\|_{\loc,{\rm int}},
    \qquad
    k\in\Lambda_{\rm adj},
\]
and hence
\[
    \mathcal R_{\rm rep}(\mathcal D)
    \le
    C_{\rm rep}\|\mathcal D_\ast\|_{\loc,{\rm int}}.
\]
\end{lemma}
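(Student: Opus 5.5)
The plan mirrors the proofs of \Cref{lem:componentwise-chart-residual-bounds} and \Cref{lem:componentwise-localization-leakage-bounds}. First fix \(k\in\Lambda_{\rm adj}\). By \Cref{def:finite-window-reproduction-drift-operators}, \(K_k^{\rm rep}\) is a bounded linear operator from the intrinsic package space, normed by \(\|\cdot\|_{\loc,{\rm int}}\), into \(Y_k^{\rm rep}\). Hence its operator norm \(C_k^{\rm rep}\) is finite, and by the definition of the operator norm
\[
    \|K_k^{\rm rep}x\|_{Y_k^{\rm rep}}\le C_k^{\rm rep}\|x\|_{\loc,{\rm int}}
\]
for every intrinsic package \(x\). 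Apply this with \(x=\mathcal D_\ast=\mathcal D-\zeta_\ast(\mathcal D)\), the selected same-gauge representative from \Cref{ass:enhanced-tail-same-gauge-selector}. By \Cref{def:reproduction-drift-residuals}, this is exactly the componentwise bound \(\mathcal R_k^{\rm rep}(\mathcal D)\le C_k^{\rm rep}\|\mathcal D_\ast\|_{\loc,{\rm int}}\).

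Next, sum these inequalities over the finite index set \(\Lambda_{\rm adj}\). The left-hand side becomes \(\mathcal R_{\rm rep}(\mathcal D)\) by definition. The common factor \(\|\mathcal D_\ast\|_{\loc,{\rm int}}\) factors out of the right-hand side, leaving \(\sum_k C_k^{\rm rep}=C_{\rm rep}\). This constant is finite because \(\Lambda_{\rm adj}\subset\Lambda\) is finite and each \(C_k^{\rm rep}\) is finite.

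No step is a genuine obstacle. The only point to check is that \(\mathcal D_\ast\) lies in the domain \(\mathfrak I_\Lambda^{\loc}\) of \(K_k^{\rm rep}\). This holds because \(\Gamma_\Lambda^{\rm int}\) is a linear subspace of that space, so subtracting a gauge element keeps us in the same space. The bound is deliberately stated in terms of \(\|\mathcal D_\ast\|_{\loc,{\rm int}}\) rather than a quotient distance. Passing to \(\Dist_{\loc,{\rm int}}^{\sharp,{\rm tail}}\) will come afterwards via \Cref{lem:enhanced-tail-selector-core-bound}, exactly as in the chart and localization sub-budgets.
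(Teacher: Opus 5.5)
Your proposal is correct and matches the paper's proof: apply the operator-norm bound for \(K_k^{\rm rep}\) at the selected representative \(\mathcal D_\ast\), then sum over the finite index set \(\Lambda_{\rm adj}\). Your extra checks, that \(\mathcal D_\ast\) lies in the domain and that \(C_{\rm rep}\) is finite, are harmless and consistent with the paper.
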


\begin{proof}
The component estimate is the operator-norm bound for
\(K_k^{\rm rep}\), evaluated at the selected same-gauge representative
\(\mathcal D_\ast\).  Summing over \(k\in\Lambda_{\rm adj}\) gives the
second estimate.
\end{proof}

\begin{proposition}[Reproduction drift sub-budget]
\label{prop:reproduction-drift-subbudget-enhanced-tail}
Assume the enhanced-tail same-gauge selector from
\Cref{ass:enhanced-tail-same-gauge-selector} and the finite-window
reproduction drift operators above.  Then
\[
    \mathcal R_{\rm rep}(\mathcal D)
    \le
    C_{\rm rep}
    \Dist_{\loc,{\rm int}}^{\sharp,{\rm tail}}
    (\mathcal D,\Gamma_\Lambda^{\rm int})
    +
    C_{\rm rep}\delta_{\rm sel}^{\sharp}.
\]
Equivalently, the reproduction drift residual satisfies
\[
    \mathcal R_{\rm rep}(\mathcal D)
    \le
    \eta_{\rm rep}
    \Dist_{\loc,{\rm int}}^{\sharp,{\rm tail}}
    (\mathcal D,\Gamma_\Lambda^{\rm int})
    +
    \Delta_{\rm rep},
\]
with
\[
    \eta_{\rm rep}=C_{\rm rep},
    \qquad
    \Delta_{\rm rep}=C_{\rm rep}\delta_{\rm sel}^{\sharp}.
\]
If the selector is an exact enhanced-tail minimizer, then
\(\delta_{\rm sel}^{\sharp}=0\) and \(\Delta_{\rm rep}=0\).
\end{proposition}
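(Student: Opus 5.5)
The argument follows the same two-step pattern as \Cref{prop:chart-residual-subbudget-enhanced-tail} and \Cref{prop:localization-leakage-subbudget-enhanced-tail}, with the adjacent-scale index set \(\Lambda_{\rm adj}\) playing the role of the four component channels.

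First, invoke \Cref{lem:componentwise-reproduction-drift-bounds}. It gives the operator-norm bound
\[
    \mathcal R_{\rm rep}(\mathcal D)
    \le
    C_{\rm rep}\|\mathcal D_\ast\|_{\loc,{\rm int}},
\]
where \(C_{\rm rep}=\sum_{k\in\Lambda_{\rm adj}}C_k^{\rm rep}\). This constant is finite because \(\Lambda_{\rm adj}\) is a finite set and each \(K_k^{\rm rep}\) is a bounded finite-window operator. The residual is evaluated on the same selected representative \(\mathcal D_\ast=\mathcal D-\zeta_\ast(\mathcal D)\) that is fixed in \Cref{ass:enhanced-tail-same-gauge-selector}. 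This matters because the selector bound applies only to that representative, so no separate infimum over gauges is taken.

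Second, apply \Cref{lem:enhanced-tail-selector-core-bound}. Under \Cref{ass:enhanced-tail-same-gauge-selector} it gives
\[
    \|\mathcal D_\ast\|_{\loc,{\rm int}}
    \le
    \Dist_{\loc,{\rm int}}^{\sharp,{\rm tail}}(\mathcal D,\Gamma_\Lambda^{\rm int})
    +
    \delta_{\rm sel}^{\sharp}.
\]
This follows by dropping the two nonnegative tail terms from the selector inequality. Multiplying by \(C_{\rm rep}\ge0\) and combining with the first step yields the displayed estimate with \(\eta_{\rm rep}=C_{\rm rep}\) and \(\Delta_{\rm rep}=C_{\rm rep}\delta_{\rm sel}^{\sharp}\). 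In the exact minimizer case \(\delta_{\rm sel}^{\sharp}=0\), so \(\Delta_{\rm rep}=0\).

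There is no real obstacle; the argument is bookkeeping. The only point needing care is the one already noted: the drift operators must be evaluated at the selector's representative \(\mathcal D_\ast\). The selector inequality controls only that representative, not an arbitrary member of the gauge class.
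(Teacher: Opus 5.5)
Your proposal is correct and matches the paper's proof. The paper also applies \Cref{lem:componentwise-reproduction-drift-bounds} to get $\mathcal R_{\rm rep}(\mathcal D)\le C_{\rm rep}\|\mathcal D_\ast\|_{\loc,{\rm int}}$ on the selected representative, and then concludes with the selector bound of \Cref{lem:enhanced-tail-selector-core-bound}.
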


\begin{proof}
By \Cref{lem:componentwise-reproduction-drift-bounds},
\[
    \mathcal R_{\rm rep}(\mathcal D)
    \le
    C_{\rm rep}\|\mathcal D_\ast\|_{\loc,{\rm int}}.
\]
The enhanced-tail selector bound in
\Cref{lem:enhanced-tail-selector-core-bound} gives the displayed
sub-budget.
\end{proof}

\begin{proposition}[Affine reproduction drift variant]
\label{prop:affine-reproduction-drift-variant}
Suppose that each adjacent-scale reproduction component satisfies an affine
finite-window estimate
\[
    \mathcal R_k^{\rm rep}(\mathcal D)
    \le
    C_k^{\rm rep}\|\mathcal D_\ast\|_{\loc,{\rm int}}
    +
    \Delta_k^{\rm rep},
    \qquad
    k\in\Lambda_{\rm adj}.
\]
Then
\[
    \mathcal R_{\rm rep}(\mathcal D)
    \le
    \eta_{\rm rep}
    \Dist_{\loc,{\rm int}}^{\sharp,{\rm tail}}
    (\mathcal D,\Gamma_\Lambda^{\rm int})
    +
    \Delta_{\rm rep},
\]
where
\[
    \eta_{\rm rep}
    =
    \sum_{k\in\Lambda_{\rm adj}}C_k^{\rm rep},
\]
and
\[
    \Delta_{\rm rep}
    =
    \eta_{\rm rep}\delta_{\rm sel}^{\sharp}
    +
    \sum_{k\in\Lambda_{\rm adj}}\Delta_k^{\rm rep}.
\]
\end{proposition}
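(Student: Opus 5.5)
The plan is to follow the pattern of \Cref{prop:affine-chart-residual-variant} and \Cref{prop:affine-localization-leakage-variant}: sum the componentwise affine estimates over the finite adjacent-scale index set, then replace the core norm by the enhanced-tail distance using the selector bound.

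First, by \Cref{def:reproduction-drift-residuals}, $\mathcal R_{\rm rep}(\mathcal D)$ is the finite sum of the $\mathcal R_k^{\rm rep}(\mathcal D)$ over $k\in\Lambda_{\rm adj}$. Adding the hypothesized affine bounds over this finite set gives
\[
    \mathcal R_{\rm rep}(\mathcal D)
    \le
    \Bigl(\sum_{k\in\Lambda_{\rm adj}}C_k^{\rm rep}\Bigr)
    \|\mathcal D_\ast\|_{\loc,{\rm int}}
    +
    \sum_{k\in\Lambda_{\rm adj}}\Delta_k^{\rm rep}
    =
    \eta_{\rm rep}\|\mathcal D_\ast\|_{\loc,{\rm int}}
    +
    \sum_{k\in\Lambda_{\rm adj}}\Delta_k^{\rm rep}.
\]
Finiteness of $\Lambda$ ensures that $\eta_{\rm rep}<\infty$.

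Second, I will invoke \Cref{lem:enhanced-tail-selector-core-bound}, which holds under \Cref{ass:enhanced-tail-same-gauge-selector}. This assumption is implicitly in force, since $\mathcal D_\ast$ is defined through the selector. The lemma gives $\|\mathcal D_\ast\|_{\loc,{\rm int}}\le\Dist_{\loc,{\rm int}}^{\sharp,{\rm tail}}(\mathcal D,\Gamma_\Lambda^{\rm int})+\delta_{\rm sel}^{\sharp}$. Multiplying by $\eta_{\rm rep}\ge0$ preserves the inequality. Substituting then yields the displayed estimate with $\Delta_{\rm rep}=\eta_{\rm rep}\delta_{\rm sel}^{\sharp}+\sum_k\Delta_k^{\rm rep}$.

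No step is a genuine obstacle. The only points to check are that the coefficients $C_k^{\rm rep}$ are nonnegative, so the direction of the inequality is preserved, and that the same selected representative $\mathcal D_\ast$ appears both in the hypotheses and in the selector lemma.
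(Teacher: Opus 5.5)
Your proposal is correct and follows the paper's own proof: sum the affine component bounds over the finite set $\Lambda_{\rm adj}$, then apply \Cref{lem:enhanced-tail-selector-core-bound} to replace $\|\mathcal D_\ast\|_{\loc,{\rm int}}$ by the enhanced-tail distance plus $\delta_{\rm sel}^{\sharp}$. You are also right that the selector assumption \Cref{ass:enhanced-tail-same-gauge-selector} is implicitly required, and that nonnegativity of the $C_k^{\rm rep}$ is needed when multiplying through.
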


\begin{proof}
Sum the affine component estimates and use
\Cref{lem:enhanced-tail-selector-core-bound}.
\end{proof}

\begin{corollary}[Reproduction contribution to the enhanced-tail residual ledger]
\label{cor:reproduction-contribution-enhanced-tail-ledger}
In the decomposed enhanced-tail residual budget from
\Cref{prop:enhanced-tail-residual-budget-from-subbudgets}, the reproduction
contribution may be taken to be
\[
    \eta_{\rm rep}
    =
    \sum_{k\in\Lambda_{\rm adj}}C_k^{\rm rep},
\]
and
\[
    \Delta_{\rm rep}
    =
    \eta_{\rm rep}\delta_{\rm sel}^{\sharp}
    +
    \sum_{k\in\Lambda_{\rm adj}}\Delta_k^{\rm rep},
\]
with all \(\Delta_k^{\rm rep}=0\) in the purely linear operator-norm case.
Therefore the decomposed residual-budget constants remain
\[
\begin{aligned}
    \eta_{\Lambda,{\rm dec}}^{\sharp,{\rm tail}}
    =
    &\eta_{\rm core}
    +
    \max\{\alpha_{\rm proj}^{-1},\alpha_{\rm harm}^{-1}\}
    +
    \eta_{\rm chart}\\
    &+
    \eta_{\loc}
    +
    \eta_{\rm rep}
    +
    \eta_{\rm gate},
\end{aligned}
\]
and
\[
    \Delta_{\Lambda,{\rm dec}}^{\sharp,{\rm tail}}
    =
    \Delta_{\rm core}
    +
    \Delta_{\rm chart}
    +
    \Delta_{\loc}
    +
    \Delta_{\rm rep}
    +
    \Delta_{\rm gate}.
\]
\end{corollary}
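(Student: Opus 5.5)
The statement is a bookkeeping consequence: it asserts that the reproduction slot of the decomposed ledger in \Cref{prop:enhanced-tail-residual-budget-from-subbudgets} may be filled by the reproduction drift sub-budget, and that the aggregate constants then keep the same form. I would therefore prove it by exhibiting the sub-budget and substituting it into the earlier proposition. No new estimate is needed.

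First, I would check that the reproduction sub-budget in the required form is available. In the purely linear operator-norm case, \Cref{prop:reproduction-drift-subbudget-enhanced-tail} gives
\[
    \mathcal R_{\rm rep}(\mathcal D)\le C_{\rm rep}\Dist_{\loc,{\rm int}}^{\sharp,{\rm tail}}(\mathcal D,\Gamma_\Lambda^{\rm int})+C_{\rm rep}\delta_{\rm sel}^{\sharp},
\]
with \(C_{\rm rep}=\sum_{k\in\Lambda_{\rm adj}}C_k^{\rm rep}\). This coincides with the displayed \(\eta_{\rm rep}\) and \(\Delta_{\rm rep}\) once all \(\Delta_k^{\rm rep}=0\). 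In the affine case, \Cref{prop:affine-reproduction-drift-variant} gives the same estimate with the extra additive term \(\sum_k\Delta_k^{\rm rep}\). In both cases the mechanism is the same: sum the componentwise bounds \(\mathcal R_k^{\rm rep}\le C_k^{\rm rep}\|\mathcal D_\ast\|_{\loc,{\rm int}}+\Delta_k^{\rm rep}\), then replace \(\|\mathcal D_\ast\|_{\loc,{\rm int}}\) using the selector bound of \Cref{lem:enhanced-tail-selector-core-bound}.

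Second, I would note that this is exactly the reproduction entry required in \Cref{ass:enhanced-tail-detector-residual-subbudgets}, namely the case \(a={\rm rep}\). The remaining hypotheses of \Cref{prop:enhanced-tail-residual-budget-from-subbudgets} are the core bound, the tail bound and the other sub-budgets. These are unchanged, so that proposition applies directly. Its displayed \(\eta_{\Lambda,{\rm dec}}^{\sharp,{\rm tail}}\) and \(\Delta_{\Lambda,{\rm dec}}^{\sharp,{\rm tail}}\) are linear sums in which \(\eta_{\rm rep}\) and \(\Delta_{\rm rep}\) appear as separate summands, so inserting the new values reproduces the formulas stated in the corollary.

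The only point needing care, and so the nearest thing to an obstacle, is consistency of the selected representative. The drift residuals are evaluated on \(\mathcal D_\ast=\mathcal D-\zeta_\ast(\mathcal D)\) from \Cref{ass:enhanced-tail-same-gauge-selector}. The core residual \(\mathcal R_{\rm core}\) is evaluated at the \(\zeta_\ast\) of \Cref{ass:same-gauge-enhanced-tail-compatibility}. I would state that one common selector is used throughout. If the two selectors differ, the argument still closes, because the reproduction bound only uses \Cref{lem:enhanced-tail-selector-core-bound} and so depends solely on the enhanced-tail selector defect \(\delta_{\rm sel}^{\sharp}\). Hence no additional compatibility condition enters.
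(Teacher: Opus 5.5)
Your proposal is correct and matches the paper's proof: insert either the linear sub-budget of \Cref{prop:reproduction-drift-subbudget-enhanced-tail} or the affine one of \Cref{prop:affine-reproduction-drift-variant} as the reproduction entry of \Cref{prop:enhanced-tail-residual-budget-from-subbudgets}, whose aggregate constants are additive in \(\eta_{\rm rep}\) and \(\Delta_{\rm rep}\). Your extra remark on selector consistency is also sound, since the reproduction bound depends on the selector only through \(\delta_{\rm sel}^{\sharp}\) via \Cref{lem:enhanced-tail-selector-core-bound}.
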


\begin{proof}
Insert either reproduction sub-budget, the linear one from
\Cref{prop:reproduction-drift-subbudget-enhanced-tail} or the affine one from
\Cref{prop:affine-reproduction-drift-variant}, into the decomposed ledger
\Cref{prop:enhanced-tail-residual-budget-from-subbudgets}.
\end{proof}

\begin{remark}[Status of the reproduction drift sub-budget]
This subsection proves only a finite-window reproduction drift sub-budget.
It says that, once the adjacent-scale reproduction drift operators and the
same-gauge representative selection are fixed, the reproduction drift
residual is controlled by the enhanced-tail quotient geometry.  It does not
prove that Navier--Stokes generates the reproduction maps, exact
reproduction, scale-uniformity, smallness, or Navier--Stokes regularity.  It
also does not compare
\[
    \Dist_{\loc,{\rm int}}^{\sharp,{\rm tail}}
\]
with
\[
    \Dist_{\loc,{\rm int}}.
\]
\end{remark}

\subsection{Gate/tax mismatch sub-budget in the enhanced-tail geometry}
\label{subsec:gate-tax-mismatch-subbudget-enhanced-tail}

We finally replace the placeholder gate residual
\(\mathcal R_{\rm gate}\) in
\Cref{def:decomposed-enhanced-tail-residual-ledger}.  This subsection
records finite-window mismatch between localized gate data, clean detector
gates, and tax or ledger-slack coordinates.  It is not a lower bound for a
concrete Navier--Stokes pressure, flux, or dissipation tax.

\begin{definition}[Finite-window gate/tax mismatch operators]
\label{def:finite-window-gate-tax-mismatch-operators}
Let
\[
    Y_{\rm gate}^{\rm det},\qquad
    Y_{\rm gate}^{\rm tax},\qquad
    Y_{\rm gate}^{\rm slack}
\]
be finite-dimensional normed spaces.  Fix bounded finite-window operators
\[
    K_{\rm gate}^{\rm det}:
    \mathfrak I_\Lambda^{\loc}\to Y_{\rm gate}^{\rm det},
    \qquad
    K_{\rm gate}^{\rm tax}:
    \mathfrak I_\Lambda^{\loc}\to Y_{\rm gate}^{\rm tax},
\]
and
\[
    K_{\rm gate}^{\rm slack}:
    \mathfrak I_\Lambda^{\loc}\to Y_{\rm gate}^{\rm slack}.
\]
The operator \(K_{\rm gate}^{\rm det}\) records detector-gate mismatch,
namely the mismatch between localized gate coordinates and the clean
detector gate after applying the local-to-clean chart.  The operator
\(K_{\rm gate}^{\rm tax}\) records tax-functional mismatch, namely the
finite-window discrepancy between localized tax data and the clean tax
functional \(\Tax_\Lambda^{\cl}\).  The operator
\(K_{\rm gate}^{\rm slack}\) records ledger-slack mismatch.  Their
boundedness is part of the finite-window gate/tax datum.
\end{definition}

\begin{definition}[Gate/tax mismatch residuals]
\label{def:gate-tax-mismatch-residuals}
For an intrinsic package \(\mathcal D\), define
\[
    \mathcal R_{\rm gate}^{\rm det}(\mathcal D)
    :=
    \|K_{\rm gate}^{\rm det}\mathcal D_\ast\|_{Y_{\rm gate}^{\rm det}},
\]
\[
    \mathcal R_{\rm gate}^{\rm tax}(\mathcal D)
    :=
    \|K_{\rm gate}^{\rm tax}\mathcal D_\ast\|_{Y_{\rm gate}^{\rm tax}},
\]
and
\[
    \mathcal R_{\rm gate}^{\rm slack}(\mathcal D)
    :=
    \|K_{\rm gate}^{\rm slack}\mathcal D_\ast\|
    _{Y_{\rm gate}^{\rm slack}}.
\]
The total gate/tax mismatch residual is
\[
    \mathcal R_{\rm gate}(\mathcal D)
    :=
    \mathcal R_{\rm gate}^{\rm det}(\mathcal D)
    +
    \mathcal R_{\rm gate}^{\rm tax}(\mathcal D)
    +
    \mathcal R_{\rm gate}^{\rm slack}(\mathcal D).
\]
\end{definition}

\begin{lemma}[Componentwise gate/tax mismatch bounds]
\label{lem:componentwise-gate-tax-mismatch-bounds}
Set
\[
    C_{\rm gate}^{\rm det}
    :=
    \|K_{\rm gate}^{\rm det}\|_{\mathfrak I_\Lambda^{\loc}
    \to Y_{\rm gate}^{\rm det}},
    \qquad
    C_{\rm gate}^{\rm tax}
    :=
    \|K_{\rm gate}^{\rm tax}\|_{\mathfrak I_\Lambda^{\loc}
    \to Y_{\rm gate}^{\rm tax}},
\]
and
\[
    C_{\rm gate}^{\rm slack}
    :=
    \|K_{\rm gate}^{\rm slack}\|_{\mathfrak I_\Lambda^{\loc}
    \to Y_{\rm gate}^{\rm slack}}.
\]
Then
\[
    \mathcal R_{\rm gate}^{\rm det}(\mathcal D)
    \le
    C_{\rm gate}^{\rm det}\|\mathcal D_\ast\|_{\loc,{\rm int}},
\]
\[
    \mathcal R_{\rm gate}^{\rm tax}(\mathcal D)
    \le
    C_{\rm gate}^{\rm tax}\|\mathcal D_\ast\|_{\loc,{\rm int}},
\]
and
\[
    \mathcal R_{\rm gate}^{\rm slack}(\mathcal D)
    \le
    C_{\rm gate}^{\rm slack}\|\mathcal D_\ast\|_{\loc,{\rm int}}.
\]
\end{lemma}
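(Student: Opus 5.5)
The three inequalities are the defining operator-norm bounds of the three fixed gate/tax mismatch operators, applied to the selected same-gauge representative \(\mathcal D_\ast=\mathcal D-\zeta_\ast(\mathcal D)\). This is the same route as \Cref{lem:componentwise-chart-residual-bounds}, \Cref{lem:componentwise-localization-leakage-bounds}, and \Cref{lem:componentwise-reproduction-drift-bounds}.

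First, I note that by \Cref{def:finite-window-gate-tax-mismatch-operators} each of \(K_{\rm gate}^{\rm det}\), \(K_{\rm gate}^{\rm tax}\), and \(K_{\rm gate}^{\rm slack}\) is a bounded linear operator from \(\mathfrak I_\Lambda^{\loc}\) into a finite-dimensional normed space. Here \(\mathfrak I_\Lambda^{\loc}\) carries the intrinsic norm \(\|\cdot\|_{\loc,{\rm int}}\). Hence the three operator norms \(C_{\rm gate}^{\rm det}\), \(C_{\rm gate}^{\rm tax}\), and \(C_{\rm gate}^{\rm slack}\) are finite. By the definition of the operator norm, for every \(x\in\mathfrak I_\Lambda^{\loc}\),
\[
    \|K_{\rm gate}^{a}x\|_{Y_{\rm gate}^{a}}
    \le
    C_{\rm gate}^{a}\|x\|_{\loc,{\rm int}},
    \qquad a\in\{{\rm det},{\rm tax},{\rm slack}\}.
\]

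Second, I specialize to \(x=\mathcal D_\ast\). This element lies in \(\mathfrak I_\Lambda^{\loc}\) because \(\Gamma_\Lambda^{\rm int}\) is a linear subspace of the intrinsic package space and \(\zeta_\ast(\mathcal D)\) is fixed by \Cref{ass:enhanced-tail-same-gauge-selector}. The left-hand side is then exactly \(\mathcal R_{\rm gate}^{a}(\mathcal D)\) by \Cref{def:gate-tax-mismatch-residuals}, which gives the three displayed estimates.

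No genuine obstacle arises. The only point needing care is that all three residuals are evaluated on the same representative \(\mathcal D_\ast\), so that the later summation and the selector bound \Cref{lem:enhanced-tail-selector-core-bound} apply with a single common gauge. Since \Cref{def:gate-tax-mismatch-residuals} already fixes this convention, no separate infima over gauges are taken and the bounds hold verbatim.
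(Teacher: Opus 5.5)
Your proposal is correct and follows the paper's route exactly: each inequality is the operator-norm bound for the corresponding fixed gate/tax mismatch operator $K_{\rm gate}^{a}$, evaluated at the common selected representative $\mathcal D_\ast$, which gives $\mathcal R_{\rm gate}^{a}(\mathcal D)$ by definition. Your remarks that $\mathcal D_\ast$ lies in $\mathfrak I_\Lambda^{\loc}$ and that all three residuals share one gauge representative are accurate, though they go slightly beyond the paper's one-line proof.
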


\begin{proof}
Each estimate is the operator-norm bound for the corresponding operator,
evaluated at \(\mathcal D_\ast\).
\end{proof}

\begin{proposition}[Gate/tax mismatch sub-budget]
\label{prop:gate-tax-mismatch-subbudget-enhanced-tail}
Assume the enhanced-tail same-gauge selector from
\Cref{ass:enhanced-tail-same-gauge-selector} and the finite-window gate/tax
mismatch operators above.  Set
\[
    C_{\rm gate}
    :=
    C_{\rm gate}^{\rm det}
    +
    C_{\rm gate}^{\rm tax}
    +
    C_{\rm gate}^{\rm slack}.
\]
Then
\[
    \mathcal R_{\rm gate}(\mathcal D)
    \le
    C_{\rm gate}
    \Dist_{\loc,{\rm int}}^{\sharp,{\rm tail}}
    (\mathcal D,\Gamma_\Lambda^{\rm int})
    +
    C_{\rm gate}\delta_{\rm sel}^{\sharp}.
\]
Equivalently, the gate/tax mismatch residual satisfies
\[
    \mathcal R_{\rm gate}(\mathcal D)
    \le
    \eta_{\rm gate}
    \Dist_{\loc,{\rm int}}^{\sharp,{\rm tail}}
    (\mathcal D,\Gamma_\Lambda^{\rm int})
    +
    \Delta_{\rm gate},
\]
with
\[
    \eta_{\rm gate}=C_{\rm gate},
    \qquad
    \Delta_{\rm gate}=C_{\rm gate}\delta_{\rm sel}^{\sharp}.
\]
If the selector is an exact enhanced-tail minimizer, then
\(\delta_{\rm sel}^{\sharp}=0\) and \(\Delta_{\rm gate}=0\).
\end{proposition}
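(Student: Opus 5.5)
The plan follows the same two-step template used for the chart, localization, and reproduction sub-budgets (\Cref{prop:chart-residual-subbudget-enhanced-tail}, \Cref{prop:localization-leakage-subbudget-enhanced-tail}, \Cref{prop:reproduction-drift-subbudget-enhanced-tail}). First, we bound the total gate/tax residual by the core norm of the selected representative. By \Cref{def:gate-tax-mismatch-residuals}, \(\mathcal R_{\rm gate}(\mathcal D)\) is the sum of the three component residuals, each evaluated on the same representative \(\mathcal D_\ast=\mathcal D-\zeta_\ast(\mathcal D)\). Summing the three operator-norm estimates of \Cref{lem:componentwise-gate-tax-mismatch-bounds} gives
\[
    \mathcal R_{\rm gate}(\mathcal D)
    \le
    \bigl(C_{\rm gate}^{\rm det}+C_{\rm gate}^{\rm tax}+C_{\rm gate}^{\rm slack}\bigr)
    \|\mathcal D_\ast\|_{\loc,{\rm int}}
    =
    C_{\rm gate}\|\mathcal D_\ast\|_{\loc,{\rm int}}.
\]
This step is legitimate only because all three components use the single common representative \(\mathcal D_\ast\), rather than separate infima.

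Second, we pass from \(\|\mathcal D_\ast\|_{\loc,{\rm int}}\) to the enhanced-tail distance. \Cref{lem:enhanced-tail-selector-core-bound}, which relies on \Cref{ass:enhanced-tail-same-gauge-selector}, gives
\[
    \|\mathcal D_\ast\|_{\loc,{\rm int}}
    \le
    \Dist_{\loc,{\rm int}}^{\sharp,{\rm tail}}(\mathcal D,\Gamma_\Lambda^{\rm int})
    +
    \delta_{\rm sel}^{\sharp}.
\]
Multiplying by \(C_{\rm gate}\ge0\) yields the first displayed estimate. The equivalent form follows by renaming the constants as \(\eta_{\rm gate}=C_{\rm gate}\) and \(\Delta_{\rm gate}=C_{\rm gate}\delta_{\rm sel}^{\sharp}\). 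In the exact minimizer case \(\delta_{\rm sel}^{\sharp}=0\), so \(\Delta_{\rm gate}=0\).

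No step is a genuine obstacle. The only point needing care is finiteness of \(C_{\rm gate}\), that is, that each \(K_{\rm gate}^{\cdot}\) has finite operator norm. This is part of the gate/tax datum in \Cref{def:finite-window-gate-tax-mismatch-operators}, and in any case it is automatic for linear maps between finite-dimensional normed spaces.
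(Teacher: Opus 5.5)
Your proof is correct and matches the paper's argument: sum the three operator-norm bounds of \Cref{lem:componentwise-gate-tax-mismatch-bounds} on the common representative $\mathcal D_\ast$ to get $\mathcal R_{\rm gate}(\mathcal D)\le C_{\rm gate}\|\mathcal D_\ast\|_{\loc,{\rm int}}$, then apply \Cref{lem:enhanced-tail-selector-core-bound}. One small correction to your closing aside: boundedness of the $K_{\rm gate}^{\cdot}$ is not automatic, because the intrinsic package space $\mathfrak I_\Lambda^{\loc}$ need not be finite-dimensional (its intended factors include $L^3(Q_k)^3$), so finiteness of $C_{\rm gate}$ comes from the boundedness stipulated in \Cref{def:finite-window-gate-tax-mismatch-operators}, which you also cite, and the proof is unaffected.
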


\begin{proof}
Summing the componentwise estimates in
\Cref{lem:componentwise-gate-tax-mismatch-bounds} gives
\[
    \mathcal R_{\rm gate}(\mathcal D)
    \le
    C_{\rm gate}\|\mathcal D_\ast\|_{\loc,{\rm int}}.
\]
The enhanced-tail selector bound in
\Cref{lem:enhanced-tail-selector-core-bound} gives the displayed estimate.
\end{proof}

\begin{proposition}[Affine gate/tax mismatch variant]
\label{prop:affine-gate-tax-mismatch-variant}
Suppose that each gate/tax mismatch component satisfies an affine
finite-window estimate
\[
    \mathcal R_{\rm gate}^{a}(\mathcal D)
    \le
    C_{\rm gate}^{a}\|\mathcal D_\ast\|_{\loc,{\rm int}}
    +
    \Delta_{\rm gate}^{a},
    \qquad
    a\in\{{\rm det},{\rm tax},{\rm slack}\}.
\]
Then
\[
    \mathcal R_{\rm gate}(\mathcal D)
    \le
    \eta_{\rm gate}
    \Dist_{\loc,{\rm int}}^{\sharp,{\rm tail}}
    (\mathcal D,\Gamma_\Lambda^{\rm int})
    +
    \Delta_{\rm gate},
\]
where
\[
    \eta_{\rm gate}
    =
    C_{\rm gate}^{\rm det}
    +
    C_{\rm gate}^{\rm tax}
    +
    C_{\rm gate}^{\rm slack},
\]
and
\[
    \Delta_{\rm gate}
    =
    \eta_{\rm gate}\delta_{\rm sel}^{\sharp}
    +
    \Delta_{\rm gate}^{\rm det}
    +
    \Delta_{\rm gate}^{\rm tax}
    +
    \Delta_{\rm gate}^{\rm slack}.
\]
\end{proposition}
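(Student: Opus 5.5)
The plan is to follow exactly the pattern of \Cref{prop:affine-chart-residual-variant} and \Cref{prop:affine-localization-leakage-variant}, now applied to the three gate/tax components. First sum the three affine hypotheses for \(a\in\{{\rm det},{\rm tax},{\rm slack}\}\). By \Cref{def:gate-tax-mismatch-residuals}, \(\mathcal R_{\rm gate}\) is the sum of the three component residuals, so this gives
\[
    \mathcal R_{\rm gate}(\mathcal D)
    \le
    \eta_{\rm gate}\|\mathcal D_\ast\|_{\loc,{\rm int}}
    +
    \Delta_{\rm gate}^{\rm det}+\Delta_{\rm gate}^{\rm tax}+\Delta_{\rm gate}^{\rm slack},
\]
with \(\eta_{\rm gate}=C_{\rm gate}^{\rm det}+C_{\rm gate}^{\rm tax}+C_{\rm gate}^{\rm slack}\).

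Second, bound the core norm of the selected representative by the enhanced-tail distance. Here I invoke \Cref{lem:enhanced-tail-selector-core-bound}, which requires the same-gauge selector of \Cref{ass:enhanced-tail-same-gauge-selector}. That selector is implicit in the statement, since \(\mathcal D_\ast\) is defined through it. The lemma gives \(\|\mathcal D_\ast\|_{\loc,{\rm int}}\le \Dist_{\loc,{\rm int}}^{\sharp,{\rm tail}}(\mathcal D,\Gamma_\Lambda^{\rm int})+\delta_{\rm sel}^{\sharp}\).

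Third, multiply this bound by \(\eta_{\rm gate}\ge0\) and substitute it into the summed estimate. The additive constant collected this way is \(\eta_{\rm gate}\delta_{\rm sel}^{\sharp}\) plus the three component defects, which is exactly the displayed \(\Delta_{\rm gate}\).

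No step is a real obstacle. The only point needing care is that \(\eta_{\rm gate}\) is nonnegative, so the upper bound on \(\|\mathcal D_\ast\|_{\loc,{\rm int}}\) can be inserted without reversing the inequality.
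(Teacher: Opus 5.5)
Your proposal is correct and matches the paper's own two-line argument: sum the three affine component estimates using \Cref{def:gate-tax-mismatch-residuals}, then insert the selector bound of \Cref{lem:enhanced-tail-selector-core-bound} multiplied by $\eta_{\rm gate}$. You are right that $\eta_{\rm gate}\ge0$ is needed for that last step. The paper leaves $C_{\rm gate}^{a}\ge0$ implicit here; it states it explicitly in the analogous chart variant, and it holds automatically when the $C_{\rm gate}^{a}$ are the operator norms of \Cref{lem:componentwise-gate-tax-mismatch-bounds}.
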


\begin{proof}
Sum the three affine component estimates and use
\Cref{lem:enhanced-tail-selector-core-bound}.
\end{proof}

\begin{corollary}[Gate/tax contribution to the enhanced-tail residual ledger]
\label{cor:gate-tax-contribution-enhanced-tail-ledger}
In the decomposed enhanced-tail residual budget from
\Cref{prop:enhanced-tail-residual-budget-from-subbudgets}, the gate/tax
contribution may be taken to be
\[
    \eta_{\rm gate}
    =
    C_{\rm gate}^{\rm det}
    +
    C_{\rm gate}^{\rm tax}
    +
    C_{\rm gate}^{\rm slack},
\]
and
\[
    \Delta_{\rm gate}
    =
    \eta_{\rm gate}\delta_{\rm sel}^{\sharp}
    +
    \Delta_{\rm gate}^{\rm det}
    +
    \Delta_{\rm gate}^{\rm tax}
    +
    \Delta_{\rm gate}^{\rm slack},
\]
with all \(\Delta_{\rm gate}^{a}=0\) in the purely linear operator-norm case.
Therefore all four finite-window residual sub-budgets in the decomposed
enhanced-tail ledger have explicit coefficients, and
\[
\begin{aligned}
    \eta_{\Lambda,{\rm dec}}^{\sharp,{\rm tail}}
    =
    &\eta_{\rm core}
    +
    \max\{\alpha_{\rm proj}^{-1},\alpha_{\rm harm}^{-1}\}
    +
    \eta_{\rm chart}\\
    &+
    \eta_{\loc}
    +
    \eta_{\rm rep}
    +
    \eta_{\rm gate},
\end{aligned}
\]
while
\[
    \Delta_{\Lambda,{\rm dec}}^{\sharp,{\rm tail}}
    =
    \Delta_{\rm core}
    +
    \Delta_{\rm chart}
    +
    \Delta_{\loc}
    +
    \Delta_{\rm rep}
    +
    \Delta_{\rm gate}.
\]
\end{corollary}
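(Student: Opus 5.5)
The plan is to feed the gate/tax sub-budget into the decomposed ledger, exactly as was done for the chart, localization, and reproduction contributions in \Cref{cor:chart-contribution-enhanced-tail-ledger}, \Cref{cor:localization-contribution-enhanced-tail-ledger}, and \Cref{cor:reproduction-contribution-enhanced-tail-ledger}.

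First, I will establish the gate/tax estimate
\[
    \mathcal R_{\rm gate}(\mathcal D)
    \le
    \eta_{\rm gate}\Dist_{\loc,{\rm int}}^{\sharp,{\rm tail}}(\mathcal D,\Gamma_\Lambda^{\rm int})
    +
    \Delta_{\rm gate},
\]
with the displayed constants, by cases:
\begin{enumerate}[label=(\roman*),leftmargin=2em]
    \item In the affine case, \Cref{prop:affine-gate-tax-mismatch-variant} gives the estimate with exactly the stated \(\eta_{\rm gate}\) and \(\Delta_{\rm gate}\). Its proof sums the three component estimates and applies \Cref{lem:enhanced-tail-selector-core-bound}.
    \item In the purely linear operator-norm case, \Cref{lem:componentwise-gate-tax-mismatch-bounds} supplies the affine hypotheses with \(\Delta_{\rm gate}^{a}=0\). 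The formula then reduces to \(\Delta_{\rm gate}=C_{\rm gate}\delta_{\rm sel}^{\sharp}\), which is \Cref{prop:gate-tax-mismatch-subbudget-enhanced-tail}.
\end{enumerate}

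Second, I will collect the remaining sub-budgets. The chart, localization, and reproduction estimates come from \Cref{prop:affine-chart-residual-variant}, \Cref{prop:affine-localization-leakage-variant}, and \Cref{prop:affine-reproduction-drift-variant}, or from their linear versions. The core term is bounded by \Cref{ass:same-gauge-enhanced-tail-compatibility}, and the tail term by \Cref{lem:simultaneous-pressure-tail-observability}. Together these give, for every \(a\in\{{\rm chart},{\rm loc},{\rm rep},{\rm gate}\}\), an estimate of the form required in \Cref{ass:enhanced-tail-detector-residual-subbudgets}, with explicit \(\eta_a\) and \(\Delta_a\).

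Third, I will apply \Cref{prop:enhanced-tail-residual-budget-from-subbudgets}, which sums the six estimates. This yields the stated \(\eta_{\Lambda,{\rm dec}}^{\sharp,{\rm tail}}\) and \(\Delta_{\Lambda,{\rm dec}}^{\sharp,{\rm tail}}\). The tail contributes only to the \(\eta\) coefficient, which is why no tail term appears in the \(\Delta\) sum.

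The one point needing care is consistency of the same-gauge representative. Every sub-budget must be evaluated at the same \(\mathcal D_\ast=\mathcal D-\zeta_\ast(\mathcal D)\) from \Cref{ass:enhanced-tail-same-gauge-selector}. That selector must also be the representative used in \Cref{ass:same-gauge-enhanced-tail-compatibility} for the core term, so that a single \(\delta_{\rm sel}^{\sharp}\) governs all four additive losses. I will state this identification explicitly. Beyond that, the argument is pure addition.
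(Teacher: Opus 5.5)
Your proposal is correct and follows the paper's proof. The paper likewise inserts the linear or affine gate/tax sub-budget (\Cref{prop:gate-tax-mismatch-subbudget-enhanced-tail} or \Cref{prop:affine-gate-tax-mismatch-variant}) into the ledger of \Cref{prop:enhanced-tail-residual-budget-from-subbudgets}, takes the chart, localization, and reproduction contributions from \Cref{cor:chart-contribution-enhanced-tail-ledger}, \Cref{cor:localization-contribution-enhanced-tail-ledger}, and \Cref{cor:reproduction-contribution-enhanced-tail-ledger}, and adds. Your identification of the core representative with the selector is harmless but unnecessary, because the four sub-budgets already share the selector of \Cref{ass:enhanced-tail-same-gauge-selector} and hence a single \(\delta_{\rm sel}^{\sharp}\), while the core bound from \Cref{ass:same-gauge-enhanced-tail-compatibility} is a separate input with no selector defect.
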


\begin{proof}
Insert either the linear gate/tax sub-budget from
\Cref{prop:gate-tax-mismatch-subbudget-enhanced-tail} or the affine variant
from \Cref{prop:affine-gate-tax-mismatch-variant} into the decomposed ledger
\Cref{prop:enhanced-tail-residual-budget-from-subbudgets}.  The chart
contribution is supplied by
\Cref{cor:chart-contribution-enhanced-tail-ledger}.  The localization and
reproduction contributions are supplied by
\Cref{cor:localization-contribution-enhanced-tail-ledger} and
\Cref{cor:reproduction-contribution-enhanced-tail-ledger}.
\end{proof}

\begin{remark}[Status of the gate/tax mismatch sub-budget]
This subsection proves only a finite-window gate/tax mismatch sub-budget.  It
says that, once the detector-gate, tax-functional, and ledger-slack mismatch
operators and the same-gauge representative selection are fixed, the
gate/tax residual is controlled by the enhanced-tail quotient geometry.  It
does not prove a positive pressure tax, a positive flux tax, coercivity of
\(\Tax_\Lambda^{\cl}\), scale-uniformity, smallness, or Navier--Stokes
regularity.  It also does not compare
\[
    \Dist_{\loc,{\rm int}}^{\sharp,{\rm tail}}
\]
with
\[
    \Dist_{\loc,{\rm int}}.
\]
\end{remark}

\subsection{Assembled enhanced-tail residual-budget criterion}
\label{subsec:assembled-enhanced-tail-residual-budget-criterion}

We now collect the four finite-window sub-budgets recorded above.  This is an
assembly step only: it does not derive the enhanced-tail comparison
hypothesis, does not compare the enhanced-tail distance with the original
intrinsic distance, and does not prove any scale-uniform estimate.

\begin{corollary}[Assembled finite-window enhanced-tail residual budget]
\label{cor:assembled-finite-window-enhanced-tail-residual-budget}
Assume the core and tail estimates in
\Cref{ass:same-gauge-enhanced-tail-compatibility} and
\Cref{lem:simultaneous-pressure-tail-observability}.  Assume also the
finite-window chart, localization, reproduction, and gate/tax sub-budgets in
\Cref{prop:chart-residual-subbudget-enhanced-tail},
\Cref{prop:localization-leakage-subbudget-enhanced-tail},
\Cref{prop:reproduction-drift-subbudget-enhanced-tail}, and
\Cref{prop:gate-tax-mismatch-subbudget-enhanced-tail}.  Then the decomposed
enhanced-tail residual satisfies
\[
    \Err_{\Lambda,{\rm dec}}^{\loc}(\mathcal D)
    \le
    \eta_{\Lambda,{\rm dec}}^{\sharp,{\rm tail}}
    \Dist_{\loc,{\rm int}}^{\sharp,{\rm tail}}
    (\mathcal D,\Gamma_\Lambda^{\rm int})
    +
    \Delta_{\Lambda,{\rm dec}}^{\sharp,{\rm tail}},
\]
where
\[
\begin{aligned}
    \eta_{\Lambda,{\rm dec}}^{\sharp,{\rm tail}}
    =
    &\eta_{\rm core}
    +
    \max\{\alpha_{\rm proj}^{-1},\alpha_{\rm harm}^{-1}\}
    +
    \eta_{\rm chart}\\
    &+
    \eta_{\loc}
    +
    \eta_{\rm rep}
    +
    \eta_{\rm gate},
\end{aligned}
\]
and
\[
    \Delta_{\Lambda,{\rm dec}}^{\sharp,{\rm tail}}
    =
    \Delta_{\rm core}
    +
    \Delta_{\rm chart}
    +
    \Delta_{\loc}
    +
    \Delta_{\rm rep}
    +
    \Delta_{\rm gate}.
\]
Consequently, if the detector comparison in
\Cref{ass:enhanced-tail-detector-residual-subbudgets} holds with the same
decomposed residual, then the enhanced-tail residual-budget hypothesis is
realized with the displayed constants.
\end{corollary}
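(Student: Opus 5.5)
The plan is to bound $\Err_{\Lambda,{\rm dec}}^{\loc}(\mathcal D)$ term by term, using the definition in \Cref{prop:enhanced-tail-residual-budget-from-subbudgets} as the sum of the six nonnegative residuals $\mathcal R_{\rm core},\mathcal R_{\rm tail},\mathcal R_{\rm chart},\mathcal R_{\rm loc},\mathcal R_{\rm rep},\mathcal R_{\rm gate}$. Each residual will be controlled by an affine bound in the single quantity $\Dist_{\loc,{\rm int}}^{\sharp,{\rm tail}}(\mathcal D,\Gamma_\Lambda^{\rm int})$. The slopes and intercepts are then added.

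First, I bound the core residual. By \Cref{def:decomposed-enhanced-tail-residual-ledger}, $\mathcal R_{\rm core}(\mathcal D)$ is exactly the left-hand side of the core inequality in \Cref{ass:same-gauge-enhanced-tail-compatibility}, evaluated at the same selected representative $\zeta_\ast$. This gives $\mathcal R_{\rm core}\le\eta_{\rm core}\Dist^{\sharp,{\rm tail}}_{\loc,{\rm int}}+\Delta_{\rm core}$. I must check that the $\zeta_\ast$ in the ledger is the one in that assumption. Since the ledger explicitly refers to "the selected same-gauge representative", I will fix one selection throughout, namely an exact minimizer as in \Cref{ass:same-gauge-enhanced-tail-compatibility}.

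Second, I bound the tail residual. $\mathcal R_{\rm tail}=\mathcal T_{{\rm proj},q}+\mathcal T_{{\rm harm},q}$ is bounded by \Cref{lem:simultaneous-pressure-tail-observability} with slope $\max\{\alpha_{\rm proj}^{-1},\alpha_{\rm harm}^{-1}\}$ and zero intercept.

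Third, I bound the four remaining residuals. The chart, localization, reproduction, and gate residuals are bounded by \Cref{prop:chart-residual-subbudget-enhanced-tail}, \Cref{prop:localization-leakage-subbudget-enhanced-tail}, \Cref{prop:reproduction-drift-subbudget-enhanced-tail}, and \Cref{prop:gate-tax-mismatch-subbudget-enhanced-tail}. Each gives slope $\eta_a$ and intercept $\Delta_a$; alternatively the affine variants or \Cref{cor:gate-tax-contribution-enhanced-tail-ledger} can be used. Summing the six affine inequalities yields the displayed slope $\eta_{\Lambda,{\rm dec}}^{\sharp,{\rm tail}}$ and intercept $\Delta_{\Lambda,{\rm dec}}^{\sharp,{\rm tail}}$. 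The tail term contributes no additive constant, which matches the formula.

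For the final clause, I assume the detector comparison of \Cref{ass:enhanced-tail-detector-residual-subbudgets} holds with this same decomposed residual. Then both parts of \Cref{ass:enhanced-tail-localized-residual-budget} hold with $\Err^{\loc}_\Lambda:=\Err^{\loc}_{\Lambda,{\rm dec}}$, together with the detector half of \Cref{ass:enhanced-tail-local-to-clean-transfer-comparison}. This is the same conclusion as \Cref{prop:enhanced-tail-residual-budget-from-subbudgets}.

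The only real obstacle is consistency of the representative. The sub-budget propositions use $\mathcal D_\ast=\mathcal D-\zeta_\ast$ from \Cref{ass:enhanced-tail-same-gauge-selector}, which allows the slack $\delta^\sharp_{\rm sel}$. The core assumption instead requires an exact minimizer. I would therefore fix one selector $\zeta_\ast$ satisfying both. Taking the exact minimizer makes the selector slack $\delta^\sharp_{\rm sel}=0$, so it is compatible with both, and any nonzero slack would in any case already be absorbed into the $\Delta_a$. The minimum exists because the quotient is finite-dimensional and the objective is continuous and coercive; otherwise it is simply assumed, as in \Cref{ass:same-gauge-enhanced-tail-compatibility}.
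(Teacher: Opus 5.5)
Your proposal is correct and matches the paper's proof: both sum the core bound from \Cref{ass:same-gauge-enhanced-tail-compatibility}, the tail bound from \Cref{lem:simultaneous-pressure-tail-observability}, and the four sub-budget propositions, and your choice of a single exact-minimizer selector (so that $\delta_{\rm sel}^{\sharp}=0$), which serves both the core assumption and \Cref{ass:enhanced-tail-same-gauge-selector}, is a sensible clarification the paper leaves implicit. Drop the compactness aside, though: the intrinsic spaces (e.g.\ $L^3(Q_k)^3$) and $\Gamma_\Lambda^{\rm int}$ need not be finite-dimensional, so rely on \Cref{ass:same-gauge-enhanced-tail-compatibility} itself to supply the minimizer, as your fallback already does.
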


\begin{proof}
The definition of \(\Err_{\Lambda,{\rm dec}}^{\loc}\) in
\Cref{prop:enhanced-tail-residual-budget-from-subbudgets} is the sum of the
core, tail, chart, localization, reproduction, and gate/tax residuals.  The
core estimate is
\Cref{ass:same-gauge-enhanced-tail-compatibility}.  The pressure-tail
estimate is \Cref{lem:simultaneous-pressure-tail-observability}.  The four
finite-window sub-budgets are supplied by the four propositions listed in
the statement.  Adding the six estimates gives the displayed constants.
\end{proof}

\begin{remark}[Status of the assembled criterion]
\Cref{cor:assembled-finite-window-enhanced-tail-residual-budget} is a
bookkeeping consequence of the finite-window sub-budgets.  It does not prove
that the enhanced-tail detector comparison holds for localized
Navier--Stokes packages, does not prove the local-to-clean comparison, and
does not compare \(\Dist_{\loc,{\rm int}}^{\sharp,{\rm tail}}\) with
\(\Dist_{\loc,{\rm int}}\).  Those are the next PDE-facing problems.
\end{remark}

\subsection{Conditional comparison between enhanced-tail and intrinsic geometries}
\label{subsec:conditional-enhanced-tail-intrinsic-comparison}

We now compare the combined enhanced-tail geometry with the original
intrinsic geometry.  This is the first PDE-facing step after the residual
ledger has been assembled.  The comparison is conditional: the projection and
harmonic tails must be controlled on a common intrinsic representative.

\begin{lemma}[One-sided comparison with the intrinsic distance]
\label{lem:enhanced-tail-dominates-intrinsic-distance}
For every intrinsic package \(\mathcal D\),
\[
    \Dist_{\loc,{\rm int}}(\mathcal D,\Gamma_\Lambda^{\rm int})
    \le
    \Dist_{\loc,{\rm int}}^{\sharp,{\rm tail}}
    (\mathcal D,\Gamma_\Lambda^{\rm int}).
\]
\end{lemma}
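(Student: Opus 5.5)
The inequality follows from a pointwise comparison inside the infimum defining the enhanced-tail distance in \Cref{def:combined-pressure-tail-enhanced-intrinsic-distance}. Fix an intrinsic package \(\mathcal D\) and an arbitrary representative \(\zeta\in\Gamma_\Lambda^{\rm int}\). The two tail sizes \(\mathcal T_{\rm proj}(\mathcal D;\zeta)\) and \(\mathcal T_{\rm harm}(\mathcal D;\zeta)\) are nonnegative. This holds because each is a product of an operator norm, or of \(1+\) an operator norm, with a norm, as in \Cref{def:intrinsic-clean-projection-tail-size} and \Cref{def:intrinsic-harmonic-tail-size}. The weights \(\alpha_{\rm proj},\alpha_{\rm harm}\) are positive. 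Hence
\[
    \Dist_{\loc,{\rm int}}(\mathcal D,\Gamma_\Lambda^{\rm int})
    \le
    \|\mathcal D-\zeta\|_{\loc,{\rm int}}
    \le
    \|\mathcal D-\zeta\|_{\loc,{\rm int}}
    +
    \alpha_{\rm proj}\mathcal T_{\rm proj}(\mathcal D;\zeta)
    +
    \alpha_{\rm harm}\mathcal T_{\rm harm}(\mathcal D;\zeta).
\]
The first inequality is the definition of \(\Dist_{\loc,{\rm int}}\) in \Cref{def:intrinsic-localized-package-norm-candidate} as an infimum over the same gauge space.

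Taking the infimum over \(\zeta\in\Gamma_\Lambda^{\rm int}\) on the right-hand side gives the claim. The left-hand side does not depend on \(\zeta\), so it is a lower bound for every term in the infimum and therefore for the infimum itself.

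There is no real obstacle here. The only point to check is that both distances use the same gauge space \(\Gamma_\Lambda^{\rm int}\) and the same norm \(\|\cdot\|_{\loc,{\rm int}}\), so that the comparison holds representative by representative. One should also note that no selector or minimizer (\Cref{ass:enhanced-tail-same-gauge-selector}) is needed for this direction. The reverse comparison is the one that will require the projection and harmonic tail control on a common representative.
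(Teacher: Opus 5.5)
Your proof is correct and follows the same argument as the paper. Nonnegativity of the weighted projection and harmonic tail terms gives the comparison for each representative $\zeta\in\Gamma_\Lambda^{\rm int}$, and taking the infimum over $\zeta$ yields the claim, with no selector needed.
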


\begin{proof}
For every \(\zeta\in\Gamma_\Lambda^{\rm int}\), the tail terms in
\Cref{def:combined-pressure-tail-enhanced-intrinsic-distance} are
nonnegative, so
\[
    \|\mathcal D-\zeta\|_{\loc,{\rm int}}
    \le
    \|\mathcal D-\zeta\|_{\loc,{\rm int}}
    +
    \alpha_{\rm proj}\mathcal T_{\rm proj}(\mathcal D;\zeta)
    +
    \alpha_{\rm harm}\mathcal T_{\rm harm}(\mathcal D;\zeta).
\]
Taking the infimum over \(\zeta\) gives the claim.
\end{proof}

\begin{assumption}[Common intrinsic representative for tail approximation]
\label{ass:common-intrinsic-representative-tail-approximation}
For every intrinsic package \(\mathcal D\), there is a representative
\[
    \zeta_{\rm int}(\mathcal D)\in\Gamma_\Lambda^{\rm int}
\]
and a representative-selection error \(\delta_{\rm int}\ge0\) such that
\[
    \|\mathcal D-\zeta_{\rm int}\|_{\loc,{\rm int}}
    \le
    \Dist_{\loc,{\rm int}}(\mathcal D,\Gamma_\Lambda^{\rm int})
    +
    \delta_{\rm int}.
\]
On this same representative, assume that the projection and harmonic tails
satisfy
\[
    \mathcal T_{\rm proj}(\mathcal D;\zeta_{\rm int})
    \le
    C_{\rm proj}^{\rm app}
    \Dist_{\loc,{\rm int}}(\mathcal D,\Gamma_\Lambda^{\rm int})
    +
    \Delta_{{\rm proj},N},
\]
and
\[
    \mathcal T_{\rm harm}(\mathcal D;\zeta_{\rm int})
    \le
    C_{\rm harm}^{\rm app}
    \Dist_{\loc,{\rm int}}(\mathcal D,\Gamma_\Lambda^{\rm int})
    +
    \Delta_{{\rm harm},M}.
\]
Here \(C_{\rm proj}^{\rm app},C_{\rm harm}^{\rm app}<\infty\), while
\(\Delta_{{\rm proj},N},\Delta_{{\rm harm},M}\ge0\) are finite-window
approximation errors.  No decay of these errors as \(N\to\infty\) or
\(M\to\infty\) is assumed here.
\end{assumption}

\begin{theorem}[Conditional enhanced-tail/intrinsic comparison]
\label{thm:conditional-enhanced-tail-intrinsic-comparison}
Assume
\Cref{ass:common-intrinsic-representative-tail-approximation}.  Then, for
every intrinsic package \(\mathcal D\),
\[
\begin{aligned}
    &\Dist_{\loc,{\rm int}}^{\sharp,{\rm tail}}
    (\mathcal D,\Gamma_\Lambda^{\rm int})\\
    &\le
    \bigl(
        1
        +
        \alpha_{\rm proj}C_{\rm proj}^{\rm app}
        +
        \alpha_{\rm harm}C_{\rm harm}^{\rm app}
    \bigr)
    \Dist_{\loc,{\rm int}}(\mathcal D,\Gamma_\Lambda^{\rm int})\\
    &\quad+
    \delta_{\rm int}
    +
    \alpha_{\rm proj}\Delta_{{\rm proj},N}
    +
    \alpha_{\rm harm}\Delta_{{\rm harm},M}.
\end{aligned}
\]
\end{theorem}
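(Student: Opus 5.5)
The argument is a single-representative evaluation of the infimum in \Cref{def:combined-pressure-tail-enhanced-intrinsic-distance}. Since \(\Dist_{\loc,{\rm int}}^{\sharp,{\rm tail}}\) is an infimum over \(\zeta\in\Gamma_\Lambda^{\rm int}\) of
\[
    \|\mathcal D-\zeta\|_{\loc,{\rm int}}
    +
    \alpha_{\rm proj}\mathcal T_{\rm proj}(\mathcal D;\zeta)
    +
    \alpha_{\rm harm}\mathcal T_{\rm harm}(\mathcal D;\zeta),
\]
it is bounded above by the value of this expression at the particular representative \(\zeta=\zeta_{\rm int}(\mathcal D)\) supplied by \Cref{ass:common-intrinsic-representative-tail-approximation}. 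This is the same device used in \Cref{prop:same-gauge-sufficient-source-size-compatibility}: bound the infimum by one common gauge choice and never take separate infima for the three terms.

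The three terms are then estimated in turn on this common representative. The core term satisfies \(\|\mathcal D-\zeta_{\rm int}\|_{\loc,{\rm int}}\le\Dist_{\loc,{\rm int}}(\mathcal D,\Gamma_\Lambda^{\rm int})+\delta_{\rm int}\) by the near-minimizer clause of the assumption. The projection tail, multiplied by \(\alpha_{\rm proj}>0\), is at most \(\alpha_{\rm proj}C_{\rm proj}^{\rm app}\Dist_{\loc,{\rm int}}+\alpha_{\rm proj}\Delta_{{\rm proj},N}\). The harmonic tail likewise contributes at most \(\alpha_{\rm harm}C_{\rm harm}^{\rm app}\Dist_{\loc,{\rm int}}+\alpha_{\rm harm}\Delta_{{\rm harm},M}\). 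Adding these three bounds and collecting the coefficient of \(\Dist_{\loc,{\rm int}}(\mathcal D,\Gamma_\Lambda^{\rm int})\) gives exactly the stated constants, \(1+\alpha_{\rm proj}C_{\rm proj}^{\rm app}+\alpha_{\rm harm}C_{\rm harm}^{\rm app}\) and \(\delta_{\rm int}+\alpha_{\rm proj}\Delta_{{\rm proj},N}+\alpha_{\rm harm}\Delta_{{\rm harm},M}\).

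There is no real analytic obstacle here. The only point needing care is that the two tail bounds in \Cref{ass:common-intrinsic-representative-tail-approximation} are required on the \emph{same} \(\zeta_{\rm int}\) that nearly minimizes the intrinsic norm, since otherwise the three terms could not be bounded by one evaluation of the infimum. Combined with \Cref{lem:enhanced-tail-dominates-intrinsic-distance}, this yields a two-sided affine comparison between the enhanced-tail and intrinsic geometries, and the comparison is multiplicative when all the additive errors vanish.
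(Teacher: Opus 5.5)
Your proposal is correct and follows the paper's proof: you use \(\zeta_{\rm int}(\mathcal D)\) as a single competitor in the infimum defining \(\Dist_{\loc,{\rm int}}^{\sharp,{\rm tail}}\), then substitute the three bounds from \Cref{ass:common-intrinsic-representative-tail-approximation} on that same representative and collect terms. You also correctly identify that the common representative is the one point needing care.
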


\begin{proof}
Use the representative \(\zeta_{\rm int}(\mathcal D)\) from
\Cref{ass:common-intrinsic-representative-tail-approximation} as a competitor
in the infimum defining the enhanced-tail distance.  Then
\[
\begin{aligned}
    &\Dist_{\loc,{\rm int}}^{\sharp,{\rm tail}}
    (\mathcal D,\Gamma_\Lambda^{\rm int})\\
    &\le
    \|\mathcal D-\zeta_{\rm int}\|_{\loc,{\rm int}}
    +
    \alpha_{\rm proj}\mathcal T_{\rm proj}
    (\mathcal D;\zeta_{\rm int})
    +
    \alpha_{\rm harm}\mathcal T_{\rm harm}
    (\mathcal D;\zeta_{\rm int}).
\end{aligned}
\]
Substitute the three bounds in
\Cref{ass:common-intrinsic-representative-tail-approximation} and collect
terms.
\end{proof}

\begin{corollary}[Two-sided conditional geometry comparison]
\label{cor:two-sided-conditional-tail-intrinsic-geometry}
Under
\Cref{ass:common-intrinsic-representative-tail-approximation}, one has
\[
    \Dist_{\loc,{\rm int}}
    (\mathcal D,\Gamma_\Lambda^{\rm int})
    \le
    \Dist_{\loc,{\rm int}}^{\sharp,{\rm tail}}
    (\mathcal D,\Gamma_\Lambda^{\rm int})
\]
and
\[
    \Dist_{\loc,{\rm int}}^{\sharp,{\rm tail}}
    (\mathcal D,\Gamma_\Lambda^{\rm int})
    \le
    C_{{\rm tail}/{\rm int}}
    \Dist_{\loc,{\rm int}}
    (\mathcal D,\Gamma_\Lambda^{\rm int})
    +
    \Delta_{{\rm tail}/{\rm int}},
\]
where
\[
    C_{{\rm tail}/{\rm int}}
    :=
    1
    +
    \alpha_{\rm proj}C_{\rm proj}^{\rm app}
    +
    \alpha_{\rm harm}C_{\rm harm}^{\rm app},
\]
and
\[
    \Delta_{{\rm tail}/{\rm int}}
    :=
    \delta_{\rm int}
    +
    \alpha_{\rm proj}\Delta_{{\rm proj},N}
    +
    \alpha_{\rm harm}\Delta_{{\rm harm},M}.
\]
\end{corollary}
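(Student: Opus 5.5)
The plan is to assemble the two-sided comparison directly from the two results just established, with no new estimates.

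For the lower bound
\[
    \Dist_{\loc,{\rm int}}(\mathcal D,\Gamma_\Lambda^{\rm int})
    \le
    \Dist_{\loc,{\rm int}}^{\sharp,{\rm tail}}(\mathcal D,\Gamma_\Lambda^{\rm int}),
\]
I will quote \Cref{lem:enhanced-tail-dominates-intrinsic-distance}. That lemma holds for every intrinsic package and uses no hypothesis at all: it only needs the nonnegativity of the two weighted tail terms for each competitor \(\zeta\), followed by taking the infimum. I will also note explicitly that \Cref{ass:common-intrinsic-representative-tail-approximation} is not used for this half.

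For the upper bound I will invoke \Cref{thm:conditional-enhanced-tail-intrinsic-comparison} under \Cref{ass:common-intrinsic-representative-tail-approximation}. It suffices to observe that its coefficient and additive term are exactly the constants \(C_{{\rm tail}/{\rm int}}\) and \(\Delta_{{\rm tail}/{\rm int}}\) defined in the statement. For completeness I can recall the mechanism of that theorem:
\begin{itemize}
    \item insert the common representative \(\zeta_{\rm int}(\mathcal D)\) as a competitor in the infimum of \Cref{def:combined-pressure-tail-enhanced-intrinsic-distance};
    \item bound the core norm by \(\Dist_{\loc,{\rm int}}+\delta_{\rm int}\);
    \item bound each tail by its affine approximation estimate;
    \item collect the linear and additive parts.
\end{itemize}

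There is no real obstacle here. The only point to check is bookkeeping. The same representative \(\zeta_{\rm int}\) must serve for both tails and for the core norm, which the assumption guarantees. The additive constants must then combine as \(\delta_{\rm int}+\alpha_{\rm proj}\Delta_{{\rm proj},N}+\alpha_{\rm harm}\Delta_{{\rm harm},M}\), with no extra factor on \(\delta_{\rm int}\), since the core term carries weight one.
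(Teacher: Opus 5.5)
Your proposal is correct and matches the paper's proof exactly: the lower bound is \Cref{lem:enhanced-tail-dominates-intrinsic-distance}, and the upper bound is \Cref{thm:conditional-enhanced-tail-intrinsic-comparison}, whose constants are precisely \(C_{{\rm tail}/{\rm int}}\) and \(\Delta_{{\rm tail}/{\rm int}}\). Your observation that the lower bound uses no part of \Cref{ass:common-intrinsic-representative-tail-approximation} is also accurate.
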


\begin{proof}
Combine
\Cref{lem:enhanced-tail-dominates-intrinsic-distance} and
\Cref{thm:conditional-enhanced-tail-intrinsic-comparison}.
\end{proof}

\begin{corollary}[Intrinsic lower bound inherited from enhanced-tail transfer]
\label{cor:intrinsic-lower-bound-from-enhanced-tail-transfer}
Assume that \(c_\Lambda^{\sharp,{\rm tail}}\ge0\) and that an enhanced-tail
localized transfer estimate has the form
\[
    \calM_\Lambda^{\loc}(\mathcal D)
    \ge
    c_\Lambda^{\sharp,{\rm tail}}
    \Dist_{\loc,{\rm int}}^{\sharp,{\rm tail}}
    (\mathcal D,\Gamma_\Lambda^{\rm int})
    -
    \Delta_\Lambda^{\sharp,{\rm tail}}.
\]
Then
\[
    \calM_\Lambda^{\loc}(\mathcal D)
    \ge
    c_\Lambda^{\sharp,{\rm tail}}
    \Dist_{\loc,{\rm int}}
    (\mathcal D,\Gamma_\Lambda^{\rm int})
    -
    \Delta_\Lambda^{\sharp,{\rm tail}}.
\]
\end{corollary}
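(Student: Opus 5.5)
The plan is a one-line monotonicity argument that combines the assumed enhanced-tail transfer estimate with the one-sided comparison of \Cref{lem:enhanced-tail-dominates-intrinsic-distance}. No new estimate is needed, and in particular none of the tail-approximation hypotheses of \Cref{ass:common-intrinsic-representative-tail-approximation} will be used. Only the easy direction of the geometry comparison enters.

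\textbf{Step 1.} Fix an intrinsic package \(\mathcal D\) and invoke \Cref{lem:enhanced-tail-dominates-intrinsic-distance}. It gives
\[
    \Dist_{\loc,{\rm int}}(\mathcal D,\Gamma_\Lambda^{\rm int})
    \le
    \Dist_{\loc,{\rm int}}^{\sharp,{\rm tail}}(\mathcal D,\Gamma_\Lambda^{\rm int}),
\]
which holds unconditionally because the two tail terms in \Cref{def:combined-pressure-tail-enhanced-intrinsic-distance} are nonnegative.

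\textbf{Step 2.} Multiply this inequality by the constant \(c_\Lambda^{\sharp,{\rm tail}}\). The sign hypothesis \(c_\Lambda^{\sharp,{\rm tail}}\ge0\) preserves the direction, so
\[
    c_\Lambda^{\sharp,{\rm tail}}\Dist_{\loc,{\rm int}}
    \le
    c_\Lambda^{\sharp,{\rm tail}}\Dist_{\loc,{\rm int}}^{\sharp,{\rm tail}}.
\]

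\textbf{Step 3.} Subtract \(\Delta_\Lambda^{\sharp,{\rm tail}}\) from both sides and chain with the assumed transfer estimate. This yields the claimed lower bound for \(\calM_\Lambda^{\loc}(\mathcal D)\).

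The only point needing care is the sign of the coefficient. If \(c_\Lambda^{\sharp,{\rm tail}}\) were negative, the monotone replacement would fail, and one would instead need the reverse comparison from \Cref{thm:conditional-enhanced-tail-intrinsic-comparison}, which carries extra additive losses. For this reason I will state explicitly that the hypothesis \(c_\Lambda^{\sharp,{\rm tail}}\ge0\) is exactly what is being used.

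I will also remark how the corollary applies to \Cref{thm:enhanced-tail-localized-transfer}. There the coefficient is \(\mu_\Lambda^{\rm comp}(1-\varepsilon_G)-\eta_\Lambda^{\sharp,{\rm tail}}\) and the additive loss is \(\mu_\Lambda^{\rm comp}\delta_G+\Delta_\Lambda^{\sharp,{\rm tail}}\). The corollary therefore applies precisely in the threshold regime \(\eta_\Lambda^{\sharp,{\rm tail}}\le\mu_\Lambda^{\rm comp}(1-\varepsilon_G)\).
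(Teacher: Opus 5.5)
Your proposal is correct and follows the paper's own proof: it invokes the one-sided comparison \(\Dist_{\loc,{\rm int}}\le\Dist_{\loc,{\rm int}}^{\sharp,{\rm tail}}\) from \Cref{lem:enhanced-tail-dominates-intrinsic-distance}, multiplies by the nonnegative coefficient \(c_\Lambda^{\sharp,{\rm tail}}\), and substitutes into the assumed enhanced-tail transfer estimate. Your closing remark on applying this to \Cref{thm:enhanced-tail-localized-transfer} in the threshold regime \(\eta_\Lambda^{\sharp,{\rm tail}}\le\mu_\Lambda^{\rm comp}(1-\varepsilon_G)\), with additive loss \(\mu_\Lambda^{\rm comp}\delta_G+\Delta_\Lambda^{\sharp,{\rm tail}}\), is also accurate.
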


\begin{proof}
By \Cref{lem:enhanced-tail-dominates-intrinsic-distance}, the enhanced-tail
distance is at least the original intrinsic distance.  Multiplication by the
nonnegative constant \(c_\Lambda^{\sharp,{\rm tail}}\) preserves the
inequality, and substitution into the assumed enhanced-tail transfer estimate
gives the result.
\end{proof}

\begin{remark}[Meaning of the reverse comparison]
The lower-bound corollary uses only the immediate comparison
\(\Dist_{\loc,{\rm int}}\le
\Dist_{\loc,{\rm int}}^{\sharp,{\rm tail}}\).  The reverse comparison in
\Cref{thm:conditional-enhanced-tail-intrinsic-comparison} is needed for a
different reason: it justifies that the enhanced-tail geometry is not adding
an uncontrolled new pressure-tail coordinate.  That justification is
conditional on the projection and harmonic tail approximation assumptions in
\Cref{ass:common-intrinsic-representative-tail-approximation}.
\end{remark}

\begin{remark}[Status of the conditional comparison]
This subsection does not prove tail approximation, pressure/tax coercivity,
scale-uniformity, or Navier--Stokes regularity.  It proves only a conditional
geometric implication.  If the projection and harmonic tails are controlled
on a common intrinsic representative, then the enhanced-tail distance is
comparable to the original intrinsic distance up to explicit additive
approximation errors.
\end{remark}

\section{Scope of the Results and Outlook}
\label{sec:roadmap}

The preceding sections establish a finite-window reduction rather than a
complete regularity theorem.  In relation to the preceding defect-cascade
formulation \cite{YuInvisible2026}, the present paper isolates the clean
finite-window compactness and transfer bookkeeping that such a cascade analysis
would require.  The purpose of this final section is to make the logical status
of the results explicit: which statements have been proved inside the
finite-window framework, which inputs remain conditional, and which PDE
estimates would turn the framework into a stronger Navier--Stokes result.

\subsection{Established finite-window consequences}

Within the definitions fixed in this paper, the main conclusions are the
following.

\begin{enumerate}[label=(\roman*),leftmargin=2em]
    \item \emph{Clean compactness gap.}  Under the clean detector gauge
    compatibility assumption in
    \Cref{ass:clean-detector-gauge-compatibility} and the kernel-free
    detector condition in
    \Cref{ass:kernel-free-computational-detector}, the finite-dimensional
    clean compactness argument gives a positive computational anti-phantom
    gap; see \Cref{thm:clean-computational-antiphantom}.

    \item \emph{Finite sector reduction.}  The detector family decomposes the
    possible non-gauge clean defect into finitely many observable sectors.
    Equivalently, a normalized clean defect cannot remain simultaneously
    invisible to all pressure, flux, energy, trace, reproduction, and tax
    channels; see \Cref{cor:sector-cover}.

    \item \emph{Conditional localized transfer.}  A clean finite-window gap
    yields a localized lower bound once the local-to-clean comparison and the
    localized residual budget are assumed; see
    \Cref{ass:local-to-clean-transfer-comparison,ass:localized-residual-budget}
    and \Cref{cor:localized-transfer-from-computational-gap}.

    \item \emph{Enhanced pressure-tail bookkeeping.}  The later sections
    refine the transfer mechanism by separating the intrinsic core, the clean
    projection tail, and the harmonic pressure tail.  The resulting
    enhanced-tail transfer theorem and its subclaim criterion isolate the
    chart-visibility and residual-budget inputs needed for a pressure-tail
    version of the localized lower bound; see
    \Cref{thm:enhanced-tail-localized-transfer,cor:subclaim-criterion-enhanced-tail-localized-transfer}.

    \item \emph{Conditional comparison with the intrinsic geometry.}  If the
    clean projection tail and the harmonic tail are controlled on a common
    intrinsic representative, then the enhanced-tail distance is comparable
    with the original intrinsic distance up to explicit additive errors; see
    \Cref{thm:conditional-enhanced-tail-intrinsic-comparison}.
\end{enumerate}

Thus the paper proves a precise finite-window implication: persistent
localized badness must be visible either through a clean detector sector or
through one of the explicitly recorded residual channels, provided the stated
comparison and budget hypotheses are available.

\subsection{Conditional inputs not proved here}

The framework deliberately separates algebraic compactness from the PDE
estimates needed to realize that compactness inside the Navier--Stokes local
geometry.  In particular, the present paper does not prove the following
inputs unconditionally:
\begin{enumerate}[label=(\roman*),leftmargin=2em]
    \item the construction of canonical clean defect spaces, gauge spaces, and
    gauge-compatible detector families directly from suitable weak solutions;
    \item the local-to-clean quotient lifting and the corresponding transfer
    comparison in \Cref{ass:local-to-clean-transfer-comparison};
    \item the localized residual budget in
    \Cref{ass:localized-residual-budget}, and its enhanced-tail analogue in
    \Cref{ass:enhanced-tail-localized-residual-budget};
    \item the enhanced-tail comparison in
    \Cref{ass:enhanced-tail-local-to-clean-transfer-comparison};
    \item projection-tail and harmonic-tail approximation estimates strong
    enough to remove the additive errors in
    \Cref{thm:conditional-enhanced-tail-intrinsic-comparison};
    \item a coercive lower bound for a concrete pressure, flux, or gate/tax
    functional in the original PDE variables.
\end{enumerate}

These are not cosmetic omissions.  They mark the points at which the abstract
finite-window architecture must be connected to genuine Navier--Stokes
estimates, especially pressure decomposition, cutoff commutators, quotient
selection, and same-gauge comparison.

\subsection{PDE-facing next steps}

The most immediate continuation is to prove pressure-tail approximation in a
fixed localized intrinsic package.  Concretely, one should estimate the clean
projection error and the harmonic residual on a common representative and show
that
\[
    \Delta_{{\rm proj},N}\to0,
    \qquad
    \Delta_{{\rm harm},M}\to0,
\]
under hypotheses that are natural for suitable weak solutions.  This would
turn the conditional comparison in
\Cref{thm:conditional-enhanced-tail-intrinsic-comparison} into a usable bridge
between the enhanced-tail geometry and the original intrinsic quotient
distance.

A second direction is to justify the enhanced-tail local-to-clean comparison
and residual-budget assumptions in a concrete intrinsic model.  The
sub-budget decomposition developed above reduces this task to three
chart-visibility channels and four finite-window residual budgets: chart
mismatch, localization leakage, reproduction drift, and gate/tax mismatch.
The role of these sub-budgets is to identify exactly where a PDE estimate is
needed rather than to hide it inside a single global assumption.

A third, more coercive direction is to develop a normalized pressure/tax model
that gives an actual positive lower bound for a detector sector along
expanding finite windows.  This is the step that would move the framework from
a finite-window accounting theorem toward a mechanism capable of excluding
specific persistent defect scenarios.

\subsection{Limitations}

No result in this paper proves Navier--Stokes global regularity, rules out all
possible singularity mechanisms, or constructs a singular solution.  The
paper also does not claim a symbolic-dynamics, universal-computation, or
undecidability theorem.  Its contribution is narrower and more structural: it
turns a persistent finite-window defect into an explicit accounting problem,
where the possible escape routes are cleanly separated into detector sectors,
pressure-tail errors, and localized residual budgets.  The usefulness of the
framework therefore depends on whether the conditional inputs isolated above
can be proved in intrinsic Navier--Stokes geometries.


\begin{thebibliography}{99}

\bibitem{Leray1934}
J.~Leray,
\newblock Sur le mouvement d'un liquide visqueux emplissant l'espace,
\newblock \emph{Acta Mathematica} \textbf{63} (1934), 193--248.
\newblock DOI: \url{https://doi.org/10.1007/BF02547354}.

\bibitem{Hopf1951}
E.~Hopf,
\newblock {\"U}ber die Anfangswertaufgabe f{\"u}r die hydrodynamischen Grundgleichungen,
\newblock \emph{Mathematische Nachrichten} \textbf{4} (1950/51), no.~1--6, 213--231.
\newblock DOI: \url{https://doi.org/10.1002/mana.3210040121}.

\bibitem{Scheffer1976}
V.~Scheffer,
\newblock Partial regularity of solutions to the Navier--Stokes equations,
\newblock \emph{Pacific Journal of Mathematics} \textbf{66} (1976), no.~2, 535--552.
\newblock DOI: \url{https://doi.org/10.2140/pjm.1976.66.535}.

\bibitem{Scheffer1977}
V.~Scheffer,
\newblock Hausdorff measure and the Navier--Stokes equations,
\newblock \emph{Communications in Mathematical Physics} \textbf{55} (1977), no.~2, 97--112.
\newblock DOI: \url{https://doi.org/10.1007/BF01626512}.

\bibitem{CKN1982}
L.~Caffarelli, R.~Kohn, and L.~Nirenberg,
\newblock Partial regularity of suitable weak solutions of the Navier--Stokes equations,
\newblock \emph{Communications on Pure and Applied Mathematics} \textbf{35} (1982), no.~6, 771--831.
\newblock DOI: \url{https://doi.org/10.1002/cpa.3160350604}.

\bibitem{SohrWahl1986}
H.~Sohr and W.~von Wahl,
\newblock On the regularity of the pressure of weak solutions of Navier--Stokes equations,
\newblock \emph{Archiv der Mathematik} \textbf{46} (1986), 428--439.
\newblock DOI: \url{https://doi.org/10.1007/BF01210782}.

\bibitem{Lin1998}
F.-H.~Lin,
\newblock A new proof of the Caffarelli--Kohn--Nirenberg theorem,
\newblock \emph{Communications on Pure and Applied Mathematics} \textbf{51} (1998), no.~3, 241--257.
\newblock DOI: \url{https://doi.org/10.1002/(SICI)1097-0312(199803)51:3<241::AID-CPA2>3.0.CO;2-A}.

\bibitem{ConstantinETiti1994}
P.~Constantin, W.~E, and E.~S. Titi,
\newblock Onsager's conjecture on the energy conservation for solutions of Euler's equation,
\newblock \emph{Communications in Mathematical Physics} \textbf{165} (1994), no.~1, 207--209.
\newblock DOI: \url{https://doi.org/10.1007/BF02099744}.

\bibitem{Eyink1994}
G.~L. Eyink,
\newblock Energy dissipation without viscosity in ideal hydrodynamics. I. Fourier analysis and local energy transfer,
\newblock \emph{Physica D} \textbf{78} (1994), no.~3--4, 222--240.
\newblock DOI: \url{https://doi.org/10.1016/0167-2789(94)90117-1}.

\bibitem{DuchonRobert2000}
J.~Duchon and R.~Robert,
\newblock Inertial energy dissipation for weak solutions of incompressible Euler and Navier--Stokes equations,
\newblock \emph{Nonlinearity} \textbf{13} (2000), no.~1, 249--255.
\newblock DOI: \url{https://doi.org/10.1088/0951-7715/13/1/312}.

\bibitem{SereginSverak2002}
G.~A. Seregin and V.~\v{S}ver\'ak,
\newblock Navier--Stokes equations with lower bounds on the pressure,
\newblock \emph{Archive for Rational Mechanics and Analysis} \textbf{163} (2002), no.~1, 65--86.
\newblock DOI: \url{https://doi.org/10.1007/s002050200199}.

\bibitem{ESS2003}
L.~Escauriaza, G.~Seregin, and V.~\v{S}ver\'ak,
\newblock $L_{3,\infty}$-solutions of Navier--Stokes equations and backward uniqueness,
\newblock \emph{Russian Mathematical Surveys} \textbf{58} (2003), no.~2, 211--250.
\newblock DOI: \url{https://doi.org/10.1070/RM2003v058n02ABEH000609}.

\bibitem{KukavicaZiane2006}
I.~Kukavica and M.~Ziane,
\newblock One component regularity for the Navier--Stokes equations,
\newblock \emph{Nonlinearity} \textbf{19} (2006), no.~2, 453--469.
\newblock DOI: \url{https://doi.org/10.1088/0951-7715/19/2/012}.

\bibitem{CaoTiti2011}
C.~Cao and E.~S. Titi,
\newblock Global regularity criterion for the 3D Navier--Stokes equations involving one entry of the velocity gradient tensor,
\newblock \emph{Archive for Rational Mechanics and Analysis} \textbf{202} (2011), no.~3, 919--932.
\newblock DOI: \url{https://doi.org/10.1007/s00205-011-0439-6}.

\bibitem{SereginLectureNotes}
G.~A. Seregin,
\newblock \emph{Lecture Notes on Regularity Theory for the Navier--Stokes Equations},
\newblock World Scientific, Singapore, 2014.
\newblock DOI: \url{https://doi.org/10.1142/9314}.
\bibitem{CheminZhang2016}
J.-Y.~Chemin and P.~Zhang,
\newblock On the critical one component regularity for 3-D Navier--Stokes system,
\newblock \emph{Annales scientifiques de l'\'{E}cole Normale Sup\'{e}rieure, S\'{e}rie 4} \textbf{49} (2016), no.~1, 131--167.
\newblock DOI: \url{https://doi.org/10.24033/asens.2278}.
\bibitem{CheminZhangZhang2017}
J.-Y.~Chemin, P.~Zhang, and Z.~Zhang,
\newblock On the critical one component regularity for 3-D Navier--Stokes system: general case,
\newblock \emph{Archive for Rational Mechanics and Analysis} \textbf{224} (2017), no.~3, 871--905.
\newblock DOI: \url{https://doi.org/10.1007/s00205-017-1089-0}.

\bibitem{HanLeiLiZhao2019}
B.~Han, Z.~Lei, D.~Li, and N.~Zhao,
\newblock Sharp one component regularity for Navier--Stokes,
\newblock \emph{Archive for Rational Mechanics and Analysis} \textbf{231} (2019), no.~2, 939--970.
\newblock DOI: \url{https://doi.org/10.1007/s00205-018-1292-7}.

\bibitem{BarkerPrange2021}
T.~Barker and C.~Prange,
\newblock Quantitative regularity for the Navier--Stokes equations via spatial concentration,
\newblock \emph{Communications in Mathematical Physics} \textbf{385} (2021), no.~2, 717--792.
\newblock DOI: \url{https://doi.org/10.1007/s00220-021-04122-x}.

\bibitem{KangNguyen2023}
K.~Kang and D.~D. Nguyen,
\newblock Local regularity criteria in terms of one velocity component for the Navier--Stokes equations,
\newblock \emph{Journal of Mathematical Fluid Mechanics} \textbf{25} (2023), no.~1, article no.~10, 15 pp.
\newblock DOI: \url{https://doi.org/10.1007/s00021-022-00754-8}.
\bibitem{AlbrittonBarkerPrange2023}
D.~Albritton, T.~Barker, and C.~Prange,
\newblock Epsilon regularity for the Navier--Stokes equations via weak--strong uniqueness,
\newblock \emph{Journal of Mathematical Fluid Mechanics} \textbf{25} (2023), no.~3, article no.~49, 12 pp.
\newblock DOI: \url{https://doi.org/10.1007/s00021-023-00780-0}.
\bibitem{YuOneComponent2026}
R.~Yu,
\newblock Finite-Scale One-Component Regularity via Harmonic Pressure for the 3D Navier--Stokes Equations,
\newblock arXiv:2606.08352 [math.AP], 2026.
\newblock DOI: \url{https://doi.org/10.48550/arXiv.2606.08352}.
\bibitem{YuStrict2026}
R.~Yu,
\newblock Strict 2.5D Shadows for One-Component Navier--Stokes Regularity,
\newblock arXiv:2606.11720 [math.AP], 2026.
\newblock DOI: \url{https://doi.org/10.48550/arXiv.2606.11720}.
\bibitem{YuSchur2026}
R.~Yu,
\newblock Schur Visibility and Anti-Phantom Reduction in One-Component Navier--Stokes Degeneration,
\newblock arXiv:2606.12267 [math.AP], 2026.
\newblock DOI: \url{https://doi.org/10.48550/arXiv.2606.12267}.
\bibitem{YuInvisible2026}
R.~Yu,
\newblock Invisible Defect Cascades for Navier--Stokes Regularity,
\newblock arXiv:2606.12756 [math.AP], 2026.
\newblock DOI: \url{https://doi.org/10.48550/arXiv.2606.12756}.
\end{thebibliography}
\end{document}